\documentclass[12pt]{amsart}

\usepackage{amssymb}
\usepackage[scr=boondox]{mathalpha}
\usepackage{xypic}
\usepackage{enumerate}
\usepackage{color}
\usepackage{graphicx}
\usepackage[text={6in,9in},centering]{geometry}
\usepackage{hyperref}
\usepackage{relsize}
\usepackage[english]{babel}
\usepackage{booktabs}
\usepackage{longtable}
\usepackage{makecell}
\usepackage{multirow}
\usepackage{caption}


\hypersetup{
    colorlinks = true,
    linkcolor = blue,
    anchorcolor = blue,
    citecolor = blue
}

\numberwithin{equation}{section}

\theoremstyle{plain}
  \newtheorem{thm}{Theorem}[section]
  \newtheorem*{thm*}{Theorem}
  \newtheorem{cor}[thm]{Corollary}
  \newtheorem{lem}[thm]{Lemma}
  \newtheorem{prop}[thm]{Proposition}
  
  \newtheorem*{ques}{Question}
\theoremstyle{definition}  
  \newtheorem{defn}{Definition} [section]
  \newtheorem{exmp}{Example} [section]
  
\theoremstyle{remark}
  \newtheorem{rmk}{Remark}  [section]



\newcommand{\id}[0]{\mathrm{id}}
\newcommand{\e}[0]{\mathrm{e}}

\newcommand{\dist}[0]{\mathrm{dist}}
\newcommand{\diam}[0]{\mathrm{diam}}
\newcommand{\dif}[0]{\mathrm{d}}

\newcommand{\intr}[0]{\operatorname{int}}

\newcommand{\dimh}[0]{\dim_{\mathrm{H}}}
\newcommand{\dimp}[0]{\dim_{\mathrm{P}}}
\newcommand{\dimt}[0]{\dim_{\mathrm{T}}}
\newcommand{\vect}[1]{\boldsymbol{#1}}
\newcommand{\mat}[1]{\boldsymbol{#1}}
\newcommand{\numcov}{\#^{\mathrm{cov}}}

\newcommand{\entlow}{\underline{h}}
\newcommand{\entup}{\overline{h}}
\newcommand{\enttop}{h_{\mathrm{top}}}

\newcommand{\enttopbow}{\enttop^{\mathrm{B}}}
\newcommand{\enttoppac}{\enttop^{\mathrm{P}}}
\newcommand{\enttoplow}{\entlow_{\mathrm{top}}}
\newcommand{\enttopup}{\entup_{\mathrm{top}}}

\newcommand{\prelow}{\underline{P}}
\newcommand{\preup}{\overline{P}}
\newcommand{\qrelow}{\underline{Q}}
\newcommand{\qreup}{\overline{Q}}
\newcommand{\preL}{P^{\mathrm{L}}}
\newcommand{\preU}{P^{\mathrm{U}}}
\newcommand{\prebpp}{Q^{\mathrm{B}}}
\newcommand{\prebow}{P^{\mathrm{B}}}

\newcommand{\prepac}{P^{\mathrm{P}}}

\newcommand{\carpeslow}{\underline{\mathscr{Q}}}

\newcommand{\carpepup}{\overline{\mathscr{K}}}
\newcommand{\msrbpp}{\mathscr{M}}
\newcommand{\msrbpplow}{\underline{\msrbpp}}
\newcommand{\msrbppup}{\overline{\msrbpp}}
\newcommand{\msrppp}{\mathscr{N}}
\newcommand{\msrbow}{\mathscr{R}}
\newcommand{\msrpac}{\mathscr{P}}

\newcommand{\eqlbrbow}{M^{\mathrm{B}}}
\newcommand{\eqlbrpac}{M^{\mathrm{P}}}

\newcommand{\bu}{{\bf u}} 
\newcommand{\bv}{{\bf v}} 
\newcommand{\bw}{{\bf w}} 

\newcommand{\R}{\mathbb{R}}

\newcommand{\Z}{\mathbb{Z}}
\newcommand{\N}{\mathbb{N}}

\newcommand{\E}{\mathbb{E}}
\newcommand{\var}{\mathbb{D}}

\newcommand{\red}[1]{{\color{red}#1}}
\newcommand{\blue}[1]{{\color{blue}#1}}

\newcommand{\len}[1]{\lvert #1 \rvert}

\begin{document}

\title[Nonautonomous Dynamical Systems]{Nonautonomous Dynamical Systems \uppercase\expandafter{\romannumeral3}: \\
Symbolic and Expansive Systems}

\author{Zhuo Chen}
\address{School of Mathematical Sciences, East China Normal University, No. 500, Dongchuan Road, Shanghai 200241, P. R. China}
\email{10211510056@stu.ecnu.edu.cn, charlesc-hen@outlook.com}

\author{Jun Jie Miao}
\address{School of Mathematical Sciences,  Key Laboratory of MEA(Ministry of Education) \& Shanghai Key Laboratory of PMMP,  East China Normal University, Shanghai 200241, China}

\email{jjmiao@math.ecnu.edu.cn}

\subjclass[2020]{37D35, 37B55, 37B10}


\maketitle
\begin{abstract}
  A nonautonomous dynamical system $(\boldsymbol{X},\boldsymbol{T})=\{(X_{k},T_{k})\}_{k=0}^{\infty}$ is a sequence of continuous mappings $T_{k}:X_{k} \to X_{k+1}$ along with a sequence of compact metric spaces $X_{k}$.
  In this paper, we study the nonautonomous symbolic  dynamical systems   and nonautonomous expansive dynamical systems.

We first study the  homogeneous properties of pressures in nonautonomous symbolic systems $(\boldsymbol{\Sigma}(\boldsymbol{m}),\boldsymbol{\sigma})$, and  we  simplify the formulae of Bowen, packing, lower and  upper  topological     pressures   for potentials $\boldsymbol{f}=\{f_{k} \in C(\Sigma_{k}^{\infty}(\boldsymbol{m}),\mathbb{R})\}_{k=0}^{\infty}$ with strongly bounded variation.   Then we   apply a law of large numbers  to obtain the formulae for the lower and upper measure-theoretic pressures with respect to  nonautonomous   Bernoulli measures and obtain  Bowen equilibrium states and packing   equilibrium states for potentials in   nonautonomous symbolic systems.
  Finally, we study the generators in  nonautonomous  expansive   systems $(\boldsymbol{X},\boldsymbol{T})$, and we obtain that $(\boldsymbol{X},\boldsymbol{T})$ is expansive if and only if it has a generator. Moreover,  strongly uniformly expansive $(\boldsymbol{X},\boldsymbol{T})$ is equisemiconjugate to a subsystem of the nonautonomous symbolic dynamical system.

\end{abstract}


\section{Introduction}
\subsection{Nonautonomous dynamics and pressures}
Let $(\vect{X},\vect{d})=\{(X_{k},d_{k})\}_{k=0}^{\infty}$ be a sequence of compact metric spaces $(X_{k},d_{k})$ and $\vect{T}=\{T_{k}\}_{k=0}^{\infty}$ a sequence of continuous mappings $T_{k}:X_{k} \to X_{k+1}$.
We call the pair $(\vect{X},\vect{T})$ a \emph{nonautonomous dynamical system (NDS)}. We sometimes write the triplet $(\vect{X},\vect{d},\vect{T})$ to emphasize the dependence on the metrics $\vect{d}$.
If $(\vect{X},\vect{d})$ is a constant sequence, i.e., $(X_{k},d_{k})=(X,d)$ for all $k \in \N$ where $(X,d)$ is a compact metric space, then we say that $(\vect{X},\vect{T})$ \emph{is a nonautonomous dynamical system with an identical space} and denote it by $(X,\vect{T})$. See \cite{Kloeden&Rasmussen2011} for an introduction to the theory of NDSs $(X,\vect{T})$ with an identical space,
Moreover, if $\vect{T}$ is also a constant sequence, i.e., $T_{k}=T$ for all $k \in \N$ where $T:X \to X$ is a continuous mapping, then the NDS $(\vect{X},\vect{T})$ degenerates into the (autonomous) \emph{topological dynamical system (TDS)} $(X,T)$.
Therefore, NDSs may be considered as a generalization of TDSs.

The classic theories of TDSs $(X,T)$ concern aspects of topological dynamics, ergodic theory, and thermodynamic formalism; see \cite{Bowen2008,Pesin1997,Walters1982}. Moreover,  these theories are also highly related to fractal geometry, especially dimension theory, see \cite{Falconer2014} for details.  Given a TDS $(X,T)$. We write $M(X,T)$ for the set of all $T$-invariant measures on $X$.
In the late 1950s, Kolmogorov \cite{Kolmogorov1958} and Sinai \cite{Sinai1959} introduced the measure-theoretic entropy $h_{\mu}(T)$ of $T$ with respect to $\mu \in M(X,T)$. In 1975,  Ruelle \cite{Ruelle1972} and Walters \cite{Walters1975} introduced the topological pressure $P(T,f)$ of a TDS $(X,T)$ for a potential $f \in C(X,\R)$ which extends  the earlier topological  entropies in \cite{AKM1965,Bowen1971}. From then on, topological pressures became the center  of thermodynamic formalism.
One outstanding result is the   classic variational principle by Walters  \cite{Walters1975} which  states that
given a TDS $(X,T)$ and $f \in C(X,\R)$,
$$
P(T,f)=\sup\Bigl\{h_{\mu}(T) + \int_{X}f\dif{\mu}: \mu \in M(X,T)\Bigr\}.
$$
See \cite{Goodman1971,Misiurewicz1976,Ollagnier&Pinchon1982,Ruelle1972} for  various relevant studies on variational principles in TDSs.

The  expansive dynamics was first introduced for homeomorphisms in \cite{Utz1950} and generalized to positively expansiveness in \cite{Eisenberg1966}.
 It  plays  important roles in many fields of Mathematics.
For example, it is well known that the shifts of sequences on finite symbols are expansive, and so are hyperbolic systems restricted to their hyperbolic sets.
In particular, Anosov diffeomorphisms are expansive.
For expansive TDSs, there are generators which simplify the formulations and calculations for pressures and entropies; see \cite{Bowen1972,Burguet2020, Keynes&Robertson1969,Mane1979,Morales2012,Reddy1970,Walters1975} for various related works and generalizations on expansive dynamics.

In the classic theory of TDSs, people  mainly focus on invariant subsets which may be regarded as `dynamically regular' sets, but  `dynamically irregular' sets are one of the research objects in NDSs. Especially,  topological pressures and entropies are the main tools to study the dimensions of such sets, see \cite{Gu&Miao2022,GHM2025,GM,Wen00}. Therefore, it is natural to develop the theory of topological pressures and entropies on NDSs.

Kolyada and Snoha \cite{Kolyada&Snoha1996} and Huang et al \cite{Huang&Wen&Zeng2008} introduced topological entropy and pressure in NDSs with identical spaces.
In \cite{Kawan2015},  Kawan obtained  partial result on the variational principles in NDSs $(\vect{X},\vect{T})$   for the topological pressure, and he also discussed various generalizations of  expansiveness in NDSs $(\vect{X},\vect{T})$.
In particular, he obtained the existence of generators for the upper topological entropy (see Subsection \ref{ssect:PLPU} for its definition) in strongly uniformly expansive NDSs (see Definition~\ref{def:sue}).
\begin{thm*}[{\cite[Prop.7.12(\romannumeral4)]{Kawan2015}}]
  Assume that $(\vect{X},\vect{T})$ is strongly uniformly expansive with expansive constant $\delta>0$.
  Then there exists a sequence $\vect{\mathscr{U}}=\{\mathscr{U}_{k}\}_{k=0}^{\infty}$ of open covers $\mathscr{U}_{k}$ of $X_{k}$ with a Lebesgue number 
such that 
  $$
  \enttopup(\vect{T},X_{0}) = \varlimsup_{n \to \infty}\frac{1}{n}\log{\numcov\Big(\bigvee_{j=1}^{n-1}\vect{T}^{-j}\mathscr{U}_{j}\Big)},
  $$
  where $\numcov(\mathscr{A})$ denotes the minimal cardinality of subcovers of $\mathscr{A}$.
\end{thm*}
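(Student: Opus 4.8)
The plan is to adapt the classical fact that in a positively expansive topological dynamical system every open cover whose mesh lies below the expansive constant is a generator, taking care that each estimate remains uniform in the time index $k$; this uniformity is exactly what strong uniform expansiveness (Definition~\ref{def:sue}) supplies and what distinguishes it from expansiveness of the base fibre alone. For the expansive constant $\delta>0$ I would take $\mathscr{U}_{k}$ to be the open cover of $X_{k}$ by all balls of radius $\delta/4$, so that $\diam U\le\delta/2<\delta$ for every $U\in\mathscr{U}_{k}$ and $\lambda:=\delta/4$ is simultaneously a Lebesgue number of every $\mathscr{U}_{k}$. Each $\bigvee_{j=1}^{n-1}\vect{T}^{-j}\mathscr{U}_{j}$ is then an open cover of the compact space $X_{0}$; it has infinitely many members, but that is harmless, since a compact space makes $\numcov$ of any open cover finite. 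Write $\mathscr{W}_{n}:=\bigvee_{j=1}^{n-1}\vect{T}^{-j}\mathscr{U}_{j}$, let $\vect{T}^{j}\colon X_{0}\to X_{j}$ denote the time-$0$ composition, and $\vect{T}^{j}_{k}$ the analogous composition starting from $X_{k}$.

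The inequality ``$\le$'' requires no expansiveness. Fix $n$ and a minimal $(n,\lambda/2)$-spanning subset $E$ of $X_{0}$ in the corresponding Bowen metric. For $x\in E$ and $1\le j\le n-1$ choose $U_{j}^{x}\in\mathscr{U}_{j}$ with $B(\vect{T}^{j}x,\lambda)\subseteq U_{j}^{x}$; then $V_{x}:=\bigcap_{j=1}^{n-1}(\vect{T}^{j})^{-1}U_{j}^{x}$ belongs to $\mathscr{W}_{n}$, and $\{V_{x}:x\in E\}$ covers $X_{0}$, because any $y$ that is $(n,\lambda/2)$-spanned by some $x\in E$ has $\vect{T}^{j}y\in B(\vect{T}^{j}x,\lambda)\subseteq U_{j}^{x}$ for all $1\le j\le n-1$, hence $y\in V_{x}$. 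Therefore $\numcov(\mathscr{W}_{n})\le\numspan(n,\lambda/2,X_{0})$, and since $\numspan(n,\epsilon,X_{0})$ does not decrease as $\epsilon$ shrinks,
\[
\varlimsup_{n\to\infty}\frac{1}{n}\log\numcov(\mathscr{W}_{n})\le\varlimsup_{n\to\infty}\frac{1}{n}\log\numspan(n,\lambda/2,X_{0})\le\enttopup(\vect{T},X_{0}).
\]

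The inequality ``$\ge$'' is the heart of the matter and is where expansiveness enters. Fix $\epsilon>0$ and let $N=N(\epsilon)\in\N$ be provided by Definition~\ref{def:sue}: for every $k\ge0$ and all $x,y\in X_{k}$, if $d_{k+j}(\vect{T}^{j}_{k}x,\vect{T}^{j}_{k}y)\le\delta$ for $0\le j\le N$ then $d_{k}(x,y)\le\epsilon$. Take $m\ge1$ and put $n=m+N$. If $W\in\mathscr{W}_{n}$ and $x,y\in W$, then $d_{j}(\vect{T}^{j}x,\vect{T}^{j}y)\le\diam U_{j}\le\delta$ for all $1\le j\le n-1$; applying the implication at each time $i\in\{1,\dots,m-1\}$ — the orbit segment of $\vect{T}^{i}x$ of length $N+1$ uses only indices $\{i,\dots,i+N\}\subseteq\{1,\dots,n-1\}$ — gives $d_{i}(\vect{T}^{i}x,\vect{T}^{i}y)\le\epsilon$. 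Thus $\vect{T}^{i}(W)$ has $d_{i}$-diameter $\le\epsilon$ for every $1\le i\le m-1$, so $\mathscr{W}_{n}$ refines $\bigvee_{i=1}^{m-1}\vect{T}^{-i}\mathscr{V}_{i}$, where $\mathscr{V}_{i}$ is the cover of $X_{i}$ by $2\epsilon$-balls, whence $\numcov(\mathscr{W}_{m+N})\ge\numcov\bigl(\bigvee_{i=1}^{m-1}\vect{T}^{-i}\mathscr{V}_{i}\bigr)$. Since $\varlimsup_{n}\frac{1}{n}\log\numcov(\mathscr{W}_{n})=\varlimsup_{m}\frac{1}{m}\log\numcov(\mathscr{W}_{m+N})$ (the $N$-shift and the factor $\frac{m}{m+N}\to1$ do not affect the $\varlimsup$, which is finite by the previous paragraph), we get $\varlimsup_{n}\frac{1}{n}\log\numcov(\mathscr{W}_{n})\ge\varlimsup_{m}\frac{1}{m}\log\numcov\bigl(\bigvee_{i=1}^{m-1}\vect{T}^{-i}\mathscr{V}_{i}\bigr)$, and letting $\epsilon\to0$ the right-hand side increases to $\enttopup(\vect{T},X_{0})$, because open covers whose diameters tend to $0$ exhaust the upper topological entropy (Subsection~\ref{ssect:PLPU}). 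Together with the preceding paragraph this gives the claimed equality.

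The step I expect to cost the most care is the very last passage — matching $\varlimsup_{m}\frac{1}{m}\log\numcov\bigl(\bigvee_{i=1}^{m-1}\vect{T}^{-i}\mathscr{V}_{i}\bigr)$, as $\epsilon\to0$, with the chosen definition of $\enttopup(\vect{T},X_{0})$ — which needs a short but fiddly comparison of the cover-, spanning- and separating-type quantities over the windows $\{1,\dots,m-1\}$, $\{0,\dots,m-1\}$ and the like, using that shifting or dropping boundedly many coordinates leaves exponential growth rates unchanged. The conceptually indispensable ingredient, however, is in the third paragraph: the expansiveness implication is invoked at every time $i$ inside a window whose length grows with $n$, so it is essential that $N=N(\epsilon)$ be independent of $i$ — precisely the force of \emph{strong uniform} expansiveness, and the reason expansiveness of the base fibre alone would not suffice.
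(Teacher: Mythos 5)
Your proof is essentially correct, and it follows the same template as the paper's own treatment of this circle of ideas. Note that the paper does not reprove this particular statement --- it is cited verbatim from Kawan's Prop.\,7.12(iv) --- but Theorem~\ref{thm:suepre}(2) generalizes it to all four pressures, and the proof there (built on Lemma~\ref{lem: A.5} and Proposition~\ref{prop:qrecovpred2}) uses the same two ingredients you do: a ball cover with mesh below the expansive constant and a common Lebesgue number for the ``$\le$'' direction via spanning sets, and the uniform-in-$k$ shrinking of Bowen refinements supplied by strong uniform expansiveness for the ``$\ge$'' direction.

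The one structural difference is cosmetic but worth flagging. You take $\mathscr{U}_{k}$ to be the collection of \emph{all} $\delta/4$-balls of $X_{k}$, which is an infinite open cover; the paper's proof of Theorem~\ref{thm:suepre}(2) instead extracts a finite $(\delta/2-\varepsilon)$-net so that the resulting $\vect{\mathscr{B}}$ is also a uniform generator in the sense of Definition~\ref{def_gnrt}, which requires finiteness. For the entropy statement alone your infinite cover is perfectly acceptable, since compactness keeps every $\numcov$ finite and the statement you are proving asks only for open covers with a Lebesgue number, not for a generator. If you did want to recover a genuine generator in the paper's sense, replace your cover with the finite one the paper uses; nothing else changes.

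The ``fiddly'' final comparison you flag is indeed a necessary step but poses no real difficulty: the cover $\bigvee_{i=0}^{m-1}\vect{T}^{-i}\mathscr{V}_{i}$ refines $\bigvee_{i=1}^{m-1}\vect{T}^{-i}\mathscr{V}_{i}$ and in the other direction $\numcov(\bigvee_{i=0}^{m-1}\vect{T}^{-i}\mathscr{V}_{i})\le\numcov(\mathscr{V}_{0})\cdot\numcov(\bigvee_{i=1}^{m-1}\vect{T}^{-i}\mathscr{V}_{i})$, and since $\numcov(\mathscr{V}_{0})$ is a finite constant independent of $m$ the two exponential growth rates coincide. Proposition~\ref{prop:qrecov} then identifies $\lim_{\epsilon\to0}\varlimsup_{m}\frac{1}{m}\log\numcov\bigl(\bigvee_{i=0}^{m-1}\vect{T}^{-i}\mathscr{V}_{i}\bigr)$ with $\enttopup(\vect{T},X_{0})$, completing your lower bound. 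Your emphasis on the uniformity in $k$ of the threshold $N(\epsilon)$ --- that it must not drift with the starting time $i$ inside a window of length growing with $n$ --- is exactly the right place to locate the force of strong uniform expansiveness, and matches Lemma~\ref{lem: A.5}(1) in the paper.
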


Over the years, techniques dealing with geometric irregularities in fractal geometry  have been introduced to handle dynamical irregularities.
In \cite{Bowen1973},  Bowen defined a new type of topological entropy on subsets $Z \subseteq X$ similar to the Hausdorff dimension  of $Z$, and  Pesin and Pitskel' \cite{Pesin&Pitskel1984} generalized the idea to define what we call the \emph{Bowen-Pesin-Pitskel' pressure} $\prebow(T,f,Z)$ of $T$ for $f$ on $Z$.
Pesin also introduced the  lower and upper capacity topological pressures    which correspond to the lower and upper box dimensions of fractal sets using the Carath\'{e}odory dimensional structure \cite{Pesin1997}.
  Feng and Huang \cite{Feng&Huang2012} formulated a type of entropies similar to the packing dimensions of fractal sets,
and Zhong and Chen \cite{Zhong&Chen2023} extended it to the \emph{packing pressures $\prepac(T,f,Z)$}.
These pressures coincide on compact invariant subsets (essentially autonomous subsystems).
However,  they may be distinct if the sets are not compact or  invariant under $T$.
Moreover, Pesin and Pitskel' \cite{Pesin&Pitskel1984} showed 
$$
\prebow(T,f,Z) \leq \sup\Bigl\{h_{\mu}(T) + \int_{Z}f\dif{\mu}: \mu \in M(X,T)\Bigr\},
$$
 where the inequality may  strictly hold. 

We are interested in  the connection between   Nonautonomous dynamical systems and fractal geometry, in particular, the relation of various topological pressures and the dimension theory of Nonautonomous fractals. In \cite{CM1}, we   systematically studied the properties of Bowen, packing, lower and upper topological pressures and compared them with Hausdorff, packing, lower and upper box dimensions, and in \cite{CM2}, we obtained Billingsley type theorems and variational principles for Bowen-Pesin-Pitskel' and packing pressures in Nonautonomous dynamical systems. The following conclusion is proved in \cite{CM2}.
\begin{thm}\label{thm_vpBP}
  Given  $(\vect{X},\vect{T})$ and a compact $K \subseteq X_{0}$, for all equicontinuous $\vect{f}=\{f_{k}:X_{k} \to \R\}_{k=0}^{\infty}$,
  $$
  \prebow(\vect{T},\vect{f},K) = \sup\{\prelow_{\mu}(\vect{T},\vect{f}): \mu \in M(X_{0}), \mu(K)=1\},
  $$
  and for all equicontinuous $\vect{f}$ satisfying $\|\vect{f}\|<+\infty$ and $\prepac(\vect{T},\vect{f},K)>\|\vect{f}\|$,
  $$
  \prepac(\vect{T},\vect{f},K) = \sup\{\preup_{\mu}(\vect{T},\vect{f}): \mu \in M(X_{0}), \mu(K)=1\}.
  $$
where $\prelow_\mu$ and $\preup_\mu$ denote the lower and upper  measure-theoretic pressures, respectively.
\end{thm}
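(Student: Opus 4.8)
The plan is to prove the two identities following the template of Feng and Huang's variational principles for Bowen and packing pressures \cite{Feng&Huang2012}, adapted to the nonautonomous framework as in \cite{CM1,CM2}. The simplification relative to the autonomous case is that $M(X_{0})$ here is the whole space of Borel probability measures on $X_{0}$, so the measures produced in the lower-bound arguments need not be arranged to be invariant (and compactness of $K$ removes the measurable-selection subtleties present for analytic sets). Throughout I would lean on equicontinuity of $\vect f$ to control the oscillation of $S_{n}^{\vect T}\vect f:=\sum_{k=0}^{n-1}f_{k}\circ T_{k-1}\circ\cdots\circ T_{0}$ over a nonautonomous Bowen ball $B_{n}^{\vect T}(x,\epsilon)$: this oscillation is $o(n)$ uniformly in $x$ as $\epsilon\to 0$, which is exactly what makes the local pressures $\prelow_{\mu}(\vect T,\vect f,x):=\lim_{\epsilon\to0}\varliminf_{n}\frac1n\bigl(-\log\mu(B_{n}^{\vect T}(x,\epsilon))+S_{n}^{\vect T}\vect f(x)\bigr)$ and $\preup_{\mu}(\vect T,\vect f,x)$ (with $\varlimsup$) well defined and lets one pass freely between $S_{n}^{\vect T}\vect f(x)$ and its supremum/infimum over $B_{n}^{\vect T}(x,\epsilon)$ in the Carath\'eodory sums defining $\prebow$ and $\prepac$.

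Each identity splits into the ``easy'' inequality (the supremum is a lower bound for the pressure) and the ``hard'' inequality (the pressure is a lower bound for the supremum). For the Bowen identity, the easy inequality $\sup_{\mu}\prelow_{\mu}\le\prebow(\vect T,\vect f,K)$ follows from the mass distribution principle: if for some $\mu$ with $\mu(K)=1$ the local pressure $\prelow_{\mu}(\vect T,\vect f,x)$ exceeded $\prebow(\vect T,\vect f,K)+\delta$ on a set of positive $\mu$-measure, then after the routine decomposition into pieces on which the relevant $\varliminf$ is uniform and the restriction of $\mu$ to one such piece $E\subseteq K$, the lower Billingsley-type lemma of \cite{CM2} would yield $\prebow(\vect T,\vect f,E)\ge\prebow(\vect T,\vect f,K)+\delta$, contradicting monotonicity; hence $\prelow_{\mu}(\vect T,\vect f,x)\le\prebow(\vect T,\vect f,K)$ $\mu$-a.e. and the definition of $\prelow_{\mu}$ closes the gap. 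For the hard inequality, fix $s<\prebow(\vect T,\vect f,K)$; then the level-$s$ Hausdorff-type set function $\msrbpp^{s}$ underlying $\prebow$ is infinite on $K$, and I would invoke the dynamical Frostman lemma of \cite{CM2} to obtain a compact $K'\subseteq K$ and a Borel probability measure $\mu$ with $\mu(K')=1$ and $\mu(B_{n}^{\vect T}(x,\epsilon))\le C\exp\bigl(-ns+S_{n}^{\vect T}\vect f(x)+o(n)\bigr)$ for all $n$ and all $x\in K'$, so that $\prelow_{\mu}(\vect T,\vect f,x)\ge s$ on $K'$, whence $\prelow_{\mu}(\vect T,\vect f)\ge s$; letting $s\uparrow\prebow(\vect T,\vect f,K)$ finishes this part.

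For the packing identity the scheme is parallel, with Bowen balls organized into packings rather than covers. The easy inequality $\sup_{\mu}\preup_{\mu}\le\prepac(\vect T,\vect f,K)$ is obtained the same way from the upper Billingsley-type lemma of \cite{CM2} for the packing pressure, which upgrades a Vitali-type packing of a positive-$\mu$-measure piece of $K$ by Bowen balls into a lower bound for the regularized packing set function $\msrpac^{s}$. The hard inequality is where the hypotheses $\|\vect f\|<+\infty$ and $\prepac(\vect T,\vect f,K)>\|\vect f\|$ are used: fixing $\|\vect f\|<s<\prepac(\vect T,\vect f,K)$ makes $\exp(-ns+S_{n}^{\vect T}\vect f(x))\le e^{-n(s-\|\vect f\|)}$ exponentially small, which keeps the construction below from degenerating; from $\msrpac^{s}(K)=+\infty$ one first passes to a single piece of a countable decomposition of $K$ on which the packing pre-measure is already infinite, and then builds, by a Moran-type inductive construction on nested families of Bowen balls (with a Kolmogorov/weak-$\ast$ limit), a Borel probability measure $\mu$ carried by a compact subset of that piece with $\preup_{\mu}(\vect T,\vect f,x)\ge s$ $\mu$-a.e., hence $\preup_{\mu}(\vect T,\vect f)\ge s$; again let $s\uparrow\prepac(\vect T,\vect f,K)$.

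The main obstacle is the two hard inequalities, and inside them the dynamical Frostman lemma and the inductive construction. The nonautonomous Bowen balls $B_{n}^{\vect T}(x,\epsilon)$ --- the sets of $y\in X_{0}$ with $d_{k}(T_{k-1}\cdots T_{0}x,\,T_{k-1}\cdots T_{0}y)<\epsilon$ for $0\le k\le n$ --- are not genuine metric balls, so the Vitali/Besicovitch-type covering arguments behind both the Frostman lemma and the packing-side covering lemma must be re-established in this setting; this is the substance of the Billingsley-type preparation carried out in \cite{CM2}. On the packing side, the additional regularization layer --- descending from $\msrpac^{s}$ to a piece where the pre-measure is infinite without destroying the lower bound --- is the delicate step, and it is precisely there that $\prepac(\vect T,\vect f,K)>\|\vect f\|$ is needed.
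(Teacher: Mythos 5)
The paper does not supply its own proof of Theorem~\ref{thm_vpBP}; it states the result and refers to the companion paper~\cite{CM2}, where the Billingsley-type theorems and variational inequalities (\cite[Thm.2.4(2), Thm.2.9(2), Lem.6.1, Lem.6.3]{CM2}, cited in the footnote to Proposition~\ref{prop:eqlbrrestr}) do the work. That said, your sketch correctly reconstructs the standard architecture these citations point to and is consistent with the Feng--Huang template in the nonautonomous setting: the two easy inequalities come from the two Billingsley-type lemmas together with monotonicity of $\prebow$ and $\prepac$ under inclusion; the hard Bowen inequality comes from a dynamical Frostman lemma applied at level $s<\prebow(\vect T,\vect f,K)$; and the hard packing inequality comes from a Moran-type nested construction, where the constraints $\|\vect f\|<\infty$ and $\prepac(\vect T,\vect f,K)>\|\vect f\|$ are precisely what keep the $\exp(-ns+S_n^{\vect T}\vect f)$ weights summable and the construction from collapsing.

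Two points worth flagging, not as errors but as places where your sketch is lighter than the argument actually required. First, in the easy Bowen inequality you appeal to a ``routine decomposition into pieces on which the $\varliminf$ is uniform''; in the nonautonomous setting this decomposition has to be done against the $\epsilon$-scaled Bowen balls $B_n^{\vect T}(x,\epsilon)$ simultaneously for all small $\epsilon$, and the paper's definition~\eqref{eq:defpremsr} of local pressure already builds in the $\lim_{\epsilon\to0}$ outside, which is what keeps the reduction clean --- worth writing out if you develop this. Second, your $5r$-covering / Vitali caveat is the right one: nonautonomous Bowen balls are intersections $\bigcap_{j<n}\vect T^{-j}B(\vect T^jx,\epsilon)$ and have no doubling structure, so the covering lemmas must be proved from scratch (or via the net-like structure of balls of fixed $(n,\epsilon)$), which is the actual technical content of the preparation in~\cite{CM2}. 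With those caveats acknowledged, the proposal is a faithful outline of the intended proof.
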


In this paper, we study the properties of topological pressures and entropies on nonautonomous expansive dynamical systems and  nonautonomous symbolic dynamical systems. By applying Theorem \ref{thm_vpBP} to nonautonomous symbolic dynamical systems, we explore the  Bowen equilibrium state and Packing   equilibrium state for potentials, and we also investigate the generators for nonautonomous expansive dynamical systems.

\subsection{Nonautonomous symbolic dynamical systems}\label{ssect:symdyn}
The autonomous symbolic dynamical systems is the study of dynamical systems defined on discrete spaces, focusing on sequences of symbols and their evolution through shifts. It provides a powerful framework for analyzing and modeling the behavior of dynamical systems through this discrete lens, making it a vital area in mathematics and its applications; see \cite{Lind&Marcus2021,McGoff2012,McGoff2019, Sinai1968}. Particularly, symbolic dynamics are closely related to iterated function systems in fractal geometry; see \cite{Barreira1996,Cao&Feng&Huang2008,DasSim17,Falconer1988a,Falconer1988b,Feng23,FH08, Feng&Kaenmaki2011} for various related studies.

Nonautonomous  symbolic dynamical systems  are  strongly connected to nonautonomous iterated function systems and nonautonomous fractals. Pressures  and entropies which are essentially defined on the corresponding symbolic systems play an important role in studying the dimensions of such fractals,   \cite{Gu&Miao2022,GHM2025,GM, Hua1994,R-G&Urbanski2012,Wen00}.


 Given a sequence $\vect{m}=\{m_{k}\}_{k=0}^{\infty}$ of positive integers $m_{k} \geq 2$ for every $k \in \mathbb{N}$, let
  \begin{equation}\label{eq:symspacelvk}
    \Sigma_{k}^{\infty}(\vect{m})=\{\omega=\omega_{k}\omega_{k+1}\ldots: \omega_{j} \in \{1,\ldots,m_j\}, j \geq k\} 
  \end{equation}
  be the \emph{sequence space of level $k$}, and for each $l\geq k$, we write
  \begin{equation}
    \Sigma_{k}^{l}(\vect{m})=\{\mathbf{u}=u_{k} \ldots u_{l}: u_{j} \in \{1,\ldots,m_j\}, k \leq j \leq l\},
  \end{equation}
and
  $$
  \Sigma_{k}^{*}(\vect{m})=\bigcup_{l=k}^{\infty}\Sigma_{k}^{l}(\vect{m}).
  $$
For $\mathbf{u} \in \Sigma_{k}^{l}(\vect{m})$, we write $\len{\mathbf{u}}=l-k+1$ for the \emph{length} of $\mathbf{u}$.   Given an integer $n \geq 1$, for  $\omega \in \Sigma_{k}^{\infty}(\vect{m})$, we write $\omega\vert{n}=\omega_{k}\ldots\omega_{k+n-1}$ for the \textit{$n$-th curtailment of $\omega$},
  and it is clear that $\omega\vert{n} \in \Sigma_{k}^{k+n-1}(\vect{m})$;
    we also write $\mathbf{u}\vert{n}=u_{k} \ldots u_{k+n-1} \in \Sigma_{k}^{k+n-1}(\vect{m})$ for $n \leq \len{\mathbf{u}}$.
    Given $\mathbf{u} \in \Sigma_{k}^{l}(\vect{m})$ and $\mathbf{v}=v_{l+1}v_{l+2} \ldots v_{N} \in \Sigma_{l+1}^{N}(\vect{m})$, we write $\mathbf{u}\mathbf{v}=u_{k}u_{k+1} \ldots u_{l}v_{l+1}v_{l+2} \ldots v_{N} \in \Sigma_{k}^{N}(\vect{m})$. 
Given $\mathbf{u} \in \Sigma_{k}^{*}(\vect{m})$ and $\omega \in \Sigma_{k}^{\infty}(\vect{m})$, we write $\bu \preceq\omega$ if $\bu$ is a curtailment of $\omega$, and we call the set $[\bu]_k =\{\omega \in \Sigma_{k}^{\infty}(\vect{m}): \bu \preceq \omega \}$ the \textit{cylinder} of $\mathbf{u}$, where $\bu\in \Sigma_k^*(\vect{m})$. If $\mathbf{u}=\emptyset$, its cylinder is $[\bu]_k= \Sigma_{k}^{\infty}(\vect{m})$.   The \emph{rank} of the cylinder $[\mathbf{u}]_{k}$ refers to $\len{\mathbf{u}}$.
The cylinders  $[\bu]_k =\{\omega \in \Sigma_{k}^{\infty}(\vect{m}): \bu \preceq \omega \}$  for  $\Sigma_k^\infty(\vect{m})$ form a base
of open and closed neighbourhoods for $ \Sigma_k^\infty(\vect{m})$.
We call a set of finite words $A \subset \Sigma_k^* (\vect{m})$ a \textit{covering set} for $ \Sigma_k^\infty(\vect{m})$ if $\Sigma_k^\infty \subset \bigcup_{\mathbf{u}\in A}[\bu]_k$.

For $\omega,\vartheta \in \Sigma_k^\infty(\vect{m})$, let $\omega\wedge \vartheta\in \Sigma_k^*(\vect{m})$ denote the maximal common initial finite word of both $\omega$ and $\vartheta$. We topologise $\Sigma_k^\infty(\vect{m})$  using the metric
$d_k(\omega,\vartheta ) = \e^{-|\omega\wedge\vartheta  |}$ for distinct $\omega,\vartheta  \in\Sigma_k^\infty(\vect{m})$ to make $\Sigma_k^\infty(\vect{m})$  a compact metric space. The open and closed balls with center $\omega \in \Sigma_{k}^{\infty}(\vect{m})$ and  radius $\varepsilon$ are 
  $$
  B_{d_{k}}(\omega,\varepsilon)=[\omega\vert{\lfloor-\log{\varepsilon+1}\rfloor}]_{k},\qquad \textit{and }\qquad
  \overline{B}_{d_{k}}(\omega,\varepsilon) =[\omega\vert{\lceil-\log{\varepsilon}\rceil}]_{k}.
  $$
  Note that the sequence spaces are ultrametric spaces, i.e., $d(x,z) \leq \max\{d(x,y), d(y,z)\}$. 
  As a result, the cylinder sets have the  \emph{net property}\label{prop:net}:
  For $\bu,\bv \in \Sigma_k^\infty$,  either $[\bu]_k \cap [\bv]_k = \varnothing$, or one of them is contained in the other.

Let  $\vect{\sigma}$  be a   sequence of shift mappings $\sigma_{k}:\Sigma_{k}^{\infty}(\vect{m}) \to \Sigma_{k+1}^{\infty}(\vect{m})$ where
  \begin{equation}\label{eq:shiftlvk}       
               \sigma_{k}:\omega_{k}\omega_{k+1}\ldots \mapsto \omega_{k+1}\omega_{k+2}\ldots.
  \end{equation}
It is obvious that the left shift $\sigma_{k}$ is continuous for every $k \in \mathbb{N}$. Then $ (\vect{\Sigma}(\vect{m}),\vect{\sigma})$ forms a nonautonomous symbolic dynamical system.  When all $m_{k}$'s are equal, namely $m_{k}=m$ for all $k \in \mathbb{N}$, the symbolic system $(\vect{\Sigma}(\vect{m}),\vect{\sigma})$   becomes the well-known autonomous symbolic system $(\Sigma(m),\sigma)$. 
  Nonautonomous subshifts (commonly termed \emph{nonstationary subshifts} in literature) may be defined as  nonautonomous shifts on compact subsets,
  and they have been considered in \cite{Fisher2009,Kawan&Latushkin2015}.
  These studies originated in the two-sided NDSs introduced in \cite{Arnoux&Fisher2005}, where Arnoux and Fisher \cite{Arnoux&Fisher2005} generalized the classic Anosov diffeomorphisms into Anosov families and studied the two-sided symbolic dynamics of a particular class of Anosov families;
  Fisher \cite{Fisher2009} continued investigations into mixing and other dynamical properties of the nonautonomous subshifts and adic transformations;
  Kawan and Latushkin \cite{Kawan&Latushkin2015} gave entropy formulae for nonautonomous subshifts and studied particular cases of variational principles;
  Wu and Zhou \cite{Wu&Zhou2023} provided the two-sided symbolic dynamics of a general class of Anosov families.

For each $k \in \mathbb{N}$, let  $\mathbf{p}_{k}=(p_{k,1},\cdots,p_{k,m_{k}})$ be a positive probability vector, that is,  $\sum_{i=1}^{m_{k}}p_{k,i}=1$ and $p_{k,i}>0$ for all $i=1,\ldots,m_{k}$, and  write $ \mat{P}_{k}=(\mathbf{p}_{j})_{j=k}^{\infty}.$
 For each $k \in \mathbb{N}$, we define $\mu_{k}$ on the semi-algebra of cylinders on $\Sigma_{k}^{\infty}(\vect{m})$
  by
  \begin{equation}\label{eq:NAPV}
    \mu_{k}([\mathbf{u}]_{k})=\prod_{j=k}^{k+\len{\mathbf{u}}-1}p_{j,u_{j}},
  \end{equation}
  for all $\mathbf{u}$ of finite length. By the standard argument,  we extend it to  a Borel probability measure,  and  we call it the \emph{$\mat{P}_{k}$-Bernoulli measure} on $\Sigma_{k}^{\infty}(\vect{m})$ and still denote it by $\mu_{k}$.
In this paper, we are only concerned with $\Sigma_{0}^{\infty}(\vect{m})$ and $\mu_{0}$.

The rest of the paper is organized as follows. In Section \ref{sect:mc}, we  present our main results of pressures and entropies on nonautonomous symbolic systems and nonautonomous expansive dynamical systems.  In Section \ref{sect:pre}, we give  some basic notations and recall the definition and properties of various pressures and entropies. In Section \ref{sect:toppresymsys}, we prove the properties of  topological  pressures and entropies on nonautonomous symbolic systems.
In Section \ref{sect:msrpre}, we apply a law of large numbers to explore the measure-theoretic pressures and entropies on nonautonomous Bernoulli measures,  and we prove the existence of Bowen  equilibrium states and packing equilibrium states.  Finally, we investigate   expansiveness and generators on nonautonomous expansive dynamical systems in Section \ref{sect:exps}.

\section{Main Conclusions}\label{sect:mc}

From now on, we always write $(\vect{X},\vect{T})$ for a  nonautonomous dynamical system, that is,
$ \vect{X}=\{ X_{k}  \}_{k=0}^{\infty} $ is  a sequence of compact metric spaces $X_{k}$, and   $\vect{T}=\{T_{k}\}_{k=0}^{\infty}$ is a sequence of
continuous mappings $T_{k}:X_{k} \to X_{k+1}$. We write  $\prebow,\prepac, \preL$ and $ \preU$ for  the  Bowen, packing, lower and upper topological pressures on  $(\vect{X},\vect{T})$, respectively, and write $\vect{C}(\vect{X},\mathbb{R})$  for the collection of all sequences of continuous functions $f_k: X_k \to \mathbb{R}$. They are all concisely defined in  Section \ref{sect:pre}.

  \subsection{Topological pressures of nonautonomous shifts}

  The first theorem shows that subsets with non-empty interior share the same pressures as the entire sequence space $\Sigma_{0}^{\infty}(\vect{m})$.
  \begin{thm}\label{thm:POeqPS}
Given $P\in  \{\prebow,\prepac, \preU\}$,  if $\Omega \subseteq \Sigma_{0}^{\infty}$ has non-empty interior, then for all equicontinuous $\vect{f} \in \vect{C}(\vect{\Sigma}(\vect{m}),\R)$,
$$
    P(\vect{\sigma},\vect{f},\Omega) = P(\vect{\sigma},\vect{f},\Sigma_{0}^{\infty}).
    $$
   \end{thm}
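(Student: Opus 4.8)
The plan is to reduce the statement to a single cylinder and then exploit the self-similarity of the shift space together with the fact that the Birkhoff sums along $\vect{\sigma}$ change only by a bounded amount when the first finitely many coordinates are discarded. \emph{Step 1: reduction to a cylinder.} Since the cylinders $\{[\mathbf u]_0:\mathbf u\in\Sigma_0^*(\vect{m})\}$ form a base of the topology of $\Sigma_0^\infty$, the hypothesis that $\Omega$ has non-empty interior gives a word $\mathbf u$, of length $N:=\len{\mathbf u}\ge 0$, with $[\mathbf u]_0\subseteq\Omega\subseteq\Sigma_0^\infty$. Each of $P\in\{\prebow,\prepac,\preU\}$ is monotone in its set argument (see Section~\ref{sect:pre}), so
$$
P(\vect{\sigma},\vect{f},[\mathbf u]_0)\ \le\ P(\vect{\sigma},\vect{f},\Omega)\ \le\ P(\vect{\sigma},\vect{f},\Sigma_0^\infty),
$$
and it is enough to prove $P(\vect{\sigma},\vect{f},[\mathbf u]_0)=P(\vect{\sigma},\vect{f},\Sigma_0^\infty)$ for an arbitrary cylinder $[\mathbf u]_0$.

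\emph{Step 2: a cylinder is conjugate to a tail system.} For $k,j\ge 0$ write $\sigma_k^{\,j}:=\sigma_{k+j-1}\circ\cdots\circ\sigma_k$ (with $\sigma_k^{\,0}=\id$) for the $j$-fold iterate of the shift from level $k$, and for $N\ge 0$ let $\vect{\sigma}^{(N)}=\{\sigma_k\}_{k\ge N}$ be the NDS obtained by discarding the first $N$ spaces. For $\omega\in\Sigma_0^\infty$ and $\eta\in\Sigma_N^\infty(\vect{m})$ put $S_n\vect{f}(\omega)=\sum_{j=0}^{n-1}f_j(\sigma_0^{\,j}\omega)$ and $S_n^{(N)}\vect{f}(\eta)=\sum_{j=0}^{n-1}f_{N+j}(\sigma_N^{\,j}\eta)$. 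When $N=\len{\mathbf u}$, the restriction of $\sigma_0^{\,N}$ is a homeomorphism $[\mathbf u]_0\to\Sigma_N^\infty(\vect{m})$ carrying $[\mathbf u\mathbf v]_0$ onto $[\mathbf v]_N$ and lowering rank by $N$, and for $\omega\in[\mathbf u]_0$ and $n\ge N$,
$$
S_n\vect{f}(\omega)=S_N\vect{f}(\omega)+S_{n-N}^{(N)}\vect{f}(\sigma_0^{\,N}\omega),\qquad |S_N\vect{f}(\omega)|\le C_{\mathbf u}:=\sum_{j=0}^{N-1}\|f_j\|_\infty .
$$
Under the equicontinuity hypothesis the pressures $\prebow,\prepac,\preU$ of a subset of $\Sigma_0^\infty$ are computed, as established for nonautonomous shifts in Section~\ref{sect:toppresymsys}, from Carath\'eodory-type sums over covers, resp.\ packings, of the subset by cylinders, a rank-$r$ cylinder $[\mathbf w]$ being weighted by $\exp\!\big(-sr+\sup_{[\mathbf w]}S_r\vect{f}\big)$. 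Pushing such a configuration of $[\mathbf u]_0$ forward by $\sigma_0^{\,N}$ onto $\Sigma_N^\infty(\vect{m})$ multiplies every weight by a factor in $[\,e^{-sN-C_{\mathbf u}},e^{-sN+C_{\mathbf u}}\,]$ and decreases every rank by $N$; this alters neither the exponential growth rate defining $\preU$ nor the critical exponent of the Carath\'eodory measures defining $\prebow$ and $\prepac$. Hence, for $N=\len{\mathbf u}$,
$$
P(\vect{\sigma},\vect{f},[\mathbf u]_0)=P\big(\vect{\sigma}^{(N)},\vect{f},\Sigma_N^\infty(\vect{m})\big)=:Q_N ,
$$
a value that depends only on $N$ and not on which length-$N$ word $\mathbf u$ was chosen.

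\emph{Step 3: the whole space equals the same tail system.} Write $\Sigma_0^\infty=\bigcup_{\mathbf u'\in\Sigma_0^{N-1}(\vect{m})}[\mathbf u']_0$ as a finite union of length-$N$ cylinders. Each of $\prebow,\prepac,\preU$ is finitely stable, $P(\vect{\sigma},\vect{f},A\cup B)=\max\{P(\vect{\sigma},\vect{f},A),P(\vect{\sigma},\vect{f},B)\}$ (for $\prebow$ and $\prepac$ one even has countable stability, the analogue of that of Hausdorff and packing dimension; see Section~\ref{sect:pre}), so by Step~2,
$$
P(\vect{\sigma},\vect{f},\Sigma_0^\infty)=\max_{\mathbf u'\in\Sigma_0^{N-1}(\vect{m})}P(\vect{\sigma},\vect{f},[\mathbf u']_0)=Q_N=P(\vect{\sigma},\vect{f},[\mathbf u]_0),
$$
and combining this with Step~1 proves the theorem.

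\emph{Main obstacle.} The substance of the argument is Step~2: one must verify, separately for the Bowen, packing, and upper capacity pressures, that the bounded distortion $C_{\mathbf u}$ of the Birkhoff sums and the finite shift of cylinder ranks effected by $\sigma_0^{\,N}$ genuinely leave the pressure invariant. This amounts to tracking each of the three (slightly different) Carath\'eodory constructions through the transport map $\sigma_0^{\,N}$, and it is exactly at this point that the equicontinuity of $\vect{f}$ — which makes the cylinder description of the pressures available in the first place — is used.
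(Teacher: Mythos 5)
Your proof is correct in outline, and its overall architecture --- reduce to a cylinder by monotonicity, prove that all length-$N$ cylinders carry the same pressure, then recover $\Sigma_0^\infty$ by finite stability --- is the same as the paper's. The difference lies in the mechanism used for the cylinder-equality step: the paper (Lemma~\ref{lem:PupC}) builds an explicit equiconjugacy $\vect{\pi}=\{\pi_k\}$ between the two subsystems $(\vect{\sigma}^k([\bu]),\sigma_k\vert)$ and $(\vect{\sigma}^k([\bv]),\sigma_k\vert)$ for $\bu,\bv$ of the same length --- relabelling the first $N$ coordinates, identity thereafter --- and then appeals to the general invariance of $\preup,\prebow,\prepac$ under equiconjugacies of NDSs from \cite{CM1}. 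You instead transport the Carath\'eodory and separated-set sums directly along the map $\sigma_0^{\,N}\colon[\bu]_0\to\Sigma_N^\infty(\vect{m})$, observing that a level-$0$ Bowen ball of order $n$ inside $[\bu]_0$ maps to a level-$N$ Bowen ball of order $n-N$, and that the weight $\exp(-ns+S_n^{\vect{\sigma}}\vect{f})$ changes only by a multiplicative factor bounded between $\exp(-sN-C_{\bu})$ and $\exp(-sN+C_{\bu})$. This is a legitimate alternative: it avoids invoking the equiconjugacy-invariance theorem as a black box, and it makes the intermediate quantity $Q_N$ manifestly independent of $\bu$, so that Step~3 closes immediately. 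The price is exactly what you flag as the ``main obstacle'': one must check, for each of $\prebow$, $\prepac$, and $\preU$ separately, that a bounded multiplicative distortion together with a fixed shift $n\mapsto n-N$ of the Bowen-ball orders does not move the critical exponent (for the two Carath\'eodory-type pressures) nor the $\varlimsup_n\frac1n\log(\cdot)$ growth rate (for $\preU$, where the relevant sums use cylinders of a single rank as in \eqref{eq:defQCs}). Those checks are routine but must be written out; once they are, the argument is complete and matches the paper's conclusion.
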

   
  In particular, non-empty open sets are ``pressurely homogeneous''.
  \begin{cor}\label{cor:PBPupopen}
 Given $P\in  \{\prebow,\prepac, \preU\}$, if $\Omega \subseteq \Sigma_{0}^{\infty}$ is non-empty and open, then for all equicontinuous $\vect{f} \in \vect{C}(\vect{\Sigma}(\vect{m}),\R)$
    and all open $V \subseteq \Sigma_{0}^{\infty}$ with $\Omega \cap V \neq \varnothing$,
    $$
    P(\vect{\sigma},\vect{f},\Omega \cap V) = P(\vect{\sigma},\vect{f},\Omega) = P(\vect{\sigma},\vect{f},\Sigma_{0}^{\infty}).
    $$

  \end{cor}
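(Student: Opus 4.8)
The plan is to read both chains of equalities directly off Theorem~\ref{thm:POeqPS}, whose hypothesis on the target set is only that it has non-empty interior. The key observation is elementary: a non-empty \emph{open} subset of $\Sigma_{0}^{\infty}$ coincides with its own interior, hence in particular has non-empty interior, which is exactly what Theorem~\ref{thm:POeqPS} requires.

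First, since $\Omega$ is non-empty and open, $\intr\Omega=\Omega\neq\varnothing$, so Theorem~\ref{thm:POeqPS} applied to $\Omega$ gives
\[
P(\vect{\sigma},\vect{f},\Omega)=P(\vect{\sigma},\vect{f},\Sigma_{0}^{\infty})
\]
for every $P\in\{\prebow,\prepac,\preU\}$ and every equicontinuous $\vect{f}\in\vect{C}(\vect{\Sigma}(\vect{m}),\R)$. Next, $\Omega\cap V$ is open, being the intersection of two open sets, and it is non-empty by hypothesis; hence $\intr(\Omega\cap V)=\Omega\cap V\neq\varnothing$, so Theorem~\ref{thm:POeqPS} applied to $\Omega\cap V$ gives
\[
P(\vect{\sigma},\vect{f},\Omega\cap V)=P(\vect{\sigma},\vect{f},\Sigma_{0}^{\infty})
\]
under the same quantifiers. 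Combining the two displays yields $P(\vect{\sigma},\vect{f},\Omega\cap V)=P(\vect{\sigma},\vect{f},\Omega)=P(\vect{\sigma},\vect{f},\Sigma_{0}^{\infty})$, which is the assertion.

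I do not expect a genuine obstacle: the corollary is merely the specialization of Theorem~\ref{thm:POeqPS} to the two open sets $\Omega$ and $\Omega\cap V$. The one point worth flagging for the reader is that ``has non-empty interior'' is not in general preserved under intersection, so the openness of \emph{both} $\Omega$ and $V$ (not merely the fact that each has non-empty interior) is what guarantees that $\Omega\cap V$ again falls under Theorem~\ref{thm:POeqPS}. If one preferred not to invoke Theorem~\ref{thm:POeqPS} as a black box, the same reduction could be carried out directly: each of $\Omega$ and $\Omega\cap V$ contains a cylinder $[\bu]_{0}$, and one compares the Carath\'{e}odory sums defining $\prebow$, $\prepac$, $\preU$ on $[\bu]_{0}$ with those on the finite cylinder decomposition of $\Sigma_{0}^{\infty}$, the finitely many initial coordinates contributing only a bounded amount to the relevant (Birkhoff-type) sums of an equicontinuous $\vect{f}$ and hence nothing in the limits that define the three pressures.
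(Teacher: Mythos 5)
Your argument is correct and is exactly the implicit reasoning in the paper: the corollary is stated there without proof as an immediate specialization of Theorem~\ref{thm:POeqPS}, applied first to $\Omega$ and then to $\Omega\cap V$, both of which are non-empty open and therefore have non-empty interior. Your remark that openness (rather than merely non-empty interior) is what survives the intersection with $V$ is a useful clarification, though it adds nothing beyond what the paper's route requires.
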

  The particular ``homogeneous'' property for upper capacity pressures leads to the coincidence of the packing and upper capacity pressures
  on non-empty open and compact sets.
  \begin{cor}\label{cor:PupeqPP}
    If $\Omega \subseteq \Sigma_{0}^{\infty}$ is non-empty open and compact, then for all equicontinuous $\vect{f} \in \vect{C}(\vect{\Sigma}(\vect{m}),\R)$,
    $$
    \preU(\vect{\sigma},\vect{f},\Omega) = \prepac(\vect{\sigma},\vect{f},\Omega) = \preU(\vect{\sigma},\vect{f},\Sigma_{0}^{\infty}) = \prepac(\vect{\sigma},\vect{f},\Sigma_{0}^{\infty}).
    $$
  \end{cor}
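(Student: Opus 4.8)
The plan is to reduce the whole statement to the single nontrivial identity $\preU(\vect{\sigma},\vect{f},\Omega)=\prepac(\vect{\sigma},\vect{f},\Omega)$ and then to extract it from the Baire property of the compact space $\Omega$. Since $\Omega$ is non-empty and open, Corollary~\ref{cor:PBPupopen} (equivalently Theorem~\ref{thm:POeqPS}) already gives $\preU(\vect{\sigma},\vect{f},\Omega)=\preU(\vect{\sigma},\vect{f},\Sigma_0^\infty)$ and $\prepac(\vect{\sigma},\vect{f},\Omega)=\prepac(\vect{\sigma},\vect{f},\Sigma_0^\infty)$, so the four quantities in the corollary collapse to two and it suffices to show $\preU(\vect{\sigma},\vect{f},\Omega)=\prepac(\vect{\sigma},\vect{f},\Omega)$. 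I would work from the description of the packing pressure as a modified upper capacity pressure (recalled in Section~\ref{sect:pre}; see \cite{CM1}),
\[
\prepac(\vect{\sigma},\vect{f},Z)=\inf\Bigl\{\sup_{i\ge 1}\preU(\vect{\sigma},\vect{f},Z_i):Z\subseteq\bigcup_{i\ge 1}Z_i\Bigr\};
\]
taking the trivial cover $Z_1=\Omega$ then yields $\prepac(\vect{\sigma},\vect{f},\Omega)\le\preU(\vect{\sigma},\vect{f},\Omega)$ at once, so only the reverse inequality is at issue.

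For the reverse inequality, fix an arbitrary countable cover $\Omega\subseteq\bigcup_{i\ge 1}Z_i$ and set $F_i=\overline{Z_i}\cap\Omega$; these are closed subsets of $\Omega$ with $\bigcup_{i\ge 1}F_i=\Omega$. As $\Omega$ is a compact metric space it is a Baire space, so some $F_{i_0}$ has non-empty interior relative to $\Omega$; because $\Omega$ is open in $\Sigma_0^\infty$, this relative interior is in fact a non-empty open subset $W$ of $\Sigma_0^\infty$ with $W\subseteq\overline{Z_{i_0}}$. Now, using the closure-invariance $\preU(\vect{\sigma},\vect{f},Z)=\preU(\vect{\sigma},\vect{f},\overline{Z})$ and the monotonicity of the upper capacity pressure, together with Theorem~\ref{thm:POeqPS} applied to the non-empty open sets $W$ and $\Omega$,
\[
\preU(\vect{\sigma},\vect{f},Z_{i_0})=\preU(\vect{\sigma},\vect{f},\overline{Z_{i_0}})\ge\preU(\vect{\sigma},\vect{f},W)=\preU(\vect{\sigma},\vect{f},\Sigma_0^\infty)=\preU(\vect{\sigma},\vect{f},\Omega).
\]
Hence $\sup_{i\ge 1}\preU(\vect{\sigma},\vect{f},Z_i)\ge\preU(\vect{\sigma},\vect{f},\Omega)$ for every such cover, and taking the infimum gives $\prepac(\vect{\sigma},\vect{f},\Omega)\ge\preU(\vect{\sigma},\vect{f},\Omega)$. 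Combining this with the easy inequality above and with the two identities from Corollary~\ref{cor:PBPupopen} produces all four equalities in the statement.

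The conceptual heart is the Baire-category step, and it is exactly there that both hypotheses are needed: ``$\Omega$ compact'' supplies the Baire property, while ``$\Omega$ open'' forces a relative interior in $\Omega$ to be a genuine (non-empty) open subset of the ambient space, so that Theorem~\ref{thm:POeqPS} is applicable. The main obstacle I anticipate is bookkeeping rather than mathematics: I must check that the precise forms of ``$\preU(\vect{\sigma},\vect{f},Z)=\preU(\vect{\sigma},\vect{f},\overline{Z})$'', of monotonicity of $\preU$ in the set variable, and of the modified-upper-capacity description of $\prepac$ are the ones established (or immediately derivable) in Section~\ref{sect:pre}; if that description is stated there only for closed sets or only for $\Sigma_0^\infty$, the passage to $F_i=\overline{Z_i}\cap\Omega$ above already bridges the gap, as it replaces the $Z_i$ by closed subsets of $\Omega$ without decreasing $\sup_i\preU(\vect{\sigma},\vect{f},Z_i)$.
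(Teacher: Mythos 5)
Your argument is correct and follows essentially the same route as the paper, which invokes Corollary~\ref{cor:PBPupopen} for the open-set homogeneity of $\preU$ and then cites \cite[Cor.4.10]{CM1} for precisely the Baire-category step that you carry out by hand. Do note that the modified-upper-capacity characterization of $\prepac$ and the closure-invariance $\preU(\vect{\sigma},\vect{f},Z)=\preU(\vect{\sigma},\vect{f},\overline{Z})$ on which your argument relies are not actually recalled in Section~\ref{sect:pre} of this paper (that section only gives the packing-measure definition of $\prepac$); both must be imported from \cite{CM1}, which is the same source the paper cites as a black box.
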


Given  $\vect{f} \in \vect{C}(\vect{\Sigma}(\vect{m}),\R)$, if $f_{k} \in C(\Sigma_{k}^{\infty},\mathbb{R})$ depends only on the 1st coordinate $\omega_{k}$ of $\omega \in \Sigma_{k}^{\infty}$ for every $k>0$, namely,
  \begin{equation}\label{eq:funct1stcoord}
    f_{k}(\omega)=a_{k,\omega_{k}} \quad (\omega \in \Sigma_{k}^{\infty}),
  \end{equation}
  the topological pressures of $\vect{f}$ are significantly simplified into the following forms.
  \begin{thm}\label{thm:Psymsysf1stcoord}
    Suppose that $\vect{f} \in \vect{C}(\vect{\Sigma}(\vect{m}),\R)$ satisfies \eqref{eq:funct1stcoord}.
    Then
\begin{eqnarray}
   &&\label{eq:PloweqPBsys}
    \preL(\vect{\sigma},\vect{f},\Sigma_{0}^{\infty})=\prebow(\vect{\sigma},\vect{f},\Sigma_{0}^{\infty}) =  \varliminf_{n \to \infty}\frac{1}{n}\sum_{j=0}^{n-1}\log{\Big(\sum_{i=1}^{m_{j}}\e^{a_{j,i}}\Big)},  \\
   &&  \label{eq:PupeqPsymsys}
    \preU(\vect{\sigma},\vect{f},\Sigma_{0}^{\infty})=\prepac(\vect{\sigma},\vect{f},\Sigma_{0}^{\infty}) =  \varlimsup_{n \to \infty}\frac{1}{n}\sum_{j=0}^{n-1}\log{\Big(\sum_{i=1}^{m_{j}}\e^{a_{j,i}}\Big)}.
    \end{eqnarray}
  \end{thm}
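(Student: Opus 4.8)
The plan is to compute the four pressures of the special potential $\vect f$ satisfying \eqref{eq:funct1stcoord} directly from their definitions, exploiting the fact that the sequence spaces are ultrametric and that the cylinders form a net. The key observation is that for this class of potentials the Birkhoff sum along a cylinder is constant: if $\mathbf u=u_0\ldots u_{n-1}\in\Sigma_0^{n-1}(\vect m)$, then for every $\omega\in[\mathbf u]_0$ we have $\sum_{j=0}^{n-1} f_j(\sigma^j\omega)=\sum_{j=0}^{n-1}a_{j,u_j}$, a quantity that depends only on the word $\mathbf u$. Hence the $e^{S_nf}$-weighted sums over covering sets by rank-$n$ cylinders factor as a product:
\[
\sum_{\mathbf u\in\Sigma_0^{n-1}(\vect m)}\exp\Big(\sum_{j=0}^{n-1}a_{j,u_j}\Big)=\prod_{j=0}^{n-1}\Big(\sum_{i=1}^{m_j}\e^{a_{j,i}}\Big).
\]
Taking logarithms and dividing by $n$ gives $\tfrac1n\sum_{j=0}^{n-1}\log\big(\sum_{i=1}^{m_j}\e^{a_{j,i}}\big)$, which is exactly the right-hand side of \eqref{eq:PloweqPBsys}--\eqref{eq:PupeqPsymsys} before taking the $\varliminf$/$\varlimsup$.

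First I would handle the lower and upper topological (capacity) pressures $\preL$ and $\preU$. Since every open cover of $\Sigma_0^\infty(\vect m)$ can be refined by a cover consisting of cylinders of a common large rank (using compactness and the net property), and since a covering set by rank-$n$ cylinders must contain \emph{all} of $\Sigma_0^{n-1}(\vect m)$ (the rank-$n$ cylinders are pairwise disjoint and their union is the whole space), the optimal choice in the $n$-th approximation is forced. This pins the $n$-th level quantity to $\prod_{j=0}^{n-1}(\sum_i\e^{a_{j,i}})$ up to a bounded multiplicative error coming from the discrepancy between the metric balls of a given radius and cylinders (the radius $\varepsilon$ corresponds to ranks $\lfloor-\log\varepsilon+1\rfloor$ or $\lceil-\log\varepsilon\rceil$, so at worst one extra symbol, contributing a factor between $1$ and $\max_j m_j\cdot\e^{\|\vect f\|}$ — wait, one must be slightly careful here, since $\max_j m_j$ need not be bounded; instead the cleaner route is to note that the span/separated-set counts at scale $\varepsilon=\e^{-n}$ are \emph{exactly} governed by rank-$n$ cylinders, so no error term appears at all). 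Taking $\varliminf$ and $\varlimsup$ as $n\to\infty$ then yields \eqref{eq:PloweqPBsys} and \eqref{eq:PupeqPsymsys} for $\preL$ and $\preU$.

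Next I would treat the Bowen and packing pressures $\prebow$ and $\prepac$. For the lower bound $\prebow(\vect\sigma,\vect f,\Sigma_0^\infty)\le\varliminf_n\tfrac1n\sum_{j<n}\log(\sum_i\e^{a_{j,i}})$, I would test the Carathéodory construction against the cover of $\Sigma_0^\infty$ by all rank-$n$ cylinders at levels $n$ along a subsequence realizing the $\varliminf$; the factorization above shows the corresponding sum equals $\exp\big(\sum_{j<n}\log(\sum_i\e^{a_{j,i}})-sn\big)$, which tends to $0$ once $s$ exceeds the $\varliminf$. For the reverse inequality I would invoke the general comparison $\prebow\le\preL$ (recorded in \cite{CM1} and available from Section \ref{sect:pre}), which together with the already-established value of $\preL$ closes the chain. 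The packing pressure sits between $\prebow$ and $\preU$ in the relevant regime — more precisely, the standard inequalities $\prebow\le\prepac\le\preU$ on the whole space (again from the preliminaries, valid for $\Omega=\Sigma_0^\infty$) squeeze $\prepac$ between the two equal quantities $\varliminf$ and... no: here one genuinely needs that $\preU$ equals the $\varlimsup$ while $\prebow$ equals the $\varliminf$, so the squeeze does not immediately give \eqref{eq:PupeqPsymsys}. Instead I would argue directly: Corollary \ref{cor:PupeqPP} gives $\prepac(\vect\sigma,\vect f,\Sigma_0^\infty)=\preU(\vect\sigma,\vect f,\Sigma_0^\infty)$ because $\Sigma_0^\infty(\vect m)$ is non-empty, open, and compact, and similarly for the Bowen side one uses that $\prebow$ on a compact space with a covering by cylinders reduces to the $\varliminf$ computation.

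The main obstacle I anticipate is the Bowen pressure lower bound (i.e.\ showing $\prebow$ is not strictly smaller than the $\varliminf$): Bowen's construction allows covers by cylinders of \emph{wildly varying} ranks, not just a single rank $n$, so one must rule out that a clever mixed-rank cover does better. The net property of cylinders is the right tool — any cover by cylinders can be replaced by a cover by pairwise disjoint cylinders, and then a subadditivity/averaging estimate on $\prod_{j\in I}(\sum_i\e^{a_{j,i}})$ over initial segments $I$ shows the single-rank cover is asymptotically optimal; alternatively, and more cheaply, one simply cites the inequality $\prebow\ge\preL$? — no, that inequality goes the wrong way, so the honest argument is the direct mixed-rank estimate, and that is where the real work lies. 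Everything else is bookkeeping with the definitions and the product formula.
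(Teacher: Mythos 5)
Your strategy matches the paper's: compute $\preL$ and $\preU$ directly from the product formula $\sum_{\mathbf u\in\Sigma_0^{n-1}}\exp\bigl(\sum_{j<n}a_{j,u_j}\bigr)=\prod_{j<n}\bigl(\sum_i\e^{a_{j,i}}\bigr)$, obtain $\prepac=\preU$ from Corollary~\ref{cor:PupeqPP}, note $\prebow\le\preL$ trivially, and recognize that the substance lies in the reverse inequality $\prebow\ge\preL$. You also correctly locate the danger: the Carath\'eodory construction for $\prebow$ admits covers by cylinders of mixed ranks, and one must rule out that such a cover beats the single-rank cover. However, at exactly this point your proposal stops, and this is a genuine gap rather than bookkeeping.

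What you call ``a subadditivity/averaging estimate on $\prod_{j\in I}(\sum_i\e^{a_{j,i}})$ over initial segments $I$'' is not in fact a simple averaging argument, and you do not supply it. The paper's actual mechanism is Lemma~\ref{lem:rkrdc}, a rank-uniformization covering lemma (inspired by Hua's proof for generalized self-similar sets): given any finite disjoint cover $\mathscr U$ by cylinders, there exists a single rank $n_*$ with $n_{\min}\le n_*\le n_{\max}$ such that
\[
\sum_{\bv\in\mathscr U}\exp\Bigl(-n(\bv)s+\sup_{\omega\in[\bv]}S_{n(\bv)}^{\vect\sigma}\vect f(\omega)\Bigr)\ \ge\ \sum_{\bv\in\Sigma_0^{n_*}}\exp\Bigl(-n_*s+\sup_{\omega\in[\bv]}S_{n_*}^{\vect\sigma}\vect f(\omega)\Bigr).
\]
This is an \emph{exact} (not asymptotic) inequality, proved by induction on $n_{\max}-n_{\min}$; the inductive step hinges on the multiplicativity of the weights $\exp(-n(\bv)s+\sup S_{n(\bv)}^{\vect\sigma}\vect f)$ under concatenation of words (valid precisely because the potential depends only on the first coordinate), and proceeds by examining the minimizing ratio over the rank-$n_{\min}$ prefixes and either reducing everything to rank $n_{\min}$ or ``deepening'' the shallowest cylinders in a strictly improving way. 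Once you have this lemma, the comparison $\msrbppup_{\mathscr C}^s(\vect f,\Sigma_0^\infty)\ge\carpeslow_{\mathscr C}^s(\vect f,\Sigma_0^\infty)$ follows, and with Propositions~\ref{prop_eqQP} and~\ref{prop_eqpBE} you get $\prebow\ge\preL$.

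A minor secondary point: the middle of your argument has a sign confusion (you first label $\prebow\le\varliminf$ a ``lower bound'', then propose citing $\prebow\le\preL$ as the ``reverse inequality'', before noticing it goes the wrong way). You do recover and identify the right target, but the key step --- the rank-uniformization lemma --- remains unproved in your proposal. That is the place where a complete proof must do real work.
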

Next, we  extend these formulae for general potentials.   Given $\vect{f} \in \vect{C}(\vect{\Sigma}(\vect{m}),\R)$. For each $k \in \mathbb{N}$ and for all $\omega \in \Sigma_{k}^{\infty}$, we write
\begin{equation} \label{def_f*f_*}
  f_{k,*}(\omega) = \inf_{\vartheta \in [\omega_{k}]_{k}}f_{k}(\vartheta)
  \quad \text{and} \quad
  f_{k}^{*}(\omega) = \sup_{\vartheta \in [\omega_{k}]_{k}}f_{k}(\vartheta).
\end{equation}
It is clear that $\vect{f}_{*}=\{f_{k,*}\}_{k=1}^{\infty}$, $\vect{f}^{*}=\{f_{k}^{*}\}_{k=1}^{\infty} \in \vect{C}(\vect{\Sigma}(\vect{m}),\R)$,  and  they are ``dependent only on the 1st coordinate''.   We say $\vect{f}$ is of \emph{strongly bounded variation}
  if there exists a number $b>0$ such that for all $n>0$ and all $\mathbf{u} \in \Sigma_{0}^{n-1}$,
  \begin{equation}\label{eq:strbndvar}
    \lvert S_{n}^{\vect{\sigma}}\vect{f}^{*}(\omega)-S_{n}^{\vect{\sigma}}\vect{f}_{*}(\vartheta) \rvert \leq b,
  \end{equation}
  whenever $\omega,\vartheta \in [\mathbf{u}]$.
Note that 
  $$
  \varlimsup_{n \to \infty}S_{n}^{\vect{\sigma}}(\vect{f}^{*}-\vect{f}_{*})(\omega)<\infty \quad \textit{and }\quad \sum_{j=0}^{\infty}\max_{1 \leq i \leq m_{j}}\sup_{\omega_{k}=\vartheta_{k}=i}\lvert f_{j}(\omega)-f_{j}(\vartheta) \rvert < \infty
  $$  
are two  sufficient conditions for $\vect{f}$ to be of strongly bounded variation. 

  \begin{thm}\label{thm:Psymsys}
    Suppose that $\vect{f} \in \vect{C}(\vect{\Sigma}(\vect{m}),\R)$ is of strongly bounded variation. Then        
\begin{eqnarray}
   &&\label{eq:PlowPBformula}
      \preL(\vect{\sigma},\vect{f},\Sigma_{0}^{\infty}) = \prebow(\vect{\sigma},\vect{f},\Sigma_{0}^{\infty}) = \varliminf_{n \to \infty}\frac{1}{n}\sum_{j=0}^{n-1}\log{\Big(\sum_{i=1}^{m_{j}}\e^{a_{j,i}}\Big)}  \\
    && \label{eq:PupPPformula}
      \preU(\vect{\sigma},\vect{f},\Sigma_{0}^{\infty}) = \prepac(\vect{\sigma},\vect{f},\Sigma_{0}^{\infty}) = \varlimsup_{n \to \infty}\frac{1}{n}\sum_{j=0}^{n-1}\log{\Big(\sum_{i=1}^{m_{j}}\e^{a_{j,i}}\Big)},
    \end{eqnarray}          
    where each $a_{j,i}$ is an arbitrary number in $\big[ \inf_{\vartheta \in [i]_{j}}f_{j}(\vartheta),\sup_{\vartheta \in [i]_{j}}f_{j}(\vartheta) \big]$.
  \end{thm}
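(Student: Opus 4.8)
The plan is to sandwich $\vect{f}$ between the two ``first‑coordinate'' potentials $\vect{f}_{*}$ and $\vect{f}^{*}$ of \eqref{def_f*f_*}, to which Theorem~\ref{thm:Psymsysf1stcoord} already applies, and then to combine monotonicity of the four pressures in the potential with the strongly bounded variation hypothesis to see that the formulas obtained for $\vect{f}_{*}$ and for $\vect{f}^{*}$ coincide. Write $\underline{a}_{j,i}=\inf_{\vartheta\in[i]_{j}}f_{j}(\vartheta)$ and $\overline{a}_{j,i}=\sup_{\vartheta\in[i]_{j}}f_{j}(\vartheta)$. By \eqref{def_f*f_*}, $\vect{f}_{*}$ and $\vect{f}^{*}$ are exactly of the form \eqref{eq:funct1stcoord} with coefficients $\underline{a}_{j,i}$ and $\overline{a}_{j,i}$ respectively, and since $f_{j,*}\le f_{j}\le f_{j}^{*}$ on $\Sigma_{j}^{\infty}$ for every $j$ we have $S_{n}^{\vect{\sigma}}\vect{f}_{*}\le S_{n}^{\vect{\sigma}}\vect{f}\le S_{n}^{\vect{\sigma}}\vect{f}^{*}$. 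Each of $\preL,\prebow,\prepac,\preU$ is monotone in the potential --- immediate from the definitions recalled in Section~\ref{sect:pre}, since a uniform increase of $S_{n}^{\vect{\sigma}}\vect{f}$ enlarges every exponential weight appearing in the sums that define these pressures --- so $P(\vect{\sigma},\vect{f}_{*},\Sigma_{0}^{\infty})\le P(\vect{\sigma},\vect{f},\Sigma_{0}^{\infty})\le P(\vect{\sigma},\vect{f}^{*},\Sigma_{0}^{\infty})$ for every $P\in\{\preL,\prebow,\prepac,\preU\}$.

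The crux is to show that the formulas Theorem~\ref{thm:Psymsysf1stcoord} produces for $\vect{f}_{*}$, for $\vect{f}^{*}$, and for any intermediate choice $a_{j,i}\in[\underline{a}_{j,i},\overline{a}_{j,i}]$ all coincide; this is the only place the hypothesis is used. Since $f_{j}^{*}$ and $f_{j,*}$ are constant on each cylinder $[i]_{j}$, one has $S_{n}^{\vect{\sigma}}\vect{f}^{*}(\omega)=\sum_{j=0}^{n-1}\overline{a}_{j,\omega_{j}}$ and $S_{n}^{\vect{\sigma}}\vect{f}_{*}(\omega)=\sum_{j=0}^{n-1}\underline{a}_{j,\omega_{j}}$ for all $\omega\in\Sigma_{0}^{\infty}$; taking $\vartheta=\omega$ in \eqref{eq:strbndvar} and choosing the first $n$ symbols of $\omega$ to realize the maxima $\varepsilon_{j}:=\max_{i}(\overline{a}_{j,i}-\underline{a}_{j,i})$ gives $\sum_{j=0}^{n-1}\varepsilon_{j}\le b$ for every $n$, hence $\sum_{j=0}^{\infty}\varepsilon_{j}\le b<\infty$. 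Therefore $e^{-\varepsilon_{j}}\sum_{i=1}^{m_{j}}e^{\overline{a}_{j,i}}\le\sum_{i=1}^{m_{j}}e^{\underline{a}_{j,i}}\le\sum_{i=1}^{m_{j}}e^{a_{j,i}}\le\sum_{i=1}^{m_{j}}e^{\overline{a}_{j,i}}$, and taking logarithms, summing over $0\le j\le n-1$, dividing by $n$, and using $\frac1n\sum_{j=0}^{n-1}\varepsilon_{j}\le b/n\to 0$, the three quantities $\varliminf_{n\to\infty}\frac1n\sum_{j=0}^{n-1}\log(\sum_{i=1}^{m_{j}}e^{c_{j,i}})$ obtained with $c_{j,i}$ equal to $\underline{a}_{j,i}$, to $a_{j,i}$, and to $\overline{a}_{j,i}$ all coincide; call this common value $\underline{L}$, and let $\overline{L}$ be the analogous common value of the three $\varlimsup$'s.

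Then I would assemble. Applying \eqref{eq:PloweqPBsys} of Theorem~\ref{thm:Psymsysf1stcoord} to $\vect{f}_{*}$ and to $\vect{f}^{*}$ and using the previous paragraph, $\preL(\vect{\sigma},\vect{f}_{*},\Sigma_{0}^{\infty})=\prebow(\vect{\sigma},\vect{f}_{*},\Sigma_{0}^{\infty})=\underline{L}=\preL(\vect{\sigma},\vect{f}^{*},\Sigma_{0}^{\infty})=\prebow(\vect{\sigma},\vect{f}^{*},\Sigma_{0}^{\infty})$, so the monotonicity sandwich of the first paragraph forces $\preL(\vect{\sigma},\vect{f},\Sigma_{0}^{\infty})=\prebow(\vect{\sigma},\vect{f},\Sigma_{0}^{\infty})=\underline{L}$, which is \eqref{eq:PlowPBformula}; replacing \eqref{eq:PloweqPBsys} by \eqref{eq:PupeqPsymsys} throughout gives \eqref{eq:PupPPformula}. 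I do not anticipate a genuine obstacle, as the substantive content is Theorem~\ref{thm:Psymsysf1stcoord}; the two points deserving care are the extraction of $\sum_{j}\varepsilon_{j}<\infty$ from \eqref{eq:strbndvar} and the harmless bookkeeping at $j=0$, where $a_{0,i}$ is only pinned down to $[\underline{a}_{0,i},\overline{a}_{0,i}]$ but contributes a single term that, divided by $n$, tends to $0$ and hence affects neither $\underline{L}$ nor $\overline{L}$.
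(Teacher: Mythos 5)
Your proof is correct. The substantive ingredient in both your argument and the paper's is Theorem~\ref{thm:Psymsysf1stcoord} applied to the first-coordinate potentials $\vect{f}_{*}$ and $\vect{f}^{*}$; the difference is in how one passes from these to $\vect{f}$. The paper invokes Lemma~\ref{lem:1stcord}, which proves the equalities $P(\vect{\sigma},\vect{f},\Omega)=P(\vect{\sigma},\vect{f}_{*},\Omega)=P(\vect{\sigma},\vect{f}^{*},\Omega)$ for $P\in\{\prebow,\prepac,\preL,\preU\}$ by comparing the Carath\'{e}odory-type measures $\msrbow_{\mathscr{C}}^{s}$, $\msrbpp_{\mathscr{C}}^{s}$ etc.\ up to a multiplicative factor $e^{b}$ coming from \eqref{eq:sumiscoincidence}, and then noting that such a factor does not move the critical exponent. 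You instead rely only on monotonicity of the four pressures in the potential (a soft consequence of the definitions, as you note) and close the sandwich by a direct calculation: taking $\omega=\vartheta$ in \eqref{eq:strbndvar} and extremizing over the first $n$ symbols yields $\sum_{j\ge 0}\varepsilon_{j}\le b<\infty$ with $\varepsilon_{j}=\max_{i}(\overline{a}_{j,i}-\underline{a}_{j,i})$, whence the Ces\`{a}ro average $\frac{1}{n}\sum_{j<n}\varepsilon_{j}\to 0$ and the quantities $\underline{s}$, $\overline{s}$ are insensitive to the choice of $a_{j,i}$ inside the prescribed intervals. This is a slightly more elementary and self-contained route to the same conclusion, and it makes explicit — as the paper only asserts when it says it ``suffices to prove that $\underline{s}$ and $\overline{s}$ are irrelevant to the choice of $a_{j,i}$'' — why the formula is independent of that choice. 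Your computation also shows incidentally that \eqref{eq:strbndvar} is in fact \emph{equivalent} to $\sum_{j}\varepsilon_{j}<\infty$, a sharpening of the remark following \eqref{eq:strbndvar} where this condition is only stated as sufficient.
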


 \subsection{Measure-theoretic pressures}
Given  $(\vect{\Sigma}(\vect{m}),\vect{\sigma})$, the \emph{lower and upper  measure-theoretic pressures} of $\vect{\sigma}$ for $\vect{f} \in \vect{C}(\vect{\Sigma}(\vect{m}),\R)$
    with respect to $\mu \in M(\Sigma_{0}^{\infty}(\vect{m}))$ are given respectively by
    $$
      \prelow_{\mu}(\vect{\sigma},\vect{f}) = \int_{\Sigma_{0}^{\infty}}\prelow_{\mu}(\vect{\sigma},\vect{f},\omega)\dif{\mu(\omega)}
      \quad \text{and} \quad
      \preup_{\mu}(\vect{\sigma},\vect{f}) = \int_{\Sigma_{0}^{\infty}}\preup_{\mu}(\vect{\sigma},\vect{f},\omega)\dif{\mu(\omega)},
    $$
    where for each $\omega \in \Sigma_{0}^{\infty}(\vect{m})$,
    \begin{equation}\label{eq:defpremsr}
      \begin{aligned}
        \prelow_{\mu}(\vect{\sigma},\vect{f},\omega) &= \lim_{\varepsilon \to 0}\varliminf_{n \to \infty}\frac{1}{n} \Big(-\log{\mu\big([\omega\vert{(n+\lfloor -\log{\varepsilon} \rfloor)}]\big)}+S_{n}^{\vect{\sigma}}\vect{f}(\omega)\Big) ,\\
        \preup_{\mu}(\vect{\sigma},\vect{f},\omega) &= \lim_{\varepsilon \to 0}\varlimsup_{n \to \infty}\frac{1}{n}\Big(-\log{\mu\big([\omega\vert{(n+\lfloor -\log{\varepsilon} \rfloor)}]\big)}+S_{n}^{\vect{\sigma}}\vect{f}(\omega)\Big).
      \end{aligned}
    \end{equation}
Note that  if $\vect{f}$ is equicontinuous, then for all $\omega \in \Sigma_{0}^{\infty}$,
    \begin{equation}\label{eq:defpremsrequiv}
      \begin{aligned}
        \prelow_{\mu}(\vect{\sigma},\vect{f},\omega) &= \varliminf_{n \to \infty}\frac{1}{n}\big(-\log{\mu([\omega\vert{n}])}+\sup_{\vartheta \in [\omega\vert{n}]}S_{n}^{\vect{\sigma}}\vect{f}(\vartheta)\big), \\
        \preup_{\mu}(\vect{\sigma},\vect{f},\omega) &= \varlimsup_{n \to \infty}\frac{1}{n}\big(-\log{\mu([\omega\vert{n}])}+\sup_{\vartheta \in [\omega\vert{n}]}S_{n}^{\vect{\sigma}}\vect{f}(\vartheta)\big).
      \end{aligned}
    \end{equation}
In particular, the \emph{lower and upper  measure-theoretic local entropies of $\vect{\sigma}$ with respect to $\mu$ at $\omega$} are defined by $\entlow_{\mu}(\vect{\sigma},\omega)= \prelow_{\mu}(\vect{\sigma},\vect{0},\omega)$ and $ \entup_{\mu}(\vect{\sigma},\omega)= \preup_{\mu}(\vect{\sigma},\vect{0},\omega)$. We call 
$$ 
\entup_{\mu}(\vect{\sigma})=\prelow_{\mu}(\vect{\sigma},\vect{0}) \qquad \textit{ and } \qquad \entup_{\mu}(\vect{\sigma},\omega)=   \preup_{\mu}(\vect{\sigma},\vect{0})
$$ 
the \textit{lower and upper measure-theoretic entropies of $\vect{\sigma}$ with respect to $\mu$} respectively.   See \cite{CM2} for details.

We first obtain the formulae for the measure-theoretic entropies. Let  $H(\mathbf{p}_{j})$ denote  the entropy of the probability vector $\mathbf{p}_{j}$, i.e., $H(\mathbf{p}_{j})=-\sum_{i=1}^{m_j}p_{j,i}\log p_{j,i}$.
  \begin{thm}\label{thm:msrent}
 Given  $(\vect{\Sigma}(\vect{m}),\vect{\sigma})$ with $\sup_{n\geq 1}{m_{n}}<+\infty$, let $\mu$ be the nonautonomous Bernoulli measure given by \eqref{eq:NAPV}.  Then
\begin{equation}\label{fml_hlhu}
    \entlow_{\mu}(\vect{\sigma}) = \varliminf_{n \to \infty}\frac{1}{n} \sum_{j=0}^{n-1} H(\mathbf{p}_{j}),
    \qquad
    \entup_{\mu}(\vect{\sigma}) = \varlimsup_{n \to \infty}\frac{1}{n} \sum_{j=0}^{n-1} H(\mathbf{p}_{j}).
\end{equation}
  \end{thm}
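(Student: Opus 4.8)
The plan is to express the measure-theoretic entropies as $\mu$-integrals of their local counterparts, rewrite each local counterpart as a Birkhoff-type average of \emph{independent} random variables using the product structure of the nonautonomous Bernoulli measure, and then invoke Kolmogorov's strong law of large numbers for independent (not necessarily identically distributed) summands.

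First I would note that $\vect{0} \in \vect{C}(\vect{\Sigma}(\vect{m}),\R)$ is trivially equicontinuous, so by \eqref{eq:defpremsrequiv} and the definitions of the local and global measure-theoretic entropies,
$$
\entlow_{\mu}(\vect{\sigma},\omega) = \varliminf_{n\to\infty}\frac{1}{n}\bigl(-\log\mu([\omega\vert{n}])\bigr),
\qquad
\entup_{\mu}(\vect{\sigma},\omega) = \varlimsup_{n\to\infty}\frac{1}{n}\bigl(-\log\mu([\omega\vert{n}])\bigr)
$$
for every $\omega \in \Sigma_{0}^{\infty}$, and $\entlow_{\mu}(\vect{\sigma}) = \int \entlow_{\mu}(\vect{\sigma},\omega)\dif{\mu(\omega)}$, $\entup_{\mu}(\vect{\sigma}) = \int \entup_{\mu}(\vect{\sigma},\omega)\dif{\mu(\omega)}$. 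Setting $Y_{j}(\omega) := -\log p_{j,\omega_{j}}$ (well defined since each $\mathbf{p}_{j}$ is a positive probability vector), formula \eqref{eq:NAPV} gives $-\log\mu([\omega\vert{n}]) = \sum_{j=0}^{n-1} Y_{j}(\omega)$. The product structure of $\mu$ makes $\{Y_{j}\}_{j\ge 0}$ a sequence of $\mu$-independent random variables, and $\E_{\mu}[Y_{j}] = -\sum_{i=1}^{m_{j}}p_{j,i}\log p_{j,i} = H(\mathbf{p}_{j})$.

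The key step, and the only place the hypothesis $M := \sup_{n\ge 1}m_{n} < \infty$ is really used, is a \emph{uniform} second-moment bound. Since $\sup_{0<p\le 1}p(\log p)^{2} = 4\e^{-2}$ and $\sup_{0<p\le 1}(-p\log p) = \e^{-1}$, we get $\E_{\mu}[Y_{j}^{2}] = \sum_{i=1}^{m_{j}}p_{j,i}(\log p_{j,i})^{2} \le 4M\e^{-2}$ and $0 \le H(\mathbf{p}_{j}) \le M\e^{-1}$ for all $j$. Hence $\sum_{j\ge 0}\E_{\mu}\bigl[(Y_{j}-H(\mathbf{p}_{j}))^{2}\bigr]/(j+1)^{2} < \infty$, and Kolmogorov's strong law of large numbers yields
$$
\frac{1}{n}\sum_{j=0}^{n-1}\bigl(Y_{j} - H(\mathbf{p}_{j})\bigr) \longrightarrow 0 \qquad \mu\text{-a.e.}
$$
Therefore, for $\mu$-a.e.\ $\omega$, the sequences $\frac1n\sum_{j=0}^{n-1}Y_{j}(\omega)$ and $\frac1n\sum_{j=0}^{n-1}H(\mathbf{p}_{j})$ share the same lower and upper limits, whence
$$
\entlow_{\mu}(\vect{\sigma},\omega) = \varliminf_{n\to\infty}\frac1n\sum_{j=0}^{n-1}H(\mathbf{p}_{j})
\qquad\text{and}\qquad
\entup_{\mu}(\vect{\sigma},\omega) = \varlimsup_{n\to\infty}\frac1n\sum_{j=0}^{n-1}H(\mathbf{p}_{j})
$$
$\mu$-a.e., both right-hand sides being constants lying in $[0,M\e^{-1}]$. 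Since $\omega\mapsto\entlow_{\mu}(\vect{\sigma},\omega)$ and $\omega\mapsto\entup_{\mu}(\vect{\sigma},\omega)$ are $\mu$-measurable (being lower/upper limits of the simple functions $\omega\mapsto\frac1n(-\log\mu([\omega\vert{n}]))$) and $\mu$-a.e.\ equal to those finite constants, integrating against $\mu$ produces \eqref{fml_hlhu}.

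I expect the uniform second-moment estimate to be the main (and only real) obstacle: it is precisely what secures Kolmogorov's summability condition $\sum_{j}\E_{\mu}[(Y_{j}-\E_{\mu}Y_{j})^{2}]/(j+1)^{2}<\infty$, and it breaks down in general without $\sup_{n}m_{n}<\infty$, since then the number of terms in $\sum_{i}p_{j,i}(\log p_{j,i})^{2}$ is unbounded even though each term stays small. The remaining ingredients — the equicontinuity reduction, independence of the $Y_{j}$, the identification $\E_{\mu}Y_{j}=H(\mathbf{p}_{j})$, and the passage from the $\mu$-a.e.\ constant local entropies to the integral — are routine.
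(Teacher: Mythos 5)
Your proof is correct and follows essentially the same route as the paper: express the local entropies pointwise via Proposition~\ref{eq:entlocpnt}, realize $-\log\mu([\omega\vert n])$ as a sum of the independent random variables $Y_j(\omega)=-\log p_{j,\omega_j}$ with $\E[Y_j]=H(\mathbf{p}_j)$, bound the second moment by $4\e^{-2}m_j$, and apply Kolmogorov's criterion (Lemma~\ref{lem:KSLLN}) to deduce the $\mu$-a.e.\ identity of lower/upper limits, then integrate. The only difference is organizational: the paper proves the intermediate Theorem~\ref{thm:entlocae} under the strictly weaker growth condition $\lim_{n\to\infty} m_n/n^{1-\alpha}<1$ (still ensuring $\sum_n m_n/n^2<\infty$) and then observes $\sup_n m_n<\infty$ implies it; you use the bounded-$m_n$ hypothesis directly to get a uniform second-moment bound, which suffices for the theorem as stated.
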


Similarly,   measure-theoretic pressures have the following simplified formulae. 
 \begin{thm}\label{thm:msrpre}
 Given  $(\vect{\Sigma}(\vect{m}),\vect{\sigma})$ with $\sup_{n\geq 1}{m_{n}}<+\infty$, let $\mu$ be the nonautonomous Bernoulli measure  given by \eqref{eq:NAPV}
    and   $\vect{f} \in \vect{C}(\vect{\Sigma}(\vect{m}),\R)$ be of strongly bounded variation with  $\|\vect{f}\|<+\infty$.   
    \begin{enumerate}[(a)]
      \item If $p_{*}=\inf_{j \in \N, 1\leq i\leq m_j}\{p_{j,i}\}>0$, then  
$$
 \prelow_{\mu}(\vect{\sigma},\vect{f}) = \varliminf_{n \to \infty}\frac{1}{n}\sum_{j=0}^{n-1}\big(H(\mathbf{p}_{j}) + \E(f_j)\big), 
\qquad  \preup_{\mu}(\vect{\sigma},\vect{f}) = \varlimsup_{n \to \infty}\frac{1}{n}\sum_{j=0}^{n-1}\big(H(\mathbf{p}_{j}) + \E(f_j)\big).
$$
      \item If for $\mu$-a.e. $\omega \in \Sigma_{0}^{\infty}$, either $\entlow_{\mu}(\vect{\sigma},\omega) = \entup_{\mu}(\vect{\sigma},\omega)$ or 
$\{f_{k} \circ \vect{\sigma}^{k}(\omega)\}_{k=1}^\infty$ converges pointwise to some integrable $f(\omega)$,  then  
$$
 \prelow_{\mu}(\vect{\sigma},\vect{f}) = \entlow_{\mu}(\vect{\sigma}) +\varliminf_{n \to \infty}\frac{1}{n}\sum_{j=0}^{n-1}\E(f_j), 
\qquad  \preup_{\mu}(\vect{\sigma},\vect{f}) =\entup_{\mu}(\vect{\sigma})+ \varlimsup_{n \to \infty}\frac{1}{n}\sum_{j=0}^{n-1} \E(f_j).
$$

    \end{enumerate}
  \end{thm}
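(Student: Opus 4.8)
\emph{Proof plan.} The plan is to reduce the local pressures $\prelow_{\mu}(\vect{\sigma},\vect{f},\omega)$ and $\preup_{\mu}(\vect{\sigma},\vect{f},\omega)$ to the lower and upper averages of a sum of independent random variables and then apply Kolmogorov's strong law of large numbers. Fix, for each $j\ge0$ and $1\le i\le m_{j}$, a number $a_{j,i}\in\big[\inf_{\vartheta\in[i]_{j}}f_{j}(\vartheta),\,\sup_{\vartheta\in[i]_{j}}f_{j}(\vartheta)\big]$, and for $\omega\in\Sigma_{0}^{\infty}$ put $U_{j}(\omega)=-\log p_{j,\omega_{j}}$, $V_{j}(\omega)=a_{j,\omega_{j}}$ and $Y_{j}=U_{j}+V_{j}$; under $\mu=\mu_{0}$ the $Y_{j}$ are independent, each a function of the single coordinate $\omega_{j}$, with $\E(Y_{j})=H(\mathbf{p}_{j})+\sum_{i}p_{j,i}a_{j,i}$ (throughout, $\E(f_{j})$ stands for $\int_{\Sigma_{j}^{\infty}}f_{j}\,\dif\mu_{j}=\int f_{j}\circ\vect{\sigma}^{j}\,\dif\mu$). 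Taking $\omega=\vartheta$ and the supremum over $\omega$ in \eqref{eq:strbndvar} gives $\sum_{j\ge0}\|f_{j}^{*}-f_{j,*}\|_{\infty}\le b$, whence $\big|S_{n}^{\vect{\sigma}}\vect{f}(\omega)-\sum_{j=0}^{n-1}V_{j}(\omega)\big|\le b$ and $\big|\sum_{j=0}^{n-1}\E(V_{j})-\sum_{j=0}^{n-1}\E(f_{j})\big|\le b$ for all $n$; moreover $\|f_{j}^{*}-f_{j,*}\|_{\infty}\to0$ together with the uniform continuity of each $f_{j}$ makes $\vect{f}$ equicontinuous, so the simpler formula \eqref{eq:defpremsrequiv} applies. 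Since $-\log\mu([\omega\vert{n}])=\sum_{j=0}^{n-1}U_{j}(\omega)$, this reduces the theorem to proving, for $\mu$-a.e.\ $\omega$, that $\prelow_{\mu}(\vect{\sigma},\vect{f},\omega)=\varliminf_{n}\frac1n\sum_{j=0}^{n-1}Y_{j}(\omega)$ and $\preup_{\mu}(\vect{\sigma},\vect{f},\omega)=\varlimsup_{n}\frac1n\sum_{j=0}^{n-1}Y_{j}(\omega)$. (One may instead use \eqref{eq:defpremsr} directly, at the extra cost of checking that the tail $\frac1n\sum_{j=n}^{n+\lfloor-\log\varepsilon\rfloor-1}U_{j}(\omega)\to0$ as $n\to\infty$ for each fixed $\varepsilon$: this is immediate when $p_{*}>0$, and otherwise holds $\mu$-a.e.\ because $\mu(U_{j}>\delta j)\le m_{j}\e^{-\delta j}$ is summable, so $U_{j}(\omega)=o(j)$ $\mu$-a.e.\ by Borel--Cantelli.)

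For part (a), the hypothesis $p_{*}>0$ forces $0\le U_{j}\le-\log p_{*}$ and $|V_{j}|\le\|\vect{f}\|$, so the $Y_{j}$ are uniformly bounded, $\sum_{j}\var(Y_{j})/j^{2}<\infty$, and Kolmogorov's strong law gives $\frac1n\sum_{j=0}^{n-1}(Y_{j}-\E Y_{j})\to0$ $\mu$-a.e.; combined with $\frac1n\sum_{j=0}^{n-1}\E(V_{j})=\frac1n\sum_{j=0}^{n-1}\E(f_{j})+O(1/n)$ this gives $\varliminf_{n}\frac1n\sum_{j=0}^{n-1}Y_{j}=\varliminf_{n}\frac1n\sum_{j=0}^{n-1}\big(H(\mathbf{p}_{j})+\E(f_{j})\big)$ --- a deterministic constant --- for $\mu$-a.e.\ $\omega$, and likewise with $\varlimsup$. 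Integrating over $\omega$ yields (a).

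For part (b), the key preliminary fact is that, by the elementary inequality $p(\log p)^{2}\le4\e^{-2}$ on $(0,1]$, $\var(U_{j})\le\sum_{i=1}^{m_{j}}p_{j,i}(\log p_{j,i})^{2}\le4\e^{-2}\sup_{n}m_{n}<\infty$, so Kolmogorov's strong law applies to the $U_{j}$ alone; this is the local form of Theorem~\ref{thm:msrent}, and it gives $\entlow_{\mu}(\vect{\sigma},\omega)=\varliminf_{n}\frac1n\sum_{j=0}^{n-1}H(\mathbf{p}_{j})$ and $\entup_{\mu}(\vect{\sigma},\omega)=\varlimsup_{n}\frac1n\sum_{j=0}^{n-1}H(\mathbf{p}_{j})$ for $\mu$-a.e.\ $\omega$ --- in particular both local entropies are $\mu$-a.e.\ constant. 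Hence the disjunctive hypothesis of (b) collapses into a dichotomy. If $\frac1n\sum_{j=0}^{n-1}H(\mathbf{p}_{j})$ converges, then $\frac1n\sum_{j=0}^{n-1}U_{j}(\omega)\to\entlow_{\mu}(\vect{\sigma})=\entup_{\mu}(\vect{\sigma})$ a.e., and applying the strong law to the bounded $V_{j}$ gives $\frac1n\sum_{j=0}^{n-1}V_{j}=\frac1n\sum_{j=0}^{n-1}\E(f_{j})+o(1)$ a.e., so $\varliminf_{n}\frac1n\sum_{j=0}^{n-1}Y_{j}=\entlow_{\mu}(\vect{\sigma})+\varliminf_{n}\frac1n\sum_{j=0}^{n-1}\E(f_{j})$ a.e. If it does not converge, then $\entlow_{\mu}(\vect{\sigma},\omega)<\entup_{\mu}(\vect{\sigma},\omega)$ a.e., so the first alternative of the hypothesis fails a.e.\ and hence $\{f_{k}\circ\vect{\sigma}^{k}(\omega)\}$ converges for $\mu$-a.e.\ $\omega$; since $|V_{k}(\omega)-f_{k}\circ\vect{\sigma}^{k}(\omega)|\le\|f_{k}^{*}-f_{k,*}\|_{\infty}\to0$ we also have $V_{k}(\omega)\to f(\omega)$, so $\frac1n\sum_{j=0}^{n-1}V_{j}(\omega)\to f(\omega)$ and $\varliminf_{n}\frac1n\sum_{j=0}^{n-1}Y_{j}=\entlow_{\mu}(\vect{\sigma},\omega)+f(\omega)$ a.e.; integrating and using dominated convergence --- which gives $\int f\,\dif\mu=\lim_{k}\E(f_{k})=\varliminf_{n}\frac1n\sum_{j=0}^{n-1}\E(f_{j})$, the last equality by averaging since $\E(f_{k})$ then converges --- yields (b). In both cases the $\preup$ formula follows by the same computation with $\varlimsup$ in place of $\varliminf$.

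I expect the main obstacle to be twofold: obtaining the uniform variance bound $\var(U_{j})\le4\e^{-2}\sup_{n}m_{n}$ --- so that Kolmogorov's criterion $\sum_{j}\var(U_{j})/j^{2}<\infty$ holds and the local entropies are pinned down --- and observing that these local entropies are $\mu$-a.e.\ constant, which is precisely what turns the disjunctive hypothesis of (b) into the clean dichotomy above. The remaining steps --- the $b$-bounds coming from strong bounded variation, the reduction from \eqref{eq:defpremsr} to \eqref{eq:defpremsrequiv} (or the Borel--Cantelli treatment of the extra coordinates), and the final integrations --- should be routine.
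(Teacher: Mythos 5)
Your proof is correct and follows essentially the same route as the paper's: set up independent coordinate random variables, verify the uniform variance bounds from $\sup_n m_n<\infty$ and $\|\vect{f}\|<\infty$, apply Kolmogorov's criterion (Lemma~\ref{lem:KSLLN}) to obtain the $\mu$-a.e.\ formulae for the local pressures via the pointwise reduction in Proposition~\ref{eq:prelocpnt}, and then integrate. The only differences are cosmetic: you work with the discretized $a_{j,\omega_j}$ rather than $f_j\circ\vect{\sigma}^j$ directly (harmless thanks to the $b$-bound from strongly bounded variation), you use a Borel--Cantelli argument in place of the paper's reindexing/$p_*$ estimate to discard the tail coordinates, and you make explicit the a.e.-constancy of the local entropies in part~(b), which the paper handles more tersely through Proposition~\ref{eq:prelocpnt}(b) and Theorem~\ref{thm:prelocae}(b).
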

 
Note that $\sup_{n=1,2,\ldots}{m_{n}}<+\infty$ and $\|\vect{f}\|<+\infty$ may be relaxed to some weak assumptions; see Section \ref{sect:msrpre} for details.

In the next example, we show that  the measure-theoretic entropies are equal to the topological entropies for certain measures, which are frequently used in dimension theory. 
\begin{exmp}\label{exmp:bowethausonseqsp}

For $ (\vect{\Sigma}(\vect{m}),\vect{\sigma})$,   let $\nu$ be  the nonautonomous Bernoulli measure on $\Sigma_{0}^{\infty}(\vect{m})$ generated by  the  probability vectors $   \mathbf{p}_{k}=\left(\frac{1}{m_{k}},\cdots,\frac{1}{m_{k}}\right).$
Then  we have 
  \begin{equation}\label{eq:mmeunidistr}
      \begin{aligned}
        \entlow_{\nu}(\vect{\sigma})=\varliminf_{n \to \infty}\frac{1}{n}\sum_{j=0}^{n-1}\log{m_{j}}=\enttopbow(\vect{\sigma},\Sigma_{0}^{\infty}), \\
        \entup_{\nu}(\vect{\sigma})=\varlimsup_{n \to \infty}\frac{1}{n}\sum_{j=0}^{n-1}\log{m_{j}}=\enttoppac(\vect{\sigma},\Sigma_{0}^{\infty}).
      \end{aligned}
    \end{equation}

By simple calculation, we have that 
\begin{eqnarray*}
&&    \enttopbow(\vect{\sigma},\Sigma_{0}^{\infty})=\dimh(\Sigma_{0}^{\infty})=\varliminf_{k \to \infty}\frac{1}{n}\sum_{j=0}^{n-1}\log{m_{j}},  \\
&&    \enttoppac(\vect{\sigma},\Sigma_{0}^{\infty})=\dimp(\Sigma_{0}^{\infty})=\varlimsup_{k \to \infty}\frac{1}{n}\sum_{j=0}^{n-1}\log{m_{j}},
\end{eqnarray*}
 see \cite{Mattila1995} for the definition of Hausdorff dimension.  
By Theorem  \ref{thm:msrent}, 
    $$
    \entlow_{\nu}(\vect{\sigma},\omega) =\varliminf_{n \to \infty}\frac{1}{n}\sum_{j=0}^{n-1}\log{m_{j}},
    \qquad
    \entup_{\nu}(\vect{\sigma},\omega) =\varlimsup_{n \to \infty}\frac{1}{n}\sum_{j=0}^{n-1}\log{m_{j}}.
    $$
Then the equalities in \eqref{eq:mmeunidistr} hold. 

 \end{exmp}
 
Note that the coincidence  in \eqref{eq:mmeunidistr} shows  that the uniform mass distribution $\nu$ is a \emph{measure with maximal entropy},
    that is, the  lower and upper measure-theoretic entropies of the measure attain  the supremum of the system's Bowen and packing topological entropy, respectively. Such measures are often referred to the equilibrium states for $0$-potentials, and we study such measures in details in the next subsection.

\subsection{Equilibrium states and Gibbs states}
First, we reformulate Theorem \ref{thm_vpBP} in the context of symbolic dynamics as follows.
  If $\Omega \subseteq \Sigma_{0}^{\infty}$ is non-empty and compact, then for all equicontinuous $\vect{f} \in \vect{C}(\vect{\Sigma}(\vect{m}),\R)$,
  \begin{equation}\label{eq:varprinPB}
    \prebow(\vect{\sigma},\vect{f},\Omega) = \sup\{\prelow_\mu(\vect{\sigma},\vect{f}): \mu \in M(\Sigma_{0}^{\infty})\ \text{and}\ \mu(\Omega)=1\},
  \end{equation}
  and for equicontinuous $\vect{f}$ with $\|\vect{f}\|<+\infty$ and $\prepac(\vect{\sigma},\vect{f},\Omega)>\|\vect{f}\|$, we have
  \begin{equation}\label{eq:varprinPP}
    \prepac(\vect{\sigma},\vect{f},\Omega) = \sup\{\preup_\mu(\vect{\sigma},\vect{f}): \mu \in M(\Sigma_{0}^{\infty})\ \text{and}\ \mu(\Omega)=1\}.
  \end{equation}
   Note that the variational principle  for the packing pressure  requires  additional assumptions that   $\vect{f}$ is uniformly bounded and that $\prepac(\vect{\sigma},\vect{f},\Omega)>\|\vect{f}\|$. However, these assumptions may be removed in  symbolic dynamical systems for certain potentials; see Corollary \ref{cor_VP} and 
 Corollary \ref{cor_VPS}.

  In the remainder of this subsection, we   discuss the equilibrium and Gibbs states for the various pressures in nonautonomous symbolic dynamical systems,
  which are natural follow-up objects of the variational principles.   We first define the equilibrium states for the Bowen and packing pressures.
  \begin{defn}
    Given a compact set $\Omega \subseteq \Sigma_{0}^{\infty}(\vect{m})$ and equicontinuous $\vect{f} \in \vect{C}(\vect{\Sigma}(\vect{m}),\R)$,
    a Borel probability measure $\mu \in M(\Sigma_{0}^{\infty}(\vect{m}))$ is called a \emph{Bowen equilibrium state for $\vect{f}$ on $\Omega$}
    if $\mu(\Omega)=1$ and $\prebow(\vect{\sigma},\vect{f},\Omega)=\prelow_{\mu}(\vect{\sigma},\vect{f})$;
    and $\mu$ is called a \emph{packing equilibrium state for $\vect{f}$ on $\Omega$}
    if $\mu(\Omega)=1$ and $\prepac(\vect{\sigma},\vect{f},\Omega)=\preup_{\mu}(\vect{\sigma},\vect{f})$.
  \end{defn}
 Let $\eqlbrbow_{\vect{f}}(\Omega)$ denote the collection of all Bowen equilibrium states for $\vect{f}$ on $\Omega$
    and $\eqlbrpac_{\vect{f}}(\Omega)$ the collection of all packing equilibrium states for $\vect{f}$ on $\Omega$.

  A particular case of the equilibrium states is the measures of maximal entropy, as introduced in Example~\ref{exmp:bowethausonseqsp}.
  By \eqref{eq:mmeunidistr}, the sets $\eqlbrbow_{\vect{0}}(\Sigma_{0}^{\infty}(\vect{m}))$ and $\eqlbrpac_{\vect{0}}(\Sigma_{0}^{\infty}(\vect{m}))$ are non-empty by choosing nonautonomous Bernoulli measures with  $p_*>0$.
  The following result generalizes \eqref{eq:mmeunidistr}, which is a direct consequence of Theorems~\ref{thm:Psymsys} and \ref{thm:msrpre}.
  \begin{prop}\label{prop:eqlbrNABmsr}
    Suppose that $\vect{f}$ satisfies \eqref{eq:strbndvar}. Then $\eqlbrbow_{\vect{f}}(\Sigma_{0}^{\infty}(\vect{m})) \neq \varnothing$ and $\eqlbrpac_{\vect{f}}(\Sigma_{0}^{\infty}(\vect{m})) \neq \varnothing$.
 
    In particular, let $a_{j,i}\in\big[ \inf_{\vartheta \in [i]_{j}}f_{j}(\vartheta),\sup_{\vartheta \in [i]_{j}}f_{j}(\vartheta) \big]$. Then the nonautonomous Bernoulli measure $\mu$ generated by
\begin{equation} \label{pvama}
    p_{j,i}=\frac{\e^{a_{j,i}}}{\sum_{i=1}^{m_{j}}\e^{a_{j,i}}}
\end{equation}
    is both a Bowen equilibrium state and a packing equilibrium state for $\vect{f}$ on $\Sigma_{0}^{\infty}$.
  \end{prop}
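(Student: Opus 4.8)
The plan is to verify the explicit ``in particular'' claim; the non-emptiness of $\eqlbrbow_{\vect{f}}(\Sigma_{0}^{\infty}(\vect{m}))$ and $\eqlbrpac_{\vect{f}}(\Sigma_{0}^{\infty}(\vect{m}))$ then follows at once, the measure $\mu$ of \eqref{pvama} being a common witness. So I would fix numbers $a_{j,i}\in\bigl[\inf_{\vartheta\in[i]_{j}}f_{j}(\vartheta),\,\sup_{\vartheta\in[i]_{j}}f_{j}(\vartheta)\bigr]$, put $p_{j,i}=\e^{a_{j,i}}\big/\sum_{k=1}^{m_{j}}\e^{a_{j,k}}$, and observe that each $\mathbf{p}_{j}=(p_{j,1},\dots,p_{j,m_{j}})$ is a positive probability vector, so the nonautonomous Bernoulli measure $\mu$ it generates is well defined with $\mu(\Sigma_{0}^{\infty})=1$. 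It then suffices to prove
\[
\prelow_{\mu}(\vect{\sigma},\vect{f})=\prebow(\vect{\sigma},\vect{f},\Sigma_{0}^{\infty})
\qquad\text{and}\qquad
\preup_{\mu}(\vect{\sigma},\vect{f})=\prepac(\vect{\sigma},\vect{f},\Sigma_{0}^{\infty}).
\]

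Next I would compute both sides using the two cited theorems. Since $\vect{f}$ has strongly bounded variation, Theorem~\ref{thm:Psymsys}, applied with the same $a_{j,i}$, identifies $\prebow(\vect{\sigma},\vect{f},\Sigma_{0}^{\infty})$ and $\prepac(\vect{\sigma},\vect{f},\Sigma_{0}^{\infty})$ with, respectively, $\varliminf_{n\to\infty}$ and $\varlimsup_{n\to\infty}$ of $\frac{1}{n}\sum_{j=0}^{n-1}\log\bigl(\sum_{i=1}^{m_{j}}\e^{a_{j,i}}\bigr)$. On the measure side, $|a_{j,i}|\le\|\vect{f}\|$ gives $p_{j,i}\ge\e^{-2\|\vect{f}\|}/m_{j}$, so under the standing hypotheses $\|\vect{f}\|<+\infty$ and $\sup_{n}m_{n}<+\infty$ of Theorem~\ref{thm:msrpre} (or their relaxations) we have $p_{*}>0$, and part~(a) of that theorem identifies $\prelow_{\mu}(\vect{\sigma},\vect{f})$ and $\preup_{\mu}(\vect{\sigma},\vect{f})$ with $\varliminf_{n\to\infty}$ and $\varlimsup_{n\to\infty}$ of $\frac{1}{n}\sum_{j=0}^{n-1}\bigl(H(\mathbf{p}_{j})+\E(f_{j})\bigr)$, where $\E(f_{j})=\sum_{i=1}^{m_{j}}\int_{[i]_{j}}f_{j}\,\dif{\mu_{j}}$ and $\mu_{j}$ is the level-$j$ Bernoulli measure, $\mu_{j}([i]_{j})=p_{j,i}$. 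Thus the claim reduces to showing that the sequences $H(\mathbf{p}_{j})+\E(f_{j})$ and $\log\bigl(\sum_{i=1}^{m_{j}}\e^{a_{j,i}}\bigr)$ have the same Ces\`aro asymptotics.

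This comparison is the heart of the argument. A direct computation from the definition of $p_{j,i}$ yields $H(\mathbf{p}_{j})=\log\bigl(\sum_{i=1}^{m_{j}}\e^{a_{j,i}}\bigr)-\sum_{i=1}^{m_{j}}p_{j,i}a_{j,i}$, so if $d_{j}$ denotes the difference of the $j$-th terms of the two sequences then
\[
d_{j}=\E(f_{j})-\sum_{i=1}^{m_{j}}p_{j,i}a_{j,i}=\sum_{i=1}^{m_{j}}\int_{[i]_{j}}\bigl(f_{j}-a_{j,i}\bigr)\,\dif{\mu_{j}}.
\]
Because $a_{j,i}$ lies in $\bigl[\inf_{[i]_{j}}f_{j},\sup_{[i]_{j}}f_{j}\bigr]$, on $[i]_{j}$ we have $|f_{j}-a_{j,i}|\le\operatorname{osc}_{[i]_{j}}f_{j}$ (the oscillation $\sup_{[i]_{j}}f_{j}-\inf_{[i]_{j}}f_{j}$, which there equals $f_{j}^{*}-f_{j,*}$), hence $|d_{j}|\le\sum_{i=1}^{m_{j}}p_{j,i}\operatorname{osc}_{[i]_{j}}f_{j}\le\max_{1\le i\le m_{j}}\operatorname{osc}_{[i]_{j}}f_{j}$. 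Taking $\omega=\vartheta$ in \eqref{eq:strbndvar} gives $S_{n}^{\vect{\sigma}}(\vect{f}^{*}-\vect{f}_{*})(\omega)\le b$ for all $n$ and $\omega$, its $j$-th summand being $\operatorname{osc}_{[\omega_{j}]_{j}}f_{j}\ge 0$; letting $n\to\infty$ yields $\sum_{j=0}^{\infty}\operatorname{osc}_{[\omega_{j}]_{j}}f_{j}\le b$ for every $\omega$, and choosing $\omega$ whose $j$-th coordinate maximizes $\operatorname{osc}_{[i]_{j}}f_{j}$ for each $j$ (permissible, as the coordinates of points of $\Sigma_{0}^{\infty}$ are arbitrary) gives $\sum_{j=0}^{\infty}\max_{1\le i\le m_{j}}\operatorname{osc}_{[i]_{j}}f_{j}\le b<\infty$. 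Therefore $\frac{1}{n}\sum_{j=0}^{n-1}|d_{j}|\le b/n\to 0$, so the $\varliminf$ (resp.\ $\varlimsup$) of the two Ces\`aro averages coincide; together with the previous paragraph this yields the two equalities, i.e.\ $\mu\in\eqlbrbow_{\vect{f}}(\Sigma_{0}^{\infty})\cap\eqlbrpac_{\vect{f}}(\Sigma_{0}^{\infty})$.

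The only real obstacle is this last comparison: one must pass from $\E(f_{j})$, which involves all of $f_{j}$, to the first-coordinate quantity $\sum_{i}p_{j,i}a_{j,i}$ that drives the topological pressure formula of Theorem~\ref{thm:Psymsys}, and this is exactly where the strongly bounded variation of $\vect{f}$ enters, through the telescoping bound $\sum_{j}\max_{i}\operatorname{osc}_{[i]_{j}}f_{j}\le b$. Everything else is bookkeeping plus direct appeals to Theorems~\ref{thm:Psymsys} and~\ref{thm:msrpre}; the one minor technicality requiring care is the verification $p_{*}>0$, which is what permits using part~(a) rather than part~(b) of Theorem~\ref{thm:msrpre}.
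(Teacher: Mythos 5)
Your proof is correct and follows the paper's intended route (combining Theorems~\ref{thm:Psymsys} and~\ref{thm:msrpre}(a)); the paper labels the claim ``a direct consequence'' of those theorems and does not spell out the Ces\`{a}ro comparison of $H(\mathbf{p}_j)+\E(f_j)$ with $\log\bigl(\sum_{i}\e^{a_{j,i}}\bigr)$, which you supply correctly by bounding the discrepancy $d_j$ by the oscillation of $f_j$ over first-level cylinders and extracting the summability $\sum_{j}\max_{1\le i\le m_j}\bigl(\sup_{[i]_j}f_j-\inf_{[i]_j}f_j\bigr)\le b$ from~\eqref{eq:strbndvar}. You are also right to flag that the standing hypotheses $\sup_n m_n<\infty$ and $\|\vect{f}\|<\infty$ of Theorem~\ref{thm:msrpre} are needed here (both for the law-of-large-numbers step and for your check that $p_*>0$) and must be read as implicitly carried by Proposition~\ref{prop:eqlbrNABmsr}.
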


  The following results on equilibrium states are inspired by a recent work of Wang and Zhang \cite{WZ}, where they studied the properties of measures of maximal Bowen and packing entropies on analytic subsets in TDSs.
  \begin{prop}\label{prop:eqlbrae}
   Given $\Omega \subseteq \Sigma_{0}^{\infty}$ and  $\vect{f} \in \vect{C}(\vect{\Sigma}(\vect{m}),\R)$.
    \begin{enumerate}[(1)]
      \item If $\mu \in \eqlbrbow_{\vect{f}}(\Omega)$, then $\prelow_{\mu}(\vect{\sigma},\vect{f},\omega)=\prebow(\vect{\sigma},\vect{f},\Omega)$ for $\mu$-a.e. $\omega \in \Sigma_{0}^{\infty}(\vect{m})$.
      \item If $\mu \in \eqlbrpac_{\vect{f}}(\Omega)$, then $\preup_{\mu}(\vect{\sigma},\vect{f},\omega)=\prepac(\vect{\sigma},\vect{f},\Omega)$ for $\mu$-a.e. $\omega \in \Sigma_{0}^{\infty}(\vect{m})$.
    \end{enumerate}
  \end{prop}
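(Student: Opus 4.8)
The plan is to sandwich the local pressure between the constant $\prebow(\vect{\sigma},\vect{f},\Omega)$ from above pointwise and in the mean. Since $\mu \in \eqlbrbow_{\vect{f}}(\Omega)$, the mean $\int_{\Sigma_{0}^{\infty}}\prelow_{\mu}(\vect{\sigma},\vect{f},\omega)\dif{\mu(\omega)}$ equals $\prebow(\vect{\sigma},\vect{f},\Omega)$ by the definitions of the measure-theoretic pressure and of a Bowen equilibrium state, so once I establish that $\prelow_{\mu}(\vect{\sigma},\vect{f},\omega) \le \prebow(\vect{\sigma},\vect{f},\Omega)$ for $\mu$-a.e. $\omega$, the asserted equality follows because a measurable function bounded above $\mu$-a.e. by a constant whose $\mu$-integral equals that constant must agree with it $\mu$-a.e. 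I may assume $\eqlbrbow_{\vect{f}}(\Omega) \neq \varnothing$ (otherwise (1) is vacuous), so that $\Omega$ is compact and $\vect{f}$ is equicontinuous, and I will work with the simplified formula \eqref{eq:defpremsrequiv} for the local pressure, in which the term $\sup_{\vartheta \in [\omega\vert{n}]} S_{n}^{\vect{\sigma}}\vect{f}(\vartheta)$ is independent of the measure.

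To obtain the $\mu$-a.e. upper bound I will argue by contradiction, combining the variational principle \eqref{eq:varprinPB} with a conditioning device. Fix $c > \prebow(\vect{\sigma},\vect{f},\Omega)$ and set $A_{c} = \{\omega \in \Omega : \prelow_{\mu}(\vect{\sigma},\vect{f},\omega) > c\}$ (measurability of $\omega \mapsto \prelow_{\mu}(\vect{\sigma},\vect{f},\omega)$ is available from \cite{CM2}). Suppose $\mu(A_{c}) > 0$, and form the normalized restriction $\mu_{A_{c}} = \mu(\,\cdot\, \cap A_{c})/\mu(A_{c})$, a Borel probability measure with $\mu_{A_{c}}(\Omega) = 1$. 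The crucial point is that conditioning does not alter the local pressure $\mu_{A_{c}}$-a.e.: since the $\sigma$-algebras generated by the rank-$n$ cylinders form an increasing filtration whose union generates the Borel $\sigma$-algebra, the martingale convergence theorem gives $\mu([\omega\vert{n}] \cap A_{c})/\mu([\omega\vert{n}]) \to 1$ for $\mu$-a.e. $\omega \in A_{c}$, so that $\frac{1}{n}(-\log\mu_{A_{c}}([\omega\vert{n}])) - \frac{1}{n}(-\log\mu([\omega\vert{n}])) \to 0$, whence $\prelow_{\mu_{A_{c}}}(\vect{\sigma},\vect{f},\omega) = \prelow_{\mu}(\vect{\sigma},\vect{f},\omega) > c$ for $\mu_{A_{c}}$-a.e. $\omega$. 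Integrating gives $\prelow_{\mu_{A_{c}}}(\vect{\sigma},\vect{f}) \ge c$, and then \eqref{eq:varprinPB} forces $\prebow(\vect{\sigma},\vect{f},\Omega) \ge c$, a contradiction. Hence $\mu(A_{c}) = 0$ for every such $c$; letting $c$ decrease to $\prebow(\vect{\sigma},\vect{f},\Omega)$ along a sequence yields the pointwise bound, and combining it with $\prelow_{\mu}(\vect{\sigma},\vect{f}) = \prebow(\vect{\sigma},\vect{f},\Omega)$ finishes (1).

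Part (2) will be proved identically, replacing $\varliminf$, $\prelow$, $\prebow$, \eqref{eq:varprinPB} by $\varlimsup$, $\preup$, $\prepac$, \eqref{eq:varprinPP} respectively (and, in case the extra hypotheses of \eqref{eq:varprinPP} fail, by the one-sided estimate $\preup_{\nu}(\vect{\sigma},\vect{f}) \le \prepac(\vect{\sigma},\vect{f},\Omega)$ for every $\nu$ with $\nu(\Omega) = 1$). I expect the main obstacle to be the conditioning step: one must verify that restricting $\mu$ to a positive-measure subset leaves the local pressure unchanged at $\mu$-a.e. point of that subset, which reduces to a Lebesgue-density statement for cylinders and is precisely where the ultrametric (net) structure of $\Sigma_{0}^{\infty}(\vect{m})$ enters, through martingale convergence. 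The remaining technicalities --- measurability of the local pressure, the integrability needed so that ``integral equals the a.e.\ bound $\Rightarrow$ equality a.e.'' applies, and the degenerate values $\prebow(\vect{\sigma},\vect{f},\Omega), \prepac(\vect{\sigma},\vect{f},\Omega) \in \{\pm\infty\}$ --- are routine and can be dispatched as in \cite{CM2}.
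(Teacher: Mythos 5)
Your proof is correct, and the core strategy --- restrict $\mu$ to the set where the local pressure strictly exceeds the global one, and derive a contradiction from the variational principle --- is the same as the paper's. However, you take two more elaborate steps where the paper uses lighter tools. First, the paper works directly with the set $E=\{\omega\in\Omega:\prelow_{\mu}(\vect{\sigma},\vect{f},\omega)>\prebow(\vect{\sigma},\vect{f},\Omega)\}$, with no auxiliary parameter $c$ and no limiting argument; the strict inequality $\int_E\prelow_{\mu}\,d\mu>\mu(E)\prebow(\vect{\sigma},\vect{f},\Omega)$ already follows from $\mu(E)>0$ and pointwise strict exceedance (decompose $E=\bigcup_m\{\prelow_\mu\geq\prebow+1/m\}$). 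Second, and more significantly, you invoke martingale convergence / Lebesgue density for cylinders to show $\prelow_{\mu_{A_c}}(\vect{\sigma},\vect{f},\omega)=\prelow_{\mu}(\vect{\sigma},\vect{f},\omega)$ for a.e.\ $\omega\in A_c$. This equality is more than is needed. The paper simply observes that $\nu([\omega\vert n])=\mu([\omega\vert n]\cap E)/\mu(E)\leq\mu([\omega\vert n])/\mu(E)$, whence $-\log\nu([\omega\vert n])\geq-\log\mu([\omega\vert n])+\log\mu(E)$; dividing by $n$ kills the constant $\log\mu(E)$ and yields the one-sided inequality $\prelow_{\nu}(\vect{\sigma},\vect{f},\omega)\geq\prelow_{\mu}(\vect{\sigma},\vect{f},\omega)$ for \emph{every} $\omega$, not merely almost every. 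That one-sided bound is all the contradiction requires, and it avoids the density / martingale machinery entirely, which is exactly the ``main obstacle'' you flagged. Your version buys nothing extra here; if you want to invoke martingale convergence you must also keep track of where $\mu([\omega\vert n])>0$, whereas the paper's elementary bound has no such caveat. The rest of your plan (reduction from ``$\leq$ a.e.\ plus integral equality'' to ``$=$ a.e.,'' and the parallel treatment of part (2) with $\preup$, $\prepac$, and the packing variational inequality) matches the paper.
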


The following conclusion is a direct consequence of Proposition~\ref{prop:eqlbrae}.
  \begin{prop}\label{prop:eqlbrrestr}
    Given $\Theta \subseteq \Omega \subseteq \Sigma_{0}^{\infty}(\vect{m})$ and $\vect{f} \in \vect{C}(\vect{\Sigma}(\vect{m}),\R)$.
 For each $\mu \in M(\Sigma_{0}^{\infty})$ with $\mu(\Theta)>0$, we write $\nu=\frac{\mu\vert_{\Theta}}{\mu(\Theta)}$. 
    \begin{enumerate}[(1)]
      \item If $\mu \in \eqlbrbow_{\vect{f}}(\Omega)$, then $\nu \in \eqlbrbow_{\vect{f}}(\Omega)$ and $\nu \in \eqlbrbow_{\vect{f}}(\Theta)$;
            in particular,
            $$
            \prelow_{\nu}(\vect{\sigma},\vect{f})=\prebow(\vect{\sigma},\vect{f},\Omega)=\prebow(\vect{\sigma},\vect{f},\Theta).
            $$
      \item If $\mu \in \eqlbrpac_{\vect{f}}(\Omega)$, then $\nu \in \eqlbrpac_{\vect{f}}(\Omega)$ and $\nu \in \eqlbrpac_{\vect{f}}(\Theta)$;
            in particular,
            $$
            \preup_{\nu}(\vect{\sigma},\vect{f})=\prepac(\vect{\sigma},\vect{f},\Omega)=\prepac(\vect{\sigma},\vect{f},\Theta).
            $$
    \end{enumerate}
  \end{prop}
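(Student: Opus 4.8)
The plan is to deduce Proposition~\ref{prop:eqlbrrestr} from Proposition~\ref{prop:eqlbrae} by a direct "localization" argument, treating the Bowen and packing cases in parallel since they are formally identical after swapping $\prelow$, $\prebow$ for $\preup$, $\prepac$. Fix $\mu \in \eqlbrbow_{\vect{f}}(\Omega)$ and set $\nu = \mu\vert_{\Theta}/\mu(\Theta)$ (well-defined since $\mu(\Theta)>0$). The first thing to establish is the ingredient that lets the pointwise data from Proposition~\ref{prop:eqlbrae} transfer between measures: that the local pressure $\prelow_{\mu}(\vect{\sigma},\vect{f},\omega)$, defined through the limits in \eqref{eq:defpremsr}, is essentially insensitive to replacing $\mu$ by an equivalent measure or a normalized restriction — more precisely, that $\prelow_{\nu}(\vect{\sigma},\vect{f},\omega)=\prelow_{\mu}(\vect{\sigma},\vect{f},\omega)$ for $\nu$-a.e.\ $\omega$. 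This is where the bulk of the work sits: one compares $-\log\nu([\omega\vert k])$ with $-\log\mu([\omega\vert k])$, and for $\omega$ in a density point of $\Theta$ the ratio $\mu([\omega\vert k]\cap\Theta)/\mu([\omega\vert k])$ tends to $1$, so the difference $-\log\nu([\omega\vert k]) + \log\mu([\omega\vert k])$ stays bounded (indeed $\to \log\mu(\Theta)$, which vanishes after dividing by $n$); the $S_n^{\vect{\sigma}}\vect{f}(\omega)$ term is untouched. I expect to invoke a Lebesgue density / martingale convergence statement on the ultrametric cylinder structure of $\Sigma_0^{\infty}$ to make "density point" precise — the net property recorded on p.~\pageref{prop:net} makes the cylinders a nested filtration, so $\nu$-a.e.\ $\omega$ is such a point.

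Granting that, the argument closes quickly. Since $\mu \in \eqlbrbow_{\vect{f}}(\Omega)$, Proposition~\ref{prop:eqlbrae}(1) gives $\prelow_{\mu}(\vect{\sigma},\vect{f},\omega)=\prebow(\vect{\sigma},\vect{f},\Omega)$ for $\mu$-a.e.\ $\omega$, hence for $\nu$-a.e.\ $\omega$ (as $\nu \ll \mu$); combined with the previous paragraph, $\prelow_{\nu}(\vect{\sigma},\vect{f},\omega)=\prebow(\vect{\sigma},\vect{f},\Omega)$ for $\nu$-a.e.\ $\omega$, and integrating against $\nu$ yields $\prelow_{\nu}(\vect{\sigma},\vect{f})=\prebow(\vect{\sigma},\vect{f},\Omega)$. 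Now $\nu(\Omega)=1$, so $\nu$ witnesses the supremum in the variational principle \eqref{eq:varprinPB}, i.e.\ $\nu \in \eqlbrbow_{\vect{f}}(\Omega)$. For the $\Theta$ statement, note $\nu(\Theta)=1$, so \eqref{eq:varprinPB} applied to $\Theta$ gives $\prebow(\vect{\sigma},\vect{f},\Theta) \geq \prelow_{\nu}(\vect{\sigma},\vect{f}) = \prebow(\vect{\sigma},\vect{f},\Omega)$; the reverse inequality $\prebow(\vect{\sigma},\vect{f},\Theta) \leq \prebow(\vect{\sigma},\vect{f},\Omega)$ is monotonicity of the Bowen pressure in the set (recalled in Section~\ref{sect:pre}). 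Hence $\prebow(\vect{\sigma},\vect{f},\Theta)=\prebow(\vect{\sigma},\vect{f},\Omega)=\prelow_{\nu}(\vect{\sigma},\vect{f})$, which is exactly $\nu \in \eqlbrbow_{\vect{f}}(\Theta)$ together with the displayed chain of equalities.

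The packing case (2) runs verbatim with $\preup$ and $\prepac$ in place of $\prelow$ and $\prebow$, using Proposition~\ref{prop:eqlbrae}(2), the variational principle \eqref{eq:varprinPP}, and monotonicity of the packing pressure; the only point to check is that \eqref{eq:varprinPP} is available for $\Theta$ — its hypotheses ($\|\vect{f}\|<+\infty$ and $\prepac(\vect{\sigma},\vect{f},\Theta)>\|\vect{f}\|$) are inherited because $\mu \in \eqlbrpac_{\vect{f}}(\Omega)$ already forces $\prepac(\vect{\sigma},\vect{f},\Omega)>\|\vect{f}\|$, and we have just seen $\prepac(\vect{\sigma},\vect{f},\Theta)=\prepac(\vect{\sigma},\vect{f},\Omega)$ once the density argument is in hand (one should order the steps so the equality of pressures is obtained before quoting \eqref{eq:varprinPP} for $\Theta$, or equivalently phrase that half purely through Proposition~\ref{prop:eqlbrae} and monotonicity without re-invoking the variational principle). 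The main obstacle is thus the single lemma that normalized restriction preserves the local pressure $\nu$-a.e.; everything downstream is bookkeeping with the variational principles and monotonicity.
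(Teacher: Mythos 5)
Your proposal is correct in outline, but it uses a genuinely heavier engine than the paper does, and the extra machinery is in fact unnecessary. The paper's proof relies only on the trivial one-sided pointwise estimate (the inequality \eqref{ineq_pnpm}, proved in the course of Proposition~\ref{prop:eqlbrae}): since $\nu([\omega\vert k]) = \mu([\omega\vert k]\cap\Theta)/\mu(\Theta) \le \mu([\omega\vert k])/\mu(\Theta)$, one gets $\prelow_{\nu}(\vect{\sigma},\vect{f},\omega)\ge \prelow_{\mu}(\vect{\sigma},\vect{f},\omega)$ for \emph{every} $\omega$. Combined with Proposition~\ref{prop:eqlbrae} this gives $\prelow_{\nu}(\vect{\sigma},\vect{f})\ge \prebow(\vect{\sigma},\vect{f},\Omega)$; the variational inequality on $\Theta$ then forces $\prebow(\vect{\sigma},\vect{f},\Theta)\ge\prelow_{\nu}(\vect{\sigma},\vect{f})$, and monotonicity $\prebow(\vect{\sigma},\vect{f},\Theta)\le\prebow(\vect{\sigma},\vect{f},\Omega)$ closes the sandwich. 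You instead aim for the two-sided a.e.\ \emph{equality} $\prelow_{\nu}(\cdot,\omega)=\prelow_{\mu}(\cdot,\omega)$ via a Lebesgue-density/martingale argument on cylinders. That is true, but it is strictly more than what the sandwich needs: after dividing by $n$, the difference $-\log\nu([\omega\vert k])+\log\mu([\omega\vert k])$ only needs a bound in the direction already given by the inclusion $[\omega\vert k]\cap\Theta\subseteq[\omega\vert k]$, and this requires no density theorem at all.

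Two further points worth flagging. First, your suggested alternative for ordering the packing-case steps (``phrase that half purely through Proposition~\ref{prop:eqlbrae} and monotonicity without re-invoking the variational principle'') does not actually work: monotonicity gives $\prepac(\vect{\sigma},\vect{f},\Theta)\le\prepac(\vect{\sigma},\vect{f},\Omega)$, but the inequality you need, $\prepac(\vect{\sigma},\vect{f},\Theta)\ge\preup_{\nu}(\vect{\sigma},\vect{f})$, is a variational-type statement and cannot come from monotonicity. The paper's resolution (recorded in a footnote to the proof) is to appeal directly to the one-sided variational inequalities / Billingsley-type theorems of \cite{CM2}, which hold without the extra hypotheses $\|\vect{f}\|<\infty$ and $\prepac>\|\vect{f}\|$ imposed in the full variational principle \eqref{eq:varprinPP}. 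Second, your claim that $\mu\in\eqlbrpac_{\vect{f}}(\Omega)$ ``already forces'' $\prepac(\vect{\sigma},\vect{f},\Omega)>\|\vect{f}\|$ is not justified: being a packing equilibrium state is a definitional property and does not presuppose the hypotheses under which the variational principle was proved. Replacing the appeal to \eqref{eq:varprinPP} by the underlying variational inequality removes both difficulties cleanly.
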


Since entropies are special pressures for $\vect{f}=\vect{0}$, the following conclusion is a  particular case of  Proposition \ref{prop:eqlbrrestr} which was also proved in \cite[Prop.3.2]{WZ}.
  \begin{cor}
    Let $\Omega \subseteq \Sigma_{0}^{\infty}$ be a compact subset.
    \begin{enumerate}
      \item If $\enttopbow(\vect{\sigma},\Omega)>0$, then every $\mu \in \eqlbrbow_{\vect{0}}(\Omega)$ is non-atomic.
      \item If $\enttoppac(\vect{\sigma},\Omega)>0$, then every $\mu \in \eqlbrpac_{\vect{0}}(\Omega)$ is non-atomic.
    \end{enumerate}
  \end{cor}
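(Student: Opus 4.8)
The plan is to derive this corollary from Proposition~\ref{prop:eqlbrrestr} exactly as indicated in the text, using the special case $\vect{f}=\vect{0}$ and arguing by contradiction. I will only write out part (1), since part (2) is identical with $\prebow$, $\eqlbrbow$, $\prelow_{\mu}$ replaced by $\prepac$, $\eqlbrpac$, $\preup_{\mu}$ throughout.

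First I would fix a compact $\Omega \subseteq \Sigma_{0}^{\infty}$ with $\enttopbow(\vect{\sigma},\Omega)>0$ and suppose, for contradiction, that some $\mu \in \eqlbrbow_{\vect{0}}(\Omega)$ has an atom, say $\mu(\{\omega_{0}\})=c>0$ for some $\omega_{0} \in \Sigma_{0}^{\infty}$. Set $\Theta=\{\omega_{0}\} \subseteq \Omega$; then $\mu(\Theta)=c>0$, so Proposition~\ref{prop:eqlbrrestr}(1) applies with this $\Theta$ and $\vect{f}=\vect{0}$, yielding that $\nu = \mu\vert_{\Theta}/\mu(\Theta) = \delta_{\omega_{0}}$ satisfies
$$
\entlow_{\nu}(\vect{\sigma}) = \prelow_{\nu}(\vect{\sigma},\vect{0}) = \prebow(\vect{\sigma},\vect{0},\Theta) = \prebow(\vect{\sigma},\vect{0},\Omega) = \enttopbow(\vect{\sigma},\Omega).
$$
The contradiction will come from computing the left-hand side directly: for the Dirac measure $\delta_{\omega_{0}}$ every cylinder $[\omega_{0}\vert{n}]$ has $\delta_{\omega_{0}}$-measure $1$, so $-\log \delta_{\omega_{0}}([\omega_{0}\vert{n}]) = 0$ for all $n$, and hence from the definition \eqref{eq:defpremsr} (with $\vect{f}=\vect{0}$) we get $\entlow_{\delta_{\omega_{0}}}(\vect{\sigma},\omega) = 0$ for every $\omega$ in the support (indeed $\delta_{\omega_{0}}$-a.e.\ $\omega$ equals $\omega_{0}$), and therefore $\entlow_{\nu}(\vect{\sigma}) = \int \entlow_{\nu}(\vect{\sigma},\omega)\,\dif\nu(\omega) = 0$. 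This forces $\enttopbow(\vect{\sigma},\Omega)=0$, contradicting our hypothesis. Hence no $\mu \in \eqlbrbow_{\vect{0}}(\Omega)$ has an atom, i.e., every such $\mu$ is non-atomic.

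I do not anticipate any serious obstacle here; the only point requiring a little care is the verification that $\entlow_{\nu}(\vect{\sigma})=0$ for $\nu = \delta_{\omega_{0}}$, which is where the definition of the measure-theoretic pressure must be unwound — but this is immediate once one notes that a point mass assigns full measure to every cylinder containing its atom, so the $-\log\mu(\cdot)$ term vanishes identically. One should also remark that $\eqlbrbow_{\vect{0}}(\Omega) \neq \varnothing$ is not needed for the statement (which is vacuously true if the set is empty), though in fact non-emptiness follows from the variational principle \eqref{eq:varprinPB} together with the fact that $\prebow(\vect{\sigma},\vect{0},\Omega)=\enttopbow(\vect{\sigma},\Omega)<+\infty$. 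The same argument, reading $\prepac$ for $\prebow$ and $\preup_{\nu}$ for $\prelow_{\nu}$, establishes part (2), since $\entup_{\delta_{\omega_{0}}}(\vect{\sigma},\omega)=0$ by the identical computation.
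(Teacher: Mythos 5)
Your argument is correct and is exactly the intended deduction: the paper states the corollary as a direct consequence of Proposition~\ref{prop:eqlbrrestr}, and you carry it out by taking $\Theta=\{\omega_0\}$, so that $\nu=\delta_{\omega_0}$ has measure-theoretic entropy $0$, forcing $\enttopbow(\vect{\sigma},\Omega)=0$. One small inaccuracy in your aside: the variational principle~\eqref{eq:varprinPB} gives $\enttopbow(\vect{\sigma},\Omega)$ as a \emph{supremum} over measures and does not by itself guarantee the supremum is attained, so non-emptiness of $\eqlbrbow_{\vect{0}}(\Omega)$ does not follow from it alone — but as you already note, non-emptiness is not needed for the statement, so this does not affect the proof.
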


  The following result on the existence of equilibrium states   is an immediate consequence of Propositions~\ref{prop:eqlbrNABmsr} and \ref{prop:eqlbrrestr}.
  \begin{thm}
    Suppose that $\vect{f}$ satisfies \eqref{eq:strbndvar}. Let $\mu$ be the nonautonomous Bernoulli measure given by \eqref{pvama}.
    Then $\eqlbrbow_{\vect{f}}(\Omega) \neq \varnothing$ and $\eqlbrpac_{\vect{f}}(\Omega) \neq \varnothing$ for all non-empty compact $\Omega \subseteq \Sigma_{0}^{\infty}(\vect{m})$ with $\mu(\Omega)>0$.
  \end{thm}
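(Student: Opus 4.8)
The plan is to combine the two preceding results—Proposition~\ref{prop:eqlbrNABmsr} (which produces the global equilibrium states) and Proposition~\ref{prop:eqlbrrestr} (which passes equilibrium states to subsets via normalized restriction)—in the obvious way. First I would invoke Proposition~\ref{prop:eqlbrNABmsr}: since $\vect{f}$ satisfies \eqref{eq:strbndvar}, the nonautonomous Bernoulli measure $\mu$ generated by the probability vectors in \eqref{pvama} lies in both $\eqlbrbow_{\vect{f}}(\Sigma_{0}^{\infty}(\vect{m}))$ and $\eqlbrpac_{\vect{f}}(\Sigma_{0}^{\infty}(\vect{m}))$. In particular $\mu(\Sigma_{0}^{\infty})=1$, so we may take $\Omega=\Sigma_{0}^{\infty}(\vect{m})$ as the ambient set in Proposition~\ref{prop:eqlbrrestr}.

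Next, let $\Omega\subseteq\Sigma_{0}^{\infty}(\vect{m})$ be any non-empty compact set with $\mu(\Omega)>0$, and set $\Theta=\Omega$ in the notation of Proposition~\ref{prop:eqlbrrestr}, so that $\nu=\mu\vert_{\Omega}/\mu(\Omega)$ is a well-defined Borel probability measure with $\nu(\Omega)=1$. Applying part~(1) of Proposition~\ref{prop:eqlbrrestr} with the chain $\Theta=\Omega\subseteq\Sigma_{0}^{\infty}(\vect{m})$, the hypothesis $\mu\in\eqlbrbow_{\vect{f}}(\Sigma_{0}^{\infty}(\vect{m}))$ yields $\nu\in\eqlbrbow_{\vect{f}}(\Omega)$, whence $\eqlbrbow_{\vect{f}}(\Omega)\neq\varnothing$. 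Likewise, applying part~(2) with $\mu\in\eqlbrpac_{\vect{f}}(\Sigma_{0}^{\infty}(\vect{m}))$ gives $\nu\in\eqlbrpac_{\vect{f}}(\Omega)$, so $\eqlbrpac_{\vect{f}}(\Omega)\neq\varnothing$. This completes the argument.

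There is essentially no obstacle here: the statement is packaged as an immediate corollary, and the only thing to check is that the hypotheses of the two propositions are met—namely that the measure $\mu$ from \eqref{pvama} is a \emph{global} equilibrium state (guaranteed by Proposition~\ref{prop:eqlbrNABmsr} under \eqref{eq:strbndvar}), and that the normalized restriction $\nu$ makes sense (guaranteed by the assumption $\mu(\Omega)>0$). One might want to double-check the compactness hypothesis on $\Omega$: it is needed so that the definition of Bowen/packing equilibrium state on $\Omega$ applies (those notions were defined for compact $\Omega$), and Proposition~\ref{prop:eqlbrrestr} itself does not explicitly require compactness, so the compactness in the statement is only there to make the conclusion meaningful rather than to drive the proof. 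If one wanted a fully self-contained write-up, the in-particular clause of Proposition~\ref{prop:eqlbrrestr} also records the sharper equalities $\prelow_{\nu}(\vect{\sigma},\vect{f})=\prebow(\vect{\sigma},\vect{f},\Omega)$ and $\preup_{\nu}(\vect{\sigma},\vect{f})=\prepac(\vect{\sigma},\vect{f},\Omega)$, which could be stated explicitly as a bonus.
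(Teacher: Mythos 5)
Your argument matches the paper exactly: the theorem is stated there as an immediate consequence of Proposition~\ref{prop:eqlbrNABmsr} (giving the global equilibrium state from the Bernoulli measure \eqref{pvama}) and Proposition~\ref{prop:eqlbrrestr} (restricting and normalizing to $\Omega$). Nothing further is needed.
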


  \begin{cor}\label{cor_VP}
    Suppose that $\vect{f}$ satisfies \eqref{eq:strbndvar}. Let $\mu$ be the nonautonomous Bernoulli measure generated by \eqref{pvama}.
    Then for all non-empty compact $\Omega \subseteq \Sigma_{0}^{\infty}(\vect{m})$ with $\mu(\Omega)>0$,
  \begin{equation} 
    \prepac(\vect{\sigma},\vect{f},\Omega) = \sup\{\preup_\mu(\vect{\sigma},\vect{f}): \mu \in M(\Sigma_{0}^{\infty}(\vect{m}))\ \text{and}\ \mu(\Omega)=1\}.
  \end{equation}

  \end{cor}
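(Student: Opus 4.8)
The plan is to bootstrap the general variational principle \eqref{eq:varprinPP} using the fact that a distinguished nonautonomous Bernoulli measure already realizes the packing pressure. Let $\Omega \subseteq \Sigma_{0}^{\infty}(\vect{m})$ be non-empty and compact with $\mu(\Omega)>0$, where $\mu$ is generated by \eqref{pvama}. The inequality ``$\geq$'' is the easy half: for any $\nu \in M(\Sigma_{0}^{\infty}(\vect{m}))$ with $\nu(\Omega)=1$, the general upper bound from \eqref{eq:varprinPP} — or rather the half of it that holds unconditionally — gives $\preup_{\nu}(\vect{\sigma},\vect{f}) \leq \prepac(\vect{\sigma},\vect{f},\Omega)$; I would cite the relevant inequality in \cite{CM2} (the ``$\geq$'' direction of Theorem~\ref{thm_vpBP} needs no boundedness hypothesis) rather than re-prove it. So the content is the reverse inequality: exhibiting a single measure $\nu$ supported on $\Omega$ with $\preup_{\nu}(\vect{\sigma},\vect{f}) \geq \prepac(\vect{\sigma},\vect{f},\Omega)$.

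For that, first I would invoke the preceding theorem (the one stated immediately before this corollary, itself a consequence of Propositions~\ref{prop:eqlbrNABmsr} and \ref{prop:eqlbrrestr}): since $\vect{f}$ satisfies \eqref{eq:strbndvar} and $\mu(\Omega)>0$, the normalized restriction $\nu := \mu\vert_{\Omega}/\mu(\Omega)$ lies in $\eqlbrpac_{\vect{f}}(\Omega)$, i.e.\ $\nu(\Omega)=1$ and $\preup_{\nu}(\vect{\sigma},\vect{f}) = \prepac(\vect{\sigma},\vect{f},\Omega)$. This $\nu$ is an admissible competitor in the supremum on the right-hand side, so the supremum is at least $\prepac(\vect{\sigma},\vect{f},\Omega)$, giving ``$\leq$''. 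Combining the two inequalities yields the stated identity. In short, the corollary is just the observation that Proposition~\ref{prop:eqlbrrestr}(2) guarantees the extremal measure in the variational principle is actually attained, so the a priori hypotheses $\|\vect{f}\|<+\infty$ and $\prepac(\vect{\sigma},\vect{f},\Omega)>\|\vect{f}\|$ from \eqref{eq:varprinPP} become unnecessary: they were only needed to run the original packing variational principle abstractly, whereas here the concrete Bernoulli equilibrium state is produced directly from Theorem~\ref{thm:Psymsys} and Theorem~\ref{thm:msrpre}.

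The only subtle point — and the step I would be most careful about — is making sure the ``$\preup_{\nu} \leq \prepac$'' direction genuinely holds without the boundedness assumptions, since otherwise the argument would be circular. Here one should appeal to the definition-level fact that for any $\nu$ with $\nu(\Omega)=1$ one has $\preup_{\nu}(\vect{\sigma},\vect{f}) = \int \preup_{\nu}(\vect{\sigma},\vect{f},\omega)\,\dif\nu(\omega)$ with the integrand supported on $\Omega$, together with the pointwise comparison between the local upper measure-theoretic pressure and the packing pressure on $\Omega$ that underlies the proof of \eqref{eq:varprinPP} in \cite{CM2} (a Frostman/mass-distribution style estimate that needs no uniform bound on $\vect{f}$). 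If for some reason that unconditional ``$\geq$'' half is not directly available in the form stated, the fallback is to note that since $\vect{f}$ here is of strongly bounded variation, replacing $\vect{f}$ by its first-coordinate surrogate (as in \eqref{eq:funct1stcoord} and Theorem~\ref{thm:Psymsys}) changes neither side by more than a vanishing Cesàro average, reducing to the bounded case on each finite level; but I expect the cleaner route through the already-established inequality to suffice.
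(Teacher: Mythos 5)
Your proof is correct and follows exactly the route the paper intends: the $\geq$ direction is the unconditional variational inequality (paper's footnote: Lemmas 6.1 and 6.3 of \cite{CM2} need no boundedness on $\vect{f}$), and the $\leq$ direction is supplied by the packing equilibrium state $\nu = \mu\vert_{\Omega}/\mu(\Omega)$ produced by the theorem immediately preceding the corollary (itself a consequence of Propositions~\ref{prop:eqlbrNABmsr} and \ref{prop:eqlbrrestr}). The paper gives no separate proof of Corollary~\ref{cor_VP} for precisely this reason, and your reconstruction of the intended argument is accurate; the sketched fallback in your last paragraph is unnecessary and somewhat dubious (strong bounded variation does not imply boundedness), but you correctly flag it as unneeded.
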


  A class of commonly considered candidates for equilibrium states is the so called Gibbs states.
  They need not be Bernoulli but satisfy a similar property that is intimately related.
  Let $P \in \{\prebow,\prepac,\preL,\preU\}$. Given $\mu \in M(\Omega)$, if there exists a constant $K>1$ such that
  for all $n \geq 1$ and $\omega \in \Omega$,
  $$
  K^{-1} \leq \frac{\mu([\omega\vert{n}] \cap \Omega)}{\exp{(-nP(\vect{\sigma},\vect{f},\Omega)+S_{n}^{\vect{\sigma}}\vect{f}(\omega))}} \leq K,
  $$
  then we say $\mu$ is a \emph{$P$-Gibbs state for $\vect{f}$ on $\Omega$}.

  \begin{ques}
   Under what conditions do equilibrium states or even $P$-Gibbs states exists for broader classes of potentials $\vect{f}$? 
  \end{ques}

\subsection{Expansive nonautonomous dynamical  systems}

 First, we provide the definition of expansiveness in nonautonomous dynamical systems.
      \begin{defn}
We say that  $(\vect{X},\vect{T})$ is  \emph{expansive} if there exists $\delta>0$ such that  for all distinct $x, y \in X_{0}$, we have $d_{X_{j}}(\vect{T}^{j}x,\vect{T}^{j}y)>\delta$ for  some  $j \in \N$.
        We call $\delta$ an \emph{expansive constant} for $\vect{T}$.
      \end{defn}
Note that  if $\delta_{0}>0$ is an expansive constant for $\vect{T}$, then so is every $\delta$ with $0<\delta \leq \delta_{0}$.
Our definition of expansiveness in NDSs is a generalization of positively expansiveness in TDSs.
        A positively expansive TDS $(X,T)$ is expansive as an NDS in our terms.
        See \cite{Wu&Zhou2023} for the definition of expansiveness in two-sided NDSs and the example of a certain class of Anosov families.  Expansiveness is invariant under equiconjugacies of NDSs \cite[Prop.7.7 (\romannumeral1)]{Kawan2015}.
        However, it is not preserved under the operation of taking factors even in the autonomous case \cite[\S5.6 Rmk.(3)]{Walters1982}.

   \begin{rmk}
        (1) Let $(\vect{X},\vect{T})$ be expansive, and let $K \subseteq X_{0}$. Write $\vect{X}\vert_{K}=\{\vect{T}^{k}K\}_{k=0}^{\infty}$ and $\vect{T}\vert_{K}=\{T_{k}\vert_{\vect{T}^{k}K}\}_{k=0}^{\infty}$.
        Then $(\vect{X}\vert_{K},\vect{T}\vert_{K})$ is an expansive NDS.

        (2) Given   $(\vect{X},\vect{T})$. For $m \geq 1$, write $\vect{X}^{[m]}=\{X_{km}\}_{k=0}^{\infty}$ and $\vect{T}^{[m]}=\{\vect{T}_{km}^{m}\}_{k=0}^{\infty}$.
        Then $(\vect{X},\vect{T})$ is expansive iff $(\vect{X}^{[m]},\vect{T}^{[m]})$ expansive.
      \end{rmk}

To study  pressures for expansive NDSs, we define generators and weak generators which was  originally introduced   for entropies in TDSs by Keynes and Robertson \cite{Keynes&Robertson1969}.
 Given a sequence $\vect{\mathscr{U}}$ of open covers $\mathscr{U}_{k}$ of $X_{k}$.  If there exists $\delta>0$ such that  $\delta$ is a Lebesgue number for $\mathscr{U}_{k}$ for all  $k \geq 0$, then we say $\delta$ is a \emph{Lebesgue number} for $\vect{\mathscr{U}}$.

      \begin{defn} \label{def_gnrt}
        Given  $(\vect{X},\vect{T})$. Let $\vect{\mathscr{U}}=\{\mathscr{U}_{K}\}_{k=0}^{\infty}$ be a sequence of finite open covers $\mathscr{U}_{k}$ of $X_{k}$ with a Lebesgue number.
        We call $\vect{\mathscr{U}}$ a \emph{generator for $\vect{T}$} if for every sequence $\{U_{k}\}_{k=0}^{\infty}$ of sets $U_{k} \in \mathscr{U}_{k}$,
        the set $\bigcap_{j=0}^{\infty}\vect{T}^{-j}\overline{U_{j}}$ contains at most one point of $X_{0}$;
        and we call $\vect{\mathscr{U}}$ a \emph{weak generator for $\vect{T}$} if $\bigcap_{j=0}^{\infty}\vect{T}^{-j}U_{j}$ contains at most one point of $X_{0}$ for all sequences $\{U_{k} \in \mathscr{U}_{k}\}_{k=0}^{\infty}$.
      \end{defn}

      \begin{thm}\label{prop:expseqgnrtr}
        Given $(\vect{X},\vect{T})$, the following are equivalent:
        \begin{enumerate}[(1)]
          \item $\vect{T}$ is expansive;
          \item $\vect{T}$ has a generator;
          \item $\vect{T}$ has a weak generator.
        \end{enumerate}
        Moreover, if $\delta>0$ is a Lebesgue number for a (weak) generator, then it is also an expansive constant for $\vect{T}$;
        conversely, if $\delta_{0}>0$ is an expansive constant for $\vect{T}$, then there exists $\varepsilon$ with $0<\varepsilon<\frac{\delta_{0}}{4}$
        such that every $\delta$ with $0<\delta \leq \varepsilon$ is a Lebesgue number for a (weak) generator.
      \end{thm}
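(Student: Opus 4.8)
The plan is to prove the cycle of implications $(1)\Rightarrow(2)\Rightarrow(3)\Rightarrow(1)$, and then extract the quantitative relationship between Lebesgue numbers of (weak) generators and expansive constants along the way. For $(2)\Rightarrow(3)$ I would note it is essentially immediate: if $\vect{\mathscr{U}}$ is a generator, then since $U_k\subseteq\overline{U_k}$ we have $\bigcap_{j=0}^\infty\vect{T}^{-j}U_j\subseteq\bigcap_{j=0}^\infty\vect{T}^{-j}\overline{U_j}$, so a generator is automatically a weak generator with the same Lebesgue number. For $(3)\Rightarrow(1)$, suppose $\vect{\mathscr{U}}$ is a weak generator with Lebesgue number $\delta>0$. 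Take distinct $x,y\in X_0$ and suppose for contradiction that $d_{X_j}(\vect{T}^jx,\vect{T}^jy)\le\delta$ for all $j$. Then for each $j$ the pair $\vect{T}^jx,\vect{T}^jy$ lies in a ball of radius $\le\delta$, hence (by the Lebesgue number property) in a common element $U_j\in\mathscr{U}_j$; but then $x,y\in\bigcap_{j=0}^\infty\vect{T}^{-j}U_j$, contradicting that this set has at most one point. Hence $\delta$ is an expansive constant, which also proves the "Moreover" clause for weak generators (and a fortiori for generators, by $(2)\Rightarrow(3)$).

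The substantive implication is $(1)\Rightarrow(2)$, together with the converse quantitative statement. Assume $\vect{T}$ is expansive with expansive constant $\delta_0>0$. Fix any $\varepsilon$ with $0<\varepsilon<\delta_0/4$, and for each $k$ let $\mathscr{U}_k$ be a finite open cover of the compact space $X_k$ by balls of radius $\varepsilon$ (using compactness of each $X_k$); then $\vect{\mathscr{U}}=\{\mathscr{U}_k\}_{k=0}^\infty$ has Lebesgue number at least $\varepsilon$ — in fact I would arrange a uniform Lebesgue number, say by taking $\mathscr{U}_k$ to consist of all balls $B_{d_k}(x,\varepsilon)$, which has Lebesgue number $\varepsilon$ uniformly in $k$. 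The key claim is that such $\vect{\mathscr{U}}$ is a generator: given a sequence $U_j\in\mathscr{U}_j$, each $\overline{U_j}$ has diameter $\le 2\varepsilon$ (closed ball of radius $\varepsilon$), so if $x,y\in\bigcap_{j=0}^\infty\vect{T}^{-j}\overline{U_j}$ then $d_{X_j}(\vect{T}^jx,\vect{T}^jy)\le 2\varepsilon < \delta_0/2 < \delta_0$ for all $j$; since $\delta_0$ is an expansive constant this forces $x=y$. (Here I am using that closed balls of radius $\varepsilon$ have diameter at most $2\varepsilon$, which holds in any metric space — the ultrametric refinements used for symbolic spaces are not needed here.) This proves $(1)\Rightarrow(2)$ and simultaneously shows that every $\delta$ with $0<\delta\le\varepsilon$ is a Lebesgue number for the generator $\vect{\mathscr{U}}$ (refining the cover only shrinks diameters, or one simply observes $\varepsilon$ itself works and any smaller $\delta$ is automatically a Lebesgue number for a cover admitting Lebesgue number $\varepsilon$), giving the converse quantitative clause.

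The main obstacle — really the only delicate point — is the bookkeeping around Lebesgue numbers: one must ensure that the chosen open covers $\mathscr{U}_k$ genuinely admit a \emph{common} positive Lebesgue number $\delta$ valid for all $k$ simultaneously, since Definition~\ref{def_gnrt} requires a Lebesgue number for the whole sequence $\vect{\mathscr{U}}$, not just for each $\mathscr{U}_k$ separately. Choosing $\mathscr{U}_k$ to be the cover by all open $\varepsilon$-balls handles this cleanly, with uniform Lebesgue number $\varepsilon$. A second minor point is matching the constants so that the diameter bound $2\varepsilon$ stays strictly below $\delta_0$; taking $\varepsilon<\delta_0/4$ gives room to spare (indeed $\varepsilon<\delta_0/2$ would suffice, but the stated bound $\delta_0/4$ is comfortably safe and matches the theorem statement). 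Everything else is a direct unwinding of definitions.
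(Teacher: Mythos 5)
Your implications $(2)\Rightarrow(3)$ and $(3)\Rightarrow(1)$, and the identification of a weak-generator Lebesgue number as an expansive constant, all match the paper's argument. The gap is in $(1)\Rightarrow(2)$, specifically in the construction of the finite covers $\mathscr{U}_k$.

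You first assert that a finite open cover of $X_k$ by balls of radius $\varepsilon$ automatically has Lebesgue number at least $\varepsilon$. This is false in general: if $X_k=\bigcup_i B(x_i,\varepsilon)$ and $x$ is a point near the ``boundary'' of every ball it lies in (i.e.\ $d(x,x_i)$ close to $\varepsilon$ for every $i$ with $x\in B(x_i,\varepsilon)$), then sets of diameter close to $\varepsilon$ containing $x$ need not sit inside any single $B(x_i,\varepsilon)$, and in fact the Lebesgue number of such a cover can be arbitrarily small. You then notice this and ``fix'' it by taking $\mathscr{U}_k$ to be the collection of \emph{all} $\varepsilon$-balls $B_{d_k}(x,\varepsilon)$, which certainly has Lebesgue number $\varepsilon$ --- but Definition~\ref{def_gnrt} requires each $\mathscr{U}_k$ to be a \emph{finite} open cover, so this is not allowed. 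As it stands, neither version of your construction produces a sequence of finite covers with a common positive Lebesgue number.

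The paper's construction avoids this by decoupling the radius of the covering balls from the radius used to guarantee the cover: for each $k$, choose finitely many centers $x_1^{(k)},\dots,x_{m_k}^{(k)}$ so that the \emph{smaller} balls $B(x_i^{(k)},\frac{\delta_0}{2}-\varepsilon)$ already cover $X_k$, and then take $\mathscr{B}_k=\{B(x_i^{(k)},\frac{\delta_0}{2})\}_i$. Any set $A$ of diameter $<\varepsilon$ has some point in $B(x_i^{(k)},\frac{\delta_0}{2}-\varepsilon)$, and the triangle inequality then puts all of $A$ inside $B(x_i^{(k)},\frac{\delta_0}{2})$, so $\varepsilon$ is a uniform Lebesgue number. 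The rest of the argument (closed balls of radius $\frac{\delta_0}{2}$ have diameter $\le\delta_0$, so the intersection $\bigcap_j \vect{T}^{-j}\overline{B_{i_j}^{(j)}}$ is a singleton by expansiveness) then goes through exactly as you wrote. So the idea is right, but you need this ``cover by shrunken balls, then enlarge'' step --- or an equivalent device, such as a finite $\tfrac{\varepsilon}{2}$-net with balls of radius $\varepsilon$ --- to actually obtain a finite cover with the stated Lebesgue number.
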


 In general, the expansiveness of $(\vect{X},\vect{T})$  does not imply  the expansiveness of the shifted $(\vect{X}_{k},\vect{T}_{k})$ for any $k \geq 1$, see \cite[Exmp.7.3]{Kawan2015}.   A sufficient condition is provided by Kawan in \cite[Prop.7.7 (\romannumeral2)]{Kawan2015}. Next, we consider the system is uniformly expansive.

      \begin{defn}\label{def:uniexps}
        We say $(\vect{X},\vect{T})$ is \emph{uniformly expansive} if
        \begin{enumerate}[(a)]
          \item $(\vect{X}_{k},\vect{T}_{k})$ is expansive for every $k \geq 0$ and
          \item there exists a uniform expansive constant $\delta>0$ for all $\vect{T}_{k}$.
        \end{enumerate}

        We call $\vect{\mathscr{U}}$ a \emph{uniform (weak) generator} for $\vect{T}$
        if for every $k \geq 0$, $\vect{\mathscr{U}}_{k}=\{\mathscr{U}_{k+j}\}_{j=0}^{\infty}$ is a (weak) generator for $\vect{T}_{k}$.
      \end{defn}
There exist examples such that  $(\vect{X},\vect{T})$ satisfies (a) but not (b); see \cite[Exmp..4]{Kawan2015}.
Note that Subsystems of uniformly expansive NDSs are uniformly expansive, and  positively expansive TDSs are uniformly expansive.

      \begin{prop}\label{prop_eque}
        Given   $(\vect{X},\vect{T})$, the following are equivalent:
        \begin{enumerate}[(1)]
          \item $\vect{T}$ is uniformly expansive;
          \item $\vect{T}$ has a uniform generator;
          \item $\vect{T}$ has a uniform weak generator.
        \end{enumerate}
      \end{prop}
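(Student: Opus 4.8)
The plan is to leverage Theorem~\ref{prop:expseqgnrtr}, which already establishes the equivalence ``expansive $\Leftrightarrow$ has a generator $\Leftrightarrow$ has a weak generator'' for a single NDS, together with the observation that uniform expansiveness is precisely the requirement that this equivalence hold \emph{simultaneously and with a common constant} across every shifted system $(\vect{X}_k,\vect{T}_k)$. The proof will therefore be a ``uniformization'' of the argument in Theorem~\ref{prop:expseqgnrtr}: one must check that the passage from an expansive constant to a Lebesgue number for a (weak) generator can be done in a way that depends only on the expansive constant and not on which shifted system one is looking at.

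First I would prove $(1)\Rightarrow(2)$. Assume $\vect{T}$ is uniformly expansive with uniform expansive constant $\delta_0>0$, so that $\delta_0$ is an expansive constant for every $(\vect{X}_k,\vect{T}_k)$. By the quantitative second half of Theorem~\ref{prop:expseqgnrtr}, there is $\varepsilon$ with $0<\varepsilon<\frac{\delta_0}{4}$ such that every $\delta$ with $0<\delta\le\varepsilon$ is a Lebesgue number for a (weak) generator of $(\vect{X}_k,\vect{T}_k)$ --- and, crucially, $\varepsilon$ is produced from $\delta_0$ alone, hence may be chosen the same for all $k$. Fix such a $\delta\le\varepsilon$. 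For each $k$, since $X_k$ is a compact metric space, choose a finite open cover $\mathscr{U}_k$ of $X_k$ consisting of balls of radius $<\delta/2$ (so $\delta$ is a Lebesgue number for $\mathscr{U}_k$), and set $\vect{\mathscr{U}}=\{\mathscr{U}_k\}_{k=0}^\infty$. Then for every $k\ge0$, the tail $\vect{\mathscr{U}}_k=\{\mathscr{U}_{k+j}\}_{j=0}^\infty$ is a sequence of finite open covers with common Lebesgue number $\delta\le\varepsilon$, hence --- by the expansiveness of $(\vect{X}_k,\vect{T}_k)$ and the implication ``Lebesgue number of size $\le\varepsilon$ $\Rightarrow$ (weak) generator'' from Theorem~\ref{prop:expseqgnrtr} applied to the system $(\vect{X}_k,\vect{T}_k)$ --- it is a generator (and a weak generator) for $\vect{T}_k$. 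Thus $\vect{\mathscr{U}}$ is a uniform generator and a uniform weak generator, giving both $(1)\Rightarrow(2)$ and $(1)\Rightarrow(3)$.

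The implications $(2)\Rightarrow(1)$ and $(3)\Rightarrow(1)$ go the other way. Suppose $\vect{\mathscr{U}}=\{\mathscr{U}_k\}$ is a uniform (weak) generator for $\vect{T}$, and let $\delta>0$ be a Lebesgue number for $\vect{\mathscr{U}}$ (such a $\delta$ exists by definition, and note it serves all the tails $\vect{\mathscr{U}}_k$ at once). For each fixed $k$, $\vect{\mathscr{U}}_k=\{\mathscr{U}_{k+j}\}_{j=0}^\infty$ is a (weak) generator for $\vect{T}_k$ with Lebesgue number $\delta$; by the ``Lebesgue number of a (weak) generator is an expansive constant'' half of Theorem~\ref{prop:expseqgnrtr}, $\delta$ is an expansive constant for $\vect{T}_k$. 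Since this holds for all $k$ with the \emph{same} $\delta$, conditions (a) and (b) of Definition~\ref{def:uniexps} are met and $\vect{T}$ is uniformly expansive. (For completeness, $(2)\Leftrightarrow(3)$ also follows by combining the above, or directly since $\bigcap_j\vect{T}_k^{-j}U_{k+j}\subseteq\bigcap_j\vect{T}_k^{-j}\overline{U_{k+j}}$ shows every uniform weak generator is a uniform generator, and the converse uses the Lebesgue-number room to shrink covers.)

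The only real subtlety --- and the step I would be most careful about --- is the uniformity of the constant $\varepsilon$ in the $(1)\Rightarrow(2)$ direction: one must inspect the proof of Theorem~\ref{prop:expseqgnrtr} to confirm that the threshold $\varepsilon<\frac{\delta_0}{4}$ is manufactured purely from the expansive constant $\delta_0$ by a formula (not from compactness data specific to one $X_k$), so that a single choice of cover-radius works for all shifted systems. Granting that, the proposition reduces entirely to quoting Theorem~\ref{prop:expseqgnrtr} fibrewise over $k$ and checking that the quantifiers ``there exists $\delta$'' can be pulled outside the ``for all $k$''. Everything else is bookkeeping with Lebesgue numbers and finite open covers of compact metric spaces.
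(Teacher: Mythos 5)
Your overall strategy --- treat the definitions of uniform expansiveness and uniform (weak) generator as fibrewise over $k$, apply Theorem~\ref{prop:expseqgnrtr} to each shifted system $(\vect{X}_k,\vect{T}_k)$, and check that the constants can be pulled out uniformly --- is the right one and matches the paper's intent (the paper's own proof is a one-line citation). Your $(2)\Rightarrow(1)$ and $(3)\Rightarrow(1)$ are fine: a Lebesgue number for $\vect{\mathscr{U}}$ is automatically one for every tail $\vect{\mathscr{U}}_k$, and the first quantitative half of Theorem~\ref{prop:expseqgnrtr} then gives a single expansive constant for all $\vect{T}_k$.

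The $(1)\Rightarrow(2)$ direction, however, has a genuine gap. You assert that a cover of $X_k$ by balls of radius $<\delta/2$ has $\delta$ as a Lebesgue number; this is false. A Lebesgue number of a cover by balls of radius $r$ is necessarily $\le r$ (take a ball $B(y,\lambda)$ centred at a cover-ball centre $y$: it must lie in some $B(x_i,r)$, forcing $\lambda\le r$), so your cover has Lebesgue number $<\delta/2<\delta$, and without controlling the centres there is no uniform lower bound in $k$ at all. Compounding this, you read Theorem~\ref{prop:expseqgnrtr} as saying ``any cover with Lebesgue number $\le\varepsilon$ is a (weak) generator'', but the theorem only asserts that for each $\delta\le\varepsilon$ there \emph{exists} a (weak) generator having $\delta$ as Lebesgue number --- the one built in the proof. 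A cover by sets of diameter far exceeding $\delta_0$ can have a tiny Lebesgue number and still fail the generating property.

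The fix is to reuse the explicit construction from the end of the proof of Theorem~\ref{prop:expseqgnrtr}: fix $\varepsilon$ with $0<\varepsilon<\delta_0/4$, for each $k$ choose a finite $(\delta_0/2-\varepsilon)$-net $\{x_1^{(k)},\dots,x_{m_k}^{(k)}\}$ of $X_k$ and set $\mathscr{B}_k=\{B(x_i^{(k)},\delta_0/2)\}_{i=1}^{m_k}$. Then $\varepsilon$ is a Lebesgue number for every $\mathscr{B}_k$, hence for every tail $\vect{\mathscr{B}}_k$, uniformly in $k$. Because the balls have radius $\delta_0/2$, any $x,y\in X_k$ with $x,y\in\bigcap_{j\ge0}\vect{T}_k^{-j}\overline{B_{i_j}^{(k+j)}}$ satisfy $d_{X_{k+j}}(\vect{T}_k^j x,\vect{T}_k^j y)\le\delta_0$ for all $j$, so $x=y$ by the expansiveness of $(\vect{X}_k,\vect{T}_k)$ with the common constant $\delta_0$. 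Thus $\vect{\mathscr{B}}$ is a uniform generator, giving $(1)\Rightarrow(2)$, and $(2)\Rightarrow(3)$ is immediate. One last slip: in your closing parenthetical the inclusion $\bigcap_j\vect{T}_k^{-j}U_{k+j}\subseteq\bigcap_j\vect{T}_k^{-j}\overline{U_{k+j}}$ shows that a uniform generator is a uniform weak generator (not the reverse, as you wrote); the reverse implication is exactly what the chain $(3)\Rightarrow(1)\Rightarrow(2)$ supplies.
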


    The following definition was introduced by Kawan in \cite{Kawan2015}.
      \begin{defn}\label{def:sue}
        We say that $(\vect{X},\vect{T})$ is  \emph{ strongly uniformly expansive (sue)} if there exists $\delta>0$ such that
        for every $\varepsilon>0$, there is an integer $N \geq 1$ satisfying that
        for all $k \in \N$ and $x,y \in X_{k}$,
        $$
        d_{k}(x,y)<\varepsilon \quad \text{whenever}\ d_{k,N}^{\vect{T}}(x,y)<\delta.
        $$
        We call $\delta$ a \emph{strongly uniformly expansive  constant} of $\vect{T}$.
      \end{defn}
      
 Note that strong uniform expansiveness, uniform expansiveness and expansiveness are all equivalent to positive expansiveness in TDSs.
        It is clear by \eqref{eq:bowenballsymsys} that nonautonomous symbolic dynamical systems   $(\vect{\Sigma}(\vect{m}),\vect{\sigma})$ are strongly uniformly expansive with the expansive constant $\e^{-1}$. See \cite[\S7.2]{Kawan2015} for  other properties and examples of sue NDSs,.

    \begin{rmk}
    Strong uniform expansiveness  is invariant under equiconjugacies of NDSs but not under equisemiconjugacies (see \cite[Prop.7.12(\romannumeral2)]{Kawan2015} and \cite[\S5.6 Rmk.(3)]{Walters1982}).     
      \end{rmk}

The next conclusion  provides not only the simplified formulation for the topological pressures of  sue NDSs  but also a sufficient condition for the existence of generators.  
  \begin{thm}\label{thm:suepre}
   Let $(\vect{X},\vect{T})$ be strongly uniformly expansive. Given $Z \subseteq X_{0}$.
    \begin{enumerate}[(1)]
      \item Let $\vect{\mathscr{U}}=\{\mathscr{U}_{k}\}_{k=0}^{\infty}$ be a uniform (weak) generator for $\vect{T}$ satisfying that for every $\varepsilon>0$, there exists an integer $N>0$ such that $\diam(\vee_{k,N}^{\vect{T}}\vect{\mathscr{U}}) \leq \varepsilon$ for all $k \in \N$. Then for all equicontinuous $\vect{f} \in \vect{C}_{b}(\vect{X},\R)$,
        $$
        P(\vect{T},\vect{f},Z) = Q(\vect{T},\vect{f},Z,\vect{\mathscr{U}}),
        $$
        where $P \in \{\prebow,\prelow,\preup\}$ and $Q$ is the corresponding one in $\{\prebpp,\qrelow,\qreup\}$.
      \item If $\delta>0$ is an expansive constant for $\vect{T}$, then for every $\varepsilon$ with $0<\varepsilon<\frac{\delta}{4}$ and  all equicontinuous $\vect{f} \in \vect{C}_{b}(\vect{X},\R)$,
        $$
        P(\vect{T},\vect{f},Z) = P(\vect{T},\vect{f},Z,\varepsilon),
        $$
        where $P \in \{\prebow,\prepac,\prelow,\preup\}$.
    \end{enumerate}
  \end{thm}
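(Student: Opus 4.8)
The plan is to reduce both statements to the fact that, under strong uniform expansiveness, the various pressures computed with respect to the full metric $d_0$ coincide with the pressures computed "at finite resolution" — either by a fixed scale $\varepsilon$ (part (2)) or by a fixed sequence of generating covers $\vect{\mathscr U}$ (part (1)). Recall that each of $\prebow,\prepac,\prelow,\preup$ is defined via a limit as $\varepsilon\to 0$ of a quantity built from $(n,\varepsilon)$-Bowen balls $B_{k,n}^{\vect T}(x,\varepsilon)$ (equivalently from the covers $\vee_{j=0}^{n-1}\vect T^{-j}B(\cdot,\varepsilon)$), while $\prebpp,\qrelow,\qreup$ and $P(\vect T,\vect f,Z,\varepsilon)$ are the corresponding quantities with the resolution \emph{frozen} at $\vect{\mathscr U}$ resp. at $\varepsilon$. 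So the whole content is: the $\varepsilon\to 0$ limit is already attained at any sufficiently small fixed scale. This is exactly where the sue hypothesis enters, through the defining property that for every $\varepsilon>0$ there is $N$ with $d_k(x,y)<\varepsilon$ whenever $d_{k,N}^{\vect T}(x,y)<\delta$, uniformly in $k$.

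First I would prove part (2). Fix the expansive constant $\delta$ and $0<\varepsilon<\tfrac{\delta}{4}$. One inequality, $P(\vect T,\vect f,Z)\ge P(\vect T,\vect f,Z,\varepsilon)$ (resp.\ $\le$, depending on the monotonicity direction of the particular pressure in the resolution parameter), is immediate from the definition since the $\varepsilon\to0$ limit is monotone in $\varepsilon$. For the reverse, take any $\varepsilon'$ with $0<\varepsilon'<\varepsilon$; I must compare the $(n,\varepsilon')$-structure with the $(n,\varepsilon)$-structure. By sue there is $N=N(\varepsilon')$, uniform in $k$, so that a $d_{k,N}^{\vect T}$-ball of radius $\delta\ge 4\varepsilon$ is contained in a $d_k$-ball of radius $\varepsilon'$; hence an $(n+N,\varepsilon)$-Bowen ball based at $x$ is contained in the $(n,\varepsilon')$-Bowen ball based at $x$ (using $\varepsilon<\delta$). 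Comparing Bowen sums: $S_{n+N}^{\vect T}\vect f$ differs from $S_n^{\vect T}\vect f$ by at most $N\|\vect f\|_\infty$ along the relevant orbit segments, a bounded additive error that washes out after dividing by $n$ and sending $n\to\infty$; equicontinuity of $\vect f$ controls the oscillation of $S_n^{\vect T}\vect f$ over a single $(n,\varepsilon')$-Bowen ball by $o(n)$. Feeding these comparisons into the Carathéodory-type definitions of $\prebow,\prelow,\preup$ (and of $\prepac$, where one additionally uses the net/packing structure) gives $P(\vect T,\vect f,Z,\varepsilon')$ sandwiched between expressions that converge to $P(\vect T,\vect f,Z,\varepsilon)$; letting $\varepsilon'\to0$ yields $P(\vect T,\vect f,Z)=P(\vect T,\vect f,Z,\varepsilon)$. (For $\prepac$ the hypothesis $\varepsilon<\delta$ also forces a bounded overlap multiplicity, which is what makes the packing pressure behave; this is the one spot needing care.)

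Part (1) is then essentially the same argument with the scale $\varepsilon$ replaced by the cover $\vect{\mathscr U}$. Let $\delta_0>0$ be a Lebesgue number for the uniform generator $\vect{\mathscr U}$; by Theorem~\ref{prop:expseqgnrtr} it is also an expansive constant. The two-sided comparison between $\vee_{k,n}^{\vect T}\vect{\mathscr U}$ and $(n,\varepsilon)$-Bowen covers uses, in one direction, that every $U\in\mathscr U_k$ has diameter at most some fixed bound and, in the other, the stated hypothesis that for every $\varepsilon>0$ there is $N$ with $\diam(\vee_{k,N}^{\vect T}\vect{\mathscr U})\le\varepsilon$ for all $k$ — this is precisely the analogue of the sue defining property at the level of covers, and it lets an element of $\vee_{k,n+N}^{\vect T}\vect{\mathscr U}$ be swallowed by an $(n,\varepsilon)$-Bowen ball and conversely (using the Lebesgue number $\delta_0$). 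Again the Birkhoff sums differ by the bounded term $N\|\vect f\|_\infty$ and the oscillations are $o(n)$ by equicontinuity, so the Carathéodory sums defining $\prebpp,\qrelow,\qreup$ and $\prebow,\prelow,\preup$ agree in the limit. I would state the covers-vs-balls comparison once as a lemma and apply it three times (Bowen, lower, upper). The main obstacle — and the part worth spelling out in the actual proof — is the packing case in (2): handling $\prepac$ requires a bounded-multiplicity / net-type argument to convert a packing of $(n,\varepsilon')$-Bowen balls into a comparable packing of $(n+N,\varepsilon)$-Bowen balls without losing more than a multiplicative constant, whereas the Bowen and lower/upper cases only need the straightforward containment of balls plus the additive control on $S_n^{\vect T}\vect f$; note this is also why $\prepac$ appears in (2) but (1) is phrased only for $\{\prebow,\prelow,\preup\}$.
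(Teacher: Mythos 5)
Your proposal is correct, but it takes a genuinely different route from the paper. The paper proves part~(1) first by forming the $m$-fold dynamical refinement $\vect{\mathscr{V}}_m=\{\bigvee_{j=0}^{m-1}\vect{T}_k^{-j}\mathscr{U}_{k+j}\}_k$, showing $\diam(\vect{\mathscr{V}}_m)\to 0$ using Lemma~\ref{lem: A.5}(2), invoking Propositions~\ref{prop:qrecov} and \ref{prop:prebppcov} to get $P(\vect T,\vect f,Z)=\lim_m Q(\vect T,\vect f,Z,\vect{\mathscr{V}}_m)$, and then constructing explicit maps $\phi_{n,m}\colon(\vect{\mathscr{V}}_m)_0^n\to\vect{\mathscr{U}}_0^{n+m}$ and $\gamma_{n,m}\colon\vect{\mathscr{U}}_0^{n+m}\to(\vect{\mathscr{V}}_m)_0^n$ to sandwich $Q(\cdot,\vect{\mathscr{U}})$ between $Q(\cdot,\vect{\mathscr{V}}_m)$ and $P(\cdot,\vect{\mathscr{V}}_m)$ up to the multiplicative error $\e^{m\|\vect f\|}$ that washes out as $n\to\infty$. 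Part~(2) is then \emph{deduced} from part~(1): the open cover $\vect{\mathscr{B}}$ by $\frac{\delta}{2}$-balls with Lebesgue number $2\varepsilon$ is a uniform generator satisfying the Lemma~\ref{lem: A.5} hypothesis, and Proposition~\ref{prop:qrecovpred2} supplies the link $Q_n(\cdot,\vect{\mathscr{B}})\le P_n(\cdot,\varepsilon)$; the $\prepac$ case is handled by combining the $\preup$ equality with the argument of \cite[Thm.4.8]{CM1}. You reverse the order, proving (2) directly via the containment $\overline{B}_{n+N}^{\vect T}(x,\varepsilon)\subseteq B_n^{\vect T}(x,\varepsilon')$ that sue provides for suitable $N=N(\varepsilon')$, and then treating (1) as an analogue at the level of covers. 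Your approach buys a more transparent, self-contained proof of (2), since it bypasses the cover machinery and the dependence on~(1); the paper's approach keeps the cover formalism as the single engine and makes (2) a one-step corollary.

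One point worth correcting: your worry at the end about $\prepac$ needing a bounded-multiplicity argument is misplaced for the ball-comparison route you propose. The same containment $\overline{B}_{n_i+N}^{\vect T}(x_i,\varepsilon)\subseteq\overline{B}_{n_i}^{\vect T}(x_i,\varepsilon')$ sends any disjoint packing at the finer scale $\varepsilon'$ into a disjoint packing at the coarser scale $\varepsilon$ (after bumping each $n_i$ to $n_i+N$), and the costs differ only by the fixed factor $\e^{Ns+N\|\vect f\|}$; no multiplicity control is required. The genuine reason $\prepac$ appears in~(2) but not in~(1) is definitional, not a technical obstruction in your argument: the paper only develops open-cover Carath\'eodory structures $\prebpp,\qrelow,\qreup$ for $\{\prebow,\prelow,\preup\}$, so there is nothing of the form $\prepac(\cdot,\vect{\mathscr{U}})$ to equate with in~(1), whereas $\prepac(\cdot,\varepsilon)$ is defined and the equality in~(2) makes sense.
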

 Consequently, we have the following result.
      \begin{cor}
  Let $(\vect{X},\vect{T})$ be strongly uniformly expansive. Given $Z \subseteq X_{0}$.
        \begin{enumerate}[(1)]
          \item If $\vect{\mathscr{U}}=\{\mathscr{U}_{k}\}_{k=0}^{\infty}$ is a uniform (weak) generator for $\vect{T}$ satisfying Lemma~\ref{lem: A.5}(2), then  for each $h \in \{\enttopbow,\enttoplow,\enttopup\}$, 
                $$
                h(\vect{T},Z) = h(\vect{T},Z,\vect{\mathscr{U}}).
                $$
              
          \item If $\delta>0$ is an expansive constant for $\vect{T}$, then  for each  $h \in \{\enttopbow,\enttoppac,\enttoplow,\enttopup\}$,
                $$
                h(\vect{T},Z) = h(\vect{T},Z,\varepsilon),
                $$
               for all $0<\varepsilon<\frac{\delta}{4}$.
        \end{enumerate}
      \end{cor}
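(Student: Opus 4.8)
The corollary asserts that, for a strongly uniformly expansive NDS $(\vect{X},\vect{T})$, all four flavours of topological \emph{entropy} on a set $Z \subseteq X_{0}$ can be computed either through a uniform (weak) generator (parts (1)) or through a fixed small scale $\varepsilon$ (part (2)). The plan is to derive everything from Theorem~\ref{thm:suepre} by simply specializing the potential to $\vect{f} = \vect{0}$. Indeed, each entropy quantity is by definition the corresponding pressure evaluated at the zero potential: $\enttopbow(\vect{T},Z) = \prebow(\vect{T},\vect{0},Z)$, $\enttoppac(\vect{T},Z) = \prepac(\vect{T},\vect{0},Z)$, $\enttoplow(\vect{T},Z) = \preL(\vect{T},\vect{0},Z)$, $\enttopup(\vect{T},Z) = \preU(\vect{T},\vect{0},Z)$, and likewise for the generator-relative and scale-relative versions $h(\vect{T},Z,\vect{\mathscr{U}})$ and $h(\vect{T},Z,\varepsilon)$, which are the pressures $Q(\vect{T},\vect{0},Z,\vect{\mathscr{U}})$ and $P(\vect{T},\vect{0},Z,\varepsilon)$.

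First I would observe that the zero potential $\vect{0} = \{0\}_{k=0}^{\infty}$ is trivially equicontinuous and lies in $\vect{C}_{b}(\vect{X},\R)$, with $\|\vect{0}\| = 0 < +\infty$, so it meets the hypotheses of Theorem~\ref{thm:suepre} in both parts. For part (1) of the corollary, I would apply Theorem~\ref{thm:suepre}(1) with $\vect{f} = \vect{0}$: since $\vect{\mathscr{U}}$ is assumed to be a uniform (weak) generator satisfying the diameter-shrinking condition (the reference to Lemma~\ref{lem: A.5}(2) being exactly that condition: for every $\varepsilon > 0$ there is $N$ with $\diam(\vee_{k,N}^{\vect{T}}\vect{\mathscr{U}}) \le \varepsilon$ for all $k$), the theorem gives $P(\vect{T},\vect{0},Z) = Q(\vect{T},\vect{0},Z,\vect{\mathscr{U}})$ for each $P \in \{\prebow,\preL,\preU\}$ with its matching $Q \in \{\prebpp,\qrelow,\qreup\}$. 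Translating back into entropy notation yields $h(\vect{T},Z) = h(\vect{T},Z,\vect{\mathscr{U}})$ for $h \in \{\enttopbow,\enttoplow,\enttopup\}$. For part (2), I would apply Theorem~\ref{thm:suepre}(2) with $\vect{f} = \vect{0}$ and the given expansive constant $\delta$: for every $\varepsilon$ with $0 < \varepsilon < \frac{\delta}{4}$ one gets $P(\vect{T},\vect{0},Z) = P(\vect{T},\vect{0},Z,\varepsilon)$ for all four $P \in \{\prebow,\prepac,\preL,\preU\}$, which is precisely $h(\vect{T},Z) = h(\vect{T},Z,\varepsilon)$ for $h \in \{\enttopbow,\enttoppac,\enttoplow,\enttopup\}$.

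There is essentially no obstacle here — the corollary is a definitional restriction of the preceding theorem — but the one point I would be careful to spell out is the bookkeeping of which entropy corresponds to which pressure, and in particular that packing entropy $\enttoppac$ only appears in part (2): Theorem~\ref{thm:suepre}(1) covers $\prebow,\preL,\preU$ but not $\prepac$, so the generator-relative statement is correctly limited to $\{\enttopbow,\enttoplow,\enttopup\}$, whereas the $\varepsilon$-scale statement of part (2) does include $\prepac$ and hence $\enttoppac$. I would also note in passing that $(\vect{X},\vect{T})$ being strongly uniformly expansive guarantees (via Theorem~\ref{prop:expseqgnrtr} and Proposition~\ref{prop_eque}) that generators of the required kind actually exist, so the hypotheses of part (1) are non-vacuous; but this is context rather than part of the proof. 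Thus the corollary follows immediately by setting $\vect{f} = \vect{0}$ in Theorem~\ref{thm:suepre}.
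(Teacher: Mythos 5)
Your proposal is correct and matches the paper's implicit argument: the paper introduces this corollary with the single word ``Consequently,'' meaning it is read off from Theorem~\ref{thm:suepre} by specializing to $\vect{f}=\vect{0}$, exactly as you do. Your bookkeeping of which entropy corresponds to which pressure, and the observation that $\prepac$ (hence $\enttoppac$) is available only in part~(2), is also the right thing to note.
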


  The next conclusion states that  every sue NDS is equisemiconjugate to a subsystem of the nonautonomous symbolic dynamical system.
  In other words, sue NDSs are \emph{equi-factors} of nonautonomous subshifts.       
      \begin{thm}\label{thm:suesymext}
  Let $(\vect{X},\vect{T})$ be strongly uniformly expansive.  Then
          there exists a nonautonomous shift $(\vect{\Sigma}(\vect{m}),\vect{\sigma})$, a closed $\Omega \subseteq \Sigma_{0}^\infty(\vect{m})$
                and an equicontinuous sequence $\vect{\pi}$ of surjections  $\pi_{k}: \vect{\sigma}^{k}\Omega \to X_{k}$ such that
                $\pi_{k+1} \circ \sigma_{k} = T_{k+1} \circ \pi_{k}$ for all $k \in \N$.
      \end{thm}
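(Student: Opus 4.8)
The plan is to construct the symbolic coding directly from a suitable uniform generator, mimicking the classical construction for positively expansive TDSs but keeping careful track of the nonautonomous indices. First I would invoke Proposition~\ref{prop_eque} (or rather the stronger structure available for sue systems together with Theorem~\ref{prop:expseqgnrtr}) to obtain, for a strongly uniformly expansive constant $\delta>0$, a sequence $\vect{\mathscr{U}}=\{\mathscr{U}_k\}_{k=0}^\infty$ of \emph{finite} open covers of the $X_k$ with a common Lebesgue number, which is a uniform weak generator for $\vect{T}$. Say $\mathscr{U}_k=\{U_{k,1},\dots,U_{k,m_k}\}$, and set $\vect{m}=\{m_k\}_{k=0}^\infty$; since each $m_k\geq 1$ we may, by repeating sets or refining, arrange $m_k\geq 2$ so that $(\vect{\Sigma}(\vect{m}),\vect{\sigma})$ is a genuine nonautonomous shift. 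The coding map is the standard itinerary: for $x\in X_0$ let $\Omega$ be the set of $\omega=\omega_0\omega_1\cdots\in\Sigma_0^\infty(\vect{m})$ such that $\bigcap_{j\geq 0}\vect{T}^{-j}\overline{U_{j,\omega_j}}\neq\varnothing$, and for $\omega\in\vect{\sigma}^k\Omega$ define $\pi_k(\omega)$ to be the unique point of $\bigcap_{j\geq 0}(\vect{T}_k)^{-j}\overline{U_{k+j,\omega_{k+j}}}$ inside $X_k$, where uniqueness comes from the weak generator property applied to the shifted system $(\vect{X}_k,\vect{T}_k)$ — this is where strong uniform expansiveness is used to guarantee the generator property holds simultaneously at every level $k$.

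The main steps after that are routine verifications: (i) $\Omega$ is closed — it is an intersection over finite words of the closed sets $\{\omega:\text{the length-}n\text{ intersection is nonempty}\}$, using compactness of the $X_k$ and continuity of the $T_k$ to see each such set is closed, so $\Omega$ is closed and hence compact; (ii) $\pi_k$ is well-defined and surjective onto $X_k$ — surjectivity because every $x\in X_k$ has, for each $j$, some $U_{k+j,i_j}\ni \vect{T}_k^j x$ (covers!), producing a word whose cylinders' preimages always contain $x$, so the intersection is nonempty and the itinerary lies in $\vect{\sigma}^k\Omega$; (iii) the intertwining $\pi_{k+1}\circ\sigma_k=T_{k+1}\circ\pi_k$ — both sides are the unique point of the same nested intersection after shifting the index by one, which is immediate from the definitions once one checks that $\sigma_k$ maps $\vect{\sigma}^k\Omega$ into $\vect{\sigma}^{k+1}\Omega$.

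The part that needs genuine care, and which I expect to be the main obstacle, is \emph{equicontinuity} of the family $\vect{\pi}=\{\pi_k\}$. Here the ``strong'' in strongly uniformly expansive is essential and does the real work: given $\varepsilon>0$, Definition~\ref{def:sue} supplies a single $N=N(\varepsilon)$, uniform in $k$, such that $d_{k,N}^{\vect{T}}(x,y)<\delta$ forces $d_k(x,y)<\varepsilon$. One then needs to pass from closeness in the metrics $d_k$ on $\Sigma_k^\infty(\vect{m})$ (i.e.\ agreement of $\omega,\vartheta$ on a long initial block) to the condition $d_{k+j}(\vect{T}_k^j\pi_k(\omega),\vect{T}_k^j\pi_k(\vartheta))<\delta$ for all $j<N$; this requires that when $\omega,\vartheta$ agree on the first $N$ coordinates, the points $\pi_k(\omega),\pi_k(\vartheta)$ both lie in $\overline{U_{k+j,\omega_{k+j}}}$ for $j<N$, so their images under $\vect{T}_k^j$ lie in a common set of $\mathscr{U}_{k+j}$, and hence are within (say) the common diameter bound one gets by shrinking $\delta$ relative to the Lebesgue number. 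Since the modulus of continuity $N(\varepsilon)$ does not depend on $k$, the resulting estimate ``$d_k(\omega,\vartheta)\leq e^{-N}\Rightarrow d_k(\pi_k(\omega),\pi_k(\vartheta))<\varepsilon$ for all $k$'' is exactly equicontinuity. I would also record that each $\pi_k$ is continuous (same argument without uniformity), so $\pi_k$ is a continuous surjection between compact metric spaces, completing the proof that $(\vect{X},\vect{T})$ is an equi-factor of the subsystem $(\Omega,\vect{\sigma}|_\Omega)$.
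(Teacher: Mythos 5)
Your route is genuinely different from the paper's, and both work. The paper disjointifies the covering balls into Borel sets $F_l^{(k)}$ whose boundaries form a first-category set $D_k^\infty$, defines the itinerary map $\psi_k$ on the dense set $X_k\setminus D_k^\infty$ (where the coding is unique), and then extends $\psi_k^{-1}$ by continuity to $\overline{\Omega_k}$ to produce $\pi_k$. You skip the disjointification entirely: $\Omega$ is the set of all words $\omega$ with $\bigcap_{j\geq 0}\vect{T}^{-j}\overline{U_{j,\omega_j}}\neq\varnothing$, which is automatically closed (a nested intersection of clopen cylinder-sets), and $\pi_k$ is directly the unique point of that intersection. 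Both routes invoke the strong uniform expansiveness in precisely the same way — via the uniform $N(\varepsilon)$ of Definition~\ref{def:sue}, or equivalently Lemma~\ref{lem: A.5}(1) — to get equicontinuity. Your construction is cleaner (no Baire-category argument, no extension by continuity), while the paper's construction buys the extra structural fact that $\pi_k$ is injective off a meager set, which is not needed for this theorem but can be useful for transferring measures.

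Two caveats. First, a terminology slip: you write ``uniform weak generator'' but then use the closures $\overline{U_{j,\omega_j}}$, which matches the definition of a (strong) generator, not a weak one; Proposition~\ref{prop_eque} gives both, so just say ``uniform generator'' throughout. Second, and more substantively, your surjectivity step glosses over a point that the paper also leaves implicit: when you build the itinerary of an arbitrary $x\in X_k$ (for $k\geq 1$), the resulting word $i_0 i_1\cdots$ lies in $\Sigma_k^\infty$, but for it to lie in $\vect{\sigma}^k\Omega$ you must prepend $k$ symbols coming from a preimage of $x$ in $X_0$ under $\vect{T}^k$ — so the itinerary lies in $\vect{\sigma}^k\Omega$ only when $x\in\vect{T}^k(X_0)$. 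If some $T_j$ fails to be surjective then $\vect{T}^k(X_0)\subsetneq X_k$, and in fact the commutation relation forces $\pi_{k+1}\circ\sigma_k$ to be onto $X_{k+1}$ while $T_k\circ\pi_k$ lands only in $T_k(X_k)$, so the theorem as stated tacitly requires the $T_k$ to be surjective. The paper's proof shares this issue (it defines $\Omega_k$ from $X_k$ directly but does not show $\vect{\sigma}^k\overline{\Omega_0}=\overline{\Omega_k}$). You should flag this hypothesis rather than assert surjectivity unconditionally.
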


      The final example shows that cylinders form a uniform generator of $(\vect{\Sigma}(\vect{m}),\vect{\sigma})$.
      \begin{exmp}\label{exmp:symsuepre}
        Given a nonautonomous shift $(\vect{\Sigma}(\vect{m}),\vect{\sigma})$, let $\vect{\mathscr{C}}=\{[\bu]_k: \bu\in \Sigma_{k}^{k}\}_{k=0}^{\infty}$.
        Then $\vect{\mathscr{C}}$ is a uniform generator for $\vect{\sigma}$. Moreover, by \eqref{eq:bowenballsymsys} and Theorem~\ref{thm:suepre}, for all equicontinuous $\vect{f} \in \vect{C}_{b}(\vect{X},\R)$, we have 
        $$
        \prelow(\vect{\sigma},\vect{f},\Omega)=\qrelow(\vect{\sigma},\vect{f},\Omega,\vect{\mathscr{C}}), \qquad   \qquad 
        \preup(\vect{\sigma},\vect{f},\Omega)=\qreup(\vect{\sigma},\vect{f},\Omega,\vect{\mathscr{C}}),
        $$
and 
        $$
        \prebow(\vect{\sigma},\vect{f},\Omega)=\prebpp(\vect{\sigma},\vect{f},\Omega,\vect{\mathscr{C}}) .
        $$

      \end{exmp}

\section{Topological Pressures and Entropies}\label{sect:pre}

In this section, we cite the definitions and properties of topological pressures and entropies for readers' convenience, and we refer the readers to  \cite{CM1} for details.

\subsection{Bowen metrics and Bowen balls}\label{ssect:BB}

Given   $(\vect{X},\vect{d},\vect{T})$, for each integer $k\geq 0$, we write
  $$
  \vect{T}_{k}^{j}=T_{k+(j-1)} \circ \cdots \circ T_{k}: X_{k} \to X_{k+j}
  $$
for $ j = 1,2,3,\cdots$, and we adopt the convention that  $ \vect{T}_{k}^{0}=\id_{X_{k}}$
where $\id_{X_{k}}:X_{k} \to X_{k}$ is the identity mapping.
Since the mappings $T_{k}$ are not necessarily bijective, we write $\vect{T}_{k}^{-j} = (\vect{T}_{k}^{j})^{-1}$ for the preimage of subsets of $X_{k+j}$ under $\vect{T}_{k}^{j}$.
For simplicity, we often write $\vect{T}^{j}=\vect{T}_{0}^{j}$ for $j \in \mathbb{Z}$.

Given $k \in \mathbb{N}$, for $n=1,2,3,\ldots,$ we define the {\emph{$n$-th Bowen metric at level $k$}} or the {\emph{$n$-th Bowen metric} on $X_{k}$} by
$$
d_{k,n}^{\vect{T}}(x,y)=\max_{0 \leq j \leq n-1}d_{k+j}(\vect{T}_{k}^{j}x,\vect{T}_{k}^{j}y)
$$
for all $x,y \in X_{k}$.  Given $\varepsilon>0$ and $x \in X_{k}$, the
\emph{open  and closed Bowen balls  with center  $x$ and  radius $\varepsilon$ at level $k$} are respectively given by
  \begin{equation}\label{eq:bowenballopen}
    B_{k,n}^{\vect{T}}(x,\varepsilon)=\bigcap_{j=0}^{n-1}\vect{T}_{k}^{-j}B_{X_{k+j}}(\vect{T}_{k}^{j}x,\varepsilon),
 \qquad
    \overline{B}_{k,n}^{\vect{T}}(x,\varepsilon)=\bigcap_{j=0}^{n-1}\vect{T}_{k}^{-j}\overline{B}_{X_{k+j}}(\vect{T}_{k}^{j}x,\varepsilon).
  \end{equation}
For simplicity, we write $d_{n}^{\vect{T}}=d_{0,n}^{\vect{T}}$ and 
$$
B_{n}^{\vect{T}}(x,\varepsilon)=B_{0,n}^{\vect{T}}(x,\varepsilon), \qquad
 \overline{B}_{n}^{\vect{T}}(x,\varepsilon)=\overline{B}_{0,n}^{\vect{T}}(x,\varepsilon).
$$

We denote the collection of all sequences of continuous functions $f_k: X_k \to \mathbb{R}$ by
$$
\vect{C}(\vect{X},\mathbb{R})=\prod_{k=0}^{\infty}C(X_{k},\mathbb{R}).
$$
 We often write $\vect{0}$ and $\vect{1} \in \vect{C}(\vect{X},\mathbb{R})$ for the sequence of constant $0$ functions and constant $1$ functions, respectively, and $a\vect{1} \in \vect{C}(\vect{X},\mathbb{R})$ for the sequence of constant $a$ functions where $a \in \mathbb{R}$.
  Given $\vect{f}=\{f_{k}\}_{k=0}^{\infty}$ and $\vect{g}=\{g_{k}\}_{k=0}^{\infty} \in \vect{C}(\vect{X},\mathbb{R})$, we write $\vect{f} \preceq \vect{g}$
  if $f_{k} \leq g_{k}$ for all $k \in \mathbb{N}$.
 Given $\vect{f}\in \vect{C}(\vect{X},\mathbb{R})$, we write
\begin{equation}\label{eq_fnorm}
\|\vect{f}\|=\sup_{k \in \mathbb{N}}\big\{\|f_{k}\|_{\infty}=\max_{x \in X_{k}}\lvert f_{k}(x) \rvert\big\}.
\end{equation}
  It is clear that $\|\vect{f}\|<+\infty$ implies that $\vect{f}$ is uniformly bounded. We denote the collection of all uniformly bounded function sequences  in $\vect{C}(\vect{X},\mathbb{R})$ by
  $$\vect{C}_{b}(\vect{X},\mathbb{R})=\{\vect{f}\in \vect{C}(\vect{X},\mathbb{R}): \|\vect{f}\|<+\infty\}.$$
Given $\vect{f} \in \vect{C}(\vect{X},\mathbb{R})$, we say $\vect{f}$ is \emph{equicontinuous} if for every $\varepsilon>0$,
  there exists $\delta>0$ such that for all $k \in \mathbb{N}$ and all $x^{\prime},x^{\prime\prime} \in X_{k}$ satisfying
  $d_{k}(x^{\prime},x^{\prime\prime})<\delta$, we have 
  $$
  \lvert f_{k}(x^{\prime})-f_{k}(x^{\prime\prime}) \rvert<\varepsilon.
  $$

Given $\vect{f} \in \vect{C}(\vect{X},\mathbb{R})$, for  $k, n \in \mathbb{N}$, we write
  \begin{equation}\label{eq:sumknTf}
    S_{k,n}^{\vect{T}}\vect{f}=\sum_{j=0}^{n-1} f_{k+j} \circ \vect{T}_{k}^{j}.
  \end{equation}
 For simplicity, we write
  \begin{equation}\label{eq:sum0nTf}
    S_{n}^{\vect{T}}\vect{f}=S_{0,n}^{\vect{T}}\vect{f}=\sum_{j=0}^{n-1} f_{j} \circ \vect{T}^{j}.
  \end{equation}

In the  nonautonomous symbolic system $(\vect{\Sigma}(\vect{m}),\vect{\sigma})$, 
  by the essence of the shift dynamics and \eqref{eq:bowenballopen}, the Bowen balls  have much simpler expressions. The $n$-th Bowen ball of level $k$ about $\omega \in \Sigma_{k}^{\infty}(\vect{m})$
  of radius $\varepsilon>0$ is
  \begin{equation}\label{eq:bowenballsymsys}
    B_{k,n}^{\vect{\sigma}}(\omega,\varepsilon)=[\omega\vert{(n+\lfloor -\log{\varepsilon}+1 \rfloor-1)}]_{k},
  \end{equation}
  the cylinder of base $\omega\vert{(n+N-1)}$, which is of rank $n+\lfloor -\log{\varepsilon}+1 \rfloor-1$,
  Similarly, the $n$-th closed Bowen ball of level $k$ about $\omega \in \Sigma_{k}^{\infty}(\vect{m})$
  of radius $\varepsilon>0$ is
  \begin{equation}\label{def_BSkn}
    \overline{B}_{k,n}^{\vect{\sigma}}(\omega,\varepsilon)=[\omega\vert{(n+\lceil -\log{\varepsilon} \rceil-1)}]_{k},
  \end{equation}
  It is clear that cylinders are also exactly the open and at the same time closed Bowen balls, that is,  $\{[\bu]_{0}: \bu \in \Sigma_{0}^{*}\}$
  is the collection of all Bowen balls.

  \subsection{Lower and upper topological pressures and entropies}\label{ssect:PLPU}
      In this subsection, we give definitions for the lower and upper topological pressures and entropies of NDSs on subsets.
      
      A standard approach is via the $(n,\varepsilon)$-spanning and $(n,\varepsilon)$-separated sets.      Given a subset $Z \subset X_{0}$, we call $F\subset X_{0}$   a \emph{$(n,\varepsilon)$-spanning set for $Z$ with respect to $\vect{T}$}  if for every $x \in Z$,  there exists $y \in F$ with $d_{0,n}^{\vect{T}}(x,y) \leq \varepsilon$.
    As a dual counterpart, a set $E \subset Z$ is called a \emph{$(n,\varepsilon)$-separated set for $Z$ with respect to $\vect{T}$}
    if  any two distinct points $x,y \in E$  implies $d_{0,n}^{\vect{T}}(x,y)>\varepsilon$.

      For $\vect{f} \in \vect{C}(\vect{X},\R)$, integral $n \geq 1$ and real $\varepsilon>0$, we write
      \begin{eqnarray}
  && \label{eq:defQn}
       Q_{n}(\vect{T},\vect{f},Z,\varepsilon) = \inf\Big\{ \sum_{x \in F}\e^{S_{n}^{\vect{T}}\vect{f}(x)}: F\ \text{is a}\ (n,\varepsilon)\text{-spanning set for}\ Z\  \Big\}; \\
  && \label{eq:defPn}
        P_{n}(\vect{T},\vect{f},Z,\varepsilon) = \sup\Big\{ \sum_{x \in E}\e^{S_{n}^{\vect{T}}\vect{f}(x)}: E\ \text{is a}\ (n,\varepsilon)\text{-separated set for}\ Z\  \Big\}.
      \end{eqnarray}
 We  consider both lower and upper limits and write
      \begin{equation}\label{eq:Qe}
        \begin{aligned}
          \qrelow(\vect{T},\vect{f},Z,\varepsilon) &= \varliminf_{n \to \infty}\frac{1}{n}\log{Q_{n}(\vect{T},\vect{f},Z,\varepsilon)}, \\
          \qreup(\vect{T},\vect{f},Z,\varepsilon) &= \varlimsup_{n \to \infty}\frac{1}{n}\log{Q_{n}(\vect{T},\vect{f},Z,\varepsilon)},
        \end{aligned}
      \end{equation}
      \begin{equation}\label{eq:Pe}
        \begin{aligned}
          \prelow(\vect{T},\vect{f},Z,\varepsilon) &= \varliminf_{n \to \infty}\frac{1}{n}\log{P_{n}(\vect{T},\vect{f},Z,\varepsilon)}, \\
          \preup(\vect{T},\vect{f},Z,\varepsilon) &= \varlimsup_{n \to \infty}\frac{1}{n}\log{P_{n}(\vect{T},\vect{f},Z,\varepsilon)}.
        \end{aligned}
      \end{equation}
     Since $\qrelow(\vect{T},\vect{f},Z,\varepsilon)$, $\qreup(\vect{T},\vect{f},Z,\varepsilon)$, $\prelow(\vect{T},\vect{f},Z,\varepsilon)$, and $\preup(\vect{T},\vect{f},Z,\varepsilon)$ are decreasing in $\varepsilon$, the following limits exist,
      \begin{eqnarray}
&& \label{eq:defqre}        \qrelow(\vect{T},\vect{f},Z) = \lim_{\varepsilon \to 0}\qrelow(\vect{T},\vect{f},Z,\varepsilon),
        \quad
        \qreup(\vect{T},\vect{f},Z) = \lim_{\varepsilon \to 0}\qreup(\vect{T},\vect{f},Z,\varepsilon),       \\
&& \label{eq:defpre}         \prelow(\vect{T},\vect{f},Z) = \lim_{\varepsilon \to 0}\prelow(\vect{T},\vect{f},Z,\varepsilon),
        \quad
        \preup(\vect{T},\vect{f},Z) = \lim_{\varepsilon \to 0}\preup(\vect{T},\vect{f},Z,\varepsilon).
      \end{eqnarray}

      \begin{defn}
        Given  a subset $Z \subset X_{0}$ and $\vect{f} \in \vect{C}(\vect{X},\R)$, we call
        $\qrelow(\vect{T},\vect{f},Z)$ and $\qreup(\vect{T},\vect{f},Z)$ the \emph{lower} and \emph{upper spanning topological pressures of $\vect{T}$ for $\vect{f}$ on $Z$}, respectively;
        and we call $\prelow(\vect{T},\vect{f},Z)$ and $\preup(\vect{T},\vect{f},Z)$ the \emph{lower} and \emph{upper separated topological pressures of $\vect{T}$ for $\vect{f}$ on $Z$}, respectively.

        If  $\qrelow(\vect{T},\vect{f},Z)=\prelow(\vect{T},\vect{f},Z)$, we call it  the \emph{lower topological pressure of $\vect{T}$ for $\vect{f}$ on $Z$} and denote it  by $\preL(\vect{T},\vect{f},Z)$.
        Similarly, if   $\qreup(\vect{T},\vect{f},Z)=\preup(\vect{T},\vect{f},Z)$, we call it the \emph{  upper topological pressure of $\vect{T}$ for $\vect{f}$ on $Z$} and denote it by $\preU(\vect{T},\vect{f},Z)$.

The \emph{lower} and \emph{upper topological entropies of $\vect{T}$ on $Z$} are  respectively defined as 
        $$
        \enttoplow(\vect{T},Z)=\preL(\vect{T},\vect{0},Z) \qquad \textit{and \qquad}
        \enttopup(\vect{T},Z)=\preU(\vect{T},\vect{0},Z).
        $$

      \end{defn}

      The following result was given in \cite[Prop.2.2]{CM1}.
      \begin{prop}\label{prop_ExPUPL}
        Given   $(\vect{X},\vect{T})$ and $Z \subseteq X_{0}$. For all equicontinuous $\vect{f} \in \vect{C}(\vect{X},\R)$, the lower and upper topological pressures  $\preL(\vect{T},\vect{f},Z)$ and $\preU(\vect{T},\vect{f},Z)$ exist, that is, 
\begin{align*}
& \preU(\vect{T},\vect{f},Z)=\preup(\vect{T},\vect{f},Z)=\qreup (\vect{T},\vect{f},Z), \\ 
& \preL(\vect{T},\vect{f},Z)=\prelow(\vect{T},\vect{f},Z)=\qrelow (\vect{T},\vect{f},Z).
\end{align*}

      \end{prop}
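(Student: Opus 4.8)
The statement to prove is Proposition~\ref{prop_ExPUPL}: for equicontinuous $\vect{f}$, the lower and upper separated and spanning pressures all coincide, so that $\preL$ and $\preU$ are well-defined. This is a standard-type comparison between spanning and separated quantities, adapted to the nonautonomous setting, so I will describe the plan in that spirit.

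\medskip

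\textbf{Proof proposal.} The plan is to establish the two-sided inequality
$$
\qrelow(\vect{T},\vect{f},Z,\varepsilon) \le \prelow(\vect{T},\vect{f},Z,\varepsilon) \le \qrelow(\vect{T},\vect{f},Z,\varepsilon/2) + \eta(\varepsilon)
$$
(and similarly with $\varliminf$ replaced by $\varlimsup$), where $\eta(\varepsilon) \to 0$ as $\varepsilon \to 0$, and then let $\varepsilon \to 0$. First I would record the elementary inequality $P_n(\vect{T},\vect{f},Z,\varepsilon) \le Q_n(\vect{T},\vect{f},Z,\varepsilon)$ valid for every $n$ and $\varepsilon$: given a maximal (hence $(n,\varepsilon)$-spanning) $(n,\varepsilon)$-separated set $E$ for $Z$, any $(n,\varepsilon)$-separated set contributes at most $Q_n$ because a maximal separated set is spanning; more directly, one shows $\sum_{x\in E}\e^{S_n^{\vect{T}}\vect{f}(x)}$ over separated $E$ is dominated by the infimum defining $Q_n$ after a covering argument. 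The cleaner route: a maximal $(n,\varepsilon)$-separated set $E$ is $(n,\varepsilon)$-spanning, so $Q_n(\vect{T},\vect{f},Z,\varepsilon)\le \sum_{x\in E}\e^{S_n^{\vect{T}}\vect{f}(x)}$ for that particular $E$ — but we need the reverse direction for this half, so instead I use: every $(n,\varepsilon)$-separated set $E'$ gives a point map into any $(n,\varepsilon/2)$-spanning set $F$ by sending $x\in E'$ to some $y\in F$ with $d_{0,n}^{\vect{T}}(x,y)\le \varepsilon/2$; this map is injective (two distinct points of $E'$ cannot share the same $y$, since then they would be within $\varepsilon$ of each other), whence $P_n(\vect{T},\vect{f},Z,\varepsilon)\le \e^{n\eta}\,Q_n(\vect{T},\vect{f},Z,\varepsilon/2)$, where $\eta$ bounds the per-step fluctuation of $\vect{f}$.

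\medskip

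The key device for the error term is equicontinuity of $\vect{f}$. Given $\varepsilon'>0$, choose $\delta>0$ from equicontinuity so that $d_k(x',x'')<\delta$ forces $|f_k(x')-f_k(x'')|<\varepsilon'$ for all $k$; then whenever $d_{0,n}^{\vect{T}}(x,y)\le \delta$ we have $|S_n^{\vect{T}}\vect{f}(x)-S_n^{\vect{T}}\vect{f}(y)|\le n\varepsilon'$, because $d_{0,n}^{\vect{T}}(x,y)\le\delta$ means $d_{j}(\vect{T}^jx,\vect{T}^jy)<\delta$ for each $0\le j\le n-1$, so each of the $n$ summands differs by less than $\varepsilon'$. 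Thus, taking $\varepsilon$ small enough that $\varepsilon/2\le\delta$, the injective map above satisfies $\e^{S_n^{\vect{T}}\vect{f}(x)}\le \e^{n\varepsilon'}\e^{S_n^{\vect{T}}\vect{f}(y)}$, giving $P_n(\vect{T},\vect{f},Z,\varepsilon)\le \e^{n\varepsilon'}Q_n(\vect{T},\vect{f},Z,\varepsilon/2)$. Taking $\frac1n\log$ and then $\varliminf$ (resp.\ $\varlimsup$) yields $\prelow(\vect{T},\vect{f},Z,\varepsilon)\le \qrelow(\vect{T},\vect{f},Z,\varepsilon/2)+\varepsilon'$ (resp.\ the $\varlimsup$ version), and combined with the trivial $Q_n\le P_n$-type bound — namely that a maximal $(n,\varepsilon)$-separated set is $(n,\varepsilon)$-spanning, so $\qrelow(\vect{T},\vect{f},Z,\varepsilon)\le \prelow(\vect{T},\vect{f},Z,\varepsilon)$ and likewise for $\varlimsup$ — we have sandwiched the two quantities. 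Letting $\varepsilon\to 0$ and then $\varepsilon'\to 0$, using that all four quantities are monotone in $\varepsilon$ so the limits in \eqref{eq:defqre}--\eqref{eq:defpre} exist, we conclude $\qrelow(\vect{T},\vect{f},Z)=\prelow(\vect{T},\vect{f},Z)$ and $\qreup(\vect{T},\vect{f},Z)=\preup(\vect{T},\vect{f},Z)$, which is exactly the assertion.

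\medskip

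The main obstacle — really the only subtlety beyond bookkeeping — is making sure the equicontinuity hypothesis is used \emph{uniformly in $k$} and that it controls the full Birkhoff-type sum $S_n^{\vect{T}}\vect{f}$ rather than just a single term: since $S_n^{\vect{T}}\vect{f}(x)=\sum_{j=0}^{n-1}f_j(\vect{T}^jx)$ involves functions at different levels $j$, the single $\delta$ supplied by the definition of equicontinuity must work simultaneously across all those levels, which is precisely why the nonautonomous notion of equicontinuity in \eqref{eq_fnorm}'s surrounding discussion is stated uniformly in $k$. A secondary point to handle carefully is the existence of a \emph{maximal} $(n,\varepsilon)$-separated set for possibly non-compact $Z$: one takes a maximal one by Zorn's lemma (or notes that in the bounded total-mass estimates only finite separated sets matter), and verifies maximality implies the spanning property. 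With these points addressed, the rest is the routine $\frac1n\log$-and-limit manipulation; I would also remark that the same argument, specialized to $\vect{f}=\vect{0}$, recovers the well-definedness of $\enttoplow$ and $\enttopup$.
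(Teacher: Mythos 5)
Your argument is correct and is the standard Walters-type spanning-versus-separated comparison adapted to the nonautonomous setting. Note that this proposition is only cited in the present paper as {[Prop.2.2]} of {\rm\cite{CM1}} rather than proved here, so there is no in-paper proof to compare against; but what you wrote is precisely the expected argument, with the two essential observations correctly isolated: that a maximal $(n,\varepsilon)$-separated subset of $Z$ is $(n,\varepsilon)$-spanning (yielding $Q_n\le P_n$ and hence $\qrelow\le\prelow$, $\qreup\le\preup$), and that the injective map from an $(n,\varepsilon)$-separated set into an $(n,\varepsilon/2)$-spanning set together with the level-uniform $\delta$ supplied by equicontinuity yields $P_n(\varepsilon)\le \e^{n\varepsilon'}Q_n(\varepsilon/2)$ once $\varepsilon/2\le\delta(\varepsilon')$. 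One small remark: the Zorn's-lemma discussion can be dispensed with, since $d_{0,n}^{\vect{T}}$ metrizes the compact topology on $X_{0}$, so every $(n,\varepsilon)$-separated set is automatically finite and maximal separated sets exist trivially.
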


      Another method to define the upper and lower pressures for equicontinuous $\vect{f} \in \vect{C}(\vect{X},\R)$ is by open covers. 
    Given a sequence $\vect{\mathscr{U}}=\{\mathscr{U}_{k}\}_{k=0}^{\infty}$ of open covers $\mathscr{U}_{k}$ of $X_{k}$,
    for all integers $k \geq 0$ and $n \geq 1$,
    we write $\vect{\mathscr{U}}_{k}^{n}$ for the set of all strings $\mathbf{U}$ of length $n=\len{\mathbf{U}}$ at level $k$,
    i.e.,
    $$
    \vect{\mathscr{U}}_{k}^{n} = \{\mathbf{U}=U_{k}U_{k+1} \cdots U_{k+n-1}: U_{j} \in \mathscr{U}_{j}, j = k,\ldots,k+n-1\},
    $$
    and for every $\mathbf{U} \in \vect{\mathscr{U}}_{k}^{n}$,
    \begin{equation}\label{def_XkbU}
    X_{k}[\mathbf{U}] = \bigcap_{j=0}^{n-1}\vect{T}_{k}^{-j}U_{k+j}
    = \{x \in X_{k}: \vect{T}_{k}^{j}x \in U_{k+j}, j=0,\dots,n-1\}.
    \end{equation}
   For every $\mathbf{U} \in \vect{\mathscr{U}}_{k}^{n}$, we write
    \begin{equation}\label{eq:SknTfU}
      \underline{S}_{k,n}^{\vect{T}}\vect{f}(\mathbf{U})=\inf_{x \in X_{k}[\mathbf{U}]}S_{k,n}^{\vect{T}}\vect{f}(x),
      \quad\text{and}\quad
      \overline{S}_{k,n}^{\vect{T}}\vect{f}(\mathbf{U})=\sup_{x \in X_{k}[\mathbf{U}]}S_{k,n}^{\vect{T}}\vect{f}(x);
    \end{equation}
   where $\underline{S}_{k,n}^{\vect{T}}\vect{f}(\mathbf{U})=\overline{S}_{k,n}^{\vect{T}}\vect{f}(\mathbf{U})=-\infty$ if $X_{k}[\mathbf{U}] = \emptyset$. 
    Write
    $$
    \vee_{k,n}^{\vect{T}}\vect{\mathscr{U}} = \bigvee_{j=0}^{n-1}\vect{T}_{k}^{-j}\mathscr{U}_{j+k} = \{X_{k}[\mathbf{U}]: \mathbf{U} \in \vect{\mathscr{U}}_{k}^{n}\}.
    $$
    We say that $\Gamma \subset \vect{\mathscr{U}}_{0}^{n}$ \emph{covers} $Z \subset X_{0}$ if $Z \subset \bigcup_{\mathbf{U} \in \Gamma}X_{0}[\mathbf{U}]$.
    For simplicity, we write $\vee_{n}^{\vect{T}}\vect{\mathscr{U}}=\vee_{0,n}^{\vect{T}}\vect{\mathscr{U}}$, $\underline{S}_{n}^{\vect{T}}\vect{f}(\mathbf{U})=\underline{S}_{0,n}^{\vect{T}}\vect{f}(\mathbf{U})$ and $ \overline{S}_{n}^{\vect{T}}\vect{f}(\mathbf{U})= \overline{S}_{0,n}^{\vect{T}}\vect{f}(\mathbf{U})$.    
    
     Similar to \eqref{eq:defQn} and \eqref{eq:defPn}, we define
    \begin{equation}\label{eq:QnPncov}
      \begin{split}
        Q_{n}(\vect{T},\vect{f},Z,\vect{\mathscr{U}}) &= \inf\Big\{\sum_{\mathbf{U} \in \Gamma}\exp{(\underline{S}_{n}^{\vect{T}}\vect{f}(\mathbf{U}))}: \Gamma \subset \vect{\mathscr{U}}_{0}^{n}\ \text{covers}\ Z\Big\}, \\
        P_{n}(\vect{T},\vect{f},Z,\vect{\mathscr{U}}) &= \inf\Big\{\sum_{\mathbf{U} \in \Gamma}\exp{(\overline{S}_{n}^{\vect{T}}\vect{f}(\mathbf{U}))}: \Gamma \subset \vect{\mathscr{U}}_{0}^{n}\ \text{covers}\ Z\Big\}.
      \end{split}
    \end{equation}
We write 
\begin{equation}\label{def_QQPPCV}
    \begin{split}
   &   \qrelow(\vect{T},\vect{f},Z,\vect{\mathscr{U}})=\varliminf_{n \to \infty}\frac{1}{n}\log{Q_{n}(\vect{T},\vect{f},Z,\vect{\mathscr{U}})}, \\
   &   \qreup(\vect{T},\vect{f},Z,\vect{\mathscr{U}})=\varlimsup_{n \to \infty}\frac{1}{n}\log{Q_{n}(\vect{T},\vect{f},Z,\vect{\mathscr{U}})},\\
   & \prelow(\vect{T},\vect{f},Z,\vect{\mathscr{U}})=\varliminf_{n \to \infty}\frac{1}{n}\log{P_{n}(\vect{T},\vect{f},Z,\vect{\mathscr{U}})}, \\
   & \preup(\vect{T},\vect{f},Z,\vect{\mathscr{U}})=\varlimsup_{n \to \infty}\frac{1}{n}\log{P_{n}(\vect{T},\vect{f},Z,\vect{\mathscr{U}})}.
    \end{split}
\end{equation}

    Given a sequence $\vect{\mathscr{U}}$ of open covers $\mathscr{U}_{k}$ of $X_{k}$,
    we write
    $$
    \diam(\vect{\mathscr{U}}) = \sup_{k \in \N}\sup\{\diam(U): U \in \mathscr{U}_{k}\}.
    $$
The following conclusion is  proved in  \cite[Prop.3.3 \& Prop.3.4]{CM1}.
    \begin{prop}\label{prop:qrecov}
 Given  $(\vect{X},\vect{T})$ and $Z \subset X_{0}$. If $\vect{f} \in \vect{C}(\vect{X},\R)$ is equicontinuous,  
 \begin{equation*}
 \begin{split}
   & \preL(\vect{T},\vect{f},Z) = \sup_{\vect{\mathscr{U}}}\qrelow(\vect{T},\vect{f},Z,\vect{\mathscr{U}}) = \lim_{\diam(\vect{\mathscr{U}}) \to 0}\qrelow(\vect{T},\vect{f},Z,\vect{\mathscr{U}})   = \lim_{\diam(\vect{\mathscr{U}}) \to 0}\prelow(\vect{T},\vect{f},Z,\vect{\mathscr{U}})\\ 
    &  \preU(\vect{T},\vect{f},Z) = \sup_{\vect{\mathscr{U}}}\qreup(\vect{T},\vect{f},Z,\vect{\mathscr{U}}) = \lim_{\diam(\vect{\mathscr{U}}) \to 0}\qreup(\vect{T},\vect{f},Z,\vect{\mathscr{U}})= \lim_{\diam(\vect{\mathscr{U}}) \to 0}\preup(\vect{T},\vect{f},Z,\vect{\mathscr{U}}),
  \end{split}
  \end{equation*}
      where $\vect{\mathscr{U}}$ ranges over all sequences of open covers of $X_{k}$ with a Lebesgue number.
    \end{prop}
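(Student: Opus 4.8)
The plan is to deduce all the stated equalities from Proposition~\ref{prop_ExPUPL}, which already
identifies $\preL(\vect{T},\vect{f},Z)$ with $\lim_{\varepsilon\to0}\qrelow(\vect{T},\vect{f},Z,\varepsilon)$ and with
$\lim_{\varepsilon\to0}\prelow(\vect{T},\vect{f},Z,\varepsilon)$, by means of a few elementary comparisons between the cover
quantities $Q_{n}(\vect{T},\vect{f},Z,\vect{\mathscr{U}}),P_{n}(\vect{T},\vect{f},Z,\vect{\mathscr{U}})$ and the spanning/separated quantities
$Q_{n}(\vect{T},\vect{f},Z,\varepsilon),P_{n}(\vect{T},\vect{f},Z,\varepsilon)$, all controlled by a Lebesgue number and by the
diameter of $\vect{\mathscr{U}}$. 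Throughout, fix an equicontinuous $\vect{f}$ and a common modulus of continuity
$\rho$ (so $\rho(t)\to0$ as $t\to0$ and $\lvert f_{k}(x)-f_{k}(x')\rvert\le\rho(t)$ whenever $d_{k}(x,x')\le t$,
uniformly in $k$); then $\lvert S_{n}^{\vect{T}}\vect{f}(x)-S_{n}^{\vect{T}}\vect{f}(x')\rvert\le n\rho(t)$ whenever
$d_{j}(\vect{T}^{j}x,\vect{T}^{j}x')\le t$ for $0\le j\le n-1$.

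First I would prove the universal upper bound $\qrelow(\vect{T},\vect{f},Z,\vect{\mathscr{U}})\le\preL(\vect{T},\vect{f},Z)$
for every $\vect{\mathscr{U}}$ with a Lebesgue number $\delta>0$. Given $0<\varepsilon<\delta/2$ and a finite
$(n,\varepsilon)$-spanning set $F$ for $Z$, each closed Bowen ball $\overline{B}_{n}^{\vect{T}}(y,\varepsilon)$ is an
intersection of preimages of closed $\varepsilon$-balls of diameter $<\delta$, each of which lies in some member
of the corresponding $\mathscr{U}_{j}$; this produces a string $\mathbf{U}_{y}$ with $\overline{B}_{n}^{\vect{T}}(y,\varepsilon)\subseteq X_{0}[\mathbf{U}_{y}]$
and $y\in X_{0}[\mathbf{U}_{y}]$, so $\{\mathbf{U}_{y}:y\in F\}$ covers $Z$ and $\underline{S}_{n}^{\vect{T}}\vect{f}(\mathbf{U}_{y})\le S_{n}^{\vect{T}}\vect{f}(y)$.
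Taking the infimum over $F$ gives $Q_{n}(\vect{T},\vect{f},Z,\vect{\mathscr{U}})\le Q_{n}(\vect{T},\vect{f},Z,\varepsilon)$, hence
$\qrelow(\vect{T},\vect{f},Z,\vect{\mathscr{U}})\le\qrelow(\vect{T},\vect{f},Z,\varepsilon)$ for all small $\varepsilon$; letting $\varepsilon\to0$ and
invoking Proposition~\ref{prop_ExPUPL} gives the bound, and in particular
$\sup_{\vect{\mathscr{U}}}\qrelow(\vect{T},\vect{f},Z,\vect{\mathscr{U}})\le\preL(\vect{T},\vect{f},Z)$.

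Next I would establish the matching lower bound by a dual comparison. Put $D=\diam(\vect{\mathscr{U}})$. Any two
points of a set $X_{0}[\mathbf{U}]$ lie within $d_{n}^{\vect{T}}$-distance $D$, so a $(n,D)$-separated set $E\subseteq Z$
meets each $X_{0}[\mathbf{U}]$ at most once; assigning to $x\in E$ a string $\mathbf{U}$ of a covering family $\Gamma$
with $x\in X_{0}[\mathbf{U}]$ and using $S_{n}^{\vect{T}}\vect{f}(x)\le\overline{S}_{n}^{\vect{T}}\vect{f}(\mathbf{U})$ gives
$P_{n}(\vect{T},\vect{f},Z,D)\le P_{n}(\vect{T},\vect{f},Z,\vect{\mathscr{U}})$. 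The equicontinuity estimate forces
$\overline{S}_{n}^{\vect{T}}\vect{f}(\mathbf{U})-\underline{S}_{n}^{\vect{T}}\vect{f}(\mathbf{U})\le n\rho(D)$, hence $P_{n}(\vect{T},\vect{f},Z,\vect{\mathscr{U}})\le\e^{\,n\rho(D)}Q_{n}(\vect{T},\vect{f},Z,\vect{\mathscr{U}})$;
together with $Q_{n}(\vect{T},\vect{f},Z,\vect{\mathscr{U}})\le P_{n}(\vect{T},\vect{f},Z,\vect{\mathscr{U}})$ and the second paragraph this yields
$$
\prelow(\vect{T},\vect{f},Z,D)-\rho(D)\ \le\ \qrelow(\vect{T},\vect{f},Z,\vect{\mathscr{U}})\ \le\ \prelow(\vect{T},\vect{f},Z,\vect{\mathscr{U}})\ \le\ \preL(\vect{T},\vect{f},Z)+\rho(D).
$$
Letting $D=\diam(\vect{\mathscr{U}})\to0$, the outer terms converge to $\preL(\vect{T},\vect{f},Z)$ by Proposition~\ref{prop_ExPUPL}, so
$\lim_{\diam(\vect{\mathscr{U}})\to0}\qrelow(\vect{T},\vect{f},Z,\vect{\mathscr{U}})=\lim_{\diam(\vect{\mathscr{U}})\to0}\prelow(\vect{T},\vect{f},Z,\vect{\mathscr{U}})=\preL(\vect{T},\vect{f},Z)$. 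Since
covers of arbitrarily small diameter still possessing a positive Lebesgue number do exist---for instance
take $\mathscr{U}_{k}$ to be the open balls of radius $D/2$ about a maximal $(D/4)$-separated subset of $X_{k}$,
which has Lebesgue number at least $D/4$ uniformly in $k$---this limit also forces
$\sup_{\vect{\mathscr{U}}}\qrelow(\vect{T},\vect{f},Z,\vect{\mathscr{U}})\ge\preL(\vect{T},\vect{f},Z)$, completing the first chain of equalities. The
upper-pressure identities follow by repeating the argument with $\varliminf$ replaced by $\varlimsup$, since
every inequality used holds between the finite-$n$ quantities $Q_{n}$ and $P_{n}$.

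The one step that requires real care is the equicontinuity estimate and its uniformity in both $n$ and
$k$: it is precisely the uniformity over $k$ that needs $\vect{f}$ equicontinuous rather than merely a
sequence of continuous maps, and it is what makes the modulus $\rho$ common to all $f_{k}$. The remaining
point is the small lemma that small-diameter covers with a genuine uniform Lebesgue number exist;
everything else is the nonautonomous transcription of the classical cover-versus-separated-set comparison
for topological pressure.
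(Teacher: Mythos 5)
The paper itself does not supply a proof of this proposition---it cites \cite[Prop.3.3 \& Prop.3.4]{CM1}---so there is no internal argument to compare against. Judged on its own, your proof is correct and complete. The two comparison inequalities you establish, namely $Q_{n}(\vect{T},\vect{f},Z,\vect{\mathscr{U}})\le Q_{n}(\vect{T},\vect{f},Z,\varepsilon)$ whenever $2\varepsilon$ is below a Lebesgue number for $\vect{\mathscr{U}}$, and $P_{n}(\vect{T},\vect{f},Z,D)\le P_{n}(\vect{T},\vect{f},Z,\vect{\mathscr{U}})\le \e^{\,n\rho(D)}Q_{n}(\vect{T},\vect{f},Z,\vect{\mathscr{U}})$ with $D=\diam(\vect{\mathscr{U}})$, are exactly the nonautonomous transcription of the classical cover-versus-spanning/separated comparison, and the sandwich
$$
\prelow(\vect{T},\vect{f},Z,D)-\rho(D)\ \le\ \qrelow(\vect{T},\vect{f},Z,\vect{\mathscr{U}})\ \le\ \prelow(\vect{T},\vect{f},Z,\vect{\mathscr{U}})\ \le\ \preL(\vect{T},\vect{f},Z)+\rho(D)
$$
closes all the stated equalities at once after invoking Proposition~\ref{prop_ExPUPL}. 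You correctly identify the only genuinely nonautonomous point: the modulus $\rho$ must be common to all $k$, which is precisely what equicontinuity of $\vect{f}$ buys, and you correctly supply the small but necessary existence lemma (covers of arbitrarily small diameter carrying a uniform Lebesgue number, via maximal separated sets, with finiteness coming from compactness of each $X_{k}$) that makes both the supremum and the $\diam\to0$ limit nonvacuous. Your closing remark that every inequality is at the level of the finite-$n$ quantities, so the $\varliminf$ and $\varlimsup$ statements are proved simultaneously, is exactly right.
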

  
    Given $\Omega \subseteq \Sigma_{k}^{\infty}$, write $\Omega_{k}^{l}=\{\mathbf{u} \in \Sigma_{k}^{l}: [\mathbf{u}]_{k} \cap \Omega \neq \varnothing\}=\{\omega\vert{(l-k+1)}: \omega \in \Omega\}$ for the collection of all $\Omega$-admissible strings.
    By Propositions~\ref{prop:qrecov}, we immediately have the following formulae for lower and upper capacity pressures in symbolic systems.
    \begin{prop} \label{eq:PlowupOmega}
      Given $\Omega \subseteq \Sigma_{0}^{\infty}$, for every integer $n \geq 1$ and $\mathbf{u} \in \Omega_{0}^{n-1}$, let $\omega^{\mathbf{u}} \in [\mathbf{u}]$.
      Then for all equicontinuous $\vect{f} \in \vect{C}(\vect{\Sigma}(\vect{m}),\R)$,
      \begin{equation*}
        \begin{aligned}
          \preL(\vect{\sigma},\vect{f},\Omega)=\varliminf_{n \to \infty}\frac{1}{n}\log{\sum_{\mathbf{u} \in \Omega_{0}^{n-1}}\exp{\left(S_{n}^{\vect{\sigma}}\vect{f}(\omega^{\mathbf{u}})\right)}}, \\
          \preU(\vect{\sigma},\vect{f},\Omega)=\varlimsup_{n \to \infty}\frac{1}{n}\log{\sum_{\mathbf{u} \in \Omega_{0}^{n-1}}\exp{\left(S_{n}^{\vect{\sigma}}\vect{f}(\omega^{\mathbf{u}})\right)}},
        \end{aligned}
      \end{equation*}
      where both lower and upper limits do not depend on the $\omega^{\mathbf{u}} \in [\mathbf{u}]$ chosen.
    \end{prop}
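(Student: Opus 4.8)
The plan is to feed Proposition~\ref{prop:qrecov} the sequence $\vect{\mathscr{U}}^{(r)}=\{\mathscr{U}_k^{(r)}\}_{k=0}^{\infty}$, where $\mathscr{U}_k^{(r)}=\{[\bu]_k:\bu\in\Sigma_k^{k+r-1}\}$ is the partition of $\Sigma_k^{\infty}$ into cylinders of rank $r$. Each $\vect{\mathscr{U}}^{(r)}$ has Lebesgue number $\e^{1-r}>0$ and $\diam(\vect{\mathscr{U}}^{(r)})=\e^{-r}$, so Proposition~\ref{prop:qrecov} already gives $\preU(\vect{\sigma},\vect{f},\Omega)=\lim_{r\to\infty}\qreup(\vect{\sigma},\vect{f},\Omega,\vect{\mathscr{U}}^{(r)})$ and $\preL(\vect{\sigma},\vect{f},\Omega)=\lim_{r\to\infty}\qrelow(\vect{\sigma},\vect{f},\Omega,\vect{\mathscr{U}}^{(r)})$, and it remains to evaluate these limits.

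First I would make the refinement explicit. Since $\vect{\sigma}^{-j}[\bw]_j$ constrains exactly the coordinates $j,\dots,j+r-1$, the join $\bigvee_{j=0}^{n-1}\vect{\sigma}^{-j}\mathscr{U}_j^{(r)}$ is precisely the partition of $\Sigma_0^{\infty}$ into cylinders of rank $n+r-1$, and $\mathbf{U}\mapsto X_0[\mathbf{U}]$ is a bijection onto it once the empty pieces are discarded. As these cylinders are pairwise disjoint, the unique minimal subfamily covering $\Omega$ is the one indexed by $\Omega_0^{n+r-2}$, so the infimum defining $Q_n$ is attained there, giving
$$
Q_n(\vect{\sigma},\vect{f},\Omega,\vect{\mathscr{U}}^{(r)})=\sum_{\bu\in\Omega_0^{n+r-2}}\exp\Bigl(\inf_{\omega\in[\bu]_0}S_n^{\vect{\sigma}}\vect{f}(\omega)\Bigr).
$$
Next, equicontinuity lets me pass from the infimum to a point value: if $\omega,\vartheta$ lie in a common cylinder of rank $\ge n$, then $\vect{\sigma}^{j}\omega$ and $\vect{\sigma}^{j}\vartheta$ agree in at least their first $n-j$ coordinates for $0\le j\le n-1$, whence $\lvert S_n^{\vect{\sigma}}\vect{f}(\omega)-S_n^{\vect{\sigma}}\vect{f}(\vartheta)\rvert\le\delta_n:=\sum_{i=1}^{n}\varepsilon(\e^{-i})$, where $\varepsilon(\cdot)$ is a modulus of equicontinuity of $\vect{f}$ and $\delta_n=o(n)$ by Ces\`{a}ro. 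Consequently, writing $B_n^{(r)}=\sum_{\bu\in\Omega_0^{n+r-2}}\exp(S_n^{\vect{\sigma}}\vect{f}(\omega^{\bu\vert{n}}))$ where $\omega^{\bw}\in[\bw]_0$ is any representative, one gets $\e^{-\delta_n}B_n^{(r)}\le Q_n(\vect{\sigma},\vect{f},\Omega,\vect{\mathscr{U}}^{(r)})\le\e^{\delta_n}B_n^{(r)}$, with the same estimate for any other choice of representatives; in particular the limits in \eqref{def_QQPPCV} do not depend on the chosen points, which proves the final clause of the statement.

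It then remains to compare $B_n^{(r)}$ with $A_n:=\sum_{\bv\in\Omega_0^{n-1}}\exp(S_n^{\vect{\sigma}}\vect{f}(\omega^{\bv}))$. Grouping by the rank-$n$ prefix gives $B_n^{(r)}=\sum_{\bv\in\Omega_0^{n-1}}N_r(\bv)\exp(S_n^{\vect{\sigma}}\vect{f}(\omega^{\bv}))$ with $N_r(\bv)\ge 1$, so $B_n^{(r)}\ge A_n$ for all $n$; for the opposite bound I would reindex $n\mapsto n+r-1$ and use that $S_n^{\vect{\sigma}}\vect{f}(\omega^{\bu\vert{n}})$ differs from $S_{n+r-1}^{\vect{\sigma}}\vect{f}(\omega^{\bu})$ by at most $\delta_n+\sum_{j=n}^{n+r-2}\|f_j\|_{\infty}$ uniformly in $\bu\in\Omega_0^{n+r-2}$, so that $B_n^{(r)}$ and $A_{n+r-1}$ have the same exponential growth rate as $n\to\infty$ with $r$ fixed (the factor $\tfrac1n$ versus $\tfrac1{n+r-1}$ tends to $1$). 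This yields $\qreup(\vect{\sigma},\vect{f},\Omega,\vect{\mathscr{U}}^{(r)})=\varlimsup_{m\to\infty}\frac1m\log A_m$ and $\qrelow(\vect{\sigma},\vect{f},\Omega,\vect{\mathscr{U}}^{(r)})=\varliminf_{m\to\infty}\frac1m\log A_m$ for every $r$, and letting $r\to\infty$ completes the proof.

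The step I expect to be the main nuisance is this last comparison: the cover $\vect{\mathscr{U}}^{(r)}$ naturally produces sums over $\Omega$-admissible words of length $n+r-1$, whereas the target formula is a sum over words of length $n$, so one must check that the $r-1$ extra coordinates contribute only subexponentially in $n$. When $\vect{f}\in\vect{C}_b(\vect{\Sigma}(\vect{m}),\R)$ this is immediate from $\sum_{j=n}^{n+r-2}\|f_j\|_{\infty}\le(r-1)\|\vect{f}\|$; for merely equicontinuous $\vect{f}$ the same reindexing works together with $\varepsilon(\e^{-i})\to 0$. Everything else is a routine transcription of Proposition~\ref{prop:qrecov} into the symbolic setting, relying only on the ultrametric structure of $(\Sigma_0^{\infty},d_0)$ and the net property of cylinders.
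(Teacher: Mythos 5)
Your overall strategy is the right one and is exactly what the paper intends: feed Proposition~\ref{prop:qrecov} the sequence of partitions $\vect{\mathscr{U}}^{(r)}$ by rank-$r$ cylinders, identify $\vee_{0,n}^{\vect{\sigma}}\vect{\mathscr{U}}^{(r)}$ with the rank-$(n+r-1)$ cylinders, use the net property to see that the minimal cover of $\Omega$ is indexed by $\Omega_0^{n+r-2}$, and control $\inf_{[\bu]}S_n^{\vect{\sigma}}\vect{f}$ by equicontinuity. Up through the bound $\e^{-\delta_n}B_n^{(r)}\le Q_n(\vect{\sigma},\vect{f},\Omega,\vect{\mathscr{U}}^{(r)})\le \e^{\delta_n}B_n^{(r)}$ with $\delta_n=o(n)$, everything is sound, and the representative-independence observation is correct.

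The gap is in the final comparison. To go from $B_n^{(r)}$ to $A_m$ you reindex $n\mapsto n+r-1$ and claim the term-by-term gap between $S_n^{\vect{\sigma}}\vect{f}(\omega^{\bu\vert n})$ and $S_{n+r-1}^{\vect{\sigma}}\vect{f}(\omega^{\bu})$ is $\delta_n+\sum_{j=n}^{n+r-2}\|f_j\|_\infty$, and you assert that for merely equicontinuous $\vect{f}$ this is still negligible ``together with $\varepsilon(\e^{-i})\to 0$''. That assertion is false: the paper's notion of equicontinuity controls only the modulus of continuity of each $f_k$ uniformly in $k$, \emph{not} the size $\|f_k\|_\infty$. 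For example, the constant potentials $f_k\equiv a_k$ are equicontinuous for \emph{any} real sequence $(a_k)$; taking $m_k=2$ and $a_k=(-1)^k k$ gives $\sum_{j=n}^{n+r-2}|a_j|\asymp(r-1)n$, so the ratio $B_n^{(r)}/A_{n+r-1}$ oscillates like $\e^{\pm\Omega(n)}$, and the ``same exponential growth rate'' claim fails outright. (In this particular example the two liminfs happen to coincide anyway, but that is luck, not a consequence of the argument.)

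The fix is to drop the reindexing entirely and compare $Q_n$ with $A_n$ at the \emph{same} $n$. Group $\bu\in\Omega_0^{n+r-2}$ by the rank-$n$ prefix $\bv=\bu\vert n$. The crucial point is that $S_n^{\vect{\sigma}}\vect{f}$ involves only $f_0,\dots,f_{n-1}$ composed with $\vect{\sigma}^0,\dots,\vect{\sigma}^{n-1}$, so evaluated anywhere in $[\bu]\subseteq[\bv]$ it is within $\delta_n$ of $S_n^{\vect{\sigma}}\vect{f}(\omega^{\bv})$ regardless of the extra coordinates $n,\dots,n+r-2$ — the extra coordinates never enter $S_n^{\vect{\sigma}}\vect{f}$. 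They affect only the multiplicity $N_r(\bv)=\#\{\bu\in\Omega_0^{n+r-2}:\bu\vert n=\bv\}$, which satisfies $1\le N_r(\bv)\le\prod_{j=n}^{n+r-2}m_j$. Hence
\[
\e^{-\delta_n}A_n \;\le\; Q_n(\vect{\sigma},\vect{f},\Omega,\vect{\mathscr{U}}^{(r)}) \;\le\; \e^{\delta_n}\Bigl(\prod_{j=n}^{n+r-2}m_j\Bigr)A_n,
\]
and the error $\tfrac1n\bigl(\delta_n+\sum_{j=n}^{n+r-2}\log m_j\bigr)\to 0$ for each fixed $r$ under, say, $\sup_k m_k<\infty$ (or any condition giving $\tfrac1n\sum_{j=n}^{n+r-2}\log m_j\to 0$). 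Then $\qrelow(\vect{\sigma},\vect{f},\Omega,\vect{\mathscr{U}}^{(r)})=\varliminf_n\tfrac1n\log A_n$ and $\qreup(\vect{\sigma},\vect{f},\Omega,\vect{\mathscr{U}}^{(r)})=\varlimsup_n\tfrac1n\log A_n$ for every $r$, and letting $r\to\infty$ gives the result. Note that if $\log m_j$ grows linearly this comparison genuinely fails, and in fact the stated formula can fail (take $m_j=2^j$, $f_j\equiv -j\log 2$; one gets $A_n\equiv 1$ while $\preL=+\infty$); so some growth restriction on $\vect{m}$ is needed, which the paper leaves implicit. But the specific error in your write-up is the reindexing, which introduces an unnecessary and uncontrolled $\sum_{j=n}^{n+r-2}\|f_j\|_\infty$ term.
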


  \subsection{Bowen pressures and entropies}\label{ssect:prebppetentbow}
      In this subsection, we define the Bowen pressures on nonautonomous dynamical systems.

      Given  a subset $Z \subset X_{0}$, real $N>0$ and  real $\varepsilon>0$, we say that a collection $\{B_{n_{i}}^{\vect{T}}(x_{i},\varepsilon)\}_{i\in \mathcal{I}}$ of Bowen balls is a \emph{$(N,\varepsilon)$-cover} of $Z$
      if $\bigcup_{i\in \mathcal{I}}B_{n_{i}}^{\vect{T}}(x_{i},\varepsilon) \supset Z$ where $n_{i} \geq N$ for each $i \in \mathcal{I}$. 
      Given $\vect{f} \in \vect{C}(\vect{X},\mathbb{R})$ and $s \in \mathbb{R}$, for  reals $N >0$ and $\varepsilon>0$, we define
      \begin{equation}\label{eq:defmsrbow}
        \msrbow_{N,\varepsilon}^{s}(\vect{T},\vect{f},Z)=\inf\Big\{\sum_{i=1}^{\infty} \exp{\big(-n_{i}s+S_{n_{i}}^{\vect{T}}\vect{f}(x_{i})\big)}  \Big\},
      \end{equation}
      where the infimum is taken over all countable $(N,\varepsilon)$-covers $\{B_{n_{i}}^{\vect{T}}(x_{i},\varepsilon)\}_{i=1}^\infty$ of $Z$. We write
      $$
      \msrbow_{\varepsilon}^{s}(\vect{T},\vect{f},Z)=\lim_{N \to \infty} \msrbow_{N,\varepsilon}^{s}(\vect{T},\vect{f},Z).
      $$
and
      \begin{equation}\label{def_PBep}
        \prebow(\vect{T},\vect{f},Z,\varepsilon)=\inf\{s: \msrbow_{\varepsilon}^{s}(\vect{T},\vect{f},Z)=0\}
                                                =\sup\{s: \msrbow_{\varepsilon}^{s}(\vect{T},\vect{f},Z)=+\infty\}.
      \end{equation}
      Since $\msrbow_{N,\varepsilon}^{s}$ is monotone in $\varepsilon$, so are $\msrbow_{\varepsilon}^{s}$ and $\prebow(\vect{T},\vect{f},Z,\varepsilon)$.
      \begin{defn}\label{def:prebpp}
        Given  $\vect{f} \in \vect{C}(\vect{X},\mathbb{R})$  and $Z \subset X_{0}$, we call
        $$
        \prebow(\vect{T},\vect{f},Z)=\lim_{\varepsilon \to 0}\prebow(\vect{T},\vect{f},Z,\varepsilon)
        $$
        the \emph{Bowen-Pesin-Pitskel' topological pressure} (\emph{Bowen pressure} for short) \emph{of $\vect{T}$ for $\vect{f}$ on $Z$}. The \emph{Bowen topological entropy} (\emph{Bowen entropy} )  \emph{of $\vect{T}$  on $Z$} is defined as 
        $$
        \enttopbow(\vect{T},Z)=\prebow(\vect{T},\vect{0},Z). 
        $$
        
      \end{defn}

      Bowen pressures for equicontinuous $\vect{f} \in \vect{C}(\vect{X},\R)$ may also be calculated using open covers.    Given a sequence $\vect{\mathscr{U}}=\{\mathscr{U}_{k}\}_{k=0}^{\infty}$ of open covers $\mathscr{U}_{k}$ of $X_{k}$, recall that
    $$
    \vect{\mathscr{U}}_{0}^{n} = \{\mathbf{U}=U_{0}U_{1} \cdots U_{n-1}: U_{j} \in \mathscr{U}_{j}, j = 0,\ldots,n-1\}
    $$
 and $\len{\mathbf{U}}=n $ is the   length of the string  $\mathbf{U}\in \vect{\mathscr{U}}_0^{n}$.     We say that $\Gamma \subset \cup_{n=0}^\infty \vect{\mathscr{U}}_{0}^{n}$ \emph{covers} $Z \subset X_{0}$ if $Z \subset \bigcup_{\mathbf{U} \in \Gamma}X_{0}[\mathbf{U}]$ where $X_{0}[\mathbf{U}]$ is defined by \eqref{def_XkbU}.

Given  $(\vect{X},\vect{T})$, $\vect{f} \in \vect{C}(\vect{X},\R)$ and a sequence $\vect{\mathscr{U}}$ of open covers of $X_{k}$,
    for each $s \in \R$ and $N>0$, we define the measures $\msrbpplow_{N}^{s}(\vect{T},\vect{f},\cdot,\vect{\mathscr{U}})$ and $\msrbppup_{N}^{s}(\vect{T},\vect{f},\cdot,\vect{\mathscr{U}})$.
    For simplicity, we write $\msrbpp$ for one of $\{\msrbpplow,\msrbppup\}$ and correspondingly $S$ for one of $\{\underline{S},\overline{S}\}$ as in \eqref{eq:SknTfU}.
    The measures are given by
    \begin{equation}\label{eq:defmsrbpp}
      \msrbpp_{N}^{s}(\vect{T},\vect{f},Z,\vect{\mathscr{U}}) = \inf_{\Gamma}\Big\{\sum_{\mathbf{U} \in \Gamma}\exp{\Big(-\len{\mathbf{U}}s + S_{\len{\mathbf{U}}}^{\vect{T}}\vect{f}(\mathbf{U})\Big)}\Big\},
    \end{equation}
    where the infimum is taken over all countable covers  $\Gamma$ of  $Z$ satisfying that $\len{\mathbf{U}} \geq N$ for every $\mathbf{U} \in \Gamma$.
    Clearly $\msrbpp_{N}^{s}(\vect{T},\vect{f},Z,\vect{\mathscr{U}})$ is non-decreasing as $N$ tends to $\infty$ for every given $Z$,
    and we write
    $$
    \msrbpp^{s}(\vect{T},\vect{f},Z,\vect{\mathscr{U}}) = \lim_{N \to \infty}\msrbpp_{N}^{s}(\vect{T},\vect{f},Z,\vect{\mathscr{U}}).
    $$
    Similar dimension structures are given by the critical values in $s$, denoted by
    \begin{equation}\label{def_PBPPep}
      \begin{aligned}
        \prebpp(\vect{T},\vect{f},Z,\vect{\mathscr{U}}) &= \sup\{s \in \R: \msrbpplow^{s}(\vect{T},\vect{f},Z,\vect{\mathscr{U}}) = +\infty\} \\
                                                        &= \inf\{s \in \R: \msrbpplow^{s}(\vect{T},\vect{f},Z,\vect{\mathscr{U}}) = 0\}, \\
        \prebow(\vect{T},\vect{f},Z,\vect{\mathscr{U}}) &= \sup\{s \in \R: \msrbppup^{s}(\vect{T},\vect{f},Z,\vect{\mathscr{U}}) = +\infty\} \\
                                                        &= \inf\{s \in \R: \msrbppup^{s}(\vect{T},\vect{f},Z,\vect{\mathscr{U}}) = 0\}.
      \end{aligned}
    \end{equation}

The following result provides an equivalent description of $\prebow$ for equicontinuous potentials.

    \begin{prop}\label{prop:prebppcov}
      Given  $Z \subset X_{0}$, if $\vect{f} \in \vect{C}(\vect{X},\R)$ is equicontinuous, then
      $$
      \prebow(\vect{T},\vect{f},Z) = \lim_{\diam(\vect{\mathscr{U}}) \to 0}\prebpp(\vect{T},\vect{f},Z,\vect{\mathscr{U}}) = \lim_{\diam(\vect{\mathscr{U}}) \to 0}\prebow(\vect{T},\vect{f},Z,\vect{\mathscr{U}}),
      $$
      where $\vect{\mathscr{U}}$ ranges over all sequences of open covers of $X_{k}$ with a Lebesgue number.
    \end{prop}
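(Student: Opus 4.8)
The plan is to squeeze the cover pressures $\prebpp(\vect{T},\vect{f},Z,\vect{\mathscr{U}})$ and $\prebow(\vect{T},\vect{f},Z,\vect{\mathscr{U}})$ between two Bowen-ball pressures $\prebow(\vect{T},\vect{f},Z,\varepsilon)$ whose radii are dictated by a Lebesgue number of $\vect{\mathscr{U}}$ and by $\diam(\vect{\mathscr{U}})$, and then to let $\diam(\vect{\mathscr{U}})\to0$, using that $\prebow(\vect{T},\vect{f},Z,\varepsilon)$ is monotone in $\varepsilon$ with $\prebow(\vect{T},\vect{f},Z)=\lim_{\varepsilon\to0}\prebow(\vect{T},\vect{f},Z,\varepsilon)$, so that $\prebow(\vect{T},\vect{f},Z,\varepsilon)\leq\prebow(\vect{T},\vect{f},Z)$ for every $\varepsilon>0$. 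Fix $\gamma>0$; by equicontinuity there is $\rho>0$ with $d_{k}(x,y)<\rho\Rightarrow\lvert f_{k}(x)-f_{k}(y)\rvert<\gamma$ for all $k$, hence $d_{0,n}^{\vect{T}}(x,y)<\rho\Rightarrow\lvert S_{n}^{\vect{T}}\vect{f}(x)-S_{n}^{\vect{T}}\vect{f}(y)\rvert\leq n\gamma$. Two geometric observations are used repeatedly: (a) if $\delta$ is a Lebesgue number for $\vect{\mathscr{U}}$ and $2\varepsilon<\delta$, then each ball $B_{X_{j}}(\vect{T}^{j}x,\varepsilon)$ lies in some member of $\mathscr{U}_{j}$, so every open Bowen ball $B_{n}^{\vect{T}}(x,\varepsilon)$ lies in $X_{0}[\mathbf{U}]$ for some $\mathbf{U}\in\vect{\mathscr{U}}_{0}^{n}$ with $x\in X_{0}[\mathbf{U}]$; (b) if $X_{0}[\mathbf{U}]\neq\varnothing$ and $x_{\mathbf{U}}\in X_{0}[\mathbf{U}]$, then $X_{0}[\mathbf{U}]\subseteq\overline{B}_{\len{\mathbf{U}}}^{\vect{T}}(x_{\mathbf{U}},\diam(\vect{\mathscr{U}}))\subseteq B_{\len{\mathbf{U}}}^{\vect{T}}(x_{\mathbf{U}},2\diam(\vect{\mathscr{U}}))$.

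First I would prove the upper bound. Starting from any countable $(N,\varepsilon)$-cover $\{B_{n_{i}}^{\vect{T}}(x_{i},\varepsilon)\}$ of $Z$ with $2\varepsilon<\delta$, fact (a) produces strings $\mathbf{U}^{(i)}$ with $B_{n_{i}}^{\vect{T}}(x_{i},\varepsilon)\subseteq X_{0}[\mathbf{U}^{(i)}]$; then $\Gamma=\{\mathbf{U}^{(i)}\}$ covers $Z$, $\len{\mathbf{U}^{(i)}}=n_{i}\geq N$, and since $x_{i}\in X_{0}[\mathbf{U}^{(i)}]$ we get $\underline{S}_{n_{i}}^{\vect{T}}\vect{f}(\mathbf{U}^{(i)})\leq S_{n_{i}}^{\vect{T}}\vect{f}(x_{i})$, whence $\msrbpplow_{N}^{s}(\vect{T},\vect{f},Z,\vect{\mathscr{U}})\leq\msrbow_{N,\varepsilon}^{s}(\vect{T},\vect{f},Z)$. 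Letting $N\to\infty$ and comparing critical values in $s$ yields $\prebpp(\vect{T},\vect{f},Z,\vect{\mathscr{U}})\leq\prebow(\vect{T},\vect{f},Z,\varepsilon)\leq\prebow(\vect{T},\vect{f},Z)$ for every $\vect{\mathscr{U}}$ admitting a Lebesgue number — no equicontinuity is needed here. Next, the lower bound. Given a cover $\Gamma\subseteq\bigcup_{n}\vect{\mathscr{U}}_{0}^{n}$ of $Z$ with $\len{\mathbf{U}}\geq N$, discard the $\mathbf{U}$ with $X_{0}[\mathbf{U}]=\varnothing$ and apply fact (b); if $\eta:=\diam(\vect{\mathscr{U}})<\rho$ the equicontinuity estimate gives $S_{\len{\mathbf{U}}}^{\vect{T}}\vect{f}(x_{\mathbf{U}})\leq\underline{S}_{\len{\mathbf{U}}}^{\vect{T}}\vect{f}(\mathbf{U})+\len{\mathbf{U}}\gamma$, so the $(N,2\eta)$-Bowen cover $\{B_{\len{\mathbf{U}}}^{\vect{T}}(x_{\mathbf{U}},2\eta)\}$ has sum at most $\sum_{\mathbf{U}}\exp(-\len{\mathbf{U}}(s-\gamma)+\underline{S}_{\len{\mathbf{U}}}^{\vect{T}}\vect{f}(\mathbf{U}))$; taking infima, $\msrbow_{N,2\eta}^{s}(\vect{T},\vect{f},Z)\leq\msrbpplow_{N}^{s-\gamma}(\vect{T},\vect{f},Z,\vect{\mathscr{U}})$, and comparing critical values, $\prebow(\vect{T},\vect{f},Z,2\eta)\leq\prebpp(\vect{T},\vect{f},Z,\vect{\mathscr{U}})+\gamma$ whenever $\diam(\vect{\mathscr{U}})<\rho$. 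Since $\prebow(\vect{T},\vect{f},Z,2\eta)\to\prebow(\vect{T},\vect{f},Z)$ as $\eta\to0$, combining the two bounds and letting $\gamma\to0$ gives $\lim_{\diam(\vect{\mathscr{U}})\to0}\prebpp(\vect{T},\vect{f},Z,\vect{\mathscr{U}})=\prebow(\vect{T},\vect{f},Z)$.

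For the last equality, I would note that if $\diam(\vect{\mathscr{U}})<\rho(\gamma)$ then for each $\mathbf{U}\in\vect{\mathscr{U}}_{0}^{n}$ any two points of $X_{0}[\mathbf{U}]$ stay within $\rho$ under $\vect{T}^{0},\dots,\vect{T}^{n-1}$, so $0\leq\overline{S}_{n}^{\vect{T}}\vect{f}(\mathbf{U})-\underline{S}_{n}^{\vect{T}}\vect{f}(\mathbf{U})\leq n\gamma$; this sandwiches $\msrbppup_{N}^{s}$ between $\msrbpplow_{N}^{s}$ and $\msrbpplow_{N}^{s-\gamma}$, hence $\prebpp(\vect{T},\vect{f},Z,\vect{\mathscr{U}})\leq\prebow(\vect{T},\vect{f},Z,\vect{\mathscr{U}})\leq\prebpp(\vect{T},\vect{f},Z,\vect{\mathscr{U}})+\gamma$ for all such $\vect{\mathscr{U}}$, so $\prebow(\vect{T},\vect{f},Z,\vect{\mathscr{U}})$ shares the limit $\prebow(\vect{T},\vect{f},Z)$ (with the usual conventions if the value is $+\infty$). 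The routine parts are the monotonicity bookkeeping and the passage from the pre-measure inequalities to critical exponents, entirely analogous to the proof of Proposition~\ref{prop:qrecov}; the one point demanding care is the asymmetry of the squeeze — the upper bound on $\prebpp$ needs only a Lebesgue number while the lower bound needs the diameter, and for a general cover these are not comparable, so the two quantities must be tracked separately rather than trying to bound a Bowen-ball radius by $\diam(\vect{\mathscr{U}})$ alone.
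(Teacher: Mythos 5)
Your proof is correct and is precisely the standard squeeze argument the paper delegates to the analogue of Proposition~\ref{prop:qrecov} (cf.\ \cite[Prop.~3.5]{CM1}): the Lebesgue number gives $\msrbpplow_{N}^{s}(\vect{T},\vect{f},Z,\vect{\mathscr{U}}) \leq \msrbow_{N,\varepsilon}^{s}(\vect{T},\vect{f},Z)$, the diameter together with the equicontinuity modulus gives $\msrbow_{N,2\diam(\vect{\mathscr{U}})}^{s}(\vect{T},\vect{f},Z) \leq \msrbpplow_{N}^{s-\gamma}(\vect{T},\vect{f},Z,\vect{\mathscr{U}})$, and the oscillation estimate $\overline{S}_{n}^{\vect{T}}\vect{f}(\mathbf{U})-\underline{S}_{n}^{\vect{T}}\vect{f}(\mathbf{U}) \leq n\gamma$ handles $\prebow(\vect{T},\vect{f},Z,\vect{\mathscr{U}})$. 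The passage to critical exponents and the remark on the asymmetric roles of the Lebesgue number and the diameter are both accurate, so no gap remains.
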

    \begin{proof}
      The proof is similar to  \cite[Prop.3.5]{CM1}, and we omit it.
    \end{proof}

  \subsection{Packing  pressures and entropies}\label{ssect:prepacetentpac}
      Given  a subset $Z \subset X_{0}$, we say that a collection $\{\overline{B}_{n_{i}}^{\vect{T}}(x_{i},\varepsilon)\}_{i\in \mathcal{I}}$ of closed Bowen balls is a \textit{$(N,\varepsilon)$-packing of $Z$} if $\{\overline{B}_{n_{i}}^{\vect{T}}(x_{i},\varepsilon)\}_{i\in \mathcal{I}}$ is disjoint where $x_{i} \in Z$ and $n_{i} \geq N$ for all $i \in \mathcal{I}$.
      Given $\vect{f} \in \vect{C}(\vect{X},\R)$ and $s \in \mathbb{R}$, for each $N>0$ and $\varepsilon>0$, we define
      \begin{equation}\label{eq:defmsrPack}
        \msrpac_{N,\varepsilon}^{s}(\vect{T},\vect{f},Z)=\sup\Big\{\sum_{i=1}^{\infty}\exp{\left(-n_{i}s+S_{n_{i}}^{\vect{T}}\vect{f}(x_{i})\right)} \Big\},
      \end{equation}
      where the supremum is taken over all countable $(N,\varepsilon)$-packings $\{\overline{B}_{n_{i}}^{\vect{T}}(x_{i},\varepsilon)\}_{i=1}^\infty$ of $Z$. We write
      $$
      \msrpac_{\infty,\varepsilon}^{s}(\vect{T},\vect{f},Z)=\lim_{N \to \infty} \msrpac_{N,\varepsilon}^{s}(\vect{T},\vect{f},Z).
      $$
      and 
      \begin{equation}\label{def_pes}
        \msrpac_{\varepsilon}^{s}(\vect{T},\vect{f},Z)=\inf\Big\{\sum_{i=1}^{\infty}\msrpac_{\infty,\varepsilon}^{s}(\vect{T},\vect{f},Z_{i}): \bigcup_{i=1}^{\infty}Z_{i} \supset Z\Big\}.
      \end{equation}
      Similarly, we denote the jump value of $s$  by
      \begin{equation}\label{def_PP}
        \prepac(\vect{T},\vect{f},Z,\varepsilon) =\inf\{s:\msrpac_{\varepsilon}^{s}(\vect{T},\vect{f},Z)=0\}
                                =\sup\{s: \msrpac_{\varepsilon}^{s}(\vect{T},\vect{f},Z)=+\infty\}.
      \end{equation}

      \begin{defn}\label{def:prepac}
        Given  $\vect{f} \in \vect{C}(\vect{X},\mathbb{R})$ and $Z \subset X_{0}$, we define the \emph{packing topological pressure} (\emph{packing pressure} for short) \emph{of $\vect{T}$ for $\vect{f}$ on  $Z$} by
        $$\prepac(\vect{T},\vect{f},Z)=\lim_{\varepsilon \to 0}\prepac(\vect{T},\vect{f},Z,\varepsilon).$$
        We define the \emph{packing topological entropy} (\emph{packing entropy} for short) \emph{of $\vect{T}$ on $Z$} as
        $$\enttoppac(\vect{T},Z)=\prepac(\vect{T},\vect{0},Z).$$
      \end{defn}

\section{Topological Pressures of Symbolic Dynamical Systems}\label{sect:toppresymsys}

  \subsection{Equivalent formulations of topological pressures}\label{ssect:preequivdefsymsys}
    In the context of symbolic dynamics, we present the pressures by constructing `measures' via cylinders.
    Given $\Omega \subseteq \Sigma_{0}^{\infty}$ and $\vect{f} \in \vect{C}(\vect{\Sigma}(\vect{m}),\R)$, for all $s \in \R$, we define
    \begin{eqnarray}      \label{eq:defQCs}
    && \carpeslow_{\mathscr{C}}^{s}(\vect{\sigma},\vect{f},\Omega) = \varliminf_{n \to \infty}\inf\Big\{\sum_{i=1}^{\infty}\exp{\left(-ns+S_{n}^{\vect{\sigma}}\vect{f}(\omega^{i})\right)}: \bigcup_{i=1}^{\infty}[\omega^{i}\vert{n}] \supseteq \Omega\Big\}, \\   
      &&\carpepup_{\mathscr{C}}^{s}(\vect{\sigma},\vect{f},\Omega) = \varlimsup_{n \to \infty}\sup\Big\{\sum_{i=1}^{\infty}\exp{\left(-ns+S_{n}^{\vect{\sigma}}\vect{f}(\omega^{i})\right)}: \\
    &&\hspace{5.5cm}\{[\omega^{i}\vert{n}]\}_{i=1}^{\infty}\ \text{is disjoint and}\ \omega^{i} \in \Omega\Big\}. \nonumber 
    \end{eqnarray}

We have the following equivalence.
\begin{prop}\label{prop_eqQP}
Given $\Omega \subseteq \Sigma_{0}^{\infty}$ and $\vect{f} \in \vect{C}(\vect{\Sigma}(\vect{m}),\R)$,
    \begin{equation}\label{eq:qrelowpreupcyl}
      \begin{aligned}
        \qrelow(\vect{\sigma},\vect{f},\Omega) &= \sup\{s:\carpeslow_{\mathscr{C}}^{s}(\vect{\sigma},\vect{f},\Omega)=+\infty\} = \inf\{s:\carpeslow_{\mathscr{C}}^{s}(\vect{\sigma},\vect{f},\Omega)=0\}, \\
        \preup(\vect{\sigma},\vect{f},\Omega) &= \sup\{s:\carpepup_{\mathscr{C}}^{s}(\vect{\sigma},\vect{f},\Omega)=+\infty\} = \inf\{s:\carpepup_{\mathscr{C}}^{s}(\vect{\sigma},\vect{f},\Omega)=0\}.
      \end{aligned}
    \end{equation}
  Furthermore, if $\vect{f}$ is equicontinuous or of strongly bounded variation, then $\preL(\vect{\sigma},\vect{f},\Omega)$ and $\preU(\vect{\sigma},\vect{f},\Omega)$ exist, and
  \begin{equation} \label{prop_eqPUPL}
    \begin{aligned}
      \preL(\vect{\sigma},\vect{f},\Omega) &= \sup\{s:\carpeslow_{\mathscr{C}}^{s}(\vect{\sigma},\vect{f},\Omega)=+\infty\} = \inf\{s:\carpeslow_{\mathscr{C}}^{s}(\vect{\sigma},\vect{f},\Omega)=0\}, \\
      \preU(\vect{\sigma},\vect{f},\Omega) &= \sup\{s:\carpepup_{\mathscr{C}}^{s}(\vect{\sigma},\vect{f},\Omega)=+\infty\} = \inf\{s:\carpepup_{\mathscr{C}}^{s}(\vect{\sigma},\vect{f},\Omega)=0\}.
    \end{aligned}
  \end{equation}
\end{prop}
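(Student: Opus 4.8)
The plan is to translate every object in the statement into the language of cylinders and then read both sides off from a single sequence. First I would invoke \eqref{eq:bowenballsymsys} and \eqref{def_BSkn}, which say that in $(\vect{\Sigma}(\vect{m}),\vect{\sigma})$ the open and closed $n$-th Bowen balls of radius $\varepsilon$ about $\omega$ are the cylinders about $\omega$ of rank $n+\lfloor-\log\varepsilon\rfloor$ and $n+\lceil-\log\varepsilon\rceil-1$, together with the net property of cylinders. Putting $\varepsilon=\e^{-r-1}$, a set $F$ is an $(n,\varepsilon)$-spanning set for $\Omega$ iff $F$ meets every $\mathbf{u}\in\Omega_{0}^{n+r-1}$, while a set $E$ is an $(n,\varepsilon)$-separated set for $\Omega$ iff $E\subseteq\Omega$ meets each such $\mathbf{u}$ at most once. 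Since $S_{n}^{\vect{\sigma}}\vect{f}$ is continuous, its extrema on a cylinder are attained, so the infimum in \eqref{eq:defQn} and the supremum in \eqref{eq:defPn} are realized by the obvious choices and
\[
Q_{n}(\vect{\sigma},\vect{f},\Omega,\e^{-r-1})=\sum_{\mathbf{u}\in\Omega_{0}^{n+r-1}}\exp\!\big(\underline{S}_{n}^{\vect{\sigma}}\vect{f}(\mathbf{u})\big),\qquad P_{n}(\vect{\sigma},\vect{f},\Omega,\e^{-r-1})=\sum_{\mathbf{u}\in\Omega_{0}^{n+r-1}}\ \sup_{\omega\in[\mathbf{u}]\cap\Omega}\exp\!\big(S_{n}^{\vect{\sigma}}\vect{f}(\omega)\big),
\]
where $\underline{S}_{n}^{\vect{\sigma}}\vect{f}(\mathbf{u})=\inf_{\omega\in[\mathbf{u}]}S_{n}^{\vect{\sigma}}\vect{f}(\omega)$. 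Taking $r=0$ and inserting the weight $\e^{-ns}$ reproduces precisely the inner expressions in \eqref{eq:defQCs}, so $\carpeslow_{\mathscr{C}}^{s}(\vect{\sigma},\vect{f},\Omega)=\varliminf_{n}\exp\!\big(n(\alpha_{n}-s)\big)$ and $\carpepup_{\mathscr{C}}^{s}(\vect{\sigma},\vect{f},\Omega)=\varlimsup_{n}\exp\!\big(n(\beta_{n}-s)\big)$, with $\alpha_{n}=\frac1n\log\sum_{\mathbf{u}\in\Omega_{0}^{n-1}}\exp\!\big(\underline{S}_{n}^{\vect{\sigma}}\vect{f}(\mathbf{u})\big)$ and $\beta_{n}=\frac1n\log\sum_{\mathbf{u}\in\Omega_{0}^{n-1}}\sup_{[\mathbf{u}]\cap\Omega}\exp\!\big(S_{n}^{\vect{\sigma}}\vect{f}\big)$.

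Next I would use the elementary observation that, for any real sequence $(a_{n})$, one has $\varliminf_{n}\exp\!\big(n(a_{n}-s)\big)=+\infty$ if $s<\varliminf_{n}a_{n}$ and $=0$ if $s>\varliminf_{n}a_{n}$, hence $\sup\{s:\varliminf_{n}\exp(n(a_{n}-s))=+\infty\}=\inf\{s:\varliminf_{n}\exp(n(a_{n}-s))=0\}=\varliminf_{n}a_{n}$, and symmetrically the critical value attached to $\varlimsup_{n}\exp(n(a_{n}-s))$ is $\varlimsup_{n}a_{n}$. Applying this to $\alpha_{n}$ and $\beta_{n}$ shows that the critical values in \eqref{eq:qrelowpreupcyl} equal $\varliminf_{n}\alpha_{n}$ and $\varlimsup_{n}\beta_{n}$. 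The remaining point is to identify these with $\qrelow(\vect{\sigma},\vect{f},\Omega)$ and $\preup(\vect{\sigma},\vect{f},\Omega)$: by the first paragraph together with \eqref{eq:defqre} and \eqref{eq:defpre}, $\qrelow(\vect{\sigma},\vect{f},\Omega)=\lim_{r\to\infty}\varliminf_{n}\frac1n\log\sum_{\mathbf{u}\in\Omega_{0}^{n+r-1}}\exp(\underline{S}_{n}^{\vect{\sigma}}\vect{f}(\mathbf{u}))$ and likewise for $\preup$; since, for a fixed rank-$n$ word $\mathbf{v}$, the $\Omega$-admissible refinements $\mathbf{u}$ with $\mathbf{u}\vert{n}=\mathbf{v}$ in $\Omega_{0}^{n+r-1}$ are sub-exponentially many and the values $S_{n}^{\vect{\sigma}}\vect{f}(\omega)$, $\omega\in[\mathbf{u}]$, lie between $\inf_{[\mathbf{v}]}S_{n}^{\vect{\sigma}}\vect{f}$ and $\sup_{[\mathbf{v}]}S_{n}^{\vect{\sigma}}\vect{f}$, these $r$-indexed quantities have the same exponential rate as $\alpha_{n}$ (resp.\ $\beta_{n}$), which completes \eqref{eq:qrelowpreupcyl}.

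For the ``furthermore'' clause I would treat the two hypotheses separately. If $\vect{f}$ is equicontinuous, Proposition~\ref{prop_ExPUPL} already provides $\preL(\vect{\sigma},\vect{f},\Omega)=\prelow(\vect{\sigma},\vect{f},\Omega)=\qrelow(\vect{\sigma},\vect{f},\Omega)$ and $\preU(\vect{\sigma},\vect{f},\Omega)=\preup(\vect{\sigma},\vect{f},\Omega)=\qreup(\vect{\sigma},\vect{f},\Omega)$; since the argument of the first two paragraphs (with $\varlimsup$ in place of $\varliminf$ and $\inf$-over-cylinder weights) also shows $\qreup(\vect{\sigma},\vect{f},\Omega)=\varlimsup_{n}\beta_{n}$, the identities \eqref{prop_eqPUPL} follow from \eqref{eq:qrelowpreupcyl}. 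If instead $\vect{f}$ is of strongly bounded variation, then from $S_{n}^{\vect{\sigma}}\vect{f}_{*}\le S_{n}^{\vect{\sigma}}\vect{f}\le S_{n}^{\vect{\sigma}}\vect{f}^{*}$ and \eqref{eq:strbndvar} the oscillation of $S_{n}^{\vect{\sigma}}\vect{f}$ over any rank-$n$ cylinder, and hence over any finer cylinder, is at most $b$; therefore the spanning sums, the separated sums and the cylinder sums above differ by at most the bounded factors $\e^{\pm b}$, which forces $\prelow=\qrelow$ and $\preup=\qreup$, so $\preL$ and $\preU$ exist, and the rank-shift step of the second paragraph becomes immediate, so \eqref{prop_eqPUPL} again follows.

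The step I expect to be the only genuinely non-formal one is the rank-shift comparison in the second paragraph, i.e.\ checking that enlarging the cylinder rank from $n$ to $n+r$ (as the radius $\varepsilon$ forces) leaves the exponential growth rate of the weighted sums unchanged. For equicontinuous or strongly-bounded-variation $\vect{f}$ this is controlled by the (uniform) $o(n)$, respectively $O(1)$, bound on the oscillation of $S_{n}^{\vect{\sigma}}\vect{f}$ over cylinders; in the general continuous case one instead combines the monotonicity in $r$ of the spanning sums with the sub-exponential growth of the number of refinements.
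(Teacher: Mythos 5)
Your first paragraph and the critical-exponent reading are correct: the ultrametric characterization of Bowen balls as cylinders gives $Q_{n}(\vect{\sigma},\vect{f},\Omega,\e^{-r-1})=\sum_{\mathbf{u}\in\Omega_{0}^{n+r-1}}\e^{\underline{S}_{n}^{\vect{\sigma}}\vect{f}(\mathbf{u})}$ and the analogous formula for $P_{n}$, and the $r=0$ case reproduces the inner expressions in \eqref{eq:defQCs}, so the critical values of $\carpeslow_{\mathscr{C}}^{s}$ and $\carpepup_{\mathscr{C}}^{s}$ are indeed $\varliminf_{n}\alpha_{n}$ and $\varlimsup_{n}\beta_{n}$.

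The gap is the rank-shift step, which you flag as the only non-formal one. You assert that for fixed $r$ the $\Omega$-admissible refinements in $\Omega_{0}^{n+r-1}$ of a rank-$n$ word $\mathbf{v}$ are ``sub-exponentially many''; in fact the count is bounded only by $\prod_{j=n}^{n+r-1}m_{j}$, which for fixed $r$ need not be $\e^{o(n)}$ once the alphabet sizes $m_{j}$ are allowed to grow. Your second ingredient — that the values of $S_{n}^{\vect{\sigma}}\vect{f}$ on $[\mathbf{u}]$ lie between $\inf_{[\mathbf{v}]}S_{n}^{\vect{\sigma}}\vect{f}$ and $\sup_{[\mathbf{v}]}S_{n}^{\vect{\sigma}}\vect{f}$ — replaces each weight $\underline{S}_{n}^{\vect{\sigma}}\vect{f}(\mathbf{u})$ by $\sup_{[\mathbf{v}]}S_{n}^{\vect{\sigma}}\vect{f}$, so you also need the oscillation of $S_{n}^{\vect{\sigma}}\vect{f}$ over rank-$n$ cylinders to be $o(n)$; equicontinuity alone does not give this, because the summands $f_{j}\circ\vect{\sigma}^{j}$ with $j$ close to $n$ are constrained only to coarse low-rank cylinders and equicontinuity imposes no bound on $\|f_{j}\|_{\infty}$, and the first half of \eqref{eq:qrelowpreupcyl} is stated for arbitrary continuous $\vect{f}$. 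So the inequality $\qrelow(\vect{\sigma},\vect{f},\Omega,\e^{-r-1})\le\varliminf_{n}\alpha_{n}$ (and its $\preup$ analogue) is not established, and since your treatment of the \emph{Furthermore} clause correctly reduces it to \eqref{eq:qrelowpreupcyl} together with Proposition~\ref{prop_ExPUPL} or the $\e^{\pm b}$ estimate, the gap propagates there too. For comparison, the paper does not argue the rank-shift directly at all: it records the $\varepsilon$-independence $\carpeslow_{\varepsilon}^{s}=\carpeslow_{\mathscr{C}}^{s}$, $\carpepup_{\varepsilon}^{s}=\carpepup_{\mathscr{C}}^{s}$ as \eqref{eq:carcyldcoincidence} and invokes \cite[Prop.3.1]{CM1}, so the content of your rank-shift is delegated to the companion paper and its definitions rather than re-derived.
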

\begin{proof}
Let $\carpeslow_{\varepsilon}^{s}$ and $\carpepup_{\varepsilon}^{s}$ be given by \cite[\S3.1]{CM1}. Since  for every $\varepsilon>0$,   
\begin{equation}\label{eq:carcyldcoincidence}
      \carpeslow_{\varepsilon}^{s}(\vect{\sigma},\vect{f},\Omega) = \carpeslow_{\mathscr{C}}^{s}(\vect{\sigma},\vect{f},\Omega), \quad
      \carpepup_{\varepsilon}^{s}(\vect{\sigma},\vect{f},\Omega) = \carpepup_{\mathscr{C}}^{s}(\vect{\sigma},\vect{f},\Omega),
\end{equation}
Equation \eqref{eq:qrelowpreupcyl} follows from \cite[Prop.3.1]{CM1}.

Furthermore, if $\vect{f}$ is   equicontinuous, the  conclusion \eqref{prop_eqPUPL} follows from Proposition \ref{prop_ExPUPL}.
It remains to prove the conclusion  \eqref{prop_eqPUPL} holds for $\vect{f}$ of strongly bounded variation,
and by  Proportion \ref{prop_ExPUPL}, it suffices to show that
\begin{equation}\label{eq:QgeqP}
\qrelow(\vect{\sigma},\vect{f},\Omega) \geq \prelow(\vect{\sigma},\vect{f},\Omega) \quad \text{and} \quad \qreup(\vect{\sigma},\vect{f},\Omega) \geq \preup(\vect{\sigma},\vect{f},\Omega).
\end{equation}
 
By \eqref{eq:strbndvar}, there exists a constant $b>0$ such that for all $n \geq 1$ and $\varepsilon>0$, we have
$$
\lvert S_{n}^{\vect{\sigma}}\vect{f}(\omega)-S_{n}^{\vect{\sigma}}\vect{f}(\vartheta) \rvert < \lvert S_{n}^{\vect{\sigma}}\vect{f}^{*}(\omega)-S_{n}^{\vect{\sigma}}\vect{f}_{*}(\vartheta) \rvert \leq b
$$
whenever $d_{n}^{\vect{\sigma}}(\omega,\vartheta) < \frac{\varepsilon}{2}$.

Arbitrarily choose a $(n,\varepsilon)$-separated set $E$ and a $(n,\frac{\varepsilon}{2})$-spanning set $F$ for $\Omega$. Since $E \subset \Omega$,  it is clear that  $F$ is also a $(n,\varepsilon)$ spanning for $E$.
We define a mapping $\phi: E \to F$ by choosing a point $\phi(\omega) \in F$ with $d_{n}^{\vect{T}}(\omega,\phi(\omega))<\frac{\varepsilon}{2}$ for each $\omega \in E$.
The mapping $\phi$ is injective since $E$ is $(n,\varepsilon)$ separated. It follows that
\begin{align*}
  \sum_{\vartheta \in F}\e^{S_{n}^{\vect{\sigma}}\vect{f}(\vartheta)} & \geq \sum_{\vartheta \in \phi(E)}\e^{S_{n}^{\vect{\sigma}}\vect{f}(\vartheta)}  = \sum_{\omega \in E}\e^{S_{n}^{\vect{\sigma}}\vect{f}(\phi(\omega))} \geq \e^{-b}\sum_{\omega \in E}\e^{S_{n}^{\vect{\sigma}}\vect{f}(\omega)},
\end{align*}
and by equations \eqref{eq:defQn} and \eqref{eq:defPn}, we have  that for all $n \geq 1$ and $\varepsilon>0$,
$$
Q_{n}(\vect{\sigma},\vect{f},\Omega,\frac{\varepsilon}{2}) \geq \e^{-b}P_{n}(\vect{\sigma},\vect{f},\Omega,\varepsilon).
$$
Combining this with   \eqref{eq:Qe},\eqref{eq:Pe},\eqref{eq:defqre} and \eqref{eq:defpre},  the inequality \eqref{eq:QgeqP} follows, 
 and we obtain the existence of $\preL$ and $\preU$.
\end{proof}

    The above generalizations of classic pressures are restricted to using cylinders of the same rank $n$ (also the number of iterations of the shifts).
    By allowing cylinders of different ranks, we obtain the Bowen and packing pressures in symbolic dynamical systems.

  Given $\Omega \subseteq \Sigma_{0}^{\infty}$ and $\vect{f} \in \vect{C}(\vect{\Sigma}(\vect{m}),\R)$,
  for all $s \in \mathbb{R}$, we define two types of Hausdorff measures, namely,
  \begin{equation}
    \begin{aligned}
      \msrbow_{\mathscr{C}}^{s}(\vect{f},\Omega)=\lim_{N \to \infty}\inf\Big\{\sum_{i=1}^{\infty}\exp{\Big(-n_{i}s+S_{n_{i}}^{\vect{\sigma}}\vect{f}(\omega^{i})\Big)}:
                                                                               \bigcup_{i=1}^{\infty}[\omega^{i}\vert{n_{i}}] \supseteq Z, n_{i} \geq N\Big\}
    \end{aligned}
  \end{equation}
  and
  \begin{equation} \label{def_mcpb}
    \begin{aligned}
&      \msrbppup_{\mathscr{C}}^{s}(\vect{f},\Omega)=\lim_{N \to \infty}\inf\Big\{\sum_{\bv\in \mathscr{U}}\exp{\Big(-n(\bv)s+\sup_{\omega \in [\bv]}S_{n(\bv)}^{\vect{\sigma}}\vect{f}(\omega)\Big)}\Big\}, \\
&   \underline{\msrbpp}_{\mathscr{C}}^{s}(\vect{f},\Omega) =\lim_{N \to \infty}\inf\Big\{\sum_{\bv\in \mathscr{U}}\exp{\Big(-n(\bv)s+\inf_{\omega \in [\bv]}S_{n(\bv)}^{\vect{\sigma}}\vect{f}(\omega)\Big)}\Big\},
    \end{aligned}
  \end{equation}
  where the infimum is taken over all countable covers $\mathscr{U}$ of $\Omega$ consisting of  cylinders  of rank $n(\bv) \geq N$.
We have the following equivalence for  the Bowen pressure of the nonautonomous shift $\vect{\sigma}$ for $\vect{f}$ on $\Omega$. 
\begin{prop}\label{prop_eqpBE}
Given $\Omega \subseteq \Sigma_{0}^{\infty}$ and $\vect{f} \in \vect{C}(\vect{\Sigma}(\vect{m}),\R)$,
$$ 
      \prebow(\vect{\sigma},\vect{f},\Omega)
      =\sup\{s: \msrbow_{\mathscr{C}}^{s}(\vect{f},\Omega)=+\infty\}=\inf\{s: \msrbow_{\mathscr{C}}^{s}(\vect{f},\Omega)=0\}.
$$  
Furthermore, if  $\vect{f}$ is equicontinuous, then
\begin{eqnarray*}
\prebow(\vect{\sigma},\vect{f},\Omega)
&=&\sup\{s: \msrbppup_{\mathscr{C}}^{s}(\vect{f},\Omega)=+\infty\}=\inf\{s: \msrbppup_{\mathscr{C}}^{s}(\vect{f},\Omega)=0\} \\
&=&\sup\{s: \underline{\msrbpp}_{\mathscr{C}}^{s}(\vect{f},\Omega) =+\infty\}=\inf\{s: \underline{\msrbpp}_{\mathscr{C}}^{s}(\vect{f},\Omega) =0\}.
\end{eqnarray*}
\end{prop}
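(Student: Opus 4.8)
The plan is to reduce everything to a comparison of critical exponents, exploiting that in the ultrametric space $\Sigma_{0}^{\infty}(\vect{m})$ a Bowen ball is literally a cylinder. By \eqref{eq:bowenballsymsys} and \eqref{def_BSkn}, for each $\varepsilon>0$ the $\varepsilon$-Bowen ball $B_{n}^{\vect{\sigma}}(\omega,\varepsilon)$ with $n$ iterates equals the cylinder $[\omega\vert{(n+c)}]$ of rank $n+c$, where $c=c(\varepsilon)=\lfloor-\log\varepsilon+1\rfloor-1\geq 0$ and $c(\varepsilon)\to\infty$ as $\varepsilon\to 0$. I would first record the routine fact that $\msrbow_{\mathscr{C}}^{s}(\vect{f},\Omega)$, $\msrbppup_{\mathscr{C}}^{s}(\vect{f},\Omega)$ and $\underline{\msrbpp}_{\mathscr{C}}^{s}(\vect{f},\Omega)$, being Carath\'eodory-type set functions, are non-increasing in $s$ and take at most one value in $(0,+\infty)$, so each drops from $+\infty$ to $0$ at a single critical exponent; hence the displayed suprema and infima over $s$ agree automatically, and it suffices to identify those critical exponents with $\prebow(\vect{\sigma},\vect{f},\Omega)$.

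For the first identity I would prove both inequalities directly. The easy direction: given a $(N,\varepsilon)$-cover $\{B_{n_{i}}^{\vect{\sigma}}(x_{i},\varepsilon)\}$ of $\Omega$, replace each ball by its rank-$n_{i}$ parent cylinder $[x_{i}\vert{n_{i}}]$ and keep $x_{i}$ as the representative; this gives a cover of $\Omega$ by cylinders of rank $\geq N$ whose weight, with these representatives, equals the original $\msrbow_{\varepsilon}^{s}$-weight $\sum_{i}\exp(-n_{i}s+S_{n_{i}}^{\vect{\sigma}}\vect{f}(x_{i}))$, so $\msrbow_{\mathscr{C}}^{s}(\vect{f},\Omega)\leq\msrbow_{\varepsilon}^{s}(\vect{\sigma},\vect{f},\Omega)$ for every $\varepsilon$; hence $\inf\{s:\msrbow_{\mathscr{C}}^{s}(\vect{f},\Omega)=0\}\leq\prebow(\vect{\sigma},\vect{f},\Omega,\varepsilon)$ for every $\varepsilon$, and therefore $\leq\prebow(\vect{\sigma},\vect{f},\Omega)$. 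For the reverse, fix $s>s''>\inf\{s:\msrbow_{\mathscr{C}}^{s}(\vect{f},\Omega)=0\}$, so $\msrbow_{\mathscr{C}}^{s''}(\vect{f},\Omega)=0$, and fix $\varepsilon$. For each small $\delta$ and each large $N$ choose a cover $\{C_{j}\}$ of $\Omega$ by cylinders of rank $r_{j}\geq N$ with $\sum_{j}\exp(-r_{j}s''+S_{r_{j}}^{\vect{\sigma}}\vect{f}(\omega^{j}))<\delta$, and re-read $C_{j}$ as the $\varepsilon$-Bowen ball $B_{r_{j}-c}^{\vect{\sigma}}(\omega^{j},\varepsilon)$ (legitimate once $N>c$); its $\msrbow_{\varepsilon}^{s}$-weight is $\exp(-(r_{j}-c)s+S_{r_{j}-c}^{\vect{\sigma}}\vect{f}(\omega^{j}))$, and since $|S_{r_{j}}^{\vect{\sigma}}\vect{f}(\omega^{j})-S_{r_{j}-c}^{\vect{\sigma}}\vect{f}(\omega^{j})|\leq\sum_{l=r_{j}-c}^{r_{j}-1}\|f_{l}\|_{\infty}$, this is at most $e^{cs}\exp\bigl(r_{j}(s''-s)+\sum_{l=r_{j}-c}^{r_{j}-1}\|f_{l}\|_{\infty}\bigr)$ times the original weight of $C_{j}$. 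Because $s''-s<0$ and $c$ is fixed, for $N$ large this prefactor is $\leq e^{cs}$ uniformly in $j$, so the re-read cover has $\msrbow_{\varepsilon}^{s}$-weight $<e^{cs}\delta$; letting $\delta\to 0$ gives $\msrbow_{\varepsilon}^{s}(\vect{\sigma},\vect{f},\Omega)=0$, hence $\prebow(\vect{\sigma},\vect{f},\Omega,\varepsilon)\leq s$ for every $\varepsilon$, hence $\prebow(\vect{\sigma},\vect{f},\Omega)\leq s$; letting $s$ decrease to $\inf\{s:\msrbow_{\mathscr{C}}^{s}(\vect{f},\Omega)=0\}$ closes this identity.

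For the equicontinuous addendum I would sandwich. The pointwise bounds $\inf_{[\bv]}S_{n}^{\vect{\sigma}}\vect{f}\leq S_{n}^{\vect{\sigma}}\vect{f}(\omega)\leq\sup_{[\bv]}S_{n}^{\vect{\sigma}}\vect{f}$ for $\omega\in[\bv]$ with $\bv$ of rank $n$ give $\underline{\msrbpp}_{\mathscr{C}}^{s}(\vect{f},\Omega)\leq\msrbow_{\mathscr{C}}^{s}(\vect{f},\Omega)\leq\msrbppup_{\mathscr{C}}^{s}(\vect{f},\Omega)$ for all $s$, so the three critical exponents are ordered. To close the chain, let $\rho$ be the modulus of equicontinuity of $\vect{f}$: if $\omega,\vartheta$ lie in a common cylinder of rank $n$ then $d_{j}(\vect{\sigma}^{j}\omega,\vect{\sigma}^{j}\vartheta)\leq\e^{-(n-j)}$ for $0\leq j\leq n-1$, so $|S_{n}^{\vect{\sigma}}\vect{f}(\omega)-S_{n}^{\vect{\sigma}}\vect{f}(\vartheta)|\leq\sum_{m=1}^{n}\rho(\e^{-m})=:g(n)$ with $g(n)/n\to 0$ by Ces\`aro. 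Hence for every $\eta>0$ and all $n\geq N_{\eta}$ one has $\exp(-n(s+\eta)+\sup_{[\bv]}S_{n}^{\vect{\sigma}}\vect{f})\leq\exp(-ns+\inf_{[\bv]}S_{n}^{\vect{\sigma}}\vect{f})$, which dominates termwise the weights on any cover by cylinders of rank $\geq N_{\eta}$, so $\msrbppup_{\mathscr{C}}^{s+\eta}(\vect{f},\Omega)\leq\underline{\msrbpp}_{\mathscr{C}}^{s}(\vect{f},\Omega)$; letting $\eta\to 0$ collapses all three critical exponents to a common value, which by the first part equals $\prebow(\vect{\sigma},\vect{f},\Omega)$.

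The hard part will be the reverse inequality in the first identity, namely controlling the Birkhoff-sum perturbation $\sum_{l=r-c}^{r-1}\|f_{l}\|_{\infty}$ caused by the rank offset $c=c(\varepsilon)$. When $\|\vect{f}\|<+\infty$, or more generally when $\sum_{l=r-c}^{r-1}\|f_{l}\|_{\infty}=o(r)$ for each fixed $c$, the argument above goes through verbatim; for more pathological potentials one expects both sides of the identity to be $+\infty$, but making this dichotomy rigorous is the one genuinely delicate point. The remainder is bookkeeping with the definitions in Subsections~\ref{ssect:prebppetentbow} and \ref{ssect:preequivdefsymsys}.
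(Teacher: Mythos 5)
Your proof is more careful than the paper's, which simply asserts in one line that for every $\varepsilon>0$ the set functions $\msrbow_{\varepsilon}^{s}(\vect\sigma,\vect f,\Omega)$ and $\msrbow_{\mathscr C}^{s}(\vect f,\Omega)$ (and their $\msrbpp$-analogues) are equal, and then cites Definition~\ref{def:prebpp} and a companion paper. You have correctly noticed that this asserted equality is not literal once $\varepsilon<\e^{-1}$: by \eqref{eq:bowenballsymsys} the ball $B_{n}^{\vect\sigma}(\omega,\varepsilon)$ is the cylinder of rank $n+c(\varepsilon)$, so a $(N,\varepsilon)$-cover by $n_i$-balls is a cylinder cover of rank $n_i+c$ but with weights $\exp(-n_i s + S_{n_i}\vect f)$ rather than $\exp(-(n_i+c)s + S_{n_i+c}\vect f)$. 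What is literally true is only the one-sided comparison $\msrbow_{\mathscr C}^s\le\msrbow_{\varepsilon}^s$ (your ``parent cylinder'' direction) plus the exact equality when $c(\varepsilon)=0$, i.e.\ $\varepsilon\in(\e^{-1},1]$. Your ``easy'' direction, your re-reading of cylinder covers as $\varepsilon$-ball covers with the $\e^{cs}$ prefactor, and the sandwich argument via the Cesàro estimate $g(n)/n\to0$ for the equicontinuous addendum, are all correct and in fact supply the details that the paper's one-line proof elides.

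The caveat you raise in your last paragraph is a genuine gap rather than a cosmetic one. The factor $\exp\bigl(cs+r_j(s''-s)-\sum_{l=r_j-c}^{r_j-1}f_l(\vect\sigma^l\omega^j)\bigr)$ need not be bounded uniformly in $j$ unless the boundary tail $\sum_{l=r-c}^{r-1}\|f_l\|_\infty$ grows sublinearly in $r$; the hypothesis of the first identity imposes nothing of the sort on $\vect f$. Put differently, your argument establishes that the critical exponent of $\msrbow_{\mathscr C}^s$ equals $\prebow(\vect\sigma,\vect f,\Omega,\varepsilon_0)$ for $\varepsilon_0\in(\e^{-1},1]$, and then that $\prebow(\vect\sigma,\vect f,\Omega,\varepsilon)$ does not increase further as $\varepsilon\to0$; the latter is precisely a strong-uniform-expansiveness stabilization statement, and elsewhere in the paper (Theorem~\ref{thm:suepre}(2)) that stabilization is only proved for equicontinuous $\vect f\in\vect C_b$. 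So either the first identity tacitly requires a boundedness hypothesis on $\vect f$, or it needs an argument finer than bounding $f_l$ by $\|f_l\|_\infty$ (for instance, exploiting that the same discrepancy term $\sum_{l=r-c}^{r-1}f_l$ already appears in the cylinder weight of $\msrbow_{\mathscr C}^s$, so only its oscillation across a cylinder matters, not its absolute size). You have diagnosed this precisely; the paper's proof, as written, does not resolve it.
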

  \begin{proof} 
Let $\msrbppup_{\varepsilon}^{s}$ denote the $\msrbpp_{\varepsilon}^{s}$ defined in \cite[\S3.2]{CM2}. Since for every $\varepsilon>0$,  
    \begin{equation}\label{eq:msrcyldcoincidence}
      \begin{aligned}
        \msrbow_{\varepsilon}^{s}(\vect{\sigma},\vect{f},\Omega) = \msrbow_{\mathscr{C}}^{s}(\vect{f},\Omega), \\
        \msrbppup_{\varepsilon}^{s}(\vect{\sigma},\vect{f},\Omega) = \msrbppup_{\mathscr{C}}^{s}(\vect{f},\Omega),
      \end{aligned}
    \end{equation}
    the conclusion follows by  Definition~\ref{def:prebpp} and \cite[Prop.3.3]{CM2}.
  \end{proof}

  Note that Proposition \ref{prop_eqQP} and Proposition \ref{prop_eqpBE} extend the results in Example~\ref{exmp:symsuepre}.
  Moreover,
  \begin{equation*}
    \begin{aligned}
      \msrbow_{\mathscr{C}}^{s}(\vect{f},\Omega) = \msrbow^{s}(\vect{\sigma},\vect{f},\Omega,\vect{\mathscr{C}}), \\
      \msrbppup_{\mathscr{C}}^{s}(\vect{f},\Omega) = \msrbppup^{s}(\vect{\sigma},\vect{f},\Omega,\vect{\mathscr{C}}).
    \end{aligned}
  \end{equation*}
  where $\vect{\mathscr{C}}=\{[\mathbf{u}]_{k}:\mathbf{u} \in \Sigma_{k}^{k}(\vect{m})]\}_{k=0}^{\infty}$.

 Similarly, we may define packing contents $\msrpac_{N,\mathscr{C}}^{s}$ via cylinders so that for all $\varepsilon>0$,
    $$
    \begin{aligned}
    \msrpac_{\infty,\varepsilon}^{s}(\vect{\sigma},\vect{f},\Omega) = \lim_{N \to \infty}\msrpac_{N,\mathscr{C}}^{s}(\vect{f},\Omega),
    \end{aligned}
    $$
    and the same procedures to define packing measures $\msrpac_{\mathscr{C}}^{s}$ gives that for every $\varepsilon>0$,
    $$
    \msrpac_{\varepsilon}^{s}(\vect{\sigma},\vect{f},\Omega) = \msrpac_{\mathscr{C}}^{s}(\vect{\sigma},\vect{f},\Omega).
    $$

    By Definition~\ref{def:prepac}, we have the following equivalence for packing pressures.
\begin{prop}
  Given $\Omega \subseteq \Sigma_{0}^{\infty}$ and $\vect{f} \in \vect{C}(\vect{\Sigma}(\vect{m}),\R)$,
    $$
      \prepac(\vect{\sigma},\vect{f},\Omega)
      =\sup\{s: \msrpac_{\mathscr{C}}^{s}(\vect{f},\Omega)=+\infty\}=\inf\{s: \msrpac_{\mathscr{C}}^{s}(\vect{f},\Omega)=0\}.
    $$
\end{prop}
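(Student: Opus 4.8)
The plan is to argue exactly as in the proofs of Propositions~\ref{prop_eqQP} and \ref{prop_eqpBE}, the only new point being that we now deal with packings rather than covers. Everything hinges on the fact that, in a symbolic system, closed Bowen balls are cylinders: by \eqref{def_BSkn}, for every $\varepsilon\in(0,1)$ one has $\overline{B}_{n}^{\vect{\sigma}}(\omega,\varepsilon)=[\omega\vert{(n+c_{\varepsilon})}]$ with $c_{\varepsilon}:=\lceil-\log\varepsilon\rceil-1\in\{0,1,2,\ldots\}$ depending only on $\varepsilon$. Consequently a countable $(N,\varepsilon)$-packing $\{\overline{B}_{n_{i}}^{\vect{\sigma}}(\omega^{i},\varepsilon)\}_{i}$ of $\Omega$ is precisely a countable pairwise disjoint family of cylinders $\{[\omega^{i}\vert{(n_{i}+c_{\varepsilon})}]\}_{i}$ with $\omega^{i}\in\Omega$ and $n_{i}\ge N$, the weight $\exp(-n_{i}s+S_{n_{i}}^{\vect{\sigma}}\vect{f}(\omega^{i}))$ being attached to the time index $n_{i}$. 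I would use this correspondence --- following \cite[\S3.2]{CM2} and the argument behind Proposition~\ref{prop_eqpBE} --- to set up the cylinder packing content $\msrpac_{N,\mathscr{C}}^{s}$ and the cylinder packing measure $\msrpac_{\mathscr{C}}^{s}$ so that, for every $\varepsilon>0$ and every $s\in\R$,
$$
\msrpac_{\infty,\varepsilon}^{s}(\vect{\sigma},\vect{f},\Omega)=\lim_{N\to\infty}\msrpac_{N,\mathscr{C}}^{s}(\vect{f},\Omega)
\qquad\text{and}\qquad
\msrpac_{\varepsilon}^{s}(\vect{\sigma},\vect{f},\Omega)=\msrpac_{\mathscr{C}}^{s}(\vect{f},\Omega);
$$
this is exactly the construction recorded in the paragraph preceding the statement.

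Granting these identities, the proposition is a formal consequence of the definition of the packing pressure. By \eqref{def_PP}, for each fixed $\varepsilon>0$ the map $s\mapsto\msrpac_{\varepsilon}^{s}(\vect{\sigma},\vect{f},\Omega)$ is non-increasing and drops from $+\infty$ to $0$ at the single value $\prepac(\vect{\sigma},\vect{f},\Omega,\varepsilon)=\sup\{s:\msrpac_{\varepsilon}^{s}(\vect{\sigma},\vect{f},\Omega)=+\infty\}=\inf\{s:\msrpac_{\varepsilon}^{s}(\vect{\sigma},\vect{f},\Omega)=0\}$. Substituting $\msrpac_{\varepsilon}^{s}(\vect{\sigma},\vect{f},\Omega)=\msrpac_{\mathscr{C}}^{s}(\vect{f},\Omega)$ makes the right-hand side equal to $\sup\{s:\msrpac_{\mathscr{C}}^{s}(\vect{f},\Omega)=+\infty\}=\inf\{s:\msrpac_{\mathscr{C}}^{s}(\vect{f},\Omega)=0\}$, a value that does not depend on $\varepsilon$. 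Hence $\varepsilon\mapsto\prepac(\vect{\sigma},\vect{f},\Omega,\varepsilon)$ is constant, and passing to the limit $\varepsilon\to0$ in Definition~\ref{def:prepac} yields
$$
\prepac(\vect{\sigma},\vect{f},\Omega)=\sup\{s:\msrpac_{\mathscr{C}}^{s}(\vect{f},\Omega)=+\infty\}=\inf\{s:\msrpac_{\mathscr{C}}^{s}(\vect{f},\Omega)=0\},
$$
as claimed.

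The genuine work all lies in the first step --- the bookkeeping reconciling the ``time'' index $n_{i}$ carried by the Bowen-ball weights with the ``rank'' $n_{i}+c_{\varepsilon}$ of the underlying cylinders --- and I expect this, rather than the formal second step, to be the main obstacle. One inclusion is immediate: since $[\omega^{i}\vert{n_{i}}]\supseteq[\omega^{i}\vert{(n_{i}+c_{\varepsilon})}]$, a pairwise disjoint family of rank-$n_{i}$ cylinders remains disjoint after passing to rank $n_{i}+c_{\varepsilon}$, so every competitor for $\msrpac_{N,\mathscr{C}}^{s}$ is, with the same weights, a competitor for $\msrpac_{N,\varepsilon}^{s}$; the reverse comparison requires enlarging the cylinders of a given $(N,\varepsilon)$-packing back to rank $n_{i}$ and absorbing the resulting overlaps, where the net property of cylinders keeps the combinatorics under control. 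As in the treatment of $\msrbow_{\mathscr{C}}^{s}$ in Proposition~\ref{prop_eqpBE} --- and in contrast with the supremum/infimum variants there --- no equicontinuity or uniform boundedness assumption on $\vect{f}$ enters, because the weights are evaluated at genuine points $\omega^{i}$ rather than through oscillations of $\vect{f}$ over cylinders.
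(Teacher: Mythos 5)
Your route is the same as the paper's: in the paragraph preceding the statement the paper simply \emph{asserts} that the cylinder packing contents $\msrpac_{N,\mathscr{C}}^{s}$ and measures $\msrpac_{\mathscr{C}}^{s}$ can be defined so that $\msrpac_{\infty,\varepsilon}^{s}=\lim_{N}\msrpac_{N,\mathscr{C}}^{s}$ and $\msrpac_{\varepsilon}^{s}=\msrpac_{\mathscr{C}}^{s}$ hold for every $\varepsilon>0$, and then the proposition follows ``by Definition~\ref{def:prepac}'' exactly as in your formal second step. So the overall architecture of your argument is the paper's; the only content is the reconciliation you isolate as ``the first step''.

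However, the mechanism you propose for the reverse comparison is wrong in an essential way. If $\{[\omega^{i}\vert(n_{i}+c_{\varepsilon})]\}_{i}$ is a disjoint $(N,\varepsilon)$-packing and you ``enlarge back to rank $n_{i}$'', the cylinders $\{[\omega^{i}\vert n_{i}]\}_{i}$ can overlap, and by the net property the only way to ``absorb the overlaps'' into a disjoint family is to \emph{discard} every cylinder that is contained in a larger one. For a packing content --- a \emph{supremum} over disjoint families --- discarding terms strictly \emph{decreases} the sum $\sum_{i}\exp(-n_{i}s+S_{n_{i}}^{\vect{\sigma}}\vect{f}(\omega^{i}))$, so you obtain a competitor for $\msrpac_{N,\mathscr{C}}^{s}$ of \emph{smaller} weight, not a bound in the needed direction. (Indeed the loss can be as large as a factor $\prod_{j=n_i}^{n_i+c_{\varepsilon}-1}m_{j}$ when many balls share the same rank-$n_i$ initial word.) The correct move is the opposite: \emph{keep} the disjoint cylinders at their natural rank $m_{i}:=n_{i}+c_{\varepsilon}\ge N+c_{\varepsilon}$, so they remain a competitor for $\msrpac_{N+c_{\varepsilon},\mathscr{C}}^{s}$, and compare the two weight conventions. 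The ratio of the original weight to the rank-$m_i$ weight is $\exp\bigl(c_{\varepsilon}s-\sum_{l=n_{i}}^{n_{i}+c_{\varepsilon}-1}f_{l}(\vect{\sigma}^{l}\omega^{i})\bigr)$, which is a bounded multiplicative factor whenever $\|\vect{f}\|<\infty$ (or more generally when $\|f_{k}\|_{\infty}$ grows sublinearly), and a fixed multiplicative constant is invisible to the critical value. This is also where an implicit hypothesis hides: your ``in contrast ... no equicontinuity or uniform boundedness assumption on $\vect{f}$ enters'' overstates matters --- some control on $\|f_{k}\|_{\infty}$ is needed to make the re-indexing error harmless, a point the paper itself glosses over.
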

    This indicates that the sequence $\vect{\mathscr{C}}=\{[\bu]_{k}: \bu\in \Sigma_k^k\}_{k=0}^{\infty}$ forms in some sense a generator for $\prepac$.

  \subsection{Topological pressures on open sets}\label{ssect:toppreop}
  In this subsection, we study the behavior of the topological pressures on open subsets of $\Sigma_{0}^{\infty}$.

  We require the following lemma, which is a particular case of Theorem~\ref{thm:POeqPS}.
  \begin{lem}\label{lem:PupC}
Given $P\in  \{\prebow,\prepac, \preU\}$, let $\vect{f} \in \vect{C}(\vect{\Sigma}(\vect{m}),\R)$ be  equicontinuous. Then   for all  $\bu \in \Sigma_0^*$,    
    $$
    P(\vect{\sigma},\vect{f},[\bu]) = P(\vect{\sigma},\vect{f},\Sigma_{0}^{\infty}).
    $$
  \end{lem}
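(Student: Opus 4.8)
The plan is to prove the inequality in both directions, reducing the statement on the cylinder $[\bu]$ to the statement on the whole space $\Sigma_0^\infty$. The ``$\leq$'' direction is trivial by monotonicity of all three pressures in the set argument, since $[\bu] \subseteq \Sigma_0^\infty$. So the heart of the matter is the reverse inequality $P(\vect{\sigma},\vect{f},[\bu]) \geq P(\vect{\sigma},\vect{f},\Sigma_0^\infty)$. The key observation is that the shift $\vect{\sigma}$ maps cylinders onto cylinders: if $\bu \in \Sigma_0^{l-1}$ has length $l$, then $\vect{\sigma}^{l}[\bu]_0 = \Sigma_{l}^\infty(\vect{m})$, and more relevantly, $\vect{\sigma}^{l}$ restricted to $[\bu]_0$ is a homeomorphism onto $\Sigma_l^\infty(\vect{m})$ (it simply deletes the prefix $\bu$). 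Thus computing a pressure of $\vect{\sigma}$ on $[\bu]_0 \subseteq \Sigma_0^\infty$ is essentially the same as computing a pressure of the shifted system $\vect{\sigma}_{l} = \{\sigma_j\}_{j=l}^\infty$ on all of $\Sigma_l^\infty(\vect{m})$, up to the bounded contribution of the first $l$ coordinates coming from $S_l^{\vect{\sigma}}\vect{f}$ (which is continuous, hence bounded on the compact set $[\bu]_0$, so contributes $O(1)$ and vanishes after dividing by $n$).

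Concretely, I would use the cylinder-based formulations from Propositions~\ref{eq:PlowupOmega} and \ref{prop_eqpBE}. For the upper capacity pressure, by Proposition~\ref{eq:PlowupOmega} we have
$$
\preU(\vect{\sigma},\vect{f},[\bu]) = \varlimsup_{n\to\infty}\frac{1}{n}\log\sum_{\bv \in ([\bu])_0^{n-1}}\exp\big(S_n^{\vect{\sigma}}\vect{f}(\omega^{\bv})\big),
$$
and the admissible words $([\bu])_0^{n-1}$ for $n > l$ are exactly the words of the form $\bu\bw$ with $\bw$ ranging over $\Sigma_l^{n-1}(\vect{m})$. Splitting $S_n^{\vect{\sigma}}\vect{f}(\omega^{\bu\bw}) = S_l^{\vect{\sigma}}\vect{f}(\omega^{\bu\bw}) + S_{l, n-l}^{\vect{\sigma}}\vect{f}(\vect{\sigma}^l\omega^{\bu\bw})$ and bounding the first summand by $\pm l\|\vect{f}\|_{[\bu]}$ (a constant $C_{\bu}$ depending only on $\bu$), we get that the sum over $[\bu]$-admissible words of length $n-1$ is, up to the factor $e^{\pm C_{\bu}}$, the analogous sum for the system $(\vect{\Sigma}_l(\vect{m}),\vect{\sigma}_l)$ on all of $\Sigma_l^\infty(\vect{m})$ with words of length $n-1-l$. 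Dividing by $n$ and taking $\varlimsup$, the constant disappears and the index shift $n \mapsto n-1-l$ is harmless, so $\preU(\vect{\sigma},\vect{f},[\bu])$ equals the upper capacity pressure of the shifted system on its full space. Finally, one must observe that this shifted-system pressure equals $\preU(\vect{\sigma},\vect{f},\Sigma_0^\infty)$ itself: applying the same computation with $\bu$ the empty word is vacuous, so instead one notes that $\Sigma_0^\infty = \bigsqcup_{i=1}^{m_0}[i]_0$ is a finite disjoint union, the pressure of a finite union is the max of the pressures (using the net property and finite stability of $\varlimsup$ of sums), and by induction on $|\bu|$ every single-symbol cylinder has the same pressure — hence so does every cylinder, and hence so does $\Sigma_0^\infty$. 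The arguments for $\prebow$ and $\prepac$ are parallel, using Proposition~\ref{prop_eqpBE} (and the packing analogue) together with the $(N,\varepsilon)$-cover / $(N,\varepsilon)$-packing descriptions: covers of $[\bu]$ by cylinders of rank $\geq N$ correspond to covers of $\Sigma_l^\infty$ by cylinders of rank $\geq N - l$, and the Hausdorff/packing critical exponents are insensitive to the bounded additive shift $C_{\bu}$ in the exponents and to the finite rank shift.

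The main obstacle I expect is organizing the ``finite union'' step cleanly for all three pressures simultaneously, and in particular making sure the reduction to the shifted system is genuinely an equality rather than just an inequality: one needs that there is no ``loss'' in passing from $[\bu]$ to $\Sigma_0^\infty$, which requires knowing that $\Sigma_0^\infty$ itself is a finite disjoint union of rank-$1$ cylinders each of which, by the net property, can be subdivided compatibly — so that a near-optimal cover/packing/spanning configuration for $\Sigma_0^\infty$ restricts to one for $[\bu]$ with comparable weight. For $\prebow$ this is immediate since $\prebow$ of a finite (even countable) union is the sup of the pieces; for $\prepac$ it follows from the standard fact that packing pressure is finitely stable; for $\preU$ one uses that $\varlimsup_n \frac1n\log(a_n + b_n) = \max\{\varlimsup_n\frac1n\log a_n,\ \varlimsup_n\frac1n\log b_n\}$. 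Granting these, the equicontinuity of $\vect{f}$ is used only to guarantee (via Proposition~\ref{prop_ExPUPL} and Proposition~\ref{prop:prebppcov}) that these cylinder formulations are legitimate, and the rest is bookkeeping with the prefix-deletion homeomorphism.
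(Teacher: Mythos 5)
Your proposal is correct, and it reaches the conclusion by a genuinely different route from the paper. Both proofs share the same reductive skeleton: by finite (or countable) stability of $\preU$, $\prepac$, and $\prebow$, since $\Sigma_0^\infty = \bigcup_{\bu\in\Sigma_0^{l-1}}[\bu]$ is a finite union of rank-$l$ cylinders, it suffices to show that all cylinders of the same rank $l$ carry the same pressure. Where you diverge is in how you establish that last fact. The paper constructs, for any two rank-$l$ words $\bu,\bv$, an explicit equiconjugacy $\vect{\pi}$ between the two restricted NDSs $(\vect{\sigma}^k[\bu],\sigma_k)$ and $(\vect{\sigma}^k[\bv],\sigma_k)$ (a prefix-replacement map for $k\leq l$, the identity for $k\geq l+1$), notes that the restricted potentials agree for $k\geq l+1$, and then invokes the invariance of $\preup$, $\prebow$, $\prepac$ under equiconjugacy from the companion paper [CM1, Thm.~7.1 \& 5.7]. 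You instead work from the cylinder-based formulas (Propositions~\ref{eq:PlowupOmega} and \ref{prop_eqpBE}) and compute directly: admissible words for $[\bu]$ of length $n>l$ are exactly $\bu\bw$ with $\bw\in\Sigma_l^*$, and splitting $S_n^{\vect{\sigma}}\vect{f}=S_l^{\vect{\sigma}}\vect{f}+S_{l,n-l}^{\vect{\sigma}}\vect{f}\circ\vect{\sigma}^l$ pulls out a bounded constant $C_\bu$ (finite since only finitely many continuous $f_j$ on compact spaces are involved) plus, in the Carath\'eodory measures, a harmless factor $\e^{-ls}$, leaving the shifted-system quantity which is visibly independent of $\bu$. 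What your approach buys is that it avoids the machinery of equiconjugacy invariance from [CM1] and is entirely computational; what it costs is that the bookkeeping around the definitions of $\msrbow_{\mathscr{C}}^s$ and $\msrpac_{\mathscr{C}}^s$ (the rank shift, the inner cover/packing step, the two-stage definition of $\msrpac$) has to be carried out explicitly, whereas the paper's conjugacy argument dispatches all three pressures in one stroke by citing invariance. One small presentational glitch in your write-up: the closing ``induction on $|\bu|$'' sentence is unnecessary and slightly muddles the point — once you know $\preU(\vect{\sigma},\vect{f},[\bu])$ depends only on $|\bu|=l$, a single application of finite stability at level $l$ gives $\preU(\vect{\sigma},\vect{f},\Sigma_0^\infty)=\preU(\vect{\sigma},\vect{f},[\bu])$ directly, with no recursion on $l$ needed.
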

  \begin{proof}
    We only provide  the proof for the upper pressure since the  argument for Bowen and packing pressures is  identical.

Since $\vect{f}$ is  equicontinuous, by Proposition \ref{prop_ExPUPL}, we have $\preU(\vect{\sigma},\vect{f},[\bu])=\preup(\vect{\sigma},\vect{f},[\bu])$. 
Since $\Sigma_{0}^{\infty}(\vect{m})=\bigcup_{\bu \in \Sigma_{0}^{l}} [\bu]$ for all $l \geq 0$,     by the finite stability of $\preup$,
    it suffices to show 
    \begin{equation}\label{eq:PupCeqD}
      \preup(\vect{\sigma},\vect{f},[\bu]) = \preup(\vect{\sigma},\vect{f},[\bv]),  \qquad \textit{  for all  $\bu, \bv\in \Sigma_0^{l}$.}
    \end{equation}

Fix $\bu, \bv\in \Sigma_0^{l}$.  For each  $k \in \N$, let $X_{k}=\vect{\sigma}^{k}([\bu]) \subseteq \Sigma_{k}^{\infty}$ and $Y_{k}=\vect{\sigma}^{k}([\bv]) \subseteq \Sigma_{k}^{\infty}$. It is clear that     
    $$
    X_{k}=\{u_{k} \ldots u_{l}\omega: \omega \in \Sigma_{l+1}^{\infty}(\vect{m})\}
 \quad \textit{ and } \quad
    Y_{k}=\{v_{k} \ldots v_{l}\omega: \omega\in \Sigma_{l+1}^{\infty}(\vect{m})\},
    $$
for every $0 \leq k \leq l$ and $X_{k}=Y_{k}=\Sigma_{k}^{\infty}$ for all $k \geq l+1$. Let $T_{k}=\sigma_{k}\vert_{X_{k}}$ and $R_{k}=\sigma_{k}\vert{Y_{k}}$. Thus both $(\vect{X},\vect{T})$  and  $(\vect{Y},\vect{R})$  are  NDSs.  We endow $\vect{X}$  and $\vect{Y}$ with potentials $\vect{f}\vert_{\vect{X}}=\{f_{k}\vert_{X_{k}}\}_{k=0}^{\infty}$ and $\vect{f}\vert_{\vect{Y}}=\{f_{k}\vert_{Y_{k}}\}_{k=0}^{\infty}$, respectively.

To show  \eqref{eq:PupCeqD}, it is equivalent to show
    $$
    \preup(\vect{T},\vect{f}\vert_{\vect{X}},X_{0}) = \preup(\vect{R},\vect{f}\vert_{\vect{Y}},Y_{0}).
    $$
 For every $0 \leq k \leq l$,  let $\pi_{k}: X_k \to Y_k$ be given by
    $$
    \pi_{k}(u_{k} \ldots u_{l}\omega)=(v_{k} \ldots v_{l}\omega)\quad(\omega \in \Sigma_{l+1}^{\infty}(\vect{m})),
    $$
 and for $k \geq l+1$,  let $\pi_{k}=\id_{\Sigma_{k}^{\infty}}$ be the identity mapping on $\Sigma_{k}^{\infty}$.
Then  $\vect{\pi}=\{\pi_{k}\}_{k=0}^{\infty}$ is an equiconjugacy from $(\vect{X},\vect{T})$ to $(\vect{Y},\vect{R})$.
    Since $f_{k}\vert_{X_{k}}=f_{k}\vert_{Y_{k}}$ for all $k \geq l+1$ and $\vect{f}$ is equicontinuous,  by \cite[Thm.7.1 \& Thm.5.7]{CM1}, we have  $\preup(\vect{T},\vect{f}\vert_{\vect{X}},X_{0}) = \preup(\vect{R},\vect{f}\vert_{\vect{Y}},Y_{0})$.
  \end{proof}

Finally, we show that  Theorem~\ref{thm:POeqPS} is a consequence of  Lemma~\ref{lem:PupC}.
  \begin{proof}[Proof of Theorem~\ref{thm:POeqPS}]
We only provide the proof for $\preU$ since others are similar. Since $\Omega \subseteq \Sigma_{0}^{\infty}$, it follows that 
    $$
    \preU(\vect{\sigma},\vect{f},\Sigma_{0}^{\infty}) \geq \preU(\vect{\sigma},\vect{f},\Omega).
    $$
    
    Suppose that $\omega \in \Omega$ is an interior point. Then there exists $N \in \N$ such that  $[\omega\vert{n}] \subseteq \Omega$ for all $n \geq N$.
    By Lemma~\ref{lem:PupC}, we have
    \begin{align*}
      \preU(\vect{\sigma},\vect{f},\Sigma_{0}^{\infty}) = \preU(\vect{\sigma},\vect{f},[\omega\vert{n}]) \leq \preU(\vect{\sigma},\vect{f},\Omega),
    \end{align*}
 and the conclusion holds.
  \end{proof}

  \begin{proof}[Proof of Corollary~\ref{cor:PupeqPP}]
Since $\Omega\subset \Sigma_{0}^{\infty}$ is non-empty and open, by Corollary~\ref{cor:PBPupopen}, for all open $V \subseteq \Sigma_{0}^{\infty}$ with $\Omega \cap V \neq \varnothing$,
    $$
    \preU(\vect{\sigma},\vect{f},\Omega\cap V) = \preU(\vect{\sigma},\vect{f},\Omega)=\preU(\vect{\sigma},\vect{f},\Sigma_{0}^{\infty}).
    $$
Since $\Omega$ is also compact and $\vect{f}$ is equicontinuous, by Proposition \ref{prop_ExPUPL} and \cite[Cor.4.10]{CM1}, we have that  $\preU(\vect{\sigma},\vect{f},\Omega)=\prepac(\vect{\sigma},\vect{f},\Omega)$. Hence the conclusion follows.
  \end{proof}

  \subsection{Pressures of potentials with strongly bounded variation}\label{ssect:estimate}

  Given an equicontinuous $\vect{f} \in \vect{C}(\vect{\Sigma}(\vect{m}),\R)$,  
  let $a_{j,i}\in \big[ \inf_{\vartheta \in [i]_{j}}f_{j}(\vartheta),\sup_{\vartheta \in [i]_{j}}f_{j}(\vartheta) \big]$ for $j \geq 1$ and $1 \leq i \leq m_{j}$  and
  $$
  s_{n}=\frac{1}{n}\sum_{j=0}^{n-1}\log{\Big(\sum_{i=1}^{m_{j}}\e^{a_{j,i}}\Big)},
  $$
  and we write
  \begin{equation}\label{def_s_su}
    \underline{s}=\varliminf_{n \to \infty}s_{n}
    \quad \text{and} \quad
    \overline{s}=\varlimsup_{n \to \infty}s_{n}.
  \end{equation}
  It  immediately follows by Proposition \ref{eq:PlowupOmega} that
$$
  \inf\underline{s} \leq \prelow(\vect{\sigma},\vect{f},\Sigma_{0}^{\infty}) \leq \preup(\vect{\sigma},\vect{f},\Sigma_{0}^{\infty}) \leq \sup\overline{s},
  $$
  where the infimum and supremum are taken over all the possible choices of $a_{j,i} $.

To show Theorem~\ref{thm:Psymsys}, it is sufficient to prove that $\underline{s}$ and $\overline{s}$ are irrelevant to the choice of $a_{j,i}$  for potentials $\vect{f}$ satisfying \eqref{eq:strbndvar}. 
Recall  that $\vect{f}_{*}=\{f_{k,*}\}_{k=1}^{\infty}$, $\vect{f}^{*}=\{f_{k}^{*}\}_{k=1}^{\infty} \in \vect{C}(\vect{\Sigma}(\vect{m}),\R)$, 
where $f_{k,*}$ and $ f_{k}^{*}$ are given by \eqref{def_f*f_*}. 
\begin{lem}\label{lem:1stcord}
Given $P\in  \{\prebow,\prepac,\preL, \preU\}$,       if $\vect{f} \in \vect{C}(\vect{\Sigma}(\vect{m}),\R)$ satisfies \eqref{eq:strbndvar}, then
    $$
    P(\vect{\sigma},\vect{f},\Omega) = P(\vect{\sigma},\vect{f}_{*},\Omega) = P(\vect{\sigma},\vect{f}^{*},\Omega).
    $$
  \end{lem}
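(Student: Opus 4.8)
The plan is to reduce the lemma to the observation that the strongly bounded variation hypothesis makes the Birkhoff sums of $\vect{f}$, $\vect{f}_*$ and $\vect{f}^*$ differ by a single constant uniformly in $n$ and $\omega$, together with the fact that each of the four pressures is stable under such an additive perturbation of the potential. First I would record the elementary facts. Since $\omega \in [\omega_k]_k$, the definitions in \eqref{def_f*f_*} give $f_{k,*} \leq f_k \leq f_k^*$ pointwise, hence $\vect{f}_* \preceq \vect{f} \preceq \vect{f}^*$ and so $S_n^{\vect{\sigma}}\vect{f}_*(\omega) \leq S_n^{\vect{\sigma}}\vect{f}(\omega) \leq S_n^{\vect{\sigma}}\vect{f}^*(\omega)$ for all $n \geq 1$ and $\omega \in \Sigma_0^\infty$. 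Taking $\vartheta = \omega$ in \eqref{eq:strbndvar} gives $0 \leq S_n^{\vect{\sigma}}\vect{f}^*(\omega) - S_n^{\vect{\sigma}}\vect{f}_*(\omega) \leq b$, whence $|S_n^{\vect{\sigma}}\vect{f}(\omega) - S_n^{\vect{\sigma}}\vect{g}(\omega)| \leq b$ for each $\vect{g} \in \{\vect{f}_*,\vect{f}^*\}$, all $n \geq 1$ and all $\omega \in \Sigma_0^\infty$. Moreover $\vect{f}_*$ and $\vect{f}^*$ depend only on the first coordinate, hence are equicontinuous (any $\delta \leq \e^{-1}$ works), so $\preL$ and $\preU$ are well defined for all of $\vect{f}$, $\vect{f}_*$ and $\vect{f}^*$ by Proposition~\ref{prop_eqQP}, while $\prebow$ and $\prepac$ are defined for every potential.

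The heart of the proof is a stability statement, which I would verify case by case: if $\vect{g},\vect{h} \in \vect{C}(\vect{\Sigma}(\vect{m}),\R)$ satisfy $\sup_{n \geq 1,\ \omega}|S_n^{\vect{\sigma}}\vect{g}(\omega)-S_n^{\vect{\sigma}}\vect{h}(\omega)| \leq b$, then $P(\vect{\sigma},\vect{g},\Omega) = P(\vect{\sigma},\vect{h},\Omega)$ for each $P \in \{\prebow,\prepac,\preL,\preU\}$; for $\preL$ and $\preU$ the identity is obtained first for the spanning and separated quantities $\qrelow,\prelow,\qreup,\preup$ and then transfers to $\preL,\preU$ wherever the latter are defined. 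For $\preL$ and $\preU$: using the same $(n,\varepsilon)$-spanning set in \eqref{eq:defQn}, resp.\ the same $(n,\varepsilon)$-separated set in \eqref{eq:defPn}, gives $\e^{-b}Q_n(\vect{\sigma},\vect{h},\Omega,\varepsilon) \leq Q_n(\vect{\sigma},\vect{g},\Omega,\varepsilon) \leq \e^{b}Q_n(\vect{\sigma},\vect{h},\Omega,\varepsilon)$, resp.\ the same inequalities for $P_n$, and passing to $\frac{1}{n}\log$ in \eqref{eq:Qe}, \eqref{eq:Pe} and then $\varepsilon \to 0$ in \eqref{eq:defqre}, \eqref{eq:defpre} absorbs the constant $b$. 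For $\prebow$: in \eqref{eq:defmsrbow} each summand $\exp(-n_i s + S_{n_i}^{\vect{\sigma}}\vect{g}(x_i))$ lies within the factor $\e^{\pm b}$ of $\exp(-n_i s + S_{n_i}^{\vect{\sigma}}\vect{h}(x_i))$, so $\msrbow_{N,\varepsilon}^s(\vect{\sigma},\vect{g},Z)$, and hence $\msrbow_{\varepsilon}^s(\vect{\sigma},\vect{g},Z)$, differs from the corresponding quantity for $\vect{h}$ by at most that factor; in particular one vanishes precisely when the other does, so the critical values $\prebow(\vect{\sigma},\cdot,Z,\varepsilon)$ in \eqref{def_PBep} coincide, and $\varepsilon \to 0$ finishes this case. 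For $\prepac$: the same reasoning carries the factor $\e^{\pm b}$ through \eqref{eq:defmsrPack}, through the limit defining $\msrpac_{\infty,\varepsilon}^s$, and through the infimum over countable decompositions $\{Z_i\}$ in \eqref{def_pes} --- which it survives, being uniform in $i$ --- so $\prepac(\vect{\sigma},\cdot,Z,\varepsilon)$ in \eqref{def_PP} is unchanged and $\varepsilon \to 0$ concludes.

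Applying this stability statement to the pairs $(\vect{f},\vect{f}_*)$ and $(\vect{f},\vect{f}^*)$ then yields $P(\vect{\sigma},\vect{f},\Omega) = P(\vect{\sigma},\vect{f}_*,\Omega) = P(\vect{\sigma},\vect{f}^*,\Omega)$ for every $P \in \{\prebow,\prepac,\preL,\preU\}$, which is the lemma. I do not expect a genuine obstacle: the whole argument is the bookkeeping observation that a uniformly bounded additive shift of the Birkhoff sums merely rescales every weight appearing in the definitions by a bounded multiplicative factor, which affects neither the critical exponent $s$ (for $\prebow$ and $\prepac$) nor the exponential growth rate $\frac{1}{n}\log(\cdot)$ (for $\preL$ and $\preU$). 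The two points that need a little care are keeping track of where $\preL$ and $\preU$ have to be known to exist --- supplied by Propositions~\ref{prop_ExPUPL} and \ref{prop_eqQP} --- and, for $\prepac$, carrying the factor $\e^{\pm b}$ past the extra infimum in \eqref{def_pes}, which is harmless since $\e^{\pm b}$ does not depend on the decomposition.
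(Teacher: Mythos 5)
Your proof is correct and rests on the same key observation as the paper's: under \eqref{eq:strbndvar} the Birkhoff sums of $\vect{f}$, $\vect{f}_*$ and $\vect{f}^*$ differ by at most the constant $b$, uniformly in $n$ and $\omega$, and each pressure is insensitive to such a uniformly bounded additive perturbation. The paper only writes out the $\prebow$ case, and does so by passing to the cylinder-based set functions $\msrbow_{\mathscr{C}}^{s}$, $\underline{\msrbpp}_{\mathscr{C}}^{s}$, $\overline{\msrbpp}_{\mathscr{C}}^{s}$ of Section~\ref{ssect:preequivdefsymsys} and then delegating the passage from comparable measures to equal critical exponents to the argument of \cite[Prop.3.5]{CM1}; you instead isolate the stability statement as an explicit intermediate lemma and verify it for all four pressures directly from the general definitions \eqref{eq:defQn}--\eqref{def_pes}, tracking the factor $\e^{\pm b}$ through the infima/suprema and through the limits in $n$, $N$ and $\varepsilon$. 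This makes the argument self-contained and makes explicit the point the paper glosses over, namely that the bound survives the extra outer infimum over countable decompositions in the definition of $\msrpac_{\varepsilon}^{s}$; your handling of that step, and your care about where $\preL,\preU$ are known to exist, are correct.
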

  \begin{proof}
    The existence of $\preL$ and $\preU$  follows by  Proposition~\ref{prop_eqQP}.
    We only give  the proof for $\prebow$, and other proofs   are similar.
  
Given  $\vect{f} \in \vect{C}(\vect{\Sigma}(\vect{m}),\R)$ satisfying \eqref{eq:strbndvar}, since
  $$
  S_{k,n}^{\vect{\sigma}}\vect{f}_{*}(\omega) \leq S_{k,n}^{\vect{\sigma}}\vect{f}(\omega) \leq S_{k,n}^{\vect{\sigma}}\vect{f}^{*}(\omega),
  $$
for all $n=1,2,\ldots$ and all $\omega \in \Sigma_{0}^{\infty}$, it follows that 
    \begin{equation}\label{eq:sumiscoincidence}
      \begin{aligned}
        S_{n}^{\vect{\sigma}}\vect{f}_{*}(\omega) \leq S_{n}^{\vect{\sigma}}\vect{f}(\omega) \leq S_{n}^{\vect{\sigma}}\vect{f}_{*}(\omega) + b, \\
        S_{n}^{\vect{\sigma}}\vect{f}^{*}(\omega) - b \leq S_{n}^{\vect{\sigma}}\vect{f}(\omega) \leq S_{n}^{\vect{\sigma}}\vect{f}^{*}(\omega).
      \end{aligned}
    \end{equation}
Combining it with \eqref{def_mcpb}, we have 
\begin{eqnarray*}
&&    \underline{\msrbpp}_{\mathscr{C}}^{s}(\vect{f},\Omega) \leq \msrbow_{\mathscr{C}}^{s}(\vect{f}_{*},\Omega) \leq \underline{\msrbpp}_{\mathscr{C}}^{s-\alpha}(\vect{f},\Omega),   \\
&&    \overline{\msrbpp}_{\mathscr{C}}^{s+\alpha}(\vect{f},\Omega) \leq \msrbow_{\mathscr{C}}^{s}(\vect{f}^{*},\Omega) \leq \overline{\msrbpp}_{\mathscr{C}}^{s}(\vect{f},\Omega).
\end{eqnarray*}
Then the conclusion for $\prebow$ follows by the same argument in \cite[Prop.3.5]{CM1}.
  \end{proof}

The next lemma is the key to  the proof of Theorem~\ref{thm:Psymsysf1stcoord}.  Given a finite cover $\mathscr{U}$ of $\Sigma_{0}^{\infty}(\vect{m})$ consisting of cylinders, that is $\Sigma_{0}^{\infty}(\vect{m})\subset \cup_{\bu\in \mathscr{U}} [\bu]$,  we denote the lowest and the highest ranks of the cylinders in $\mathscr{U}$ by $n_{\min},n_{\max}$, respectively, i.e.,
  $$
  n_{\min}=\min\{n(\bu)=\len{\bu}: \bu \in \mathscr{U}\}, \quad n_{\max}=\max\{n(\bu)=\len{\bu}: \bu \in \mathscr{U}\}.
  $$
  We have the following rank uniformization covering lemma, which is inspired by the proof of \cite[Thm.1]{Hua1994}.
  \begin{lem}\label{lem:rkrdc}
    Given a finite disjoint cover $\mathscr{U}$ of $\Sigma_{0}^{\infty}(\vect{m})$ by cylinders, for every $\vect{f}$ satisfying \eqref{eq:funct1stcoord} and all $s \in \R$,

    \begin{enumerate}[(1)]
      \item     there exists an integer $n_{*}$ with $n_{\min} \leq n_{*} \leq n_{\max}$ such that
            \begin{equation*}
              \sum_{\bu \in \mathscr{U}}\exp{\Big(-n(\bu)s+\sup_{\omega \in [\bu] }S_{n(\bu)}^{\vect{\sigma}}\vect{f}(\omega)\Big)} \geq \sum_{\bu \in \Sigma_0^{n_{*}}}\exp{\Big(-n_{*}s+\sup_{\omega \in [\bu]}S_{n_{*}}^{\vect{\sigma}}\vect{f}(\omega)\Big)};
            \end{equation*}
      \item     there exists an integer $n^{*}$ with $n_{\min} \leq n^{*} \leq n_{\max}$ such that
            \begin{equation*}
              \sum_{\bu \in \mathscr{U}}\exp{\Big(-n(\bu) s+\sup_{\omega \in [\bu]}S_{n(\bu)}^{\vect{\sigma}}\vect{f}(\omega)\Big)} \leq \sum_{\bu \in \Sigma_0^{n^{*}}}\exp{\Big(-n^{*}s+\sup_{\omega \in [\bu]}S_{n^{*}}^{\vect{\sigma}}\vect{f}(\omega)\Big)}.
            \end{equation*}
    \end{enumerate}
  \end{lem}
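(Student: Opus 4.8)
The plan is to rewrite the weighted covering sums that appear in the statement as averages against an auxiliary Bernoulli measure, after which both inequalities become immediate. First I would record that, if $\vect{f}$ satisfies \eqref{eq:funct1stcoord}, then $S_{n}^{\vect{\sigma}}\vect{f}$ is constant on every cylinder of rank $n$: for a word $\bu=u_{0}\cdots u_{n-1}$ of length $n$ its value on $[\bu]$ is $\sum_{j=0}^{n-1}a_{j,u_{j}}$, so the suprema in the statement are superfluous and the weight attached to a cylinder of rank $n(\bu)=\len{\bu}$ is
\[
  w(\bu):=\exp\Bigl(-\len{\bu}\,s+\sum_{j=0}^{\len{\bu}-1}a_{j,u_{j}}\Bigr),
\]
which depends only on $\bu$ and is multiplicative along the word tree.

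Next I would introduce the normalizing data: set $c_{j}:=\e^{-s}\sum_{i=1}^{m_{j}}\e^{a_{j,i}}>0$, let $\Phi(n):=\prod_{j=0}^{n-1}c_{j}$ (with $\Phi(0)=1$), and let $\mu$ be the Bernoulli measure on $\Sigma_{0}^{\infty}(\vect{m})$ generated by the probability vectors $\bigl(\e^{a_{j,i}}/\sum_{i'=1}^{m_{j}}\e^{a_{j,i'}}\bigr)_{i=1}^{m_{j}}$. A one-line telescoping of these definitions gives the key identity $w(\bu)=\mu([\bu])\,\Phi(\len{\bu})$ for every finite word $\bu$. Summing it over all words of a fixed length $n$, which partition $\Sigma_{0}^{\infty}(\vect{m})$, yields $\sum_{\len{\bu}=n}w(\bu)=\Phi(n)$; in other words $\Phi(n_{*})$ and $\Phi(n^{*})$ are exactly the right-hand sides in (1) and (2), realized by the uniform-rank covers $\{[\bu]:\len{\bu}=n_{*}\}$ and $\{[\bu]:\len{\bu}=n^{*}\}$.

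Finally, because $\mathscr{U}$ is a \emph{disjoint} cover, the cylinders $\{[\bu]:\bu\in\mathscr{U}\}$ partition $\Sigma_{0}^{\infty}(\vect{m})$, hence $\sum_{\bu\in\mathscr{U}}\mu([\bu])=1$ and
\[
  \sum_{\bu\in\mathscr{U}}w(\bu)=\sum_{\bu\in\mathscr{U}}\mu([\bu])\,\Phi(\len{\bu})
\]
is a convex combination of the numbers $\Phi(\len{\bu})$, $\bu\in\mathscr{U}$. The ranks $\len{\bu}$ occurring here form a subset of $\{n_{\min},\dots,n_{\max}\}$ containing both endpoints, so this sum lies between $\min$ and $\max$ of $\Phi$ over those ranks; taking $n_{*}$ (resp. $n^{*}$) to be a rank in $[n_{\min},n_{\max}]$ at which $\Phi$ attains that minimum (resp. maximum) and invoking $\sum_{\len{\bu}=n}w(\bu)=\Phi(n)$ proves (1) and (2). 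I expect the only genuinely non-mechanical point to be spotting that normalizing by $\mu$ turns the covering sums into an honest convex combination; the rest is bookkeeping, the one place to stay careful being that the rank of a cylinder matches the number of Birkhoff terms in $S_{n}^{\vect{\sigma}}\vect{f}$, so that the stated right-hand sides really are the uniform-rank covers (i.e.\ over $\Sigma_{0}^{\,n_{*}-1}$ and $\Sigma_{0}^{\,n^{*}-1}$).
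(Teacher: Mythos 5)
Your proof is correct, and it follows a genuinely different and more economical route than the paper's. The paper proceeds by induction on $n_{\max}-n_{\min}$, splitting into two cases according to whether a certain minimal ratio $\lambda$ attached to the rank-$n_{\min}$ cylinders equals $1$ or is strictly smaller; in the latter case it ``transports'' the refinement structure of the $\lambda$-minimising cylinder onto the unrefined cylinders (this transplant uses exactly the first-coordinate dependence of $\vect{f}$) to produce a new disjoint cover with strictly smaller sum and strictly larger $n_{\min}$, then invokes the induction hypothesis. Your argument avoids the induction and the case analysis entirely: the factorisation $w(\bu)=\mu([\bu])\,\Phi(\len{\bu})$ and the observation that for a finite disjoint cover the weights $\mu([\bu])$ sum to $1$ turn the covering sum into a convex combination of the quantities $\Phi(r)$ over the ranks $r\in\{n_{\min},\dots,n_{\max}\}$ that actually occur, and each $\Phi(r)$ is exactly the uniform-rank sum at rank $r$. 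The conclusion then follows from the trivial fact that a convex combination lies between the minimum and maximum of the quantities being averaged. Both proofs use the same structural input (multiplicativity of the weight along the word tree, which is where \eqref{eq:funct1stcoord} enters); yours packages it more conceptually. The induction in the paper buys a purely combinatorial proof that never mentions a measure, whereas your argument makes transparent why the normalising Bernoulli measure \eqref{pvama} appears later as the equilibrium state. One small detail, which you correctly flag, is the off-by-one convention: in the paper's notation $\bu\in\Sigma_{0}^{n_{*}}$ has $\len{\bu}=n_{*}+1$, while the exponent carries $n_{*}$ Birkhoff terms, so the sum indexed by ``rank equal to the number of Birkhoff terms'' should run over $\Sigma_{0}^{\,n_{*}-1}$; this is a notational slip in the statement, not a gap in either proof.
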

  \begin{proof}
We only give  the  proof for conclusion (1) since the proofs are similar. We prove it by induction on the integer $n_{\max}-n_{\min}$. 

For $n_{\max}-n_{\min}=0$, let  $n_{*}=n_{\max}=n_{\min}$ and  $\mathscr{U}=\Sigma_0^{n_{*}}$, and the conclusion holds.

Fix $q \geq 1$. Assume that the conclusion holds for all $n_{\max}-n_{\min} \leq q$. Next, we show the conclusion holds for  $n_{\max}-n_{\min} = q+1$.

For each $\bv \in \Sigma_{0}^{n_{\min}}$, we write 
    $$
    \mathscr{U}_{\bv}=\{\bu \in \mathscr{U}: [\bu] \cap [\bv] \neq \varnothing\}.
    $$
    Then $\mathscr{U}_{\bv}$ is a cover of $[\bv]$.  
    Since $n_{\min}$ is the lowest rank of the cylinders in $\mathscr{U}$, by the net properties of cylinders,   the cover $\mathscr{U}_{\bv}$ of $[\bv]$ consists either of a single cylinder $[\bv]$, or of disjoint cylinders of rank strictly greater than $n_{\min}$,    and hence  $\mathscr{U}\cap \Sigma_{0}^{n_{\min}}\neq \emptyset$.   Moreover, for all $\widehat{\bv} \in \mathscr{U}\cap \Sigma_{0}^{n_{\min}}$, we have 
    $$
    \frac{\sum_{\bv \in \mathscr{U}_{\widehat{\bv}}}\exp{\big(-n(\bv)s+\sup_{\omega \in [\bv]}S_{n(\bv)}^{\vect{\sigma}}\vect{f}(\omega)\big)}}{\exp{\big(-n_{\min}s+\sup_{\omega \in [\widehat{\bv}]}S_{n_{\min}}^{\vect{\sigma}}\vect{f}(\omega)\big)}} = \frac{\exp{\big(-n_{\min}s+\sup_{\omega \in [\widehat{\bv}]}S_{n_{\min}}^{\vect{\sigma}}\vect{f}(\omega)\big)}}{\exp{\big(-n_{\min}s+\sup_{\omega \in [\widehat{\bv}]}S_{n_{\min}}^{\vect{\sigma}}\vect{f}(\omega)\big)}}=1.
    $$

    Let
    $$
    \lambda=\min_{\mathbf{u} \in \Sigma_{0}^{n_{\min}}}{\frac{\sum_{\bv \in \mathscr{U}_{\mathbf{u}}}\exp{\big(-n(\bv)s+\sup_{\omega \in [\bv]}S_{n(\bv)}^{\vect{\sigma}}\vect{f}(\omega)\big)}}{\exp{\big(-n_{\min}s+\sup_{\omega \in [\mathbf{u}]}S_{n_{\min}}^{\vect{\sigma}}\vect{f}(\omega)\big)}}}.
    $$
    It is clear that $\lambda \leq 1$.

      \emph{Case 1.} If $\lambda=1$, then
      $$
      \sum_{\bv \in \mathscr{U}_{\mathbf{u}}}\exp{\Big(-n(\bv)s+\sup_{\omega \in [\bv]}S_{n(\bv)}^{\vect{\sigma}}\vect{f}(\omega)\Big)} \geq \exp{\Big(-n_{\min}s+\sup_{\omega \in [\mathbf{u}]}S_{n_{\min}}^{\vect{\sigma}}\vect{f}(\omega)\Big)}
      $$
      for all $\mathbf{u} \in \Sigma_{0}^{n_{\min}}$, and we are able to directly reduce the ranks of cylinders $C_{i} \in \mathscr{U}$
      to derive the estimate
      \begin{align*}
        \sum_{\bu \in \mathscr{U}}\exp{\Big(-n(\bu)s+\sup_{\omega \in [\bu]}S_{n(\bu)}^{\vect{\sigma}}\vect{f}(\omega)\Big)} &= \sum_{\mathbf{u} \in \Sigma_{0}^{n_{\min}}}\sum_{\bv \in \mathscr{U}_{\mathbf{u}}}\exp{\Big(-n(\bv)s+\sup_{\omega \in [\bv]}S_{n(\bv)}^{\vect{\sigma}}\vect{f}(\omega)\Big)} \\
        &\geq \sum_{\mathbf{u} \in \Sigma_{0}^{n_{\min}}}\exp{\Big(-n_{\min}s+\sup_{\omega \in [\mathbf{u}]}S_{n_{\min}}^{\vect{\sigma}}\vect{f}(\omega)\Big)}. 
      \end{align*}
      Taking $n_{*}=n_{\min}$, the conclusion holds.

      \emph{Case 2.} If $\lambda<1$, then  there exists $\bu_0 \in \Sigma_{0}^{n_{\min}}$ with $\bu_0 \notin \mathscr{U}$  such that
      $$
      \lambda=\frac{\sum_{\bv \in \mathscr{U}_{\bu_0}}\exp{\big(-n(\bv)s+\sup_{\omega \in [\bv]}S_{n(\bv)}^{\vect{\sigma}}\vect{f}(\omega)\big)}}{\exp{\big(-n_{\min}s+\sup_{\omega \in [\bu_0]}S_{n_{\min}}^{\vect{\sigma}}\vect{f}(\omega)\big)}}.
      $$

Let
      $$
      W=\{\mathbf{w} \in \Sigma_{n_{\min}+1}^{l}: l \geq n_{\min}+1, \mathbf{v}\mathbf{w} \in \mathscr{U}_{\mathbf{v}}\},
      $$
      and for every $\widehat{\mathbf{u}} \in \Sigma_{0}^{n_{\min}}\cap \mathscr{U}$, we  set
      $$
      \mathscr{U}_{\widehat{\mathbf{u}}}^{\prime}=\{\widehat{\mathbf{u}}\mathbf{w}: \mathbf{w} \in W\}. 
      $$
It is clear that  $\mathscr{U}_{\widehat{\mathbf{u}}}^{\prime}$ covers $[\widehat{\mathbf{u}}]$, i.e. $[\widehat{\mathbf{u}}]\subset \cup_{ \mathbf{w} \in W}[\widehat{\mathbf{u}}\mathbf{w}]$, and the rank of every element of $\mathscr{U}_{\widehat{\mathbf{u}}}^{\prime}$ is at least $n_{\min}+1$. 

For every $\widehat{\mathbf{u}} \in \Sigma_{0}^{n_{\min}}\cap \mathscr{U}$,  we write 
 \begin{align*}
     I:&=   \frac{\sum_{\widehat{\mathbf{u}}\bw \in \mathscr{U}_{\widehat{\mathbf{u}}}^{\prime}}\exp{\big(-n(\widehat{\mathbf{u}}\bw)s+\sup_{\omega \in [\widehat{\mathbf{u}}\bw]}S_{n(\widehat{\mathbf{u}}\bw)}^{\vect{\sigma}}\vect{f}(\omega)\big)}}{\exp{\left(-n_{\min}s+\sup_{\omega \in [\widehat{\mathbf{u}}]}S_{n_{\min}}^{\vect{\sigma}}\vect{f}(\omega)\right)}} \\
        & = \frac{\sum_{\mathbf{w} \in W}\exp{\big(-\len{\widehat{\mathbf{u}}\mathbf{w}}s+\sup_{\omega \in [\widehat{\mathbf{u}}\mathbf{w}]}S_{\len{\widehat{\mathbf{u}}\mathbf{w}}}^{\vect{\sigma}}\vect{f}(\omega)\big)}}{\exp{\left(-n_{\min}s+\sup_{\omega \in [\widehat{\mathbf{u}}]}S_{n_{\min}}^{\vect{\sigma}}\vect{f}(\omega)\right)}} .
   \end{align*}

Since $f_{k}$ is dependent only on the 1st coordinate $\omega_{k}$ of $\omega \in \Sigma_{k}^{\infty}$,  by \eqref{eq:sumknTf}, we have
$$
\sup_{\omega \in [\widehat{\mathbf{u}}]}S_{n_{\min}}^{\vect{\sigma}}\vect{f}(\omega) = S_{n_{\min}}^{\vect{\sigma}}\vect{f}(\vartheta^{\prime})
$$
and
 $$
 \sup_{\omega \in [\widehat{\mathbf{u}}\mathbf{w}]}S_{\len{\widehat{\mathbf{u}}\mathbf{w}}}^{\vect{\sigma}}\vect{f}(\omega) =S_{n_{\min}}^{\vect{\sigma}}\vect{f}(\vartheta') + S_{n_{\min}+1,\len{\mathbf{w}}}^{\vect{\sigma}}\vect{f}(\vartheta')
  $$
 for all $\vartheta' \in [\widehat{\mathbf{u}} \bw]$. Combining it with  and $\len{\widehat{\mathbf{u}}\mathbf{w}}=n_{\min}+\len{\bw}$,  it follows that 
      \begin{align*}
      I &= \frac{\exp{\left(-n_{\min}s+S_{n_{\min}}^{\vect{\sigma}}\vect{f}(\vartheta^{\prime})\right)}\sum_{\mathbf{w} \in W}\exp{\left(-\len{\mathbf{w}}s+S_{n_{\min}+1,\len{\mathbf{w}}}^{\vect{\sigma}}\vect{f}(\vartheta')\right)}}{\exp{\left(-n_{\min}s+\sup_{\omega \in [\widehat{\mathbf{u}}]}S_{n_{\min}}^{\vect{\sigma}}\vect{f}(\omega)\right)}} \\
      &=\sum_{\mathbf{w} \in W}\exp{\left(-\len{\mathbf{w}}s+S_{n_{\min}+1,\len{\mathbf{w}}}^{\vect{\sigma}}\vect{f}(\vartheta')\right)}.
      \end{align*}
Similarly, for all $\vartheta\in [\bu_0 \mathbf{w}]$, we have
$$
S_{n_{\min}}^{\vect{\sigma}}\vect{f}(\vartheta)=\sup_{\omega \in [\bu_0]}S_{n_{\min}}^{\vect{\sigma}}\vect{f}(\omega)
\quad \textit{ and } \quad  S_{n_{\min}+1,\len{\mathbf{w}}}^{\vect{\sigma}}\vect{f}(\vartheta) = S_{n_{\min}+1,\len{\mathbf{w}}}^{\vect{\sigma}}\vect{f}(\vartheta'),
$$
 and it follows that       
   \begin{align*}
     I  &= \frac{\exp{\left(-n_{\min}s+S_{n_{\min}}^{\vect{\sigma}}\vect{f}(\vartheta)\right)}\sum_{\mathbf{w} \in W}\exp{\left(-\len{\mathbf{w}}s+S_{n_{\min}+1,\len{\mathbf{w}}}^{\vect{\sigma}}\vect{f}(\vartheta)\right)}}{\exp{\left(-n_{\min}s+\sup_{\omega \in [\bu_0]}S_{n_{\min}}^{\vect{\sigma}}\vect{f}(\omega)\right)}} \\
        &= \frac{\sum_{\mathbf{w} \in W}\exp{\left(-\len{\bu_0\mathbf{w}}s+\sup_{\omega \in [\bu_0\bw]}S_{\len{\bu_0\mathbf{w}}}^{\vect{\sigma}}\vect{f}(\omega)\right)}}{\exp{\left(-n_{\min}s+\sup_{\omega \in [\bu_0]}S_{n_{\min}}^{\vect{\sigma}}\vect{f}(\omega)\right)}} \\
        &= \frac{\sum_{\bv \in \mathscr{U}_{\bu_0}}\exp{\left(-n(\bv)s+\sup_{\omega \in \bv}S_{n(\bv)}^{\vect{\sigma}}\vect{f}(\omega)\right)}}{\exp{\left(-n_{\min}s+\sup_{\omega \in [\bu_0]}S_{n_{\min}}^{\vect{\sigma}}\vect{f}(\omega)\right)}} =\lambda.
      \end{align*}

Since    for every $\widehat{\mathbf{u}} \in \Sigma_{0}^{n_{\min}}\cap \mathscr{U}$,     
$$
\lambda< \frac{\sum_{\bv \in \mathscr{U}_{\widehat{\mathbf{u}}}}\exp{\left(-n(\bv)s+\sup_{\omega \in [\bv]}S_{n(\bv)}^{\vect{\sigma}}\vect{f}(\omega)\right)}}{\exp{\left(-n_{\min}s+\sup_{\omega \in [\widehat{\mathbf{u}}]}S_{n_{\min}}^{\vect{\sigma}}\vect{f}(\omega)\right)}}, 
$$
 we obtain that 
\begin{equation}\label{clm:b}
              \sum_{\bv \in \mathscr{U}_{\widehat{\mathbf{u}}}^{\prime}}\exp{\big(-n(\bv)s+\sup_{\omega \in [\bv]}S_{n(\bv)}^{\vect{\sigma}}\vect{f}(\omega)\big)} < \sum_{\bv \in \mathscr{U}_{\widehat{\mathbf{u}}}}\exp{\big(-n(\bv)s+\sup_{\omega \in [\bv]}S_{n(\bv)}^{\vect{\sigma}}\vect{f}(\omega)\big)}.
\end{equation}

      Let
      $$
      \mathscr{U}^{\prime}=\left(\mathscr{U} \setminus \Sigma_0^{n_{\min}}\right) \cup \Big(\bigcup_{\widehat{\mathbf{u}} \in \mathscr{U} \cap \Sigma_0^{n_{\min}}}\mathscr{U}_{\widehat{\mathbf{u}}}^{\prime}\Big).
      $$
Since the members of $\mathscr{U}^{\prime}$ are cylinders of rank at least $n_{\min}+1$,  summing \eqref{clm:b} over $ \widehat{\mathbf{u}}\in \mathscr{U} \cap \Sigma_0^{n_{\min}}$ implies that
      $$
      \sum_{\bv \in \mathscr{U}^{\prime}}\exp{\Big(-n(\bv)s+\sup_{\omega \in [\bv]}S_{n(\bv)}^{\vect{\sigma}}\vect{f}(\omega)\Big)} < \sum_{\bv \in \mathscr{U}}\exp{\Big(-n(\bv)s+\sup_{\omega \in [\bv]}S_{n(\bv)}^{\vect{\sigma}}\vect{f}(\omega)\Big)}.
      $$

      By the induction hypothesis, there exists  $n_{*} \in \mathbb{N}$ with $n_{\min} < n_{\min}+1 \leq n_{*} \leq n_{\max}$ such that
      $$
       \sum_{\bv \in \mathscr{U}^{\prime}}\exp{\Big(-n(\bv)s+\sup_{\omega \in [\bv]}S_{n(\bv)}^{\vect{\sigma}}\vect{f}(\omega)\Big)} \geq \sum_{\bv \in \Sigma_0^{n_{*}}}\exp{\Big(-n_{*}s+\sup_{\omega \in [\bv]}S_{n_{*}}^{\vect{\sigma}}\vect{f}(\omega)\Big)}.
      $$
      It follows that
      $$
    \sum_{\bv \in \mathscr{U}}\exp{\Big(-n(\bv)s+\sup_{\omega \in [\bv]}S_{n(\bv)}^{\vect{\sigma}}\vect{f}(\omega)\Big)} \geq \sum_{\bv \in \Sigma_0^{n_{*}}}\exp{\Big(-n_{*}s+\sup_{\omega \in [\bv]}S_{n_{*}}^{\vect{\sigma}}\vect{f}(\omega)\Big)}.
      $$

Combining Case $1$ and Case $2$ together,  we obtain the  conclusion (1).
\end{proof}

  \subsection{Pressures of potentials dependent  on the 1st coordinate}\label{ssect:Psymsysf1stcoord}
 In the subsection, we give the proofs of Theorems~\ref{thm:Psymsysf1stcoord} and \ref{thm:Psymsys}
 and     provide some of their consequences.

  \begin{proof}[Proof of Theorem \ref{thm:Psymsysf1stcoord}]
 Since  $\vect{f}=\{f_{k}\}_{k=1}^{\infty}$ satisfies \eqref{eq:funct1stcoord}, $\vect{f}$ is equicontinuous, and  by \eqref{def_s_su}, Proposition \ref{prop_ExPUPL}  and Proposition \ref{eq:PlowupOmega},   
    $$
    \preL(\vect{\sigma},\vect{f},\Sigma_{0}^{\infty})=    \prelow(\vect{\sigma},\vect{f},\Sigma_{0}^{\infty})=\underline{s},
    \qquad
    \preU(\vect{\sigma},\vect{f},\Sigma_{0}^{\infty})=    \preup(\vect{\sigma},\vect{f},\Sigma_{0}^{\infty})=\overline{s}.
    $$
    The equation \eqref{eq:PupeqPsymsys}  follows from Corollary~\ref{cor:PupeqPP}.
Since   $\prebow(\vect{\sigma},\vect{f},\Sigma_{0}^{\infty}) \leq \prelow(\vect{\sigma},\vect{f},\Sigma_{0}^{\infty})$, it remains to show 
    $$
    \prebow(\vect{\sigma},\vect{f},\Sigma_{0}^{\infty}) \geq \prelow(\vect{\sigma},\vect{f},\Sigma_{0}^{\infty}).
    $$

Given $s \in \mathbb{R}$.  Since $\Sigma_{0}^{\infty}$ is compact, for any given countable cover $\mathscr{U}$ of $\Sigma_{0}^{\infty}$ by cylinders,
    we are always able to find a finite subcover $\mathscr{U}^{\prime}$ with smaller sums
    $$
    \sum_{\bv \in \mathscr{U}^{\prime}}\exp{\Big(-n(\bv)s+\sup_{\omega \in [\bv]}S_{n(\bv)}^{\vect{\sigma}}\vect{f}(\omega)\Big)} \leq \sum_{\bv \in \mathscr{U}}\exp{\Big(-n(\bv)s+\sup_{\omega \in [\bv]}S_{n(\bv)}^{\vect{\sigma}}\vect{f}(\omega)\Big)},
    $$
where $n(\bv)=\len{\bv}$.
Given $N \geq 1$, for any cover $\mathscr{U}^{\prime}$ of $\Sigma_{0}^{\infty}$ consisting of cylinders  of ranks no less than $ N$,
    by the net properties of cylinders, we are able to find a subcover $\mathscr{U}^{\prime\prime} \subseteq \mathscr{U}^{\prime}$ such that
    all members in $\mathscr{U}^{\prime\prime}$ are pairwise disjoint. This implies that 
    $$
    \sum_{\bv \in \mathscr{U}^{\prime\prime}}\exp{\Big(-n(\bv)s+\sup_{\omega \in [\bv]}S_{n(\bv)}^{\vect{\sigma}}\vect{f}(\omega)\Big)} \leq \sum_{\bv \in \mathscr{U}^{\prime}}\exp{\Big(-n(\bv)s+\sup_{\omega \in [\bv]}S_{n(\bv)}^{\vect{\sigma}}\vect{f}(\omega)\Big)}.
    $$

    Hence  to estimate  a lower bound for $\msrbppup_{\mathscr{C}}^{s}(\vect{f},\Sigma_{0}^{\infty})$, without loss of generality,   we assume $\mathscr{U}$  is a finite cover consisting of  disjoint cylinders. By Lemma~\ref{lem:rkrdc}(1), there exists $n_{*} \geq N$ such that
    $$
    \sum_{\bv \in \mathscr{U}}\exp{\Big(-n(\bv)s+\sup_{\omega \in [\bv]}S_{n(\bv)}^{\vect{\sigma}}\vect{f}(\omega)\Big)} \geq \sum_{\bv \in \Sigma_0^{n_{*}}}\exp{\Big(-n_{*}s+\sup_{\omega \in [\bv]}S_{n_{*}}^{\vect{\sigma}}\vect{f}(\omega)\Big)}.
    $$
     Since the above holds for all finite disjoint covers $\mathscr{U}$ of $\Sigma_{0}^{\infty}$ by cylinders, by \eqref{def_mcpb} and \eqref{eq:defQCs},
    it follows that
  $$
  \msrbppup_{\mathscr{C}}^{s}(\vect{f},\Sigma_{0}^{\infty}) \geq \carpeslow_{\mathscr{C}}^{s}(\vect{f},\Sigma_{0}^{\infty}).
  $$
 Hence by Proposition~\ref{prop_eqQP} and~\ref{prop_eqpBE},         we obtain  $\prebow(\vect{\sigma},\vect{f},\Sigma_{0}^{\infty}) \geq \prelow(\vect{\sigma},\vect{f},\Sigma_{0}^{\infty})$.
  \end{proof}

  \begin{proof}[Proof of Theorem~\ref{thm:Psymsys}]
   It is a direct consequence of   Lemma~\ref{lem:1stcord} and Theorem~\ref{thm:Psymsysf1stcoord}.
  \end{proof}

The following results are immediate consequences of Theorem~\ref{thm:Psymsysf1stcoord}.
  \begin{cor}
    Given a real sequence $\{a_k\}_{k=0}^\infty$, let $\vect{f}=\{f_k=a_k\}_{k=0}^\infty$. Then
\begin{eqnarray*}
   && \preL(\vect{\sigma},\vect{f},\Sigma_{0}^{\infty})=\prebow(\vect{\sigma},\vect{f},\Sigma_{0}^{\infty})=\varliminf_{n \to \infty}\frac{1}{n}\sum_{j=0}^{n-1}(\log{m_{j}}+a_{j}),\\  
   && \preU(\vect{\sigma},\vect{f},\Sigma_{0}^{\infty})=\prepac(\vect{\sigma},\vect{f},\Sigma_{0}^{\infty})=\varlimsup_{n \to \infty}\frac{1}{n}\sum_{j=0}^{n-1}(\log{m_{j}}+a_{j}).
\end{eqnarray*}
Moreover, if $\lim_{k\to\infty }a_{k} = a$, then
\begin{eqnarray*}
   && \preL(\vect{\sigma},\vect{f},\Sigma_{0}^{\infty})=\prebow(\vect{\sigma},\vect{f},\Sigma_{0}^{\infty})=\varliminf_{n \to \infty}\frac{1}{n}\sum_{j=0}^{n-1}\log{m_{j}}+a, \\
   && \preU(\vect{\sigma},\vect{f},\Sigma_{0}^{\infty})=\prepac(\vect{\sigma},\vect{f},\Sigma_{0}^{\infty})=\varlimsup_{n \to \infty}\frac{1}{n}\sum_{j=0}^{n-1}\log{m_{j}}+a.
\end{eqnarray*}
Furthermore, we have that 
 \begin{eqnarray*}
   &&\enttoplow(\vect{\sigma},\Sigma_{0}^{\infty})=\enttopbow(\vect{\sigma},\Sigma_{0}^{\infty})=\varliminf_{n \to \infty}\frac{1}{n}\sum_{j=0}^{n-1}\log{m_{j}},\\
   && \enttopup(\vect{\sigma},\Sigma_{0}^{\infty})=\enttoppac(\vect{\sigma},\Sigma_{0}^{\infty})=\varlimsup_{n \to \infty}\frac{1}{n}\sum_{j=0}^{n-1}\log{m_{j}}.
\end{eqnarray*}
  \end{cor}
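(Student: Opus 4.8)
The plan is to derive everything from Theorem~\ref{thm:Psymsysf1stcoord} by specializing the potential. Given a real sequence $\{a_{k}\}_{k=0}^{\infty}$ with $\|\vect{f}\|$ not necessarily finite, set $f_{k}\equiv a_{k}$; this sequence visibly satisfies \eqref{eq:funct1stcoord} with $a_{k,i}=a_{k}$ for every $i=1,\dots,m_{k}$, because a constant function depends (trivially) only on the first coordinate. Plugging $a_{j,i}=a_{j}$ into \eqref{eq:PloweqPBsys} and \eqref{eq:PupeqPsymsys} gives
$$
\sum_{i=1}^{m_{j}}\e^{a_{j,i}}=\sum_{i=1}^{m_{j}}\e^{a_{j}}=m_{j}\e^{a_{j}},
$$
so $\log\bigl(\sum_{i=1}^{m_{j}}\e^{a_{j,i}}\bigr)=\log m_{j}+a_{j}$, and the first pair of displayed equalities follows immediately. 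Note one should confirm that Theorem~\ref{thm:Psymsysf1stcoord} does not implicitly require $\vect{f}\in\vect{C}_{b}$; inspecting its statement, it only asks that $\vect{f}$ satisfy \eqref{eq:funct1stcoord}, so an unbounded sequence $\{a_{k}\}$ is admissible, and this is the only point where a little care is needed.

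Next I would handle the case $\lim_{k\to\infty}a_{k}=a$. Here the key elementary fact is Ces\`aro convergence: if $a_{k}\to a$ then $\frac{1}{n}\sum_{j=0}^{n-1}a_{j}\to a$. Combining this with the standard inequality that for sequences $x_{j},y_{j}$ with $\frac1n\sum y_{j}$ convergent one has
$$
\varliminf_{n\to\infty}\frac1n\sum_{j=0}^{n-1}(x_{j}+y_{j})=\varliminf_{n\to\infty}\frac1n\sum_{j=0}^{n-1}x_{j}+\lim_{n\to\infty}\frac1n\sum_{j=0}^{n-1}y_{j},
$$
and the analogous identity for $\varlimsup$, applied with $x_{j}=\log m_{j}$ and $y_{j}=a_{j}$, yields
$$
\varliminf_{n\to\infty}\frac1n\sum_{j=0}^{n-1}(\log m_{j}+a_{j})=\varliminf_{n\to\infty}\frac1n\sum_{j=0}^{n-1}\log m_{j}+a,
$$
and likewise for $\varlimsup$. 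This gives the second pair of displayed equalities.

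Finally, the entropy statement is just the case $a_{k}=0$ for all $k$ (equivalently $\vect{f}=\vect{0}$): by the definitions $\enttoplow(\vect{\sigma},\Sigma_{0}^{\infty})=\preL(\vect{\sigma},\vect{0},\Sigma_{0}^{\infty})$, $\enttopbow(\vect{\sigma},\Sigma_{0}^{\infty})=\prebow(\vect{\sigma},\vect{0},\Sigma_{0}^{\infty})$, and similarly for the upper/packing versions, so the first pair of equalities with $a_{j}=0$ gives exactly
$$
\enttoplow(\vect{\sigma},\Sigma_{0}^{\infty})=\enttopbow(\vect{\sigma},\Sigma_{0}^{\infty})=\varliminf_{n\to\infty}\frac1n\sum_{j=0}^{n-1}\log m_{j},
$$
and the corresponding statement for $\enttopup$ and $\enttoppac$.

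There is essentially no obstacle here: the corollary is a pure specialization, so the only thing to be vigilant about is (i) checking that the hypotheses of Theorem~\ref{thm:Psymsysf1stcoord} genuinely permit constant potentials that may be unbounded, and (ii) invoking Ces\`aro convergence correctly when splitting the $\varliminf$/$\varlimsup$ — one cannot in general split a $\varliminf$ of a sum, but one can when one of the two summand-averages actually converges, which is exactly the situation when $a_{k}\to a$. I would state the proof in three short paragraphs mirroring the three displayed blocks, citing Theorem~\ref{thm:Psymsysf1stcoord} for the first, adding the Ces\`aro remark for the second, and noting $\vect{f}=\vect{0}$ for the third.
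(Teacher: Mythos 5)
Your proof is correct and follows exactly the route the paper intends: the paper presents this corollary as an "immediate consequence of Theorem~\ref{thm:Psymsysf1stcoord}" with no further argument, and your specialization to $a_{j,i}=a_{j}$, followed by the Ces\`aro step for the $a_{k}\to a$ case and setting $\vect{f}=\vect{0}$ for the entropy claim, is precisely how it is meant to be deduced.
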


  \begin{rmk}
    If $m_{k}=m$ and $ f_k=f $ where $f(\omega)=a_{i}$ for all $\omega \in [i]$ and $i=1,\ldots, m$, then  $(\vect{\Sigma}(\vect{m}),\vect{\sigma})$
    degenerates to   $(\Sigma(m),\sigma)$, the above results reduce to 
    \begin{eqnarray*}
   && \prebow(\sigma,f,\Sigma)=\prepac(\sigma,f,\Sigma)=P(\sigma,f)=\log{\Big(\sum_{i=1}^{m}\e^{a_{i}}\Big)}, \\    
   && \enttopbow(\sigma,\Sigma)=\enttoppac(\sigma,\Sigma)=\enttop(\sigma)=\log{m}.
    \end{eqnarray*}
  \end{rmk}

\section{Measure-Theoretic Pressures and Equilibrium States}\label{sect:msrpre}

  \subsection{Measure-theoretic pressure of nonautonomous Bernoulli measures}\label{ssect:msrprenabmsr}

  We first present the almost everywhere exact formulae for the local entropies under certain conditions.   For this purpose, we first cite a law of large numbers, also known as the Kolmogorov's criterion.
  See \cite[\S5.2 Cor.1.]{Chow&Teicher1997}  for the proof.
  \begin{lem}\label{lem:KSLLN}
    Given a probability space $(\Omega,\mathscr{F},\mathbb{P})$, let $\{X_{n}\}_{n=1}^{\infty}$ be a sequence of independent random variables with $\var[X_{n}]<\infty$ for each $n \geq 1$. If $    \sum_{n=1}^{\infty}\frac{1}{n^{2}}\var[X_{n}]<\infty,$
    then
    $$
  \lim_{n\to\infty}  \frac{1}{n}\sum_{j=0}^{n-1}(X_{j} - \E[X_{j}]) = 0 \quad (\mathbb{P} \text{-a.s.}).
    $$
  \end{lem}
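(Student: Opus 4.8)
The plan is to derive the statement from two classical ingredients, the Khintchine--Kolmogorov convergence theorem (which rests on Kolmogorov's maximal inequality) together with Kronecker's lemma. First I would pass to the centered, rescaled variables $Y_{n} = \frac{X_{n}-\E[X_{n}]}{n}$ for $n \geq 1$. These are independent with $\E[Y_{n}]=0$ and $\var[Y_{n}] = \frac{1}{n^{2}}\var[X_{n}]$, so the hypothesis $\sum_{n} \frac{1}{n^{2}}\var[X_{n}] < \infty$ says precisely that $\sum_{n} \var[Y_{n}] < \infty$.

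The second step is to show that the series $\sum_{n=1}^{\infty} Y_{n}$ converges $\mathbb{P}$-almost surely. Writing $S_{n} = \sum_{k=1}^{n} Y_{k}$, Kolmogorov's maximal inequality gives, for all $m < n$ and $\varepsilon > 0$,
$$
\mathbb{P}\Big( \max_{m < k \leq n} |S_{k}-S_{m}| \geq \varepsilon \Big) \leq \frac{1}{\varepsilon^{2}} \sum_{k=m+1}^{n} \var[Y_{k}].
$$
Letting $n \to \infty$ and then $m \to \infty$, the right-hand side tends to $0$ since $\sum_{k} \var[Y_{k}]$ converges; hence $(S_{n})_{n}$ is almost surely a Cauchy sequence, and therefore converges almost surely.

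The last step is Kronecker's lemma: if $\sum_{n} a_{n}/b_{n}$ converges and $0 < b_{n} \uparrow \infty$, then $b_{n}^{-1}\sum_{k=1}^{n} a_{k} \to 0$. Applying it pathwise with $a_{n} = X_{n}-\E[X_{n}]$ and $b_{n} = n$ yields $\frac{1}{n}\sum_{k=1}^{n}(X_{k}-\E[X_{k}]) \to 0$ almost surely, which is the asserted conclusion after the harmless relabeling of the summation range from $\{1,\dots,n\}$ to $\{0,\dots,n-1\}$.

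I do not expect a genuine obstacle here: the result is standard (indeed the paper only cites it for this reason), and the two auxiliary facts --- Kolmogorov's maximal inequality, proved by a first-passage decomposition in which independence kills the cross terms, and Kronecker's lemma, proved by summation by parts --- are both elementary. If one prefers to bypass the maximal inequality, the almost sure convergence of $\sum_{n} Y_{n}$ can instead be obtained from the martingale convergence theorem applied to the $L^{2}$-bounded martingale $(S_{n})_{n}$, after which Kronecker's lemma finishes the argument unchanged.
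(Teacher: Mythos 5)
Your proposal is correct, but note that the paper does not prove this lemma at all: it is stated as a standard law of large numbers (Kolmogorov's criterion) and the proof is delegated to the cited reference, Chow and Teicher, \S5.2 Cor.~1. So there is no internal argument to compare against. What you have written is the classical textbook route to exactly that result: rescale to $Y_n=(X_n-\E[X_n])/n$, observe $\sum_n\var[Y_n]<\infty$, invoke the Khintchine--Kolmogorov convergence theorem (via Kolmogorov's maximal inequality, or equivalently $L^2$-bounded martingale convergence) to get almost sure convergence of $\sum_n Y_n$, and finish with Kronecker's lemma applied pathwise. Each step is correct, the hypotheses are used exactly where they are needed, and the index shift from $\{1,\dots,n\}$ to $\{0,\dots,n-1\}$ in the statement is indeed immaterial (set $X_0\equiv 0$, or reindex). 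In short, you have supplied a complete proof of a fact the paper simply quotes, and it matches the standard proof one would find in the cited source.
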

The next conclusion follows from the definition of measure-theoretic local entropies. 
\begin{prop} \label{eq:entlocpnt}
Let $\mu$ be the nonautonomous Bernoulli measure generated by \eqref{eq:NAPV}.
 Then for all $\omega \in \Sigma_{0}^{\infty}$,
      \begin{equation}
        \entlow_{\mu}(\vect{\sigma},\omega) = \varliminf_{n \to \infty}-\frac{1}{n}\sum_{j=0}^{n-1}\log{p_{j,\omega_{j}}},
        \quad
        \entup_{\mu}(\vect{\sigma},\omega) = \varlimsup_{n \to \infty}-\frac{1}{n}\sum_{j=0}^{n-1}\log{p_{j,\omega_{j}}}.
      \end{equation}
\end{prop}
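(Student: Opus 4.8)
The plan is to unwind the definitions. By definition $\entlow_{\mu}(\vect{\sigma},\omega)=\prelow_{\mu}(\vect{\sigma},\vect{0},\omega)$ and $\entup_{\mu}(\vect{\sigma},\omega)=\preup_{\mu}(\vect{\sigma},\vect{0},\omega)$, and since the constant sequence $\vect{0}$ is equicontinuous we may apply the reformulation \eqref{eq:defpremsrequiv}. With $\vect{f}=\vect{0}$ one has $S_{n}^{\vect{\sigma}}\vect{0}\equiv 0$, so
\[
\entlow_{\mu}(\vect{\sigma},\omega)=\varliminf_{n\to\infty}\frac{1}{n}\bigl(-\log\mu([\omega\vert{n}])\bigr),\qquad
\entup_{\mu}(\vect{\sigma},\omega)=\varlimsup_{n\to\infty}\frac{1}{n}\bigl(-\log\mu([\omega\vert{n}])\bigr).
\]
Next I would substitute the defining formula \eqref{eq:NAPV} of the nonautonomous Bernoulli measure: for every $n$, $\mu([\omega\vert{n}])=\prod_{j=0}^{n-1}p_{j,\omega_{j}}$, hence $-\log\mu([\omega\vert{n}])=-\sum_{j=0}^{n-1}\log p_{j,\omega_{j}}$. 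Plugging this in yields exactly the two claimed formulae, so nothing more is needed once \eqref{eq:defpremsrequiv} is in hand.

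For completeness I would also check that the same answer drops straight out of the primary definition \eqref{eq:defpremsr}, without invoking \eqref{eq:defpremsrequiv}. Writing $c=c(\varepsilon)=\lfloor-\log\varepsilon\rfloor\in\N$ and $S_{m}=-\sum_{j=0}^{m-1}\log p_{j,\omega_{j}}$ (a nonnegative, nondecreasing sequence, since each $p_{j,i}\in(0,1)$ as $m_{j}\geq 2$), formula \eqref{eq:defpremsr} with $\vect{f}=\vect{0}$ reads $\entlow_{\mu}(\vect{\sigma},\omega)=\lim_{\varepsilon\to 0}\varliminf_{n\to\infty}\frac{S_{n+c}}{n}$ and $\entup_{\mu}(\vect{\sigma},\omega)=\lim_{\varepsilon\to 0}\varlimsup_{n\to\infty}\frac{S_{n+c}}{n}$. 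The only point requiring an argument is the claim that for every fixed integer $c\geq 0$,
\[
\varliminf_{n\to\infty}\frac{S_{n+c}}{n}=\varliminf_{n\to\infty}\frac{S_{n}}{n}
\qquad\text{and}\qquad
\varlimsup_{n\to\infty}\frac{S_{n+c}}{n}=\varlimsup_{n\to\infty}\frac{S_{n}}{n},
\]
which makes the outer $\lim_{\varepsilon\to 0}$ vacuous and reduces the primary definition to the formula already obtained. The inequality ``$\geq$'' (resp.\ ``$\leq$'' for $\varlimsup$) is immediate from $S_{n+c}\geq S_{n}$. For the reverse direction I would use the identity $\frac{S_{n+c}}{n}=\frac{n+c}{n}\cdot\frac{S_{n+c}}{n+c}$ together with $\frac{n+c}{n}\to 1$: along a subsequence $m_{k}\to\infty$ realizing $\varliminf_{m}\frac{S_{m}}{m}=:L$, setting $n_{k}=m_{k}-c$ gives $\frac{S_{n_{k}+c}}{n_{k}}\to L$ when $L<\infty$, while the case $L=\infty$ follows from the trivial lower bound; the $\varlimsup$ version is symmetric.

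The computation is genuinely routine, and the only mild subtlety I would be careful about is this: without the standing hypotheses $\sup_{k}m_{k}<\infty$ or $p_{*}>0$ the terms $-\log p_{j,\omega_{j}}$ are unbounded, so one must not estimate the tail $\frac{1}{n}\sum_{j=n}^{n+c-1}(-\log p_{j,\omega_{j}})$ termwise; instead one exploits the monotonicity of $S_{m}$ through the multiplicative identity above, which keeps the argument valid even when the local entropies equal $+\infty$.
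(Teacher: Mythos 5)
Your proof is correct and takes essentially the same route the paper intends: the paper offers no argument at all, remarking only that the proposition ``follows from the definition,'' and your work is precisely the careful definition-unwinding that remark gestures at. Your ``for completeness'' verification that the $\varepsilon$-shift $n\mapsto n+\lfloor-\log\varepsilon\rfloor$ is harmless — using monotonicity of $S_m$ and the identity $\tfrac{S_{n+c}}{n}=\tfrac{n+c}{n}\cdot\tfrac{S_{n+c}}{n+c}$ rather than a termwise tail bound — is a genuinely useful extra step, since the paper states \eqref{eq:defpremsrequiv} without proof and your argument shows the reduction holds here without any boundedness hypothesis on $m_k$ or $p_{j,i}$.
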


 By applying the law of large numbers to  Proposition \ref{eq:entlocpnt},  we obtain a $\mu$-a.e. formula for the measure-theoretic entropies.
  \begin{thm}\label{thm:entlocae}
    Let $\mu$ be the nonautonomous Bernoulli measure generated by \eqref{eq:NAPV}.
    If $\lim_{n \to \infty}\frac{m_{n}}{n^{1-\alpha}}<1$ for some $\alpha>0$, then for $\mu$-a.e. $\omega \in \Sigma_{0}^{\infty}$,
    $$
    \entlow_{\mu}(\vect{\sigma},\omega) = \varliminf_{n \to \infty}\frac{1}{n} \sum_{j=0}^{n-1} H(\mathbf{p}_{j}),
    \qquad
    \entup_{\mu}(\vect{\sigma},\omega) = \varlimsup_{n \to \infty}\frac{1}{n} \sum_{j=0}^{n-1} H(\mathbf{p}_{j}).
    $$

  \end{thm}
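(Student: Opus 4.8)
The plan is to invoke the law of large numbers (Lemma~\ref{lem:KSLLN}) on the independent random variables arising from the coordinates of $\omega$, and then read off the conclusion from Proposition~\ref{eq:entlocpnt}. Regard $\Sigma_{0}^{\infty}$ as a probability space with the Borel $\sigma$-algebra and the measure $\mu$, and for each $j \in \N$ define $X_{j}\colon \Sigma_{0}^{\infty} \to \R$ by $X_{j}(\omega) = -\log p_{j,\omega_{j}}$. Since $\mu$ is the $\mat{P}_{0}$-Bernoulli measure of \eqref{eq:NAPV}, the coordinate maps $\omega \mapsto \omega_{j}$ are independent with $\mu(\{\omega_{j}=i\}) = p_{j,i}$, so the $X_{j}$ are independent; each takes only finitely many values (as $m_{j}$ is finite), hence has finite variance, and
$$
\E[X_{j}] = -\sum_{i=1}^{m_{j}} p_{j,i}\log p_{j,i} = H(\mathbf{p}_{j}).
$$

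The key estimate is a uniform variance bound. I would use $\var[X_{j}] \le \E[X_{j}^{2}] = \sum_{i=1}^{m_{j}} p_{j,i}(\log p_{j,i})^{2}$ together with the elementary inequality $x(\log x)^{2} \le 4\e^{-2}$ for all $x \in (0,1]$ (the maximum of $x(\log x)^{2}$ on $(0,1]$ is attained at $x=\e^{-2}$), which gives $\var[X_{j}] \le 4\e^{-2} m_{j}$. By hypothesis $\lim_{n\to\infty} m_{n}/n^{1-\alpha} < 1$, so $m_{n} \le n^{1-\alpha}$ for all $n$ beyond some $N$, whence
$$
\sum_{n=1}^{\infty}\frac{1}{n^{2}}\var[X_{n}] \le \sum_{n=1}^{N-1}\frac{4\e^{-2}m_{n}}{n^{2}} + 4\e^{-2}\sum_{n=N}^{\infty}\frac{1}{n^{1+\alpha}} < \infty
$$
since $\alpha>0$. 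Lemma~\ref{lem:KSLLN} then applies and yields, for $\mu$-a.e. $\omega \in \Sigma_{0}^{\infty}$,
$$
\lim_{n\to\infty}\frac{1}{n}\sum_{j=0}^{n-1}\bigl(X_{j}(\omega)-H(\mathbf{p}_{j})\bigr) = 0.
$$

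Finally I would translate this back. Fix $\omega$ in the full-measure set above. Since the averages $\frac{1}{n}\sum_{j=0}^{n-1}(X_{j}(\omega)-H(\mathbf{p}_{j}))$ tend to $0$, subtracting them from $\frac{1}{n}\sum_{j=0}^{n-1}X_{j}(\omega)$ alters neither its lower nor its upper limit, so
$$
\varliminf_{n\to\infty}\frac{1}{n}\sum_{j=0}^{n-1}X_{j}(\omega) = \varliminf_{n\to\infty}\frac{1}{n}\sum_{j=0}^{n-1}H(\mathbf{p}_{j}),
\qquad
\varlimsup_{n\to\infty}\frac{1}{n}\sum_{j=0}^{n-1}X_{j}(\omega) = \varlimsup_{n\to\infty}\frac{1}{n}\sum_{j=0}^{n-1}H(\mathbf{p}_{j}).
$$
Because $X_{j}(\omega) = -\log p_{j,\omega_{j}}$, Proposition~\ref{eq:entlocpnt} identifies the left-hand sides with $\entlow_{\mu}(\vect{\sigma},\omega)$ and $\entup_{\mu}(\vect{\sigma},\omega)$ respectively, which is the assertion. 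The only step that is not bookkeeping is the variance bound: the factors $(\log p_{j,i})^{2}$ are unbounded as $p_{j,i}\to 0$, so the linear-in-$m_{j}$ control supplied by the inequality $x(\log x)^{2}\le 4\e^{-2}$ is exactly what makes the Kolmogorov series converge under the stated growth condition on $m_{n}$.
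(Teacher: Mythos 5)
Your proof is correct and follows essentially the same route as the paper's: you introduce the same independent random variables $X_j(\omega)=-\log p_{j,\omega_j}$, obtain the same variance bound $\var[X_j]\le 4\e^{-2}m_j$ via $x(\log x)^2\le 4\e^{-2}$, verify Kolmogorov's criterion under the growth assumption on $m_n$, apply Lemma~\ref{lem:KSLLN}, and conclude through Proposition~\ref{eq:entlocpnt}. Your write-up is slightly more explicit about why the lim-condition on $m_n/n^{1-\alpha}$ forces the series to converge, but the argument is identical in substance.
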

  \begin{proof}
    Note that $\mu(B_{n}^{\vect{\sigma}}(\omega,\varepsilon))=\prod_{j=0}^{n + \lfloor-\log{\varepsilon}\rfloor-1}p_{j,\omega_{j}}$ for all $\omega \in \Sigma_{0}^{\infty}$.
    For each $n \geq 1$, we define a random variable $X_{n}$ on $(\Sigma_{0}^{\infty},\mathscr{B},\mu)$ by
    $$
    X_{n}(\omega)=-\log{p_{n,\omega_{n}}}.
    $$
    It is easy to verify that $\{X_{n}\}_{n=1}^{\infty}$ is a sequence of independent random variables with
    \begin{eqnarray*}
    && \E[X_{n}] = -\sum_{i=1}^{m_{n}}p_{n,i}\log{p_{n,i}}=H(\mathbf{p}_{n}),\\
    && \var[X_{n}] = \E[X_{n}^{2}]-(\E[X_{n}])^{2} \leq  \sum_{i=1}^{m_{n}}p_{n,i}(\log{p_{n,i}})^{2} < \infty.
    \end{eqnarray*}
Moreover $    \var[X_{n}] \leq 4 \e^{-2} m_{n} $ since  $-\frac{1}{\e} \leq x\log{x} < 0$ and $0 < x(\log{x})^{2} \leq 4 \e^{-2}$ for  $x\in(0, 1)$. 
By   $\lim_{n \to \infty}\frac{m_{n}}{n^{1-\alpha}}<1$, we obtain
    $$
    \sum_{n=1}^{\infty}\frac{1}{n^{2}}\var[X_{n}]\leq     \sum_{n=1}^{\infty}\frac{1}{n^{2}}4 \e^{-2} m_{n} <\infty.
    $$
Therefore, it follows by Lemma \ref{lem:KSLLN} that for $\mu$-a.e. $\omega \in \Sigma_{0}^{\infty}$, 
    \begin{equation}\label{eq:entlim}
   \lim_{n\to\infty} \frac{1}{n}\sum_{j=0}^{n-1}\Bigl(-\log{p_{j,\omega_{j}}} + H(\mathbf{p}_{n})\Bigr) = 0.
    \end{equation}
Combining this with  Proposition \ref{eq:entlocpnt}, we obtain that   for $\mu$-a.e. $\omega$,
\begin{eqnarray*}
    &&\entlow_{\mu}(\vect{\sigma},\omega) = \varliminf_{n \to \infty}-\frac{1}{n}\sum_{j=0}^{n-1}\log{p_{j,\omega_{j}}} = \varliminf_{n \to \infty}\frac{1}{n} \sum_{j=0}^{n-1} H(\mathbf{p}_{j}), \\
    &&\entup_{\mu}(\vect{\sigma},\omega) = \varlimsup_{n \to \infty}-\frac{1}{n}\sum_{j=0}^{n-1}\log{p_{j,\omega_{j}}} = \varlimsup_{n \to \infty}-\frac{1}{n}\sum_{j=0}^{n-1} H(\mathbf{p}_{j}). \qedhere
\end{eqnarray*}
  \end{proof}

  Similar to Lemma~\ref{lem:1stcord}, we have  the measure-theoretic version equality for potentials satisfying the strongly bounded variation property \eqref{eq:strbndvar}.
  \begin{lem}
    Given $\mu \in M( \Sigma_{0}^{\infty}(\vect{m}))$.  For $P \in  \{\prelow_{\mu}, \preup_{\mu}\}$, if $\vect{f} \in \vect{C}(\vect{\Sigma}(\vect{m}),\R)$ satisfies \eqref{eq:strbndvar}, then
    $$
    P(\vect{\sigma},\vect{f}) = P(\vect{\sigma},\vect{f}_{*}) = P(\vect{\sigma},\vect{f}^{*}).
    $$
  \end{lem}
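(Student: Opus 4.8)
The plan is to reduce the lemma to a pointwise identity for the local measure\nobreakdash-theoretic pressures and then integrate. Concretely, I will show that for \emph{every} $\omega \in \Sigma_{0}^{\infty}(\vect{m})$,
$$
\prelow_{\mu}(\vect{\sigma},\vect{f},\omega) = \prelow_{\mu}(\vect{\sigma},\vect{f}_{*},\omega) = \prelow_{\mu}(\vect{\sigma},\vect{f}^{*},\omega),
\qquad
\preup_{\mu}(\vect{\sigma},\vect{f},\omega) = \preup_{\mu}(\vect{\sigma},\vect{f}_{*},\omega) = \preup_{\mu}(\vect{\sigma},\vect{f}^{*},\omega);
$$
integrating these equalities against $\mu$ (the three integrands are literally the same function, so no measurability or integrability subtlety arises) then yields $P(\vect{\sigma},\vect{f}) = P(\vect{\sigma},\vect{f}_{*}) = P(\vect{\sigma},\vect{f}^{*})$ for $P \in \{\prelow_{\mu},\preup_{\mu}\}$.

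The heart of the argument is a uniform comparison of the Birkhoff\nobreakdash-type sums, which is exactly \eqref{eq:sumiscoincidence}. First, since $\vect{\sigma}^{j}\omega$ begins with the symbol $\omega_{j}$, definition \eqref{def_f*f_*} gives $f_{j,*}(\vect{\sigma}^{j}\omega) \leq f_{j}(\vect{\sigma}^{j}\omega) \leq f_{j}^{*}(\vect{\sigma}^{j}\omega)$, and summing over $j$ produces the pointwise sandwich $S_{n}^{\vect{\sigma}}\vect{f}_{*}(\omega) \leq S_{n}^{\vect{\sigma}}\vect{f}(\omega) \leq S_{n}^{\vect{\sigma}}\vect{f}^{*}(\omega)$ for all $n \geq 1$ and all $\omega$. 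Next, applying the strongly bounded variation hypothesis \eqref{eq:strbndvar} with $\vartheta = \omega$ and $\mathbf{u} = \omega\vert{n} \in \Sigma_{0}^{n-1}$ gives $S_{n}^{\vect{\sigma}}\vect{f}^{*}(\omega) - S_{n}^{\vect{\sigma}}\vect{f}_{*}(\omega) \leq b$; combining this with the sandwich yields, for every $n$ and $\omega$,
$$
\lvert S_{n}^{\vect{\sigma}}\vect{f}(\omega) - S_{n}^{\vect{\sigma}}\vect{f}_{*}(\omega) \rvert \leq b
\qquad \text{and} \qquad
\lvert S_{n}^{\vect{\sigma}}\vect{f}(\omega) - S_{n}^{\vect{\sigma}}\vect{f}^{*}(\omega) \rvert \leq b.
$$

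Finally I will substitute these estimates into the defining formula \eqref{eq:defpremsr}. For fixed $\varepsilon>0$ and $n \geq 1$, the quantity $\frac{1}{n}\bigl(-\log \mu([\omega\vert{(n+\lfloor -\log \varepsilon \rfloor)}]) + S_{n}^{\vect{\sigma}}\vect{f}(\omega)\bigr)$ differs from the corresponding quantity for $\vect{f}_{*}$, respectively $\vect{f}^{*}$, by at most $b/n$, uniformly in $\varepsilon$; since $b/n \to 0$, passing to $\varliminf_{n\to\infty}$ (respectively $\varlimsup_{n\to\infty}$) and then to $\lim_{\varepsilon \to 0}$ leaves the value unchanged, even when that value is $\pm\infty$. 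This gives the pointwise identities above and hence the lemma. I do not expect a genuine obstacle here: the proof is a direct transcription of the argument for Lemma~\ref{lem:1stcord}, now carried out with the pointwise definition \eqref{eq:defpremsr} instead of the cylinder content constructions, which also avoids any need for $\vect{f}$, $\vect{f}_{*}$, or $\vect{f}^{*}$ to be uniformly bounded. The only point deserving a line of care is the elementary observation that adding a term of order $1/n$ inside a $\varliminf$ or $\varlimsup$ does not affect it, regardless of finiteness.
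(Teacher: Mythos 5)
Your proof is correct and takes the same route as the paper's one-line proof, which combines the sandwich \eqref{eq:sumiscoincidence} with a cylinder formula for the local pressure. The only difference is that you feed the estimate $\lvert S_n^{\vect{\sigma}}\vect{f}(\omega)-S_n^{\vect{\sigma}}\vect{f}_*(\omega)\rvert\le b$ directly into the original definition \eqref{eq:defpremsr} rather than into the reformulation \eqref{eq:defpremsrequiv}; this is a small but genuine improvement, since \eqref{eq:defpremsrequiv} is stated in the paper only for equicontinuous $\vect{f}$, whereas the lemma assumes only \eqref{eq:strbndvar}, and your route sidesteps having to justify that \eqref{eq:defpremsrequiv} still holds under the weaker hypothesis.
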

  \begin{proof}
    It is immediate from \eqref{eq:sumiscoincidence} and \eqref{eq:defpremsrequiv}.
  \end{proof}

Next, we provide the pointwise exact formulae for measure-theoretic local pressures.
  \begin{prop} \label{eq:prelocpnt}
    Let $\mu$ be the nonautonomous Bernoulli measure given by \eqref{eq:NAPV} and $\vect{f} \in \vect{C}(\vect{\Sigma}(\vect{m}),\R)$ satisfy  \eqref{eq:strbndvar}. Given $\omega \in \Sigma_{0}^{\infty}$. Let $a_{k}(\omega)= f_{k} \circ \vect{\sigma}^{k}(\omega)$. 
    \begin{enumerate}[(a)]
      \item   If $p_{*}(\omega):=\inf_{j \in \N}\{p_{j,\omega_{j}}\}>0$, then 
    \begin{equation*}
        \prelow_{\mu}(\vect{\sigma},\vect{f},\omega) = \varliminf_{n \to \infty}\frac{1}{n}\sum_{j=0}^{n-1}\big(a_{j}(\omega)-\log{p_{j,\omega_{j}}}\big), \quad
        \preup_{\mu}(\vect{\sigma},\vect{f},\omega) = \varlimsup_{n \to \infty}\frac{1}{n}\sum_{j=0}^{n-1}\big(a_{j}(\omega)-\log{p_{j,\omega_{j}}}\big).
    \end{equation*}
      \item  If either $\entlow_{\mu}(\vect{\sigma},\omega) = \entup_{\mu}(\vect{\sigma},\omega)$ or $\{a_{k}(\omega)\}_{k=1}^\infty$  converges, then
    \begin{equation*}
        \prelow_{\mu}(\vect{\sigma},\vect{f},\omega) =\entlow_{\mu}(\vect{\sigma},\omega)+\varliminf_{n \to \infty}\frac{1}{n}\sum_{j=0}^{n-1}a_{j}(\omega), \quad
        \preup_{\mu}(\vect{\sigma},\vect{f},\omega) = \entup_{\mu}(\vect{\sigma},\omega) +\varlimsup_{n \to \infty}\frac{1}{n}\sum_{j=0}^{n-1}a_{j}(\omega).
    \end{equation*}

    \end{enumerate}
  \end{prop}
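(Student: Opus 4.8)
The plan is to unwind the definition \eqref{eq:defpremsr} using the explicit Bernoulli weights and then to isolate the contribution of the finitely many ``tail'' coordinates $n,n+1,\dots,n+\lfloor-\log\varepsilon\rfloor-1$. Fix $\omega\in\Sigma_{0}^{\infty}$ and write $L=\lfloor-\log\varepsilon\rfloor$. By \eqref{eq:NAPV} one has $-\log\mu([\omega\vert{N}])=\sum_{j=0}^{N-1}(-\log p_{j,\omega_{j}})$ for every $N$, while $S_{n}^{\vect{\sigma}}\vect{f}(\omega)=\sum_{j=0}^{n-1}a_{j}(\omega)$; so setting
\[
E_{n}=\frac1n\sum_{j=0}^{n-1}(-\log p_{j,\omega_{j}}),\qquad F_{n}=\frac1n\sum_{j=0}^{n-1}a_{j}(\omega),\qquad R_{n}^{L}=\frac1n\sum_{j=n}^{n+L-1}(-\log p_{j,\omega_{j}})\ge0,
\]
the quantity inside the two limits of \eqref{eq:defpremsr} equals $E_{n}+F_{n}+R_{n}^{L}$. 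Note that $R_{n}^{L}$ is nondecreasing in $L$, so the outer limit $\varepsilon\to0$ exists in $[-\infty,+\infty]$, and that $E_{n}+R_{n}^{L}=\tfrac{n+L}{n}E_{n+L}$, an identity I will use repeatedly.

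Part (a) is then almost immediate: if $p_{*}(\omega)>0$ then $0\le R_{n}^{L}\le\tfrac Ln(-\log p_{*}(\omega))\to0$ as $n\to\infty$ for each fixed $L$, so $\varliminf_{n}(E_{n}+F_{n}+R_{n}^{L})=\varliminf_{n}(E_{n}+F_{n})$ and likewise for $\varlimsup$; these are independent of $L$, hence the outer limit is redundant, and the claimed formulae follow from $E_{n}+F_{n}=\tfrac1n\sum_{j=0}^{n-1}(a_{j}(\omega)-\log p_{j,\omega_{j}})$. For part (b) I first record the auxiliary fact that, for each fixed $L$, $\varliminf_{n}\tfrac{n+L}{n}E_{n+L}=\varliminf_{m}E_{m}$ and $\varlimsup_{n}\tfrac{n+L}{n}E_{n+L}=\varlimsup_{m}E_{m}$: since $E_{m}\ge0$ and $\tfrac{n+L}{n}\downarrow1$, one inequality is trivial from $\tfrac{n+L}{n}E_{n+L}\ge E_{n+L}$, and the reverse follows by evaluating along a subsequence realizing the $\varliminf$ (resp.\ $\varlimsup$) of $(E_{m})$ after the index shift $m\mapsto m-L$, using $\tfrac Ln E_{n+L}\to0$ in the finite case and handling the $+\infty$ cases directly. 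By Proposition~\ref{eq:entlocpnt}, $\varliminf_{m}E_{m}=\entlow_{\mu}(\vect{\sigma},\omega)$ and $\varlimsup_{m}E_{m}=\entup_{\mu}(\vect{\sigma},\omega)$.

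Now I would treat the two hypotheses of part (b) in turn. If $\{a_{k}(\omega)\}$ converges to $a$, then $F_{n}\to a$ by Cesàro summation, whence $\varliminf_{n}(E_{n}+F_{n}+R_{n}^{L})=a+\varliminf_{n}(E_{n}+R_{n}^{L})=a+\entlow_{\mu}(\vect{\sigma},\omega)$ independently of $L$; letting $L\to\infty$ gives the formula for $\prelow_{\mu}$, and the same computation with $\varlimsup$ gives the one for $\preup_{\mu}$ (here $\varliminf_{n}F_{n}=\varlimsup_{n}F_{n}=a$). If instead $\entlow_{\mu}(\vect{\sigma},\omega)=\entup_{\mu}(\vect{\sigma},\omega)=:E$, then $E_{n}\to E$; when $E<\infty$ the identity $E_{n}+R_{n}^{L}=\tfrac{n+L}{n}E_{n+L}\to E$ forces $R_{n}^{L}\to0$, so $\varliminf_{n}(E_{n}+F_{n}+R_{n}^{L})=\varliminf_{n}(E_{n}+F_{n})=E+\varliminf_{n}F_{n}$ (again independent of $L$, and similarly with $\varlimsup$), while when $E=+\infty$ both sides are $+\infty$ once one knows $(F_{n})$ is bounded, as it is when $\|\vect{f}\|<+\infty$ (the case relevant to Theorem~\ref{thm:msrpre}; in general the identity is to be read in $[-\infty,+\infty]$).

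The main obstacle is exactly the interchange of the outer limit $\varepsilon\to0$ with the inner $\varliminf_{n}$: a priori the $L$ tail coordinates contributing $R_{n}^{L}$ need not be negligible, and controlling them is precisely what the hypotheses $p_{*}(\omega)>0$ in (a), and entropy-regularity or convergence of $\{a_{k}(\omega)\}$ in (b), are designed for. The brief subsequence/index-shift argument establishing the auxiliary fact is the only genuinely technical step; the remainder is bookkeeping with Bernoulli weights and Cesàro averages.
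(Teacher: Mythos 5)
Your proof is correct and follows essentially the same approach as the paper's: part (a) reproduces verbatim the paper's bound $R_n^L \le (L/n)(-\log p_*(\omega)) \to 0$, and for part (b), which the paper disposes of by merely citing Proposition~\ref{eq:entlocpnt} and Theorem~\ref{thm:entlocae}, your index-shift identity $\varliminf_n \tfrac{n+L}{n}E_{n+L} = \varliminf_m E_m$ (with its $\varlimsup$ analogue) is exactly the detail implicitly needed to justify both Proposition~\ref{eq:entlocpnt} and part (b). Your remark about the possible $\infty-\infty$ indeterminacy when $\entlow_{\mu}(\vect{\sigma},\omega)=+\infty$ and $(F_n)$ is unbounded below is a fair observation about a convention the paper leaves tacit, but it does not affect the intended applications, which assume $\|\vect{f}\|<+\infty$.
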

  \begin{proof}
(a) Suppose $p_{*}(\omega)=\inf_{j \in \N}\{p_{j,\omega_{j}}\}>0$. By \eqref{eq:NAPV}, \eqref{eq:defpremsr} and \eqref{def_BSkn}, we have that 
    \begin{align*}
      \prelow_{\mu}(\vect{\sigma},\vect{f},\omega) &= \lim_{\varepsilon \to 0}\varliminf_{n \to \infty}\frac{1}{n}\Big(-\log{\mu(B_{n}^{\vect{\sigma}}(\omega,\varepsilon))}+S_{n}^{\vect{\sigma}}\vect{f}(\omega)\Big) \\
                                          &= \lim_{\varepsilon \to 0}\varliminf_{n \to \infty}\frac{1}{n}\Big(-\log{\prod_{j=0}^{n+\lfloor-\log{\varepsilon}\rfloor-1}p_{j,\omega_{j}}}+\sum_{j=0}^{n-1}a_{j}(\omega)\Big) \\
                                          &= \lim_{\varepsilon \to 0}\varliminf_{n \to \infty}\frac{1}{n}\Big( \sum_{j=0}^{n-1}( a_{j}(\omega) - \log{p_{j,\omega_{j}}} )  - \sum_{j=n}^{n+\lfloor-\log{\varepsilon}\rfloor-1}\log{p_{j,\omega_{j}}}   \Big).
    \end{align*}
    For $0<\varepsilon<\e^{-1}$, since $p_{j,\omega_{j}} \geq p_{*}(\omega)>0$, it follows that
   $$
\varlimsup_{n \to \infty}  -  \frac{1}{n}\sum_{j=n+1}^{n+\lfloor-\log{\varepsilon}\rfloor}\log{p_{j,\omega_{j}}} \leq \varlimsup_{n \to \infty} \frac{\lfloor\log{\varepsilon}\rfloor}{n}\log{p_{*}(\omega)} =0.
    $$
    Hence $    \prelow_{\mu}(\vect{\sigma},\vect{f},\omega) = \varliminf_{n \to \infty}\frac{1}{n}\sum_{j=0}^{n-1}( a_{j}(\omega) - \log{p_{j,\omega_{j}}} )$, and  the proof for $\preup_{\mu}$ is identical.

(b)   The conclusion follows by Proposition \ref{eq:entlocpnt} and  Theorem \ref{thm:entlocae}. 
  \end{proof}

Similar to Theorem \ref{thm:entlocae}, we obtain  $\mu$-a.e. formulae for the lower and upper measure-theoretic local pressures.
  \begin{thm}\label{thm:prelocae}
    Let $\mu$ be the nonautonomous Bernoulli measure given by \eqref{eq:NAPV} and $\vect{f} \in \vect{C}(\vect{\Sigma}(\vect{m}),\R)$ satisfy \eqref{eq:strbndvar}.   
    \begin{enumerate}[(a)]
    \item Suppose that  $p_{*}(\omega)=\inf_{j \in \N}\{p_{j,\omega_{j}}\}>0$ for $\mu$-a.e. $\omega \in \Sigma_{0}^{\infty}$.   If $\lim_{n \to \infty}\frac{m_{n}}{n^{1-\alpha}}<1$  and $\lim_{n \to \infty}\frac{\|f_{n}\|_{\infty}^2}{n^{1-\alpha}}<1$ for some $\alpha>0$,
    then  for almost all $\omega$,
$$
 \prelow_{\mu}(\vect{\sigma},\vect{f},\omega) = \varliminf_{n \to \infty}\frac{1}{n}\sum_{j=0}^{n-1}\big(H(\mathbf{p}_{j}) + \E(f_j)\big), \\
\quad  \preup_{\mu}(\vect{\sigma},\vect{f},\omega) = \varlimsup_{n \to \infty}\frac{1}{n}\sum_{j=0}^{n-1}\big(H(\mathbf{p}_{j}) + \E(f_j)\big).
$$
    \item Suppose for $\mu$-a.e. $\omega \in \Sigma_{0}^{\infty}$, either $\entlow_{\mu}(\vect{\sigma},\omega) = \entup_{\mu}(\vect{\sigma},\omega)$ or $\{f_{k} \circ \vect{\sigma}^{k}(\omega)\}_{k=1}^\infty$ pointwise converges.  If $\lim_{n \to \infty}\frac{m_{n}}{n^{1-\alpha}}<1$ and $\lim_{n \to \infty}\frac{\|f_{n}\|_{\infty}^2}{n^{1-\alpha}}<1$ for some $\alpha>0$,
    then  for almost all $\omega$,
$$
 \prelow_{\mu}(\vect{\sigma},\vect{f},\omega) = \entlow_{\mu}(\vect{\sigma},\omega)+ \varliminf_{n \to \infty}\frac{1}{n}\sum_{j=0}^{n-1}\E(f_j), \\
\quad  \preup_{\mu}(\vect{\sigma},\vect{f},\omega) = \entup_{\mu}(\vect{\sigma},\omega) + \varlimsup_{n \to \infty}\frac{1}{n}\sum_{j=0}^{n-1} \E(f_j). 
$$
    \end{enumerate}

  \end{thm}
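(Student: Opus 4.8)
The plan is to reduce everything to potentials depending only on the first coordinate, so that the relevant partial sums become sums of independent random variables, and then to apply Kolmogorov's criterion (Lemma~\ref{lem:KSLLN}) exactly as in the proof of Theorem~\ref{thm:entlocae}. Throughout write $a_{j}(\omega)=f_{j}\circ\vect{\sigma}^{j}(\omega)$ and let $\vect{f}_{*}=\{f_{j,*}\}$ be the first-coordinate potential attached to $\vect{f}$ through \eqref{def_f*f_*}, so that $\vect{f}_{*}$ satisfies \eqref{eq:funct1stcoord} and, trivially, \eqref{eq:strbndvar} with constant $0$.

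The first step, and the only place where strongly bounded variation is genuinely used, is a pair of reductions. Since \eqref{eq:sumiscoincidence} gives $\lvert S_{n}^{\vect{\sigma}}\vect{f}(\omega)-S_{n}^{\vect{\sigma}}\vect{f}_{*}(\omega)\rvert\le b$ for all $n$ and all $\omega$, dividing by $n$ and comparing with \eqref{eq:defpremsr} yields $\prelow_{\mu}(\vect{\sigma},\vect{f},\omega)=\prelow_{\mu}(\vect{\sigma},\vect{f}_{*},\omega)$, $\preup_{\mu}(\vect{\sigma},\vect{f},\omega)=\preup_{\mu}(\vect{\sigma},\vect{f}_{*},\omega)$, and $\varliminf_{n}\frac{1}{n}\sum_{j=0}^{n-1}a_{j}(\omega)=\varliminf_{n}\frac{1}{n}\sum_{j=0}^{n-1}f_{j,*}\circ\vect{\sigma}^{j}(\omega)$ (and the same with $\varlimsup$), all for every $\omega$. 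Next, taking $\vartheta=\omega$ in \eqref{eq:strbndvar} gives $\sum_{j=0}^{n-1}(f_{j}^{*}-f_{j,*})\circ\vect{\sigma}^{j}(\omega)\le b$ for all $\omega$ and $n$; integrating against $\mu$ and using that the pushforward of $\mu$ under $\vect{\sigma}^{j}$ is the $\mat{P}_{j}$-Bernoulli measure $\mu_{j}$ shows $\sum_{j=0}^{\infty}\int(f_{j}^{*}-f_{j,*})\,\dif\mu_{j}\le b$, whence $\frac{1}{n}\sum_{j=0}^{n-1}\bigl(\E(f_{j})-\E(f_{j,*})\bigr)\to0$, since $0\le\E(f_{j})-\E(f_{j,*})\le\int(f_{j}^{*}-f_{j,*})\,\dif\mu_{j}$.

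For part (a), on the $\mu$-full-measure set $\{p_{*}(\omega)>0\}$ Proposition~\ref{eq:prelocpnt}(a) applied to $\vect{f}_{*}$ gives $\prelow_{\mu}(\vect{\sigma},\vect{f}_{*},\omega)=\varliminf_{n}\frac{1}{n}\sum_{j=0}^{n-1}\bigl(f_{j,*}\circ\vect{\sigma}^{j}(\omega)-\log p_{j,\omega_{j}}\bigr)$, and the random variables $X_{n}(\omega)=f_{n,*}\circ\vect{\sigma}^{n}(\omega)-\log p_{n,\omega_{n}}$ depend only on $\omega_{n}$, hence are independent. A routine computation gives $\E[X_{n}]=H(\mathbf{p}_{n})+\E(f_{n,*})$ and, via $x(\log x)^{2}\le4\e^{-2}$ on $(0,1)$ and $\|f_{n,*}\|_{\infty}\le\|f_{n}\|_{\infty}$, $\var[X_{n}]\le2\|f_{n}\|_{\infty}^{2}+8\e^{-2}m_{n}$; the two growth hypotheses then give $\sum_{n}n^{-2}\var[X_{n}]<\infty$, so Lemma~\ref{lem:KSLLN} yields $\frac{1}{n}\sum_{j=0}^{n-1}(X_{j}-\E[X_{j}])\to0$ $\mu$-a.s., and combining this with the two reductions produces the asserted formulae; the case of $\preup_{\mu}$ is identical with $\varlimsup$ in place of $\varliminf$. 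For part (b), Proposition~\ref{eq:prelocpnt}(b) applies $\mu$-a.e. under the stated hypothesis and gives $\prelow_{\mu}(\vect{\sigma},\vect{f},\omega)=\entlow_{\mu}(\vect{\sigma},\omega)+\varliminf_{n}\frac{1}{n}\sum_{j=0}^{n-1}a_{j}(\omega)$ together with its $\varlimsup/\entup_{\mu}$ analogue, so it only remains to replace $\frac{1}{n}\sum_{j=0}^{n-1}a_{j}(\omega)$ by $\frac{1}{n}\sum_{j=0}^{n-1}\E(f_{j})$ in the limit; by the first reduction this comes down to $Z_{n}=f_{n,*}\circ\vect{\sigma}^{n}$, which are independent with $\var[Z_{n}]\le\|f_{n}\|_{\infty}^{2}$, so Lemma~\ref{lem:KSLLN} (using $\lim_{n}\|f_{n}\|_{\infty}^{2}/n^{1-\alpha}<1$) and the second reduction finish the argument.

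I expect the pair of reductions in the second paragraph to be the crux: the sequence $\{a_{j}\}$ is not independent for a general potential, and it is precisely the strongly bounded variation hypothesis \eqref{eq:strbndvar} — through the uniform bound \eqref{eq:sumiscoincidence} and the summability $\sum_{j}\int(f_{j}^{*}-f_{j,*})\,\dif\mu_{j}<\infty$ — that permits passing from $\vect{f}$ to $\vect{f}_{*}$ at the level of local pressures and of the averages $\frac{1}{n}\sum$ while still retaining the expectations $\E(f_{j})$ in the final statement. Once that is in place, the variance estimates and the invocation of Kolmogorov's criterion are routine and parallel the proof of Theorem~\ref{thm:entlocae}.
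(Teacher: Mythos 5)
Your proof is correct and follows the paper's overall strategy (reduce to the partial sums in Proposition~\ref{eq:prelocpnt} and apply Kolmogorov's criterion, Lemma~\ref{lem:KSLLN}), but it supplies a refinement that the paper's own proof omits — and actually needs. The paper's proof of part (a) sets $Y_n=f_n\circ\vect{\sigma}^n$ and asserts that $\{Y_n\}$ is independent; for a general potential this is false, since $Y_n$ is a function of $(\omega_n,\omega_{n+1},\dots)$ and $Y_{n+1}$ of $(\omega_{n+1},\omega_{n+2},\dots)$, so consecutive terms share infinitely many coordinates. Your two reductions repair this. First, replacing $\vect{f}$ by $\vect{f}_*$ (valid for local pressures and for the Cesàro averages of $a_j$ because of the uniform bound \eqref{eq:sumiscoincidence}) makes each observable $f_{j,*}\circ\vect{\sigma}^j$ a function of $\omega_j$ alone, and then independence is genuine. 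Second, the observation that taking $\vartheta=\omega$ in \eqref{eq:strbndvar} and integrating gives $\sum_j\int(f_j^*-f_{j,*})\,\dif\mu_j\le b<\infty$, whence $\frac{1}{n}\sum_{j<n}\bigl(\E(f_j)-\E(f_{j,*})\bigr)\to0$, is exactly what allows you to state the conclusion with $\E(f_j)$ rather than $\E(f_{j,*})$, matching the theorem. Neither reduction appears in the paper's proof, and without them the invocation of the law of large numbers on $Y_n=f_n\circ\vect{\sigma}^n$ is unjustified. Your variance bounds and the rest of the argument otherwise parallel the paper's proof of Theorem~\ref{thm:entlocae}. In short, your proof is not only correct but repairs a genuine gap in the published argument.
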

  \begin{proof}
(a) Given  $\omega \in \Sigma_{0}^{\infty}$,  for each integer $n \geq 1$, let 
    $$
    Y_{n}(\omega)=f_{n} \circ \vect{\sigma}^{n}(\omega).
    $$
     It is clear that $\{Y_{n}\}_{n=1}^{\infty}$ is a sequence of independent random variables with
    $$
    \E[Y_{n}]=\E(f_{n})
\quad \textit{and } \quad 
    \var[Y_{n}] =\E(f_{n}^2)-\E(f_{n})^{2}<\infty,
    $$
    for each integer $n \geq 1$. 

Note that  $f_{n}(\omega) \leq \|f_{n}\|_{\infty}$ for each integer $n \geq 1$, we have $    \var[Y_{n}] \leq \|f_{n}\|_{\infty}^2. $
Since $\lim_{n \to \infty}\frac{\|f_{n}\|_{\infty}^2}{n^{1-\alpha}}<1$ for some $\alpha>0$, we obtain that 
    $$
    \sum_{n=1}^{\infty}\frac{1}{n^{2}}\var[Y_{n}]<\infty.
    $$
  By Lemma \ref{lem:KSLLN}, it follows that for $\mu$-a.e. $\omega \in \Sigma_{0}^{\infty}$, 
    \begin{equation}\label{eq:potlim}
    \lim_{n\to\infty}  \frac{1}{n}\sum_{j=0}^{n-1}\Bigl(Y_n + \E(f_j)\Bigr) = 0.
    \end{equation}
Since  $p_{*}(\omega)>0$ for $\mu$-a.e. $\omega \in \Sigma_{0}^{\infty}$, combining \eqref{eq:entlim} and \eqref{eq:potlim} with Proposition \ref{eq:prelocpnt}(a), we obtain 
$$
 \prelow_{\mu}(\vect{\sigma},\vect{f},\omega) = \varliminf_{n \to \infty}\frac{1}{n}\sum_{j=0}^{n-1}\big(H(\mathbf{p}_{j}) + \E(f_j)\big), \\
\quad  \preup_{\mu}(\vect{\sigma},\vect{f},\omega) = \varlimsup_{n \to \infty}\frac{1}{n}\sum_{j=0}^{n-1}\big(H(\mathbf{p}_{j}) + \E(f_j)\big)
$$
    for $\mu$-a.e. $\omega \in \Sigma_{0}^{\infty}$. 

(b) The conclusion follows from the same argument as in (a), appealing to Proposition \ref{eq:prelocpnt}(b).
  \end{proof}

By an application of \ref{thm:entlocae} and  \ref{thm:prelocae}, we establish  Theorems \ref{thm:msrent} and \ref{thm:msrpre}.
\begin{proof}[Proof of Theorem \ref{thm:msrent}]
Since $\sup_{n=1,2,\ldots}{m_{n}}<+\infty$, it is clear that  $\lim_{n \to \infty}\frac{m_{n}}{n^{1-\alpha}}<1$ for all $\alpha>0$. Hence, by Theorem \ref{thm:entlocae},  we have  
 $$
    \entlow_{\mu}(\vect{\sigma},\omega) = \varliminf_{n \to \infty}\frac{1}{n} \sum_{j=0}^{n-1} H(\mathbf{p}_{j}),
    \qquad
    \entup_{\mu}(\vect{\sigma},\omega) = \varlimsup_{n \to \infty}\frac{1}{n} \sum_{j=0}^{n-1} H(\mathbf{p}_{j}),
    $$
 for $\mu$-a.e. $\omega \in \Sigma_{0}^{\infty}$,  and the conclusion  follows by integrating  with respect to $\mu$.
\end{proof}

\begin{proof}[Proof of Theorem \ref{thm:msrpre}]
Since  $\sup_{n=1,2,\ldots}{m_{n}}<+\infty$ and $\|\vect{f}\|=\sup_{k \in \mathbb{N}}\{\|f_{k}\|_{\infty}\}<+\infty$, we have that  $\lim_{n \to \infty}\frac{m_{n}}{n^{1-\alpha}}<1$ and $\lim_{n \to \infty}\frac{\|f_{n}\|^2_{\infty}}{n^{1-\alpha}}<1$ for all $\alpha>0$.  The conclusion  is a direct consequence of  Theorem \ref{thm:prelocae} by integrating  with respect to $\mu$.
\end{proof}

  \subsection{Equilibrium states and Gibbs states}\label{ssect:eqlbretgibbs}
We first provide the proofs of Proposition \ref{prop:eqlbrae} and \ref{prop:eqlbrrestr}. Then we present several of their applications autonomous symbolic systems. 
Note that the pressures involved here are nonnegative, see \cite[Prop.5.3]{CM1}.  
 
  \begin{proof}[Proof of Proposition \ref{prop:eqlbrae}]
    We  only prove (1), as (2) follows by an analogous argument.
 Since $\mu \in \eqlbrbow_{\vect{f}}(\Omega)$, 
    it suffices to show that
    $$
    \mu(\{\omega \in \Omega: \prelow_{\mu}(\vect{\sigma},\vect{f},\omega) > \prebow(\vect{\sigma},\vect{f})\})=0
    $$

We prove it by contradiction.    Write $E=\{\omega \in \Omega: \prelow_{\mu}(\vect{\sigma},\vect{f},\omega) > \prebow(\vect{\sigma},\vect{f})\}$, and assume  that $\mu(E)>0$. Let $\nu=\frac{\mu\vert_{E}}{\mu(E)}$.    It is clear that for all $\omega \in \Sigma_{0}^{\infty}$ and $\varepsilon>0$,
    \begin{align*}
      &\varliminf_{n \to \infty}\frac{-\log{\nu([\omega\vert{(n+\lfloor -\log{\varepsilon} \rfloor)}])} + S_{n}^{\vect{\sigma}}\vect{f}(\omega)}{n} \\
      &\geq \varliminf_{n \to \infty}\frac{-\log{\mu([\omega\vert{(n+\lfloor -\log{\varepsilon} \rfloor)}])} + \log{\mu(E)} + S_{n}^{\vect{\sigma}}\vect{f}(\omega)}{n} \\
      &\geq \varliminf_{n \to \infty}\frac{-\log{\mu([\omega\vert{(n + \lfloor -\log{\varepsilon} \rfloor)}])} + S_{n}^{\vect{\sigma}}\vect{f}(\omega)}{n},
    \end{align*}
    and hence by \eqref{eq:defpremsr}, we have that 
    \begin{equation}\label{ineq_pnpm}
    \prelow_{\nu}(\vect{\sigma},\vect{f},\omega) \geq \prelow_{\mu}(\vect{\sigma},\vect{f},\omega).
    \end{equation}
     Integrating on both sides with respect to $\nu$, we obtain that  
    \begin{align*}
      \prelow_{\nu}(\vect{\sigma},\vect{f}) &\geq \frac{1}{\mu(E)}\int_{E}\prelow_{\mu}(\vect{\sigma},\vect{f},\omega)\dif{\mu}(\omega) \\
                                            &> \frac{1}{\mu(E)}\int_{E}\prebow(\vect{\sigma},\vect{f},\Omega)\dif{\mu}(\omega) \\
                                            &\geq \prebow(\vect{\sigma},\vect{f},\Omega).
    \end{align*}
 Since $E \subseteq \Omega$ and $\nu(E)=1$, the above inequality   contradicts the variational principle \eqref{eq:varprinPB},
    and therefore $\mu(E)=0$, which completes  the proof.
  \end{proof}
 
  \begin{proof}[Proof of Proposition \ref{prop:eqlbrrestr}]
    The proof of (1) is given; that of (2) is similar and  omitted.

    By  \eqref{ineq_pnpm}, we obtain that $\prelow_{\nu}(\vect{\sigma},\vect{f},\omega) \geq \prelow_{\mu}(\vect{\sigma},\vect{f},\omega)$ for all $\omega \in \Sigma_{0}^{\infty}$.
    Since $\mu \in \eqlbrbow_{\vect{f}}(\Omega)$, by Proposition~\ref{prop:eqlbrae},
    we have that $\prelow_{\mu}(\vect{\sigma},\vect{f},\omega) = \prebow(\vect{\sigma},\vect{f},\Omega)$ for $\mu$-a.e. $\omega \in \Sigma_{0}^{\infty}$.
   Then it follows that $\prelow_{\nu}(\vect{\sigma},\vect{f}) \geq \prebow(\vect{\sigma},\vect{f},\Omega)$.
    By the variational principle \eqref{eq:varprinPB}\footnote{In fact, it suffices to use the variational inequalities \cite[Lem.6.1 \& Lem.6.3]{CM2}, or equivalently, the Billingsley type theorems \cite[Thm.2.4(2) \& Thm.2.9(2)]{CM2}, avoiding the extra conditions on $\vect{f}$ in \eqref{eq:varprinPP}.},
    this implies that
    $$
    \prebow(\vect{\sigma},\vect{f},\Theta) \geq \prelow_{\nu}(\vect{\sigma},\vect{f}) \geq \prebow(\vect{\sigma},\vect{f},\Omega).
    $$
Since $\prebow(\vect{\sigma},\vect{f},\Theta) \leq \prebow(\vect{\sigma},\vect{f},\Omega)$, the conclusion holds.
  \end{proof}

Given   $\vect{f} \in \vect{C}(\vect{\Sigma}(\vect{m}),\R)$, if there is a constant $C>0$ such that for every $l,n \geq 1$ and $\omega \in \Omega$,
      \begin{equation}\label{eq:Snfalmadd}
        S_{n}^{\vect{\sigma}}\vect{f}(\omega) + S_{l}^{\vect{\sigma}}\vect{f}(\vect{\sigma}^{n}\omega) - C \leq S_{l+n}^{\vect{\sigma}}\vect{f}(\omega) \leq S_{n}^{\vect{\sigma}}\vect{f}(\omega) + S_{l}^{\vect{\sigma}}\vect{f}(\vect{\sigma}^{n}\omega) + C,
      \end{equation}
we say that $\vect{f}$ is \textit{almost subadditive}.

  The following result was first obtained by Barreira \cite{Barreira2006} and Mummert \cite{Mummert2006} independently.
  \begin{prop}\label{prop:Palmadd}
    Let $\Omega \subseteq \Sigma_{0}^{\infty}$ be an autonomous mixing subshift of finite type.
    \begin{enumerate}[(1)]
      \item For all integral $n \geq 1$ and $\mathbf{u} \in \Omega_{0}^{n-1}$, let $\omega^{\mathbf{u}} \in [\mathbf{u}]$.
      Suppose that $\vect{f} \in \vect{C}(\vect{\Sigma}(\vect{m}),\R)$ is equicontinuous
      and  almost subadditive. 
      Then the limit
      \begin{equation}\label{eq:POmega}
        P(\vect{\sigma},\vect{f},\Omega)=\lim_{n \to \infty}\frac{1}{n}\log{\sum_{\mathbf{u} \in \Omega_{0}^{n-1}}\exp{(S_{n}^{\vect{\sigma}}\vect{f}(\omega^{\mathbf{u}}))}}
      \end{equation}
      exists and does not depend on the $\omega^{\mathbf{u}}$ chosen.
      Furthermore,
      $$
      \prebow(\vect{\sigma},\vect{f},\Omega)=\prepac(\vect{\sigma},\vect{f},\Omega)=\preL(\vect{\sigma},\vect{f},\Omega)=\preU(\vect{\sigma},\vect{f},\Omega)=P(\vect{\sigma},\vect{f},\Omega),
      $$
      and there exists a $\sigma$-invariant Borel probability   $\mu$ supported on $\Omega$ such that
      $$
      \prelow_{\mu}(\vect{\sigma},\vect{f})=\preup_{\mu}(\vect{\sigma},\vect{f})= h_{\mu}(\vect{\sigma}\vert_{\Omega}) + \lim_{n \to \infty}\frac{1}{n}\int_{\Sigma_{0}^{\infty}}S_{n}^{\vect{\sigma}}\vect{f}\dif{\mu} = P(\vect{\sigma},\vect{f},\Omega).
      $$
      \item If $\vect{f}$ satisfies \eqref{eq:Snfalmadd} and there is  $b>0$ such that for all $n>0$ and $\mathbf{u} \in \Sigma_{0}^{n-1}$,
      \begin{equation}\label{eq:bndvar}
        \lvert S_{n}^{\vect{\sigma}}\vect{f}(\omega)-S_{n}^{\vect{\sigma}}\vect{f}(\vartheta) \rvert \leq b
      \end{equation}
      whenever $\omega,\vartheta \in [\mathbf{u}]$,
      then there is a $P$-Gibbs state for $\vect{f}$ on $\Omega$.
    \end{enumerate}
  \end{prop}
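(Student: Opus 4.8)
The plan is to reduce the whole statement to the thermodynamic formalism of almost additive sequences of continuous functions over a mixing subshift of finite type, as developed by Barreira and Mummert, and then to translate their conclusions back into our pressures via the formulae already proved in Section~\ref{sect:toppresymsys}. Since $\Omega\subseteq\Sigma_0^\infty$ is an autonomous mixing SFT, the system $(\Omega,\vect{\sigma}|_\Omega)$ is the classical one-sided shift $(\Omega,\sigma)$, and the non-stationary Birkhoff sums
$$
\Phi=\{\phi_n\}_{n\ge 1},\qquad \phi_n:=S_n^{\vect{\sigma}}\vect{f}\big|_{\Omega}=\Bigl(\sum_{j=0}^{n-1}f_j\circ\vect{\sigma}^j\Bigr)\Big|_{\Omega},
$$
form a sequence of continuous functions on $\Omega$; condition \eqref{eq:Snfalmadd} is precisely the statement that $\Phi$ is almost additive with respect to $\sigma$, and the equicontinuity of $\vect{f}$ provides the tempered-variation (Bowen) regularity needed for the existence of the pressure and the variational principle in that framework. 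First I would record these identifications carefully, noting in particular that for $\omega\in\Omega$ all coordinates lie in a position-independent alphabet, so that $\phi_n$ and the shifted sums $S_l^{\vect{\sigma}}\vect{f}(\vect{\sigma}^n\omega)$ are meaningful and \eqref{eq:Snfalmadd} compares them up to the constant $C$.

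Next I would establish the existence of the limit in \eqref{eq:POmega}. Put $a_n=\log\sum_{\mathbf{u}\in\Omega_0^{n-1}}\exp\bigl(\sup_{[\mathbf{u}]\cap\Omega}S_n^{\vect{\sigma}}\vect{f}\bigr)$. Using the almost additivity \eqref{eq:Snfalmadd} together with the mixing property of the SFT (a uniform mixing length $p$ allowing any two admissible words to be concatenated through a linking word of length at most $p$), one gets the two-sided almost sub/super\-multiplicativity $a_{n+l}\le a_n+a_l+C_1$ and $a_{n+l+p}\ge a_n+a_l-C_2$; a Fekete-type argument then yields $\lim_{n\to\infty}\tfrac1n a_n=:P(\vect{\sigma},\vect{f},\Omega)$. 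By Proposition~\ref{eq:PlowupOmega}, equicontinuity lets us replace $\sup_{[\mathbf{u}]}$ by evaluation at any $\omega^{\mathbf{u}}\in[\mathbf{u}]$, so this is exactly the limit in \eqref{eq:POmega}, it is independent of the chosen $\omega^{\mathbf{u}}$, and moreover $\preL(\vect{\sigma},\vect{f},\Omega)=\preU(\vect{\sigma},\vect{f},\Omega)=P(\vect{\sigma},\vect{f},\Omega)$.

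For the remaining pressures and the equilibrium state I would invoke the Barreira--Mummert theorem: since $\Phi$ is almost additive with the required regularity over the mixing SFT $\Omega$, there is a $\sigma$-invariant (in fact unique) equilibrium measure $\mu$ on $\Omega$ with $h_\mu(\sigma|_\Omega)+\mathcal{F}_*(\mu)=P(\vect{\sigma},\vect{f},\Omega)$, where $\mathcal{F}_*(\mu)=\lim_n\tfrac1n\int_{\Omega}S_n^{\vect{\sigma}}\vect{f}\,\dif\mu$ (the limit existing by subadditivity up to a constant). I would then show $\prelow_\mu(\vect{\sigma},\vect{f})=\preup_\mu(\vect{\sigma},\vect{f})=h_\mu(\sigma|_\Omega)+\mathcal{F}_*(\mu)$: starting from the equicontinuous form \eqref{eq:defpremsrequiv}, the term $-\tfrac1n\log\mu([\omega\vert{n}])$ converges ($\mu$-a.e.\ and in $L^1$) to the local entropy of $(\Omega,\sigma,\mu)$ by the Shannon--McMillan--Breiman theorem on the autonomous system, while $\tfrac1n S_n^{\vect{\sigma}}\vect{f}(\omega)$ converges $\mu$-a.e.\ to $\mathcal{F}_*$ on ergodic components by Kingman's subadditive ergodic theorem (applicable since $\Phi$ is almost additive); integrating gives the claim. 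Combining this with the variational principle \eqref{eq:varprinPB} for the Bowen pressure and the variational inequality $\prepac(\vect{\sigma},\vect{f},\Omega)\ge\preup_\mu(\vect{\sigma},\vect{f})$ (from \cite[Lem.6.3]{CM2}, which needs no boundedness hypothesis) forces $\prebow(\vect{\sigma},\vect{f},\Omega)\ge P$ and $\prepac(\vect{\sigma},\vect{f},\Omega)\ge P$; since trivially $\prebow\le\preL=P$ and $\prepac\le\preU=P$, all four pressures equal $P(\vect{\sigma},\vect{f},\Omega)$ and $\mu$ is the asserted equilibrium state. Finally, for part (2), the additional bounded variation \eqref{eq:bndvar} is exactly the hypothesis under which the Barreira--Mummert equilibrium measure is a Gibbs measure, i.e.\ $K^{-1}\le\mu([\omega\vert{n}]\cap\Omega)\big/\exp(-nP+S_n^{\vect{\sigma}}\vect{f}(\omega))\le K$, so $\mu$ is a $P$-Gibbs state for $\vect{f}$ on $\Omega$ simultaneously for all of $\prebow,\prepac,\preL,\preU$. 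The main obstacle I expect is the bookkeeping of this reduction — making precise that the non-stationary sums $S_n^{\vect{\sigma}}\vect{f}$ over our nonautonomous symbolic space genuinely define an almost additive cocycle over the autonomous shift $(\Omega,\sigma)$ so that the cited theorems apply verbatim, and, on the measure side, justifying the two limiting steps ($L^1$-SMB for the entropy term and a subadditive ergodic theorem for the almost additive potential term) that identify $\prelow_\mu$ and $\preup_\mu$ with $h_\mu(\sigma|_\Omega)+\mathcal{F}_*(\mu)$; the mixing-based concatenation estimate is routine but is the one place where mixing (rather than mere irreducibility) is genuinely used.
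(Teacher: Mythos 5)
The paper does not prove Proposition~\ref{prop:Palmadd}: it is stated as a known result with a pointer to Barreira \cite{Barreira2006} and Mummert \cite{Mummert2006}, and no proof is supplied. Your proposal takes essentially the same route — reduce to the almost-additive thermodynamic formalism over the autonomous mixing SFT $(\Omega,\sigma)$ and invoke Barreira--Mummert — but you supply the bridging steps the paper leaves implicit: that \eqref{eq:Snfalmadd} is literally the almost-additivity condition for the sequence $\Phi=\{S_n^{\vect{\sigma}}\vect{f}|_\Omega\}$ over $(\Omega,\sigma)$; the Fekete-type argument (using the mixing length for the superadditive half) giving existence of the limit, which together with Proposition~\ref{eq:PlowupOmega} yields $\preL=\preU=P$; the sandwich $\prebow\le\preL$, $\prepac\le\preU$ on one side and the variational lower bounds $\prebow\ge\prelow_\mu$, $\prepac\ge\preup_\mu$ on the other, collapsing all four pressures to $P$; and the Shannon--McMillan--Breiman plus Kingman identification of $\prelow_\mu=\preup_\mu$ with $h_\mu(\sigma|_\Omega)+\mathcal{F}_*(\mu)$. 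These additions are correct and are exactly what is needed to transport the classical conclusion into the paper's framework of nonautonomous pressures, so the proposal is a sound filling-in of the citation the paper relies on.
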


\begin{rmk}\label{rmk:bndvar}
    The condition \eqref{eq:bndvar} is known as the \emph{bounded variation} property.     Equi-H\"{o}lder continuous potentials $\vect{f} \in \vect{C}(\vect{\Sigma}(\vect{m}),\R)$ are with bounded variation.   Potentials with bounded variation are clearly equicontinuous.
  \end{rmk}

 If  the potentials are almost subadditive,  we have the following results which are direct consequence of Propositions~\ref{prop:eqlbrNABmsr} and \ref{prop:eqlbrrestr}.
  \begin{thm}
    Suppose that $\vect{f}$ is equicontinuous and almost subadditive on some autonomous mixing subshift $\Omega$ of finite type.
    Let $\mu$ be an equilibrium state given in Proposition~\ref{prop:Palmadd}.
    Then $\prebow(\vect{\sigma},\vect{f},\Theta)=\prepac(\vect{\sigma},\vect{f},\Theta)$,
    and $\eqlbrbow_{\vect{f}}(\Theta) \neq \varnothing$ and $\eqlbrpac_{\vect{f}}(\Theta) \neq \varnothing$
    for all non-empty compact $\Theta \subseteq \Omega$ with $\mu(\Theta)>0$.
  \end{thm}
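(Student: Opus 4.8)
The plan is to read off the statement from Proposition~\ref{prop:Palmadd} and the restriction property of Proposition~\ref{prop:eqlbrrestr}; no new estimates are needed. First I would unpack Proposition~\ref{prop:Palmadd}(1) for the equicontinuous, almost subadditive potential $\vect{f}$ on the autonomous mixing subshift of finite type $\Omega$: it yields
$$
\prebow(\vect{\sigma},\vect{f},\Omega)=\prepac(\vect{\sigma},\vect{f},\Omega)=\preL(\vect{\sigma},\vect{f},\Omega)=\preU(\vect{\sigma},\vect{f},\Omega)=P(\vect{\sigma},\vect{f},\Omega),
$$
together with a $\sigma$-invariant Borel probability $\mu$ supported on $\Omega$ (so $\mu(\Omega)=1$) satisfying
$$
\prelow_{\mu}(\vect{\sigma},\vect{f})=\preup_{\mu}(\vect{\sigma},\vect{f})=P(\vect{\sigma},\vect{f},\Omega).
$$
Comparing these two displays gives $\prelow_{\mu}(\vect{\sigma},\vect{f})=\prebow(\vect{\sigma},\vect{f},\Omega)$ and $\preup_{\mu}(\vect{\sigma},\vect{f})=\prepac(\vect{\sigma},\vect{f},\Omega)$; since $\vect{f}$ is equicontinuous and $\Omega$ is compact, this is precisely the assertion that $\mu\in\eqlbrbow_{\vect{f}}(\Omega)$ and $\mu\in\eqlbrpac_{\vect{f}}(\Omega)$.

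Next I would fix a non-empty compact $\Theta\subseteq\Omega$ with $\mu(\Theta)>0$ and set $\nu=\frac{\mu\vert_{\Theta}}{\mu(\Theta)}$. Applying Proposition~\ref{prop:eqlbrrestr}(1) to $\mu\in\eqlbrbow_{\vect{f}}(\Omega)$ gives $\nu\in\eqlbrbow_{\vect{f}}(\Theta)$ and
$$
\prelow_{\nu}(\vect{\sigma},\vect{f})=\prebow(\vect{\sigma},\vect{f},\Omega)=\prebow(\vect{\sigma},\vect{f},\Theta),
$$
while applying Proposition~\ref{prop:eqlbrrestr}(2) to $\mu\in\eqlbrpac_{\vect{f}}(\Omega)$ gives $\nu\in\eqlbrpac_{\vect{f}}(\Theta)$ and
$$
\preup_{\nu}(\vect{\sigma},\vect{f})=\prepac(\vect{\sigma},\vect{f},\Omega)=\prepac(\vect{\sigma},\vect{f},\Theta).
$$
In particular $\eqlbrbow_{\vect{f}}(\Theta)\ni\nu$ and $\eqlbrpac_{\vect{f}}(\Theta)\ni\nu$ are non-empty, and chaining these equalities with the equality $\prebow(\vect{\sigma},\vect{f},\Omega)=\prepac(\vect{\sigma},\vect{f},\Omega)$ established above yields $\prebow(\vect{\sigma},\vect{f},\Theta)=\prepac(\vect{\sigma},\vect{f},\Theta)$, the last remaining claim.

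There is no real analytic obstacle here: the theorem is a bookkeeping consequence of facts already in hand. The only point that needs attention is verifying that the measure $\mu$ furnished by Proposition~\ref{prop:Palmadd} simultaneously lies in $\eqlbrbow_{\vect{f}}(\Omega)$ and in $\eqlbrpac_{\vect{f}}(\Omega)$, so that Proposition~\ref{prop:eqlbrrestr} can be invoked for both the Bowen and the packing pressure at once; this is immediate from the chain of equalities in Proposition~\ref{prop:Palmadd}(1), which forces the common value $P(\vect{\sigma},\vect{f},\Omega)$ of $\prelow_{\mu}(\vect{\sigma},\vect{f})$ and $\preup_{\mu}(\vect{\sigma},\vect{f})$ to coincide with $\prebow(\vect{\sigma},\vect{f},\Omega)$ and with $\prepac(\vect{\sigma},\vect{f},\Omega)$. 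I would also note that compactness of $\Theta$ (needed so that Bowen and packing equilibrium states on $\Theta$ are defined) and equicontinuity of $\vect{f}$ are both hypotheses of the theorem, so the argument introduces no extra conditions.
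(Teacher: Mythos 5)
Your proof is correct and is essentially the paper's argument: the paper states the theorem as a direct consequence, and your two steps — reading off from Proposition~\ref{prop:Palmadd}(1) that the invariant measure $\mu$ there is simultaneously a Bowen and a packing equilibrium state on $\Omega$, then pushing through Proposition~\ref{prop:eqlbrrestr} to the restriction $\nu=\mu\vert_\Theta/\mu(\Theta)$ — are exactly the intended bookkeeping, including the final chaining $\prebow(\vect{\sigma},\vect{f},\Theta)=\prebow(\vect{\sigma},\vect{f},\Omega)=\prepac(\vect{\sigma},\vect{f},\Omega)=\prepac(\vect{\sigma},\vect{f},\Theta)$.
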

  \begin{cor}\label{cor_VPS}
    Suppose that $\vect{f}$ is equicontinuous and almost subadditive on some autonomous subshift $\Omega$ of finite type.
    Let $\mu$ be an equilibrium state given in Proposition~\ref{prop:Palmadd}.
    Then for all non-empty compact $\Theta \subseteq \Omega$ with $\mu(\Theta)>0$,
\begin{equation}
    \prepac(\vect{\sigma},\vect{f},\Omega) = \sup\{\preup_\mu(\vect{\sigma},\vect{f}): \mu \in M(\Sigma_{0}^{\infty})\ \text{and}\ \mu(\Omega)=1\}.
  \end{equation}

  \end{cor}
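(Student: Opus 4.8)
The plan is to split the stated identity into its two inequalities and obtain each from results already in hand: the variational inequality for the packing pressure from \cite{CM2}, which carries no boundedness hypothesis on $\vect{f}$, for the ``$\ge$'' part, and the equilibrium state furnished by Proposition~\ref{prop:Palmadd} for the ``$\le$'' part. Since $\Omega$ is a subshift it is a closed, hence compact, subset of $\Sigma_{0}^{\infty}$, so both cited results apply to it directly.

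For the inequality ``$\ge$'' I would observe that for every $\mu \in M(\Sigma_{0}^{\infty})$ with $\mu(\Omega)=1$ one has
$$
\preup_{\mu}(\vect{\sigma},\vect{f}) \le \prepac(\vect{\sigma},\vect{f},\Omega).
$$
In contrast with the full variational principle \eqref{eq:varprinPP}, this one-sided bound requires neither $\|\vect{f}\|<+\infty$ nor $\prepac(\vect{\sigma},\vect{f},\Omega)>\|\vect{f}\|$; it is exactly the variational inequality \cite[Lem.6.3]{CM2} (equivalently, the Billingsley type theorem \cite[Thm.2.9(2)]{CM2}) applied to the compact set $\Omega$. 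Taking the supremum over all such $\mu$ gives $\sup\{\preup_{\mu}(\vect{\sigma},\vect{f}):\mu\in M(\Sigma_{0}^{\infty}),\ \mu(\Omega)=1\} \le \prepac(\vect{\sigma},\vect{f},\Omega)$.

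For the inequality ``$\le$'' I would invoke Proposition~\ref{prop:Palmadd}(1): since $\vect{f}$ is equicontinuous and almost subadditive on the (mixing) subshift of finite type $\Omega$, there is a $\sigma$-invariant Borel probability measure $\mu$ supported on $\Omega$ with
$$
\preup_{\mu}(\vect{\sigma},\vect{f}) \;=\; P(\vect{\sigma},\vect{f},\Omega) \;=\; \prepac(\vect{\sigma},\vect{f},\Omega),
$$
the second equality being the collapse $\prebow=\prepac=\preL=\preU=P(\vect{\sigma},\vect{f},\Omega)$ from that same proposition. As $\mu(\Omega)=1$, this $\mu$ is admissible in the supremum, whence $\prepac(\vect{\sigma},\vect{f},\Omega) \le \sup\{\preup_{\mu}(\vect{\sigma},\vect{f}):\mu\in M(\Sigma_{0}^{\infty}),\ \mu(\Omega)=1\}$, and combining with the previous paragraph proves the corollary. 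If one wants the refined version with $\Omega$ replaced by a compact $\Theta\subseteq\Omega$ with $\mu(\Theta)>0$, it is enough to replace $\mu$ by $\nu=\mu\vert_{\Theta}/\mu(\Theta)$: the theorem preceding this corollary, via Proposition~\ref{prop:eqlbrrestr}, gives $\nu\in\eqlbrpac_{\vect{f}}(\Theta)$, hence $\preup_{\nu}(\vect{\sigma},\vect{f})=\prepac(\vect{\sigma},\vect{f},\Theta)$, while the ``$\ge$'' bound is applied with $\mu(\Theta)=1$.

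The argument has no genuinely hard step; the one point needing care is that the ``$\ge$'' direction must be taken from the hypothesis-free variational inequalities of \cite{CM2} rather than from \eqref{eq:varprinPP}, since $\vect{f}$ is not assumed uniformly bounded and $\prepac(\vect{\sigma},\vect{f},\Omega)>\|\vect{f}\|$ can fail, and that the equality $\preup_{\mu}(\vect{\sigma},\vect{f})=\prepac(\vect{\sigma},\vect{f},\Omega)$ for the Barreira--Mummert equilibrium state is precisely what closes the gap in the ``$\le$'' direction.
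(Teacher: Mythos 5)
Your proof is correct and takes the route the paper clearly intends: the paper gives no explicit argument for this corollary, but the intended derivation is precisely the one you wrote — the one-sided variational inequality from \cite[Lem.6.3]{CM2} (which the paper itself flags in the footnote to Proposition~\ref{prop:eqlbrrestr} as the way to avoid the boundedness hypotheses of \eqref{eq:varprinPP}) for the upper bound on the supremum, and the Barreira--Mummert equilibrium state of Proposition~\ref{prop:Palmadd}(1), with $\preup_{\mu}(\vect{\sigma},\vect{f})=P(\vect{\sigma},\vect{f},\Omega)=\prepac(\vect{\sigma},\vect{f},\Omega)$, to show it is attained. You also correctly diagnose the two small blemishes in the statement — that ``subshift of finite type'' should read ``mixing subshift of finite type'' to invoke Proposition~\ref{prop:Palmadd}, and that the $\Omega$ in the displayed equation is presumably a typo for $\Theta$ (otherwise the hypothesis on $\Theta$ is vacuous, and the result would duplicate the $\Theta=\Omega$ case) — and your restriction argument via Proposition~\ref{prop:eqlbrrestr}(2), taking $\nu=\mu\vert_{\Theta}/\mu(\Theta)$ so that $\nu\in\eqlbrpac_{\vect{f}}(\Theta)$, handles the intended $\Theta$ version exactly as the preceding theorem's proof does.
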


\section{Expansive Systems and Their Generators}\label{sect:exps}
    In this section, we discuss the pressures in the so called expansive systems.  
      \subsection{Expansiveness and generators}\label{ssect:expsndsetgnrtr}

      In an expansive NDS $(\vect{X},\vect{T})$ with an expansive constant $\delta>0$, since $d_{X_{j}}(\vect{T}^{j}x,\vect{T}^{j}y) \leq \delta$ for all $j \in \N$ implies $x=y$,  it is clear that  for every $\varepsilon$ with $0<\varepsilon \leq \delta$,
      $$
      \lim_{n \to \infty}B_{n}^{\vect{T}}(x,\varepsilon)=\lim_{n \to \infty}\overline{B}_{n}^{\vect{T}}(x,\varepsilon)=\{x\}
      $$
      for all $x \in X_{0}$. Then we have the following property of Bowen balls.
      \begin{prop}\label{prop:expsball}
        Let $(\vect{X},\vect{T})$ be expansive  with an expansive constant $\delta>0$.
        Then for every $\varepsilon$ with $0<\varepsilon<\delta$,
        $$
        \lim_{n \to \infty}\diam(B_{n}^{\vect{T}}(x,\varepsilon)) = \lim_{n \to \infty}\diam(\overline{B}_{n}^{\vect{T}}(x,\varepsilon)) = 0
        $$
        for all $x \in X_{0}$.
      \end{prop}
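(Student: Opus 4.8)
The plan is to argue by contradiction using the compactness of $X_{0}$, reducing everything to the fact --- noted just before the statement --- that $\bigcap_{n=1}^{\infty}\overline{B}_{n}^{\vect{T}}(x,\varepsilon)=\{x\}$ whenever $0<\varepsilon\leq\delta$, which itself is an immediate consequence of expansiveness. First I would record two elementary reductions. Since $\overline{B}_{n}^{\vect{T}}(x,\varepsilon)=\bigcap_{j=0}^{n-1}\vect{T}^{-j}\overline{B}_{X_{j}}(\vect{T}^{j}x,\varepsilon)$ is a \emph{decreasing} sequence of sets in $n$, the numerical sequence $\diam(\overline{B}_{n}^{\vect{T}}(x,\varepsilon))$ is non-increasing, so its limit exists; and since $B_{n}^{\vect{T}}(x,\varepsilon)\subseteq\overline{B}_{n}^{\vect{T}}(x,\varepsilon)$, it is enough to prove the assertion for the closed Bowen balls. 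Moreover each $\overline{B}_{n}^{\vect{T}}(x,\varepsilon)$ is a closed, hence compact, subset of $X_{0}$, so its diameter is attained by a pair of points.

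The main argument would then run as follows. Suppose, for contradiction, that there is $x_{0}\in X_{0}$ with $L:=\lim_{n\to\infty}\diam(\overline{B}_{n}^{\vect{T}}(x_{0},\varepsilon))>0$. For each $n$ pick $y_{n},z_{n}\in\overline{B}_{n}^{\vect{T}}(x_{0},\varepsilon)$ with $d_{0}(y_{n},z_{n})=\diam(\overline{B}_{n}^{\vect{T}}(x_{0},\varepsilon))\geq L$. By compactness of $X_{0}$, after passing to a subsequence we may assume $y_{n}\to y$ and $z_{n}\to z$; then $d_{0}(y,z)\geq L>0$, so $y\neq z$. Now fix $j\in\N$. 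For every $n>j$ we have $y_{n},z_{n}\in\overline{B}_{n}^{\vect{T}}(x_{0},\varepsilon)\subseteq\vect{T}^{-j}\overline{B}_{X_{j}}(\vect{T}^{j}x_{0},\varepsilon)$, that is, $d_{X_{j}}(\vect{T}^{j}y_{n},\vect{T}^{j}x_{0})\leq\varepsilon$ and likewise for $z_{n}$; letting $n\to\infty$ and using the continuity of $\vect{T}^{j}$ gives $d_{X_{j}}(\vect{T}^{j}y,\vect{T}^{j}x_{0})\leq\varepsilon$ and $d_{X_{j}}(\vect{T}^{j}z,\vect{T}^{j}x_{0})\leq\varepsilon$. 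Since $j$ was arbitrary and $\varepsilon<\delta$, this forces $y,z\in\bigcap_{n=1}^{\infty}\overline{B}_{n}^{\vect{T}}(x_{0},\varepsilon)=\{x_{0}\}$, so $y=x_{0}=z$, contradicting $y\neq z$. Hence $\lim_{n\to\infty}\diam(\overline{B}_{n}^{\vect{T}}(x,\varepsilon))=0$ for every $x\in X_{0}$, and the statement for open Bowen balls follows from the inclusion $B_{n}^{\vect{T}}(x,\varepsilon)\subseteq\overline{B}_{n}^{\vect{T}}(x,\varepsilon)$.

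The only delicate point --- and it is a mild one --- is that two points of $\overline{B}_{n}^{\vect{T}}(x_{0},\varepsilon)$ are a priori only $2\varepsilon$-close along the orbit, not $\varepsilon$-close, so one cannot apply the expansiveness constant directly to the pair $(y,z)$. The resolution, which is exactly why the argument must pass through the singleton identity $\bigcap_{n}\overline{B}_{n}^{\vect{T}}(x_{0},\varepsilon)=\{x_{0}\}$ rather than through a direct distance estimate between $y$ and $z$, is to apply that identity \emph{separately} to $y$ and to $z$ with centre $x_{0}$. Everything else is a routine compactness-and-continuity limiting argument.
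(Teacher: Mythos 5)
Your proposal is correct and follows essentially the same compactness-plus-singleton-identity argument as the paper's own proof. The only (cosmetic) difference is efficiency: the paper picks a single sequence $y_{i}\in\overline{B}_{n_{i}}^{\vect{T}}(x,\varepsilon)$ with $d_{0}(x,y_{i})\geq\varepsilon_{0}$ (exploiting that the centre $x$ itself lies in every Bowen ball, so a positive diameter forces some point at positive distance from $x$) and derives a contradiction from $y=x$ alone, while you extract a pair $y_{n},z_{n}$ realizing the diameter and show both limits equal $x_{0}$; your explicit remark about why one must apply $\bigcap_{n}\overline{B}_{n}^{\vect{T}}(x_{0},\varepsilon)=\{x_{0}\}$ to each point with centre $x_{0}$, rather than invoking expansiveness directly on $(y,z)$, is accurate and is precisely the subtlety the paper's one-point choice sidesteps.
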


      This property also holds for the dynamical refinement of finite open covers.
      \begin{prop}\label{prop: A.4}
        Let $(\vect{X},\vect{T})$ be expansive  with an expansive constant  $\delta>0$.
        Let $\vect{\mathscr{A}}=\{\mathscr{A}_{k}\}_{k=0}^{\infty}$ be a sequence of finite (open) covers $\mathscr{C}^{k}$ of $X_{k}$
        with $\diam(\vect{\mathscr{A}}) \leq \delta$. Then
        $$
        \lim_{n\to\infty} \diam\left(\vee_{n}^{\vect{T}}\vect{\mathscr{A}}\right) =0.
        $$
      \end{prop}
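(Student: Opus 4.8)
\textbf{Proof proposal for Proposition~\ref{prop: A.4}.}

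The plan is to argue by contradiction, mimicking the proof of Proposition~\ref{prop:expsball} but working with the refined covers instead of Bowen balls. Suppose the conclusion fails. Then there is a constant $\varepsilon_{0}>0$ and a strictly increasing sequence $\{n_{i}\}_{i=1}^{\infty}$ of positive integers such that for every $i$ there exist a string $\mathbf{A}^{(i)} \in \vect{\mathscr{A}}_{0}^{n_{i}}$ and two points $x_{i},y_{i} \in X_{0}[\mathbf{A}^{(i)}]$ with $d_{0}(x_{i},y_{i}) > \varepsilon_{0}$. By the compactness of $X_{0}$, after passing to a subsequence we may assume $x_{i} \to x$ and $y_{i} \to y$, so that $d_{0}(x,y) \geq \varepsilon_{0} > 0$, in particular $x \neq y$.

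Next I would extract, again by compactness (a diagonal argument over the countably many indices $k$, noting each $\mathscr{A}_{k}$ is a finite cover), a single sequence $\{U_{k} \in \mathscr{A}_{k}\}_{k=0}^{\infty}$ such that, along a further subsequence of $\{n_{i}\}$, the $k$-th coordinate of $\mathbf{A}^{(i)}$ equals $U_{k}$ for all $k < n_{i}$. Concretely: since $\mathscr{A}_{0}$ is finite, infinitely many $\mathbf{A}^{(i)}$ share the same first letter $U_{0}$; among those, infinitely many share the same second letter $U_{1}$; and so on. Restricting to the corresponding diagonal subsequence, we get that for each fixed $k$, all but finitely many of the selected strings have $k$-th coordinate $U_{k}$, and the corresponding points $x_{i},y_{i}$ lie in $X_{0}[\mathbf{A}^{(i)}] \subseteq \vect{T}^{-k}U_{k}$, i.e.\ $\vect{T}^{k}x_{i},\vect{T}^{k}y_{i} \in U_{k}$ for all large $i$. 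Passing to the limit and using the continuity of $\vect{T}^{k}$ together with $\diam(U_{k}) \leq \diam(\vect{\mathscr{A}}) \leq \delta$, we obtain $d_{k}(\vect{T}^{k}x,\vect{T}^{k}y) \leq \delta$ for every $k \in \N$. By expansiveness with constant $\delta$ this forces $x = y$, contradicting $d_{0}(x,y) \geq \varepsilon_{0} > 0$.

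The only delicate point is the double limiting procedure: I want the limit points $x,y$ of the original sequences $\{x_{i}\},\{y_{i}\}$ to be compatible with the diagonally-selected coordinate sequence $\{U_{k}\}$. This is handled cleanly by performing the two compactness arguments simultaneously — i.e.\ along one subsequence of $\{n_{i}\}$ ensure both that $x_{i},y_{i}$ converge and that, for each fixed $k$, the $k$-th letters stabilize to some $U_{k}$ — rather than sequentially. (One must be slightly careful that at stage $k$ only indices with $n_{i} > k$ are eligible, but since $n_{i} \to \infty$ this excludes only finitely many.) Everything else — continuity of the finite compositions $\vect{T}^{k} = \vect{T}_{0}^{k}$, the fact that closed covers versus open covers makes no difference since $U_{k} \in \mathscr{A}_{k}$ is a fixed set independent of $i$, and the final appeal to the definition of expansiveness — is routine. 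The main obstacle, such as it is, is purely bookkeeping in organizing this diagonal selection; there is no substantive analytic difficulty once the contradiction setup is in place.
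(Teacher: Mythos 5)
Your proposal is correct and follows essentially the same strategy as the paper's proof: assume the diameters stay bounded below, extract convergent subsequences $x_i\to x$, $y_i\to y$ with $d_0(x,y)\geq\varepsilon_0$, use finiteness of each $\mathscr{A}_j$ to trap $\vect{T}^jx,\vect{T}^jy$ in the closure of a single cover element, and invoke expansiveness to force $x=y$. The one structural difference is that you run a diagonal extraction to obtain a single compatible subsequence and letter sequence $\{U_k\}$, whereas the paper avoids this bookkeeping entirely: having already fixed the convergent subsequence of $(x_i,y_i)$, for each index $j$ it separately picks, by pigeonhole, a fixed $A_{j,l_j}\in\mathscr{A}_j$ that captures infinitely many $i$'s, which already places $x,y\in\vect{T}^{-j}\overline{A_{j,l_j}}$ — no need for these choices to be consistent across different $j$. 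Your diagonalization is harmless but unnecessary; the "delicate point" you flag dissolves once you notice that the per-$j$ extractions need not be nested.
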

      \begin{proof}
We prove it by contradiction.  Suppose that there exists $\varepsilon_{0}>0$ with a subsequence $\{V_{n_{i}}\}_{i=1}^{\infty}$ ($n_{i}$ is strictly increasing as $i \to \infty$) such that
        $\diam(V_{n_{i}})>\varepsilon_{0}$, where $V_{n_{i}}$ is a member of $\vee_{n_{i}}^{\vect{T}}\vect{\mathscr{A}}$ for each $i \in \mathbb{N}$,
        i.e., $V_{n_{i}}$ is of the form $\bigcap_{j=0}^{n_{i}-1}\vect{T}^{-j}A_{j,i}$ for some members $A_{j,i}$ in $\mathscr{A}_{j}$.
        This implies that there exists $x_{i},y_{i} \in V_{n_{i}}$ such that
        $d_{X_{0}}(x_{i},y_{i})>\varepsilon_{0}$ for each $i \in \mathbb{N}$.
        By the compactness of $X_{0}$, we assume that $x_{i} \to x$ and $y_{i} \to y$ as $i \to \infty$.
        It follows that $d_{X_{0}}(x,y) \geq \varepsilon_{0}$ and $x \neq y$.

        For each $j \in \mathbb{N}$, write $$i_{j}=\min\{i \in \mathbb{N}: n_{i}-1 \geq j\}.$$
        Fix $j$. Consider the infinite sequence $\{A_{j,i}\}_{i=i_{j}}^{\infty} \subseteq \mathscr{A}_{j}$.
        Since $\mathscr{A}_{j}$ is finite, infinitely many of the sets $A_{j,i}$ coincide, and $\{A_{j,i}\}_{i=i_{j}}^{\infty}$ may be decomposed into a finite number of constant subsequences.
        It follows that $\{\vect{T}^{-j}A_{j,i}\}_{i=i_{j}}^{\infty}$, as a set, is finite.
        Recall that for each $i \geq i_{j}$, the two points $x_{i}$ and $y_{i}$ are contained in the same set $\vect{T}^{-j}A_{j,i}$.
        Thus, there exist some $\vect{T}^{-j}A_{j,i}$ containing infinitely many of the points $x_{i}$'s and infinitely many of the points $y_{i}$'s.
        Choose $A_{j,l_{j}} \in \mathscr{A}_{j}$ from $\{A_{j,l_{j}}\}_{i=i_{j}}^{\infty}$ so that $x_{i},y_{i} \in \vect{T}^{-j}A_{j,l_{j}}$
        for infinitely many $i$'s. It implies  that $x,y \in \overline{\vect{T}^{-j}A_{j,l_{j}}}=\vect{T}^{-j}\overline{A_{j,l_{j}}}$, and we have that 
        $$d_{X_{j}}(\vect{T}^{j}x,\vect{T}^{j}y) \leq \diam(\overline{A_{j,l_{j}}}) \leq \diam(\vect{\mathscr{A}}) \leq \delta$$
 for all $j \in \mathbb{N}$. It follows that  $x=y$, which  contradicts $d(x,y) \geq \varepsilon_{0}$.
      \end{proof}

      Similar to the case in TDSs (see \cite[Rmk.2.10]{Keynes&Robertson1969}; see also \cite[Thm.5.21]{Walters1982}), a generator (if it exits) determines the topology on $X_{0}$.
      \begin{prop}\label{prop:expstopbase}
        Given   $(\vect{X},\vect{T})$. Let  $\vect{\mathscr{U}}$ be a generator for $\vect{T}$.
        Then $\bigcup_{n=0}^{\infty}(\vee_{n}^{\vect{T}}\vect{\mathscr{U}})$ is a base for the topology of $X_{0}$.
      \end{prop}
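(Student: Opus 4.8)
The plan is to verify directly that the proposed collection is a base, that is, that every member $X_{0}[\mathbf{U}]$ with $\mathbf{U} \in \vect{\mathscr{U}}_{0}^{n}$ is open (immediate, since $X_{0}[\mathbf{U}] = \bigcap_{j=0}^{n-1}\vect{T}_{0}^{-j}U_{j}$ with the $U_{j}$ open and the $\vect{T}_{0}^{j}$ continuous) and that for each $x \in X_{0}$ and each open $W$ with $x \in W$ there exist an integer $n \geq 1$ and a string $\mathbf{U} = U_{0}\cdots U_{n-1} \in \vect{\mathscr{U}}_{0}^{n}$ with $x \in X_{0}[\mathbf{U}] \subseteq W$. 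I would avoid routing through expansiveness: although a generator forces $\vect{T}$ to be expansive by Theorem~\ref{prop:expseqgnrtr}, and one is tempted to quote Proposition~\ref{prop: A.4} or Proposition~\ref{prop:expsball}, those statements require the cover to have diameter at most an expansive constant, which a generator need not satisfy; the defining property of a generator together with compactness of the $X_{k}$ is what actually does the work.

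First I would fix $x \in X_{0}$ and an open $W \ni x$, and for each $n \geq 1$ introduce
$$
F_{n} = \bigcup \Big\{ \bigcap_{j=0}^{n-1}\vect{T}_{0}^{-j}\overline{U_{j}} \;:\; U_{j} \in \mathscr{U}_{j} \text{ and } \vect{T}_{0}^{j}x \in U_{j} \text{ for } 0 \leq j \leq n-1 \Big\}.
$$
Since each $\mathscr{U}_{j}$ is finite, $F_{n}$ is a finite union of closed sets, hence compact; it contains $x$; and $F_{n+1} \subseteq F_{n}$, because truncating an admissible piece of length $n+1$ to its first $n$ factors leaves an admissible piece of length $n$. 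The heart of the argument is the claim $\bigcap_{n \geq 1} F_{n} = \{x\}$. Granting it, $\{F_{n} \setminus W\}_{n \geq 1}$ is a decreasing sequence of compact sets with empty intersection, so $F_{N} \subseteq W$ for some $N$; choosing any $U_{j} \in \mathscr{U}_{j}$ with $\vect{T}_{0}^{j}x \in U_{j}$ for $0 \leq j \leq N-1$ and putting $\mathbf{U} = U_{0}\cdots U_{N-1}$ yields $x \in X_{0}[\mathbf{U}] \subseteq \bigcap_{j=0}^{N-1}\vect{T}_{0}^{-j}\overline{U_{j}} \subseteq F_{N} \subseteq W$, with $X_{0}[\mathbf{U}] \in \vee_{N}^{\vect{T}}\vect{\mathscr{U}}$, which is exactly what is needed.

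The main obstacle is proving $\bigcap_{n} F_{n} \subseteq \{x\}$, and this is where Definition~\ref{def_gnrt} is used. Given $y \in \bigcap_{n} F_{n}$, for every $n$ there is an admissible string $(U_{0}^{(n)}, \ldots, U_{n-1}^{(n)})$ with $U_{j}^{(n)} \in \mathscr{U}_{j}$, $\vect{T}_{0}^{j}x \in U_{j}^{(n)}$, and $y \in \bigcap_{j=0}^{n-1}\vect{T}_{0}^{-j}\overline{U_{j}^{(n)}}$. These strings form the levels of a tree under the prefix relation — any truncation of an admissible string is again admissible — and the tree is infinite and finitely branching, since for each $j$ only finitely many members of the finite cover $\mathscr{U}_{j}$ contain $\vect{T}_{0}^{j}x$. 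By K\"onig's lemma the tree carries an infinite branch $(U_{0}, U_{1}, U_{2}, \ldots)$, and then $y \in \bigcap_{j=0}^{\infty}\vect{T}_{0}^{-j}\overline{U_{j}}$; as $\vect{T}_{0}^{j}x \in U_{j} \subseteq \overline{U_{j}}$ for all $j$, also $x \in \bigcap_{j=0}^{\infty}\vect{T}_{0}^{-j}\overline{U_{j}}$. Since $\vect{\mathscr{U}}$ is a generator, this intersection contains at most one point, so $y = x$. The only delicate bookkeeping is to keep closures throughout (so that $F_{n}$ is closed and so that the generator hypothesis, which is phrased with closures, applies verbatim) and the routine K\"onig's lemma (equivalently, a sequential diagonal extraction) step; the rest is elementary. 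This establishes Proposition~\ref{prop:expstopbase}.
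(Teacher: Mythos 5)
Your proof is correct. It does, however, package the argument differently from the paper. The paper proves the stronger, uniform statement that $\diam(\vee_{n}^{\vect{T}}\vect{\mathscr{U}}) \to 0$ as $n \to \infty$, and deduces the base property from that; it argues by contradiction, extracting convergent sequences $x_{n} \to x$, $y_{n} \to y$ with $x \neq y$ lying in a common member of $\vee_{n}^{\vect{T}}\vect{\mathscr{U}}$, then using finiteness of each $\mathscr{U}_{j}$ to place both $x$ and $y$ inside a single $\bigcap_{j}\vect{T}^{-j}\overline{U_{j}}$, contradicting the generator property. You instead fix a point $x$ and an open neighborhood $W$, build the decreasing compacts $F_{n}$, and show $\bigcap_{n}F_{n}=\{x\}$ by the same ``constant‑slot'' extraction (your K\"onig's lemma step; note that the full coherent‑branch argument is actually more than is needed, since for each $j$ one may simply take $U_{j}:=U_{j}^{(j+1)}$ to obtain $x,y \in \vect{T}_{0}^{-j}\overline{U_{j}}$). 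Both proofs thus turn on the identical mechanism — finiteness of the covers, compactness of $X_{0}$, the closures in Definition~\ref{def_gnrt}, and the at‑most‑one‑point property — so the difference is in the packaging (pointwise vs.\ uniform). The paper's route buys the uniform diameter shrinkage, a fact that is reused in the expansive/SUE machinery later in the section, whereas your route establishes exactly the base property with no overhead. Your remark that one cannot simply invoke Proposition~\ref{prop: A.4} is accurate (a generator with Lebesgue number $\delta$ need not have $\diam(\vect{\mathscr{U}})\le\delta$), though the paper's own proof does not take that shortcut either.
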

      \begin{proof}
        Recall that every $\vee_{n}^{\vect{T}}\vect{\mathscr{U}}$ is an open cover of $X_{0}$.
        It suffices to show that for every $\varepsilon>0$, there exists $N>0$ such that $\diam(\vee_{N}^{\vect{T}}\vect{\mathscr{U}}) \leq \varepsilon$.

        Suppose otherwise that there exists $\varepsilon_{0}>0$ such that for all $n>0$, there is some member $V_{n}$ of $\vee_{n}^{\vect{T}}\vect{\mathscr{U}}$ with $\diam(V_{n}) > \varepsilon_{0}$.
        Write $V_{n}=\bigcap_{j=0}^{n-1}\vect{T}^{-j}U_{j,n}$ where $U_{j,n} \in \mathscr{U}_{j}$ for each $j$.
        It follows that for every $n>0$, there exist two points $x_{n},y_{n} \in \bigcap_{j=0}^{n-1}\vect{T}^{-j}U_{j,n}$ such that $d_{X_{0}}(x_{n},y_{n}) >\varepsilon_{0}$.
        Since $X_{0}$ is compact, without loss of generality, we  assume that  $x_{n} \to x$ and $y_{n} \to y$ as $n \to \infty$. We have that $d_{X_{0}}(x,y) \geq \varepsilon_{0}$ and that $x \neq y$.

        On the other hand, consider $\{U_{j,n}\}_{n=j+1}^{\infty} \subseteq \mathscr{U}_{j}$ for every fixed $j$.
        Since $\mathscr{U}_{j}$ is finite, $\{U_{j,n}\}_{n=j+1}^{\infty}$ may be decomposed into a finite number of constant subsequences.
        Thus, infinitely many of the $x_{n}$'s and $y_{n}$'s are contained in $\vect{T}^{-j}U_{j,n_{j}}$ where $U_{j,n_{j}}$ is chosen from $\{U_{j,n}\}_{n=j+1}^{\infty} \subseteq \mathscr{U}_{j}$.
        It follows that $x$ and $y$ are both contained in $\overline{\vect{T}^{-j}U_{j,n_{j}}} = \vect{T}^{-j}\overline{U_{j,n_{j}}}$ for every $j$.
        This implies that
        $$
        x,y \in \bigcap_{j=0}^{\infty}\vect{T}^{-j}U_{j,n_{j}},
        $$
        and since $\vect{\mathscr{U}}$ is a generator, it follows that $x=y$, leading to a contradiction.
      \end{proof}

     \begin{proof}[Proof of Theorem \ref{prop:expseqgnrtr}]
        We prove the equivalence of the three statements by showing (2) $\implies$ (3) $\implies$ (1) $\implies$ (2).
It is clear that  (2) $\implies$ (3).

Next, we show (3) $\implies$ (1).        Suppose that $\vect{\mathscr{V}}=\{\mathscr{V}_{k}\}_{k=0}^{\infty}$ is a weak generator with a Lebesgue number $\delta>0$, where $\mathscr{V}_{k}=\{V_{i}^{(k)}\}_{i=1}^{m_{k}}$ for each $k$. 
        Let $x,y \in X_{0}$. If $d_{X_{j}}(\vect{T}^{j}x,\vect{T}^{j}y) \leq \delta$ for all $j \in \N$,
        then for each $j \in \N$, there exists a member $V_{i_{j}}^{(j)}$ of $\mathscr{V}_{j}$ with $\{\vect{T}^{j}x,\vect{T}^{j}y\} \subseteq V_{i_{j}}^{(j)}$,
        and it implies that 
        $$
        \{x,y\} \subseteq \bigcap_{j=0}^{\infty}\vect{T}^{-j}V_{i_{j}}^{(j)}.
        $$
        Since the intersection on the RHS contains at most one point, $x=y$.
        This implies that $\vect{T}$ is expansive with $\delta$ as an expansive constant.

        Finally, we show (1) $\implies$ (2).
        Let $\delta>0$ be an expansive constant for $\vect{T}$.
        Fix $\varepsilon$ with $0< \varepsilon <\frac{\delta}{4}$,  and  by the compactness of $X_{k}$,  for every $k \geq 0$, we choose a finite set $\{x_{1}^{(k)},\ldots,x_{m_{k}}^{(k)}\} \subseteq X_{k}$
        such that
        $$
        X_{k} = \bigcup_{i=1}^{m_{k}}B\Bigl(x_{i}^{(k)},\frac{\delta}{2}-\varepsilon\Bigr).
        $$
Hence,  for all $k \geq 0$,  $\varepsilon$ is a Lebesgue number for the  cover $\mathscr{B}_{k}=\{B(x_{i}^{(k)},\frac{\delta}{2})\}_{i=1}^{m_{k}}$ of $X_{k}$,
        and $\varepsilon$ is a Lebesgue number  for $\vect{\mathscr{B}}=\{\mathscr{B}_{k}\}_{k=0}^{\infty}$.
        
        Suppose that $x,y \in \bigcap_{j=0}^{\infty}\vect{T}^{-j}\overline{B_{i_{j}}^{(j)}}$ where $B_{i_{j}}^{(j)} \in \mathscr{B}_{j}$ for each $j$.
        Since every $B_{i_{j}}^{(j)}$ is a ball with radius $\frac{\delta}{2}$, we have $d_{X_{j}}(\vect{T}^{j}x,\vect{T}^{j}y) \leq \delta$ for all $j$. By the expansiveness of $\vect{T}$, we have  $x=y$.  Therefore, $\bigcap_{j=0}^{\infty}\vect{T}^{-j}\overline{B_{i_{j}}^{(j)}}$ contains at most one point for all sequences $\{B_{i_{j}}^{(j)} \in \mathscr{B}_{j}\}_{j=0}^{\infty}$,
        and $\vect{\mathscr{B}}$ is a generator for $\vect{T}$.
      \end{proof}

      \begin{proof}[Proof of Proposition \ref{prop_eque}]
      It  directly follows from Propositions~\ref{prop:expsball}, \ref{prop: A.4}, \ref{prop:expstopbase} and Theorem \ref{prop:expseqgnrtr}.
\end{proof}

      \begin{prop}
        Suppose that $(\vect{X},\vect{T})$ is   uniformly expansive with a uniform expansive constant $\delta>0$. Then the following hold.
        \begin{enumerate}[(1)]
          \item For every $\varepsilon$ with $0<\varepsilon<\delta$ and for all $k \in \N$, 
          $$
          \lim_{n \to \infty}\diam(B_{k,n}^{\vect{T}}(x,\varepsilon))=\lim_{n \to \infty}\diam(\overline{B}_{k,n}^{\vect{T}}(x,\varepsilon))=0,
          $$ 
          for all $x \in X_{k}$.
          \item For all sequences $\vect{\mathscr{A}}=\{\mathscr{A}_{k}\}_{k=0}^{\infty}$ of finite (open) covers $\mathscr{A}_{k}$ of $X_{k}$ with $\diam(\vect{\mathscr{A}}) \leq \delta$,
                $\lim_{n\to\infty} \diam(\vee_{k,n}^{\vect{T}}\vect{\mathscr{A}}) = 0$  for all $k \in \N$.
          \item Given a uniform (weak) generator $\vect{\mathscr{U}}$ for $\vect{T}$, 
                $\lim_{n\to\infty}  \diam(\vee_{k,n}^{\vect{T}}\vect{\mathscr{A}}) = 0$  for all $k \in \N$.
        \end{enumerate}
      \end{prop}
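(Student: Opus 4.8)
The plan is to obtain all three parts by shifting the base point: for each $k\in\N$, pass to the NDS $(\vect{X}_k,\vect{T}_k)=(\{X_{k+j}\}_{j=0}^{\infty},\{T_{k+j}\}_{j=0}^{\infty})$ and invoke the level-$0$ results already established in Propositions~\ref{prop:expsball}, \ref{prop: A.4} and \ref{prop:expstopbase}. The one structural input needed is that uniform expansiveness of $(\vect{X},\vect{T})$ with uniform expansive constant $\delta>0$ says precisely that $(\vect{X}_k,\vect{T}_k)$ is expansive with $\delta$ as an expansive constant for every $k$ (Definition~\ref{def:uniexps}(a),(b)). Combined with the bookkeeping identities coming straight from \eqref{eq:bowenballopen} and the definition of $\vee_{k,n}^{\vect{T}}$, namely $B_{k,n}^{\vect{T}}(x,\varepsilon)=B_n^{\vect{T}_k}(x,\varepsilon)$, $\overline{B}_{k,n}^{\vect{T}}(x,\varepsilon)=\overline{B}_n^{\vect{T}_k}(x,\varepsilon)$, $\vee_{k,n}^{\vect{T}}\vect{\mathscr{A}}=\vee_n^{\vect{T}_k}\{\mathscr{A}_{k+j}\}_{j=0}^{\infty}$ and likewise for $\vect{\mathscr{U}}$, each part becomes a direct transcription of a statement at level $0$.

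Concretely, for part (1) I would fix $k$, note that $0<\varepsilon<\delta$ is admissible for the expansive NDS $(\vect{X}_k,\vect{T}_k)$, and apply Proposition~\ref{prop:expsball} at every $x\in X_k$ to get $\diam(B_{k,n}^{\vect{T}}(x,\varepsilon))=\diam(B_n^{\vect{T}_k}(x,\varepsilon))\to0$, and likewise for the closed balls. For part (2) I would fix $k$ and observe that the shifted sequence $\{\mathscr{A}_{k+j}\}_{j=0}^{\infty}$ of finite (open) covers still satisfies $\diam\le\diam(\vect{\mathscr{A}})\le\delta$, so Proposition~\ref{prop: A.4} applied to $(\vect{X}_k,\vect{T}_k)$ yields $\diam(\vee_{k,n}^{\vect{T}}\vect{\mathscr{A}})=\diam(\vee_n^{\vect{T}_k}\{\mathscr{A}_{k+j}\})\to0$. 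For part (3) I would fix $k$; by Definition~\ref{def:uniexps} the shifted sequence $\vect{\mathscr{U}}_k=\{\mathscr{U}_{k+j}\}_{j=0}^{\infty}$ is a (weak) generator for $\vect{T}_k$, and Proposition~\ref{prop:expstopbase}---whose proof in fact produces, for every $\rho>0$, an $N$ with $\diam(\vee_N^{\vect{T}_k}\vect{\mathscr{U}}_k)\le\rho$, whence $\diam(\vee_n^{\vect{T}_k}\vect{\mathscr{U}}_k)\le\rho$ for all $n\ge N$ as the joins are nested---gives $\diam(\vee_{k,n}^{\vect{T}}\vect{\mathscr{U}})\to0$.

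I expect no genuine obstacle; the argument is essentially reindexing, mirroring the short derivation of Proposition~\ref{prop_eque}. The one point deserving a line of care is the weak-generator case of part (3): Proposition~\ref{prop:expstopbase} is stated for genuine generators, so for a uniform weak generator one should either check that its contradiction argument goes through with the weak-generator property replacing the generator property (invoking the expansiveness of $(\vect{X}_k,\vect{T}_k)$ to force the two accumulation points it produces to coincide), or first replace $\vect{\mathscr{U}}_k$ by an honest generator of $(\vect{X}_k,\vect{T}_k)$ via Theorem~\ref{prop:expseqgnrtr} and argue with that. Everything else is a routine consequence of the corresponding level-$0$ statements.
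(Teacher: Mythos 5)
Your reindexing strategy---regard uniform expansiveness as the assertion that each shifted system $(\vect{X}_k,\vect{T}_k)$ is expansive with expansive constant $\delta$, and then cite Propositions~\ref{prop:expsball}, \ref{prop: A.4} and \ref{prop:expstopbase} at level $0$ for that system---is exactly what the paper intends (it supplies no explicit proof for this proposition, placing it immediately after the proof of Proposition~\ref{prop_eque}), and your bookkeeping identities $B_{k,n}^{\vect{T}}(x,\varepsilon)=B_n^{\vect{T}_k}(x,\varepsilon)$, $\vee_{k,n}^{\vect{T}}\vect{\mathscr{A}}=\vee_n^{\vect{T}_k}\{\mathscr{A}_{k+j}\}_{j}$ are correct. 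Parts (1) and (2) and the genuine-generator half of (3) go through exactly as you say.

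The weak-generator half of (3), however, needs more than the ``one line of care'' you allot, and neither of your two suggested repairs closes it as stated. For (a): the contradiction argument in Proposition~\ref{prop:expstopbase} produces accumulation points $x\neq y$ with $\vect{T}^j x, \vect{T}^j y \in \overline{U_j}$ only, since limits of points lying in the open set $U_j$ land merely in its closure. The weak-generator property in Definition~\ref{def_gnrt} concerns $\bigcap_j\vect{T}^{-j}U_j$ with the $U_j$ \emph{open}, so it says nothing about this pair. Invoking expansiveness of $(\vect{X}_k,\vect{T}_k)$ yields some $j_0$ with $d(\vect{T}^{j_0}x,\vect{T}^{j_0}y)>\delta$, but that is not yet a contradiction: both points lie in $\overline{U_{j_0}}$, and a Lebesgue number $\delta$ for $\mathscr{U}_{j_0}$ bounds the size of balls guaranteed to fit inside members from below, not $\diam(U_{j_0})$ from above. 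For (b): replacing $\vect{\mathscr{U}}_k$ by a different, honest generator $\vect{\mathscr{V}}_k$ changes the cover whose dynamical joins are being measured, so the conclusion would concern $\vee_n^{\vect{T}_k}\vect{\mathscr{V}}_k$, not $\vee_{k,n}^{\vect{T}}\vect{\mathscr{U}}$ as required. To close the weak-generator case you would need either to show that every weak generator (with the Lebesgue number it carries by Definition~\ref{def_gnrt}) is automatically a generator, or to add the hypothesis $\diam(\vect{\mathscr{U}})\le\delta$ and route the argument through Proposition~\ref{prop: A.4}; as written the step is not routine.
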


      It may happen that the convergence to $0$ of the diameters in the above proposition is not uniform in $k$ (see \cite[Exmp.7.13]{Kawan2015}),
      and we require stronger conditions for the generators to recover its generating property for pressures.

      \subsection{Pressures in strongly uniformly expansive systems}\label{ssect:suepre}
      In this subsection, we recover the generating property of generators for topological entropies
      and simplify the calculation and formulation for the pressures and entropies in  strongly uniformly expansive systems.

Let $(\vect{X},\vect{T})$ be strongly uniformly expansive with   sue constant $\delta>0$. Recall  Definition \ref{def:sue}, and it is equivalent to that, for every $\varepsilon>0$, there is an integer $N \geq 1$ such that
      for all $k \in \N$ and $x \in X_{k}$,
      $$
      B_{k,N}^{\vect{T}}(x,\delta) \subseteq B_{k}(x,\varepsilon).
      $$
This implies that the Bowen balls shrink to atoms uniformly in $k$. Hence every  sue NDS $(\vect{X},\vect{T})$  with sue constant $\delta>0$ is uniformly expansive with $\delta$ as a uniform expansive constant (see Definition \ref{def:uniexps}).
      The following conclusion is a direct consequence of  Definition \ref{def:sue}. 
      \begin{lem}\label{lem: A.5}
        Suppose that $(\vect{X},\vect{T})$ is  strongly uniformly expansive with  a  uniform expansive constant $\delta>0$. Then the following hold.
        \begin{enumerate}[(1)]
          \item Given a sequence $\vect{\mathscr{A}}=\{\mathscr{A}_{k}\}_{k=0}^{\infty}$ of finite (open) covers $\mathscr{A}_{k}$ of $X_{k}$ with $\diam(\vect{\mathscr{A}}) \leq \delta$,
                for every $\varepsilon>0$, there is an integer $N \geq 1$ such that 
                $\diam(\vee_{k,N}^{\vect{T}}\vect{\mathscr{A}}) \leq \varepsilon$ for all $k \in \N$.
          \item $\vect{T}$ has a uniform (weak) generator $\vect{\mathscr{U}}$ such that for every $\varepsilon>0$, there exists integral $N>0$ with the property that $\diam(\vee_{k,N}^{\vect{T}}\vect{\mathscr{U}}) \leq \varepsilon$ for all $k \in \N$.
        \end{enumerate}
      \end{lem}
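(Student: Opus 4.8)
The plan is to derive Lemma~\ref{lem: A.5} directly from the definition of strong uniform expansiveness, exactly as the excerpt's preamble suggests (``The following conclusion is a direct consequence of Definition~\ref{def:sue}''). The key observation is the equivalent reformulation stated just above the lemma: $(\vect{X},\vect{T})$ is sue with constant $\delta$ if and only if for every $\varepsilon>0$ there is $N \geq 1$ such that $B_{k,N}^{\vect{T}}(x,\delta) \subseteq B_{k}(x,\varepsilon)$ for all $k \in \N$ and $x \in X_{k}$. So the real content is converting this ``Bowen balls shrink uniformly'' statement into the corresponding statement about dynamical refinements of open covers and about generators.

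For part (1): fix a sequence $\vect{\mathscr{A}}=\{\mathscr{A}_{k}\}_{k=0}^{\infty}$ of finite open covers with $\diam(\vect{\mathscr{A}}) \leq \delta$. Given $\varepsilon>0$, apply the reformulation with $\varepsilon/2$ in place of $\varepsilon$ to obtain an integer $N \geq 1$ with $B_{k,N}^{\vect{T}}(x,\delta) \subseteq B_{k}(x,\varepsilon/2)$ for all $k$ and all $x \in X_{k}$. Now take any member $V$ of $\vee_{k,N}^{\vect{T}}\vect{\mathscr{A}}$, say $V = \bigcap_{j=0}^{N-1}\vect{T}_{k}^{-j}A_{k+j}$ with $A_{k+j} \in \mathscr{A}_{k+j}$. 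If $x,y \in V$, then $\vect{T}_{k}^{j}x, \vect{T}_{k}^{j}y \in A_{k+j}$ and hence $d_{k+j}(\vect{T}_{k}^{j}x,\vect{T}_{k}^{j}y) \leq \diam(A_{k+j}) \leq \diam(\vect{\mathscr{A}}) \leq \delta$ for every $0 \leq j \leq N-1$; that is, $d_{k,N}^{\vect{T}}(x,y) \leq \delta$, so $y \in B_{k,N}^{\vect{T}}(x,\delta) \subseteq B_{k}(x,\varepsilon/2)$. Thus $V$ has diameter at most $\varepsilon$ (two points of $V$ each lie within $\varepsilon/2$ of $x$; alternatively use $\overline{B}$ and $\diam \leq \varepsilon$ directly). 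Since $V$ was arbitrary, $\diam(\vee_{k,N}^{\vect{T}}\vect{\mathscr{A}}) \leq \varepsilon$, and $N$ does not depend on $k$.

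For part (2): since a sue NDS with constant $\delta$ is uniformly expansive with uniform expansive constant $\delta$ (as noted in the excerpt), Proposition~\ref{prop_eque} — or more concretely the construction in the proof of Theorem~\ref{prop:expseqgnrtr}, the implication (1)$\Rightarrow$(2) — produces for each $k$ a finite cover $\mathscr{B}_{k}=\{B(x_i^{(k)},\delta/2)\}_{i=1}^{m_k}$ of $X_k$ by balls of radius $\delta/2$ with a common Lebesgue number, and $\vect{\mathscr{B}}=\{\mathscr{B}_k\}_{k=0}^{\infty}$ is a uniform generator for $\vect{T}$. Since $\diam(\vect{\mathscr{B}}) \leq \delta$, part (1) applied to $\vect{\mathscr{A}}=\vect{\mathscr{B}}$ shows that for every $\varepsilon>0$ there is $N>0$ with $\diam(\vee_{k,N}^{\vect{T}}\vect{\mathscr{B}}) \leq \varepsilon$ for all $k$. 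Taking $\vect{\mathscr{U}}=\vect{\mathscr{B}}$ gives the claim. (Using closed balls $\overline{B}(x_i^{(k)},\delta/2)$ in the generator is harmless and keeps the diameters bounded by $\delta$.)

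The only mild subtlety — and it is not really an obstacle — is bookkeeping the radius: working with $B_{k,N}^{\vect{T}}(x,\delta)$ when the cover has diameter exactly $\delta$ forces the use of the open/closed Bowen ball distinction or a slightly strict inequality $\diam(\vect{\mathscr{A}})<\delta$; this is dispatched by invoking the definition with $\varepsilon/2$ and noting that if $\delta_0$ is an sue constant then so is any smaller $\delta$, so one may shrink $\delta$ a touch if desired. Everything else is a direct unwinding of definitions, and no new ideas beyond those already in the proofs of Propositions~\ref{prop: A.4} and~\ref{prop_eque} are needed.
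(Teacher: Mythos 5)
Your argument for part (1) is the right one and matches what the paper has in mind: the authors only remark that the lemma ``is a direct consequence of Definition~\ref{def:sue}'' without writing a proof, and your unwinding of the uniform Bowen-ball-shrinking reformulation is exactly the intended reasoning. Part (2) then follows correctly by feeding the uniform generator of radius-$\delta/2$ balls (from the proof of Theorem~\ref{prop:expseqgnrtr}, uniformized via Proposition~\ref{prop_eque}) back into part (1). So in substance the proposal is sound.

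One remark on the $\leq$ versus $<$ bookkeeping, which you flag but do not quite resolve. From $\diam(\vect{\mathscr{A}})\leq\delta$ you obtain $d_{k,N}^{\vect{T}}(x,y)\leq\delta$, while Definition~\ref{def:sue} and the reformulation $B_{k,N}^{\vect{T}}(x,\delta)\subseteq B_k(x,\varepsilon)$ both demand the \emph{strict} inequality $d_{k,N}^{\vect{T}}(x,y)<\delta$. Your proposed fix (``shrink $\delta$'') goes the wrong way: replacing $\delta$ by a smaller sue constant makes the hypothesis $\diam(\vect{\mathscr{A}})\leq\delta$ on the cover \emph{more} restrictive, not less, so it does not dispatch the boundary case where the diameter equals $\delta$ exactly. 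The clean repairs are either (i) to read the lemma with $\diam(\vect{\mathscr{A}})<\delta$, which is all that is ever used downstream (every cover the paper feeds into this lemma, in particular the ball cover in part (2), can be chosen with radii strictly less than $\delta/2$), or (ii) to note that a compactness argument on $\overline{V}\times\overline{V}\subseteq X_k\times X_k$ shows the two reformulations of sue (with $<\delta$ and with $\leq\delta$) coincide up to replacing $\delta$ by an arbitrarily close smaller constant, again reducing to the strict case. This is a common $<$/$\leq$ slippage in expansiveness arguments, and the paper is silently treating the two as interchangeable, so it is not a defect of your proof so much as a point worth making explicit.

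Everything else in the proposal — the use of Lemma~\ref{lem: A.5}(1) to deduce (2), the appeal to the sue-implies-uniformly-expansive remark, and the observation that closed balls may be used in the generator without spoiling the diameter bound — is correct and in line with the paper's surrounding development.
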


      We require the following property of pressures for the proof of Theorem~\ref{thm:suepre}.
      \begin{prop}\label{prop:qrecovpred2}
        Given a sequence $\vect{\mathscr{U}}$ of finite open covers $\mathscr{U}_{k}$ of $X_{k}$ with Lebesgue number $\delta>0$,
        we have
        $$
        Q(\vect{T},\vect{f},Z,\vect{\mathscr{U}}) \leq P(\vect{T},\vect{f},Z,\frac{\delta}{2})
        $$
        for $Q \in \{\qrelow,\qreup,\prebpp\}$ and the corresponding $P \in \{\prelow,\preup,\prebow\}$.
      \end{prop}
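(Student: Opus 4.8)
The plan is to reduce the whole statement to one geometric observation: a ball of radius $\delta/2$ has diameter at most $\delta$, hence by the Lebesgue-number property of $\vect{\mathscr{U}}$ it lies inside a member of the corresponding cover $\mathscr{U}_k$. Applying this to each of the first $n$ iterates of $\vect{T}$ shows that every (open or closed) Bowen ball $B_{n}^{\vect{T}}(x,\tfrac{\delta}{2})$, $\overline{B}_{n}^{\vect{T}}(x,\tfrac{\delta}{2})$ is contained in $X_{0}[\mathbf{U}]$ for some string $\mathbf{U} \in \vect{\mathscr{U}}_{0}^{n}$ with $\len{\mathbf{U}}=n$ (recall $X_{0}[\mathbf{U}]$ from \eqref{def_XkbU}); moreover the center $x$ itself lies in $X_{0}[\mathbf{U}]$, so $\underline{S}_{n}^{\vect{T}}\vect{f}(\mathbf{U}) \leq S_{n}^{\vect{T}}\vect{f}(x)$ by \eqref{eq:SknTfU}. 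This lets me replace any efficient $(n,\tfrac{\delta}{2})$-configuration realizing $P(\vect{T},\vect{f},Z,\tfrac{\delta}{2})$ by a cover of $Z$ by members of $\vee_{n}^{\vect{T}}\vect{\mathscr{U}}$ carrying no larger weight.

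For $Q \in \{\qrelow,\qreup\}$ with the corresponding $P \in \{\prelow,\preup\}$, I would fix $n$ and take a maximal $(n,\tfrac{\delta}{2})$-separated set $E$ for $Z$; maximality makes $E$ also $(n,\tfrac{\delta}{2})$-spanning, so $Z \subseteq \bigcup_{y \in E}\overline{B}_{n}^{\vect{T}}(y,\tfrac{\delta}{2})$. Inscribing each $\overline{B}_{n}^{\vect{T}}(y,\tfrac{\delta}{2})$ into some $X_{0}[\mathbf{U}_{y}]$ produces $\Gamma = \{\mathbf{U}_{y} : y \in E\} \subset \vect{\mathscr{U}}_{0}^{n}$ covering $Z$, and since $y \in X_{0}[\mathbf{U}_{y}]$,
\[
Q_{n}(\vect{T},\vect{f},Z,\vect{\mathscr{U}}) \;\leq\; \sum_{\mathbf{U} \in \Gamma}\e^{\underline{S}_{n}^{\vect{T}}\vect{f}(\mathbf{U})} \;\leq\; \sum_{y \in E}\e^{S_{n}^{\vect{T}}\vect{f}(y)} \;\leq\; P_{n}(\vect{T},\vect{f},Z,\tfrac{\delta}{2})
\]
by \eqref{eq:QnPncov}, \eqref{eq:SknTfU} and \eqref{eq:defPn} (the first inequality because $\Gamma$ is an admissible competitor and $\e^{(\cdot)}\geq 0$, the last because $E$ is a competitor for the supremum defining $P_n$). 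Taking $\tfrac{1}{n}\log$ and passing to $\varliminf$ (resp.\ $\varlimsup$) gives $\qrelow(\vect{T},\vect{f},Z,\vect{\mathscr{U}}) \leq \prelow(\vect{T},\vect{f},Z,\tfrac{\delta}{2})$, resp.\ the $\varlimsup$ version, via \eqref{def_QQPPCV}, \eqref{eq:Pe} and \eqref{eq:defpre}.

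For $Q = \prebpp$ and $P = \prebow$, I would fix $s \in \R$ and $N \geq 1$ and, given an arbitrary $(N,\tfrac{\delta}{2})$-cover $\{B_{n_{i}}^{\vect{T}}(x_{i},\tfrac{\delta}{2})\}_{i}$ of $Z$, inscribe each ball into $X_{0}[\mathbf{U}_{i}]$ with $\len{\mathbf{U}_{i}}=n_{i} \geq N$; the family $\Gamma=\{\mathbf{U}_{i}\}_{i}$ covers $Z$, and by \eqref{eq:defmsrbpp}, \eqref{eq:SknTfU} and $x_{i}\in X_{0}[\mathbf{U}_{i}]$ it contributes at most $\sum_{i}\e^{-n_{i}s+S_{n_{i}}^{\vect{T}}\vect{f}(x_{i})}$. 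Taking the infimum over all such Bowen-ball covers yields $\msrbpplow_{N}^{s}(\vect{T},\vect{f},Z,\vect{\mathscr{U}}) \leq \msrbow_{N,\delta/2}^{s}(\vect{T},\vect{f},Z)$, and $N \to \infty$ gives $\msrbpplow^{s}(\vect{T},\vect{f},Z,\vect{\mathscr{U}}) \leq \msrbow_{\delta/2}^{s}(\vect{T},\vect{f},Z)$ for every $s$; since the left side vanishes whenever the right side does, comparing critical exponents in \eqref{def_PBPPep} and \eqref{def_PBep} gives $\prebpp(\vect{T},\vect{f},Z,\vect{\mathscr{U}}) \leq \prebow(\vect{T},\vect{f},Z,\tfrac{\delta}{2})$.

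The only step that is not pure bookkeeping is the inscription of balls of radius $\delta/2$, where one must be slightly careful with the convention for Lebesgue number, since a closed ball of radius $\delta/2$ may have diameter exactly $\delta$: this is harmless because $\delta$ being a Lebesgue number means every ball of radius $\delta$ (equivalently every set of diameter $<\delta$) lies in a member of the cover, and $\overline{B}(x,\delta/2)\subseteq B(x,\delta)$. Everything else is a matter of keeping the directions of the infima, suprema and critical-value comparisons consistent, and I anticipate no further difficulty.
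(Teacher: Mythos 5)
Your argument is correct and, for the $\prebpp \leq \prebow$ case, essentially identical to the paper's: take an arbitrary $(N,\delta/2)$-cover by open Bowen balls, inscribe each coordinate ball into a member of $\mathscr{U}_j$ via the Lebesgue number, get a string cover $\Gamma$, and bound $\underline{S}_{n_i}^{\vect{T}}\vect{f}(\mathbf{U}_i) \leq S_{n_i}^{\vect{T}}\vect{f}(x_i)$ using that the center $x_i$ lies in $X_0[\mathbf{U}_i]$. The paper gives only this case explicitly and remarks the others are similar; your maximal-$(n,\delta/2)$-separated-set device (which is simultaneously $(n,\delta/2)$-spanning, so its closed Bowen balls cover $Z$) is the natural completion for $\qrelow,\qreup$ versus $\prelow,\preup$, and your observation that $\overline{B}(\cdot,\delta/2)\subset B(\cdot,\delta)$ correctly disposes of the Lebesgue-number convention issue.
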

      \begin{proof}
        We only give the proof for $\prebpp(\vect{T},\vect{f},Z,\vect{\mathscr{U}}) \leq \prebow(\vect{T},\vect{f},Z,\frac{\delta}{2})$, and others follows  by the similar arguments.

        Given an integer $N>0$, for every  $(N,\frac{\delta}{2})$-cover $\{B_{n_{i}}^{\vect{T}}(x_{i},\varepsilon)\}_{i=1}^{\infty}$ of $Z$,
        we have
      $$
      Z \subset \bigcup_{i=1}^{\infty}B_{n_{i}}^{\vect{T}}\Big(x,\frac{\delta}{2}\Big) = \bigcup_{i=1}^{\infty}\bigcap_{j=0}^{n-1}\vect{T}^{-j}B_{X_{j}}\Big(\vect{T}^{j}x_{i},\frac{\delta}{2}\Big).
      $$
      For each $i$, since $B_{X_{j}}\big(\vect{T}^{j}x_{i},\frac{\delta}{2}\big)$ is contained in a member of $\mathscr{U}_{j}$ for every $j>0$,
      there exists $\mathbf{U}_{i} \in \vect{\mathscr{U}}_{0}^{n_{i}}$ such that  $B_{n_{i}}^{\vect{T}}\big(x_{i},\frac{\delta}{2}\big) \subset X_{0}[\mathbf{U}_{i}]$.
      Hence $\{\mathbf{U}_{i}\}_{i=1}^{\infty} \subset \vect{\mathscr{U}}_{0}^{n}$ covers $Z$,
      and it follows by \eqref{eq:defmsrbpp} and \eqref{eq:SknTfU} that for all $s \in \R$,
      $$
      \msrbpplow_{N}^{s}(\vect{T},\vect{f},Z,\vect{\mathscr{U}}) \leq \sum_{i=1}^{\infty}\exp{\bigl(-n_{i}s + \underline{S}_{n_{i}}^{\vect{T}}\vect{f}(\mathbf{U}_{i})\bigr)} \leq \sum_{i=1}^{\infty}\exp{\bigl(-n_{i}s + S_{n_{i}}^{\vect{T}}\vect{f}(x_{i})\bigr)},
      $$
      which implies $$\msrbpplow_{N}^{s}(\vect{T},\vect{f},Z,\vect{\mathscr{U}}) \leq \msrbow_{N,\frac{\delta}{2}}^{s}(\vect{T},\vect{f},Z)$$ by the arbitrariness of the $(N,\frac{\delta}{2})$-cover $\{B_{n_{i}}^{\vect{T}}(x_{i},\frac{\delta}{2})\}_{i=1}^{\infty}$.
      The conclusion  immediately follows by \eqref{def_PBep} and \eqref{def_PBPPep}.
      \end{proof}

      \begin{proof}[Proof of Theorem~\ref{thm:suepre}]
        (1) Let $\vect{\mathscr{U}}$ be a uniform (weak) generator for $\vect{T}$.
        Write
        $$
        \mathscr{V}_{k,m}=\bigvee_{j=0}^{m-1}\vect{T}_{k}^{-j}\mathscr{U}_{k+j}
        $$
        for every $k \geq 0$,  and let $\vect{\mathscr{V}}_{m}=\{\mathscr{V}_{k,m}\}_{k=0}^{\infty}$ for each $m \geq 1$.
        By Lemma~\ref{lem: A.5}(2), we have that for all $k \geq 0$, $\diam(\mathscr{V}_{k,m}) \to 0$ as $m \to \infty$,
        and thus
        $$
        \diam(\vect{\mathscr{V}}_{m}) \to 0 \quad \text{as}\ m \to \infty.
        $$     
        It follows by Propositions~\ref{prop:qrecov} that
        \begin{equation}\label{eq:prelimexpans}
          P(\vect{T},\vect{f},Z) = \lim_{m \to \infty}Q(\vect{T},\vect{f},Z,\vect{\mathscr{V}}_{m}) = \lim_{m \to \infty}P(\vect{T},\vect{f},Z,\vect{\mathscr{V}}_{m}),
        \end{equation}
        where $P \in \{\prelow,\preup\}$.
        Similarly, by Proposition~\ref{prop:prebppcov}, we have
        \begin{equation}\label{eq:PBlimexpans}
          \prebow(\vect{T},\vect{f},Z) = \lim_{m \to \infty}\prebpp(\vect{T},\vect{f},Z,\vect{\mathscr{V}}_{m}) = \lim_{m \to \infty}\prebow(\vect{T},\vect{f},Z,\vect{\mathscr{V}}_{m}),
        \end{equation}

        We prove $ \prelow(\vect{T},\vect{f},Z) = \qrelow(\vect{T},\vect{f},Z,\vect{\mathscr{U}})$ and $ \preup(\vect{T},\vect{f},Z) = \qreup(\vect{T},\vect{f},Z,\vect{\mathscr{U}})$ first.

        For every $k \geq 0$, $m \geq 1$, and every given $V_{k} \in \mathscr{V}_{k,m}$, write $V_{k}=\bigcap_{j=0}^{m-1}\vect{T}_{k}^{-j}U_{k+j}^{(k)}$, where $U_{k+j}^{(k)} \in \mathscr{U}_{k+j}$ for each $0 \leq j \leq m-1$.
        It follows that for every $V_{0} \ldots V_{n-1} \in (\vect{\mathscr{V}}_{m})_{0}^{n}$,
        $$
        \bigcap_{j=0}^{n-1}\vect{T}^{-j}V_{j} \subseteq \Big(\bigcap_{j=0}^{n-1}\vect{T}^{-j}U_{j}^{(j)}\Big) \cap \Big(\bigcap_{j=n}^{n+m-1}\vect{T}^{-j}U_{j}^{(n-1)}\Big)
        $$
        For every $n \geq 1$, we define a mapping $\phi_{n,m}:(\vect{\mathscr{V}}_{m})_{0}^{n} \to \vect{\mathscr{U}}_{0}^{n+m}$ by
        $$
        \phi_{n,m}(V_{0} \ldots V_{n-1}) = U_{0}^{(0)} U_{1}^{(1)}\ldots U_{n-1}^{(n-1)} U_{n}^{(n-1)}\ldots U_{n+m-1}^{(n-1)}.
        $$
        Clearly, we have $X_{0}[\mathbf{V}] \subseteq X_{0}[\phi_{n,m}(\mathbf{V})]$ for all $\mathbf{V} \in (\vect{\mathscr{V}}_{m})_{0}^{n}$.
        Hence, if $\Gamma \subseteq (\vect{\mathscr{V}}_{m})_{0}^{n}$ covers $Z$, then $\phi_{n,m}(\Gamma) \subseteq \vect{\mathscr{U}}_{0}^{n+m}$ also covers $Z$.
        It follows by \eqref{eq:SknTfU} that
        $$
        \sum_{\mathbf{U} \in \phi_{n,m}(\Gamma)}\exp{\bigl(\underline{S}_{n+m}^{\vect{T}}\vect{f}(\mathbf{U})\bigr)} = \sum_{\mathbf{V} \in \Gamma}\exp{\bigl(\underline{S}_{n+m}^{\vect{T}}\vect{f}(\phi_{n,m}\mathbf{V})\bigr)} \leq \sum_{\mathbf{V} \in \Gamma}\exp{\bigl(\overline{S}_{n}^{\vect{T}}\vect{f}(\mathbf{V}) + m\|\vect{f}\|\bigr)},
        $$
  and it  implies by \eqref{eq:QnPncov} that
        $$
        Q_{n+m}(\vect{T},\vect{f},Z,\vect{\mathscr{U}}) \leq \e^{m\|\vect{f}\|}P_{n}(\vect{T},\vect{f},Z,\vect{\mathscr{V}}_{m}).
        $$

On the other hand, for every $n,m \geq 1$, we define a mapping $\gamma_{n,m}: \vect{\mathscr{U}}_{0}^{n+m} \to (\vect{\mathscr{V}}_{m})_{0}^{n}$ by
        $$
        \gamma_{n,m}(U_{0} \ldots U_{n+m-1}) = V_{0} \ldots V_{n-1},
        $$
        where $V_{k} = \bigcap_{j=0}^{m-1}\vect{T}_{k}^{-j}U_{k+j}$ for each $0 \leq k \leq n-1$.
        Obviously, we have $X_{0}[\mathbf{U}]=X_{0}[\gamma_{n,m}(\mathbf{U})]$ for all $\mathbf{U} \in \vect{\mathscr{U}}_{0}^{n+m}$.
        Therefore, if $\Phi \subseteq \vect{\mathscr{U}}_{0}^{n+m}$ covers $Z$, then so does $\gamma_{n,m}(\Phi) \subseteq (\vect{\mathscr{V}}_{m})_{0}^{n}$.
        Meanwhile, by \eqref{eq:SknTfU}, we have
        \begin{align*}
        \sum_{\mathbf{V} \in \gamma_{n,m}(\Phi)}\exp{\bigl(\underline{S}_{n}^{\vect{T}}\vect{f}(\mathbf{V}) - m\|\vect{f}\|\bigr)}
        &= \sum_{\mathbf{U} \in \Phi}\exp{\bigl(\underline{S}_{n}^{\vect{T}}\vect{f}(\gamma_{n,m}\mathbf{U}) - m\|\vect{f}\|\bigr)} \\
        &\leq \sum_{\mathbf{U} \in \Phi}\exp{\bigl(\underline{S}_{n+m}^{\vect{T}}\vect{f}(\mathbf{U})\bigr)},
        \end{align*}
and by \eqref{eq:QnPncov}, 
        $$
        \e^{-m\|\vect{f}\|}Q_{n}(\vect{T},\vect{f},Z,\vect{\mathscr{V}}_{m}) \leq Q_{n+m}(\vect{T},\vect{f},\vect{\mathscr{U}}).
        $$

        Combined with \eqref{def_QQPPCV}, these imply that for all $m \geq 1$,
        $$
        Q(\vect{T},\vect{f},Z,\vect{\mathscr{V}}_{m}) \leq Q(\vect{T},\vect{f},Z,\vect{\mathscr{U}}) \leq P(\vect{T},\vect{f},Z,\vect{\mathscr{V}}_{m}),
        $$
        where $Q \in \{\qrelow,\qreup\}$ and $P$ is the corresponding one of $\{\prelow,\preup\}$.
By \eqref{eq:prelimexpans},  we obtain
$$ 
\prelow(\vect{T},\vect{f},Z) = \qrelow(\vect{T},\vect{f},Z,\vect{\mathscr{U}}) \qquad  \textit{and} \qquad \preup(\vect{T},\vect{f},Z) = \qreup(\vect{T},\vect{f},Z,\vect{\mathscr{U}}).
$$

To prove  $ \prebow(\vect{T},\vect{f},Z) = \prebpp(\vect{T},\vect{f},Z,\vect{\mathscr{U}})$, we define $\phi_{*,m}: \bigcup_{n=1}^{\infty}(\vect{\mathscr{V}}_{m})_{0}^{n} \to \bigcup_{n=1}^{\infty}\vect{\mathscr{U}}_{0}^{n+m}$
        and $\gamma_{*,m}: \bigcup_{n=1}^{\infty}\vect{\mathscr{U}}_{0}^{n+m} \to \bigcup_{n=1}^{\infty}(\vect{\mathscr{V}}_{m})_{0}^{n}$
        respectively by
        $\phi_{*,m}(\mathbf{V})=\phi_{\len{\mathbf{V}},m}(\mathbf{V})$ for all $\mathbf{V} \in \bigcup_{n=1}^{\infty}(\vect{\mathscr{V}}_{m})_{0}^{n}$
        and $\gamma_{*,m}(\mathbf{U})=\gamma_{\len{\mathbf{U}},m}(\mathbf{U})$ for all $\mathbf{U} \in \bigcup_{n=1}^{\infty}\vect{\mathscr{U}}_{0}^{n+m}$.
        Similarly, if $\Gamma \subseteq \bigcup_{n=N}^{\infty}(\vect{\mathscr{V}}_{m})_{0}^{n}$ covers $Z$, then $\phi_{*,m}(\Gamma) \subseteq \bigcup_{n=N}^{\infty}\vect{\mathscr{U}}_{0}^{n+m}$ also covers $Z$;
        and if $\Phi \subseteq \bigcup_{n=N}^{\infty}\vect{\mathscr{U}}_{0}^{n+m}$ covers $Z$, then so does $\gamma_{*,m}(\Phi) \subseteq \bigcup_{n=N}^{\infty}(\vect{\mathscr{V}}_{m})_{0}^{n}$.
        Therefore,
        $$
        \e^{-m\|\vect{f}\|}\msrbpplow_{N}^{s}(\vect{T},\vect{f},Z,\vect{\mathscr{V}}_{m}) \leq \msrbpplow_{N+m}^{s}(\vect{T},\vect{f},Z,\vect{\mathscr{U}}) \leq \e^{m\|\vect{f}\|}\msrbppup_{N}^{s}(\vect{T},\vect{f},Z,\vect{\mathscr{V}}_{m})
        $$
        which implies that for all $m \geq 1$,
        $$
        \prebpp(\vect{T},\vect{f},Z,\vect{\mathscr{V}}_{m}) \leq \prebpp(\vect{T},\vect{f},Z,\vect{\mathscr{U}})  \leq  \prebow(\vect{T},\vect{f},Z,\vect{\mathscr{V}}_{m}),
        $$
        and by \eqref{eq:PBlimexpans}, the equality   $ \prebow(\vect{T},\vect{f},Z) = \prebpp(\vect{T},\vect{f},Z,\vect{\mathscr{U}})$ holds.

        (2) First, we prove $ P(\vect{T},\vect{f},Z) = P(\vect{T},\vect{f},Z,\varepsilon)$ for  $P \in \{\prelow,\preup\}$. 

 Fix $\varepsilon$ with $0< \varepsilon <\frac{\delta}{4}$. For every $k \geq 0$,  by the compactness of $X_{k}$, we choose a finite set $\{x_{1}^{(k)},\ldots,x_{m_{k}}^{(k)}\} \subseteq X_{k}$
        such that
        $$
        X_{k} = \bigcup_{i=1}^{m_{k}}B\Bigl(x_{i},\frac{\delta}{2}-\varepsilon\Bigr),
        $$
and    $2\varepsilon$ is a Lebesgue number for  the finite open cover $\mathscr{B}_{k}=\{B(x_{i},\frac{\delta}{2})\}_{i=1}^{m_{k}}$ of $X_{k}$. 
Hence  the sequence $\vect{\mathscr{B}}=\{\mathscr{B}_{k}\}_{k=0}^{\infty}$ has $2\varepsilon$ as a Lebesgue number.
        By Proposition~\ref{prop:qrecovpred2}, we have
        $$
        Q_{n}(\vect{T},\vect{f},Z,\vect{\mathscr{B}}) \leq P_{n}(\vect{T},\vect{f},Z,\varepsilon).
        $$
Since $P_{n}$ is decreasing in $\varepsilon$, it follows by \eqref{def_QQPPCV} and \eqref{eq:defpre} that 
        $$
        Q(\vect{T},\vect{f},Z,\vect{\mathscr{B}}) \leq P(\vect{T},\vect{f},Z,\varepsilon) \leq P(\vect{T},\vect{f},Z),
        $$
        where $P\in\{\prelow,\preup\}$ and the corresponding $Q\in \{\qrelow,\qreup\}$.
Since $\vect{\mathscr{B}}$ is a uniform generator,by the conclusion (1) above, we obtain that $P(\vect{T},\vect{f},Z,\varepsilon) = P(\vect{T},\vect{f},Z),$ for $P\in\{\prelow,\preup\}$.

To prove $ \prepac(\vect{T},\vect{f},Z) = \prepac(\vect{T},\vect{f},Z,\varepsilon)$,   we only need to  combine $ \preup(\vect{T},\vect{f},Z) = \preup(\vect{T},\vect{f},Z,\varepsilon)$ with the argument in the proof of \cite[Thm.4.8]{CM1},  and the equality holds. 

For $ \prebow(\vect{T},\vect{f},Z) = \prebow(\vect{T},\vect{f},Z,\varepsilon)$,  by Propositions~\ref{prop:qrecovpred2} and the monotonicity of $\prebow$ in $\varepsilon$, we have the similar inequalities
        $$
        \prebpp(\vect{T},\vect{f},Z,\vect{\mathscr{B}}) \leq \prebow(\vect{T},\vect{f},Z,\varepsilon) \leq \prebow(\vect{T},\vect{f},Z),
        $$
        and the equality  follows from the conclusion (1) above.
      \end{proof}

      \subsection{Symbolic dynamics of strongly uniformly expansive systems}\label{ssect:suesymdyn}
      In this subsection, we  give the  proof of  Theorem~\ref{thm:suesymext}.
      \begin{proof}[Proof of Theorem~\ref{thm:suesymext}]
Since $(\vect{X},\vect{T})$ is strongly uniformly expansive, we write $\delta>0$  for  a uniform expansive constant of  $\vect{T}$.
        By the proof of Proposition~\ref{prop:expseqgnrtr}, there exists a generator $\vect{\mathscr{B}}$ by finite covers $\mathscr{B}_{k}$ of $X_{k}$ by balls of radius $\frac{\delta}{3}$.
        Write $m_{k}=\#\mathscr{B}_{k}$ and $\mathscr{B}_{k}=\{B_{i}^{(k)}\}_{i=1}^{m_{k}}$ for each $k \in \N$.
        Fix $k$. Let
        $$
        F_{l}^{(k)}=
        \left\{
          \begin{array}{ll}
            \overline{B_{1}^{(k)}}, & l=1, \\
            \overline{B_{l}^{(k)}} \setminus \bigcup_{i=1}^{l}B_{i}^{(k)}, & 1 < l \leq m_{k}. 
          \end{array}
        \right.
        $$
        Note that $$\intr(F_{l}^{(k)})=B_{l}^{(k)} \setminus \overline{\bigcup_{i=1}^{l}B_{i}^{(k)}}$$ for all $l>1$,
        and that $$\partial{F_{j}^{(k)}} \cap \intr(F_{l}^{(k)}) \subseteq B_{l}^{(k)} \setminus \bigcup_{i=1}^{j}B_{i}^{(k)} = \varnothing$$ for all $l<j$.
        Thus, $\mathscr{F}_{k}=\{F_{i}^{(k)}\}_{i=1}^{m_{k}}$ is a cover of $X_{k}$ satisfying the following properties:
        \begin{enumerate}[(a)]
          \item $F_{i}^{(k)} \cap F_{j}^{(k)} = \partial{F_{i}^{(k)}} \cap \partial{F_{j}^{(k)}}$ for all $i \neq j$;
          \item $\bigcup_{i=1}^{m_{k}}\partial{F_{i}^{(k)}} \subseteq \bigcup_{i=1}^{m_{k}}\partial{B_{i}^{(k)}}$ has empty interior.
        \end{enumerate}

        Let $D^{(k)}=\bigcup_{i=1}^{m_{k}}\partial{F_{i}^{(k)}}$ and $D_{k}^{\infty}=\bigcup_{j=k}^{\infty}\vect{T}^{-j}D^{(j)}$.
        It is clear that $D_{k}^{\infty}$ is of first category, and so $X_{k} \setminus D_{k}^{\infty}$ is dense in $X_{k}$.
        By (a), $\widehat{\mathscr{F}}_{j}=\{F_{i}^{(j)} \cap (X_{j} \setminus D_{j}^{\infty})\}_{i=1}^{m_{j}}$ is disjoint for every $j$. Hence for each $x \in X_{k} \setminus D_{k}^{\infty}$, there exists a unique $\omega \in \Sigma_{k}^{\infty}(\vect{m})$ such that
        $\vect{T}^{j}x \in F_{\omega_{j}}^{(j)}$ for all $j \geq k$, and this  defines a mapping $\psi_{k}:X_{k} \setminus D_{k}^{\infty} \to \Sigma_{k}^{\infty}(\vect{m})$.
Moreover, $\psi_{k}$ is injective since   $\vect{T}$ is  expansive.

        Write $\Omega_{k}=\psi_{k}(X_{k} \setminus D_{k}^{\infty})$. The restriction of  $\psi_{k}$ to  $\Omega_{k}$  has a surjective inverse
        $\psi_{k}^{-1}:\Omega_{k} \to X_{k} \setminus D_{k}^{\infty}$. We define  $\pi_{k}: \overline{\Omega_{k}} \to X_{k}$ by
        $$
        \pi_{k}(\omega)=
        \left\{
          \begin{array}{ll}
            \psi_{k}^{-1}(\omega), &\text{if}\ \omega \in \Omega_{k}, \\
            \lim_{\Omega_{k} \ni \vartheta \to \omega}\psi_{k}^{-1}(\vartheta), &\text{otherwise}.
          \end{array}
        \right.
        $$
        
        We first show that each $\pi_{k}$ is well defined ($\psi_{k}^{-1}$ is continuous) and that $\vect{\pi}=\{\pi_{k}\}_{k=0}^{\infty}$ is equicontinuous.
It suffices to show that for each $\varepsilon>0$, there exists an integer $N>0$ such that
        for all $k \in \N$ and $x,y \in X_{k} \setminus D_{k}^{\infty}$, $d_{X_{k}}(x,y)<\varepsilon$ whenever $(\psi_{k}(x))_{j+k}=(\psi_{k}(y))_{j+k}$ for all $0 \leq j \leq N-1$.

Arbitrarily choose  $\varepsilon>0$. For the sequence $\widehat{\vect{\mathscr{F}}}=\{\widehat{\mathscr{F}}_{k}\}_{k=0}^{\infty}$, by Lemma~\ref{lem: A.5}(1), there is an integer $N>0$ such that $\diam(\vee_{k,n}^{\vect{T}}\widehat{\vect{\mathscr{F}}})<\varepsilon$ for all $n \geq N$.
For each $k \in \N$,  arbitrarily choose $x,y \in X_{k} \setminus D_{k}^{\infty}$ satisfying  $(\psi_{k}(x))_{j+k}=(\psi_{k}(y))_{j+k}$ for all $0 \leq j \leq N-1$, and it is clear that  $x$ and $y$  are contained in the same member of $\vee_{k,n}^{\vect{T}}\widehat{\vect{\mathscr{F}}}$. Hence  we have that $d_{X_{k}}(x,y) \leq \diam(\vee_{k,n}^{\vect{T}}\widehat{\vect{\mathscr{F}}}) <\varepsilon$.

        Since $\psi_{k+1} \circ T_{k} = \sigma_{k+1} \circ \psi_{k}$, we have $\pi_{k+1} \circ \sigma_{k} = T_{k+1} \circ \pi_{k}$ for all $k$.
       \end{proof}

\end{document}